\documentclass[11pt]{article}

\usepackage{verbatim,latexsym,amsfonts,amsmath,amssymb,graphicx,fancyhdr,hyperref,bbm
}
\usepackage{appendix,latexsym,amsfonts,amsmath,amssymb,graphicx,hyperref,amsthm,soul,verbatim,authblk}
\usepackage[framemethod=tikz]{mdframed}

\setlength{\textwidth}{6.25in} \setlength{\oddsidemargin}{0in}
\setlength{\textheight}{20 cm}

\newcommand{\EE}{\mathbb{ E}}
\newcommand{\PP}{\mathbb{P}}

\newcommand{\R}{\mathbb{R}}
\newcommand{\FF}{\mathbb{F}}
\newcommand{\C}{\mathbb{C}}
\newcommand{\Q}{\mathbb{Q}}

\newcommand{\HH}{\mathbb{H}}
\newcommand{\N}{\mathbb{N}}

\newcommand{\Z}{\mathbb{Z}}

\newcommand{\A}{\mathbb{A}}

\newcommand{\pa}{\partial}

\newcommand{\F}{{\cal F}}

\newcommand{\ind}{\mathbbm{1}}

\def\eps{\varepsilon}
\def\til{\widetilde}
\def\ha{\widehat}
\def\sem{\setminus}
\def\lin{\overline}
\def\ulin{\underline}

\def\Up{\Upsilon}

 \DeclareMathOperator{\Cont}{Cont}
 \DeclareMathOperator{\supp}{supp}
\DeclareMathOperator{\rad}{rad}
 \DeclareMathOperator{\diam}{diam}
\DeclareMathOperator{\dist}{dist} 
 \DeclareMathOperator{\id}{id}
\DeclareMathOperator{\Imm}{Im }

 \DeclareMathOperator{\doub}{doub}
 \DeclareMathOperator{\aaa}{a}
 
\DeclareMathOperator{\ii}{i} \DeclareMathOperator{\oo}{o}

\theoremstyle{plain}
\newtheorem{Theorem}{Theorem}[section]
\newtheorem{Lemma}[Theorem]{Lemma}
\newtheorem{Corollary}[Theorem]{Corollary}
\newtheorem{Proposition}[Theorem]{Proposition}

\theoremstyle{definition}
\newtheorem{Definition}[Theorem]{Definition}
\newtheorem{Remark}[Theorem]{Remark}
\newtheorem{Example}[Theorem]{Example}
\numberwithin{equation}{section}
\newcommand{\BGE}{\begin{equation}}
\newcommand{\BGEN}{\begin{equation*}}
\newcommand{\EDE}{\end{equation}}
\newcommand{\EDEN}{\end{equation*}}

\def\st{\stackrel}

\begin{document}
\title{Boundary Green's functions and Minkowski content  measure
of multi-force-point SLE$_\kappa(\ulin\rho)$}
\author{Dapeng Zhan
}
\affil{Michigan State University}
\date{\today}
\maketitle

\begin{abstract}
  We consider a transient chordal  SLE$_\kappa(\rho_1,\dots,\rho_m)$ curve $\eta$ in $\HH$  from $w$ to $\infty$ with force points $ v_1> \cdots >v_m$ in $(-\infty,w^-]$, which intersects and is not boundary-filling on $(-\infty,v_m)$. The main result is that there is an atomless locally finite Borel measure $\mu_\eta$ on $\eta\cap (-\infty,v_m]$ such that for any $v<v_m$, the $d$-dimensional Minkowski content of $\eta\cap [v,v_m]$ exists and equals $\mu_\eta [v,v_m]$, where $d=\frac{(\sum \rho_j+4)(\kappa-4-2\sum \rho_j)}{2\kappa}$ is the Hausdorff dimension of $\eta\cap [v,v_m]$. In the case that all $\rho_j=0$, this measure agrees with the covariant measure derived in \cite{cov} for chordal SLE$_\kappa$ up to a multiplicative constant.
  We call such measure a Minkowski content measure, extend it to a class of subsets of $\R^n$, and prove that they satisfy conformal covariance.
  To construct the Minkowski content measure on $\eta\cap [v,v_m]$, we follow the standard approach to derive the existence and estimates of the one- and two-point boundary Green's functions of $\eta$  on $(-\infty,v_m)$, which are the limits of the rescaled probability that $\eta$ passes through small discs or open real intervals centered at  points on $(-\infty,v_m)$.
\end{abstract}

\tableofcontents

\section{Introduction}
The Schramm-Loewner evolution (SLE)  introduced by Oded Schramm   is a
one-parameter ($\kappa\in(0,\infty)$) family of random fractal curves growing in plane domains, which satisfy conformal invariance and domain Markov property. Due to its close relation with  two-dimensional statistical lattice models, Brownian motion, Gaussian free field and Liouville quantum gravity, SLE has  received a lot of attention over the past two decades. We refer the reader to Lawler's textbook \cite{Law-SLE} for basic properties of SLE.

Hausdorff dimension and Minkowski content are central objects in the study of the fractal properties of SLE. It is known that the Hausdorff dimension of an SLE$_\kappa$ curve is $2\wedge(1+\frac\kappa 8)$ (\cite{RS,Bf}), and the corresponding Minkowski content of any bounded SLE$_\kappa$ arc exists (\cite{LR}). The Hausdorff dimension of the intersection of an SLE$_\kappa$ curve with the domain boundary is $0\vee(1\wedge (2-\frac 8\kappa))$  (\cite{dim-real}), and the corresponding Minkowski content of the intersection restricted to any compact boundary arc also exists (\cite{Mink-real}).

The Minkowski content of SLE agrees with the natural parametrization of SLE developed in \cite{LS,LZ}, which was constructed  using the Doob-Meyer decomposition.  {Alberts and Sheffield} used that idea to construct a conformally covariant measure on the intersection of a {\it chordal} SLE$_\kappa$ (one important type of SLE) curve with $\R$, and conjectured that their measure has close relation with the Minkowski content of the intersection (\cite{cov}).

In this paper, we work on chordal SLE$_\kappa(\ulin\rho)$, which are natural variants of chordal SLE$_\kappa$. The growth of a chordal SLE$_\kappa(\ulin\rho)$ curve  is affected by the force values $\ulin\rho=(\rho_1,\dots,\rho_m)$ and force points $(v_1,\dots,v_m)$ in addition to the initial point and target point of the curve. One-force-point SLE$_\kappa(\rho)$ was introduced in \cite{LSW-8/3} for the construction of conformal restriction measures. Multi-force-point SLE$_\kappa(\ulin\rho)$ was extensively studied in \cite{MS1} in the framework of imaginary geometry. Using imaginary geometry, the authors of \cite{MW} proved that for a (one-force-point) chordal SLE$_\kappa(\rho)$ curve $\eta$ in $\HH$ started from $w$ with force point $v<w$, the Hausdorff dimension of $\eta\cap (-\infty,v)$ is $\frac{(\rho +4)(\kappa-4-2\rho )}{2\kappa}$, when  $\rho\in((-2)\vee (\frac\kappa 2-4),\frac\kappa 2-2)$.

The goal of this paper is to prove the existence of the Minkowski content of the intersection of a multi-force-point chordal SLE$_\kappa(\ulin\rho)$ curve $\eta$ with some boundary arc of the domain. We assume that the curve $\eta$ grows in $\HH=\{z\in\C:\Imm z>0\}$ from a point $w\in\R$ to $\infty$, and the force points lie on the same side of $w$, say $v_m<\cdots<v_1<w$. Moreover, we assume that the force values are such that a.s.\ $\eta$ intersects but does not contain $(-\infty,v_m)$. By the work of \cite{MW},  the Hausdorff dimension of $\eta\cap(-\infty,v_m]$ is $d:=\frac{(\rho_\Sigma +4)(\kappa-4-2\rho_\Sigma )}{2\kappa}$, where $\rho_\Sigma:=\sum_{j=1}^m\rho_j$.
We are interested in the $d$-dimensional Minkowski content of $\eta\cap I$ for intervals $I\subset (-\infty,v_m]$.

To establish the Minkowski content, we follow the standard approach in   \cite{LR} and \cite{Mink-real} to study the existence of the one-point  Green's function
$$G_1(z_1):=\lim_{r_1\to 0^+} r_1^{d-1} \PP[\dist(z_1,\eta\cap \R)\le r_1]$$
and the two-point  Green's function
$$G_2(z_1,z_2):=\lim_{r_1,r_2\to 0^+} r_1^{d-1} r_2^{d-1}\PP[\dist(z_j,\eta\cap \R)\le r_j,j=1,2],$$
for $z_1\ne z_2\in (-\infty,v_m)$, and derive some estimates.

The first result on SLE Green's function in the literature (cf.\ \cite{Law-Green}) is about the one-point Green's function for a chordal SLE at an interior point, but with the Euclidean distance replaced by the conformal radius.  The two-point conformal radius Green's function for a chordal SLE at interior points was proved to exist in \cite{LW}. The existence of the Eucliean distance version (the original definition) of these Green's functions was proved in \cite{LR}. The one- and two-point Green's functions for a chordal SLE at boundary points were derived in \cite{Mink-real}.

We use the ideas in \cite{Mink-real} to study the boundary Green's functions of a chordal SLE$_\kappa(\ulin\rho)$ curve $\eta$.
Due to the force points, the proofs here involve more technical details than those in \cite{Mink-real}. We prove the existence of the one-point and two-point Green's functions for $\eta$ on $(-\infty,v_m)$ as well as some upper bounds and convergence rates. We also obtain the exact formula of the one-point Green's function up to a multiplicative constant.

Using the   estimates of these Green's functions, we then prove that, for any bounded measurable set $S\subset (-\infty,v_m]$, the limit
$\lim_{r\to 0^+} r^{1-d} \lambda(\{u\in S: \dist(u,\eta\cap\R)\le r\}) $
converges  both a.s.\ and in $L^2$, where $\lambda$ is the Lebesgue measure on $\R$. We let $\xi_S$ denote the limit and prove that whenever $\pa S$ has zero Minkowski content, $\xi_S$ is the  Minkowski content of $\eta\cap {S}$.

Furthermore, we use these $\xi_S$ to prove that there is a random measure $\mu_\eta$ supported by $\eta\cap (-\infty,v_m]$ such that for any compact interval $I\subset (-\infty, v_m]$, $\mu_\eta(I)=\xi_I$. We call such $\mu_\eta$ the Minkowski content measure on $\eta\cap (-\infty,v_m]$. The Minkowski content measures could be defined on a more general family of subsets of Euclidean spaces. We prove that they satisfy the  conformal covariance and restriction property, and use these properties to show that the increasing function $\mu_\eta(\eta[0,t])$ could act as one component in the Doob-Meyer decomposition of some supermartingale. When all $\rho_j=0$, the SLE$_\kappa(\ulin\rho)$ reduces to the standard chordal SLE$_\kappa$. The above statement shows that the $\mu_\eta$ in this special case agrees with the covariant measure developed in \cite{cov} for chordal SLE$_\kappa$, and so proves the conjecture there.

The rest of the paper is organized as follows. In Section \ref{Section 2} we review chordal Loewner equations, SLE$_\kappa(\ulin\rho)$ processes, extremal length, and  a family of Bessel-like diffusion processes, which will be needed later in this paper. In Section \ref{Section 3} we derive some one-point estimates of the  SLE$_\kappa(\ulin\rho)$  curve $\eta$, i.e., some upper bounds of the probability that $\eta$ gets near a marked point in $(-\infty,v_m]$. In Sections \ref{Section 4} and \ref{Section 5} we obtain the existence and some estimates of the one- and two-point Green's functions for $\eta$ on $(-\infty,v_m)$. In Section \ref{Section 6.1} we define the notion of Minkowski content measure, and develop a framework of this new concept. Finally, in Section \ref{Section 6.2} we   use the Green's functions from Sections \ref{Section 4} and \ref{Section 5} to construct the Minkowski content measure on $\eta\cap (-\infty,v_m]$ and prove the conjecture of \cite{cov}.

\section{Preliminaries} \label{Section 2}
\subsection{Chordal Loewner equation}\label{Section 2.1}
A bounded  relatively closed subset $K$ of $\HH$ is called an $\HH$-hull if  $\HH\sem K$ is a simply connected domain.
 For an $\HH$-hull $K$, there is a unique conformal map $g_K$ from $\HH\sem K$ onto $\HH$ such that $g_K(z)=z+ O(\frac 1{z })$ as $z\to \infty$.
If $K=\emptyset$, $g_K=\id$. Suppose $K\ne\emptyset$. Let $K^{\doub}=\lin K\cup\{\lin z:z\in {\lin K}\}$, where $\lin K$ is the closure of $K$, and $\lin z$ is the  complex conjugate of $z$. By Schwarz reflection principle, $g_K$ extends {conformally} to $\C\sem K^{\doub}$. {Let $S_K=\C\sem g_K(\C\sem K^{\doub})$. Then $S_K$ is a compact subset of $\R$.} Let $a_K=\min(\lin{K}\cap \R)$, $b_K=\max(\lin{K}\cap\R)$,  $c_K=\min S_K$, $d_K=\max S_K$. Then  $g_K$ maps  $\C\sem (K^{\doub}\cup [a_K,b_K])$ conformally onto $\C\sem [c_K,d_K]$   {(cf.\ \cite[Section 5.2]{LERW})}.

Let $W \in C([0,T),\R)$ for some $T\in(0,\infty]$. The chordal Loewner equation driven by $W$  is
$$\pa_t g_t(z)=  \frac{2}{g_t(z)-W(t)},\quad 0\le t<T;\quad g_0(z)=z.$$
For every $z\in\C$, let $\tau^*_z$ be the first time that the solution $t\mapsto g_t(z)$ blows up; if such time does not exist, then set $\tau^*_z=\infty$.
For  $t\in[0,T)$, let $K_t=\{z\in\HH:\tau^*_z\le t\}$. It turns out that for each $t\ge 0$, $K_t$ is an $\HH$-hull, $K_0=\emptyset$, $K_t\ne\emptyset$ for $t>0$, and $g_t$ agrees with $g_{K_t}$. We call $g_t$ and $K_t$ the chordal Loewner maps and hulls, respectively, driven by $W$.  We write $g^W_t$ and $K^W_t$ to emphasize the dependence of $g_t$ and $K_t$ on $W$, when necessary. Let  $c^W_t=c_{K^W_t}$ and $d^W_t=d_{K^W_t}$ for $t>0$; and $c^W_0=d^W_0=W(0)$.

\begin{Definition}
For $w\in\R$, let $\R_{w}$ be the set $\R\sem\{w\}\cup \{w^+,w^-\}$ {, where $w^+$ and $w^-$ are two symbols lying outside $\R$, which satisfy the inequalities $w^-<w<w^+$,  $w^+<x$, and $w^->y$ for any $x,y\in\R$ with $x>w>y$}.
Let $W,g^W_t,\tau^*_z,c^W_t,d^W_t$ be as above. The modified chordal Loewner maps $\ha g^W_t$ driven by $W$ are functions defined on $\R_{W(0)}$ for $0\le t<T$ such that
$$\ha g^W_t(v)=\left\{\begin{array}{ll} g^W_t(v), &\mbox{if }0\le t<\tau^*_v; \\ d^W_t, & \mbox{if }t\ge \tau^*_v \mbox{ and }v\ge W(0)^+;\\ c^W_t , & \mbox{if }t\ge \tau^*_v\mbox{ and }v\le W(0)^-.
\end{array}
\right.$$
Here $\tau^*_{W(0)^\pm}$ are understood as $0$, and so $\ha g^W_t(W(0)^+)=d^W_t$ and $\ha g^W_t(W(0)^-)=c^W_t$.
\label{Def-Rw}
\end{Definition}

 {We remark that the $\ha g^W_t$ agrees with the $g^{w}_{K}$ in \cite[Definition 2.11]{Two-Green-boundary} when $w=W(0)$ and $K=K^W_t$. It is useful to describe the force point processes of SLE$_\kappa(\ulin\rho)$ with boundary force points. See the next subsection.}

For $z_0\in \C$ and $S\subset\C$, let $\rad_{z_0}(S)=\sup\{|z_0-z|:z\in S\cup \{z_0\}\}$. The following proposition is well known (cf.\ \cite{Law-SLE}).

\begin{Proposition}
  Let $K_t$ and $g_t$, $0\le t\le \delta$, be chordal Loewner hulls and maps driven by $W$. Suppose $\rad_{ {W(0)}}( K_\delta)\le r$ for some $x_0\in\R$ and $r>0$. Then we have
  \begin{enumerate}
  \item [(i)] $|  W(\delta) - W(0)|\le 2r$;
  \item [(ii)] $|g_{\delta}(z)-z|\le 3r$ for any $z\in\lin\HH\sem \lin { { K_\delta}}$;
  \item [(iii)] $|\ln (g_\delta'(z))|\le 5 (\frac r{|z-x_0|})^2$ for any $z\in\lin\HH$ with $|z-x_0|\ge 10 r$.
\end{enumerate}
\label{basic-chordal}
\end{Proposition}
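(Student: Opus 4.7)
All three bounds reduce to the structural description of $\HH$-hulls in Section~\ref{Section 2.1}, combined with the classical area theorem and Koebe-type distortion estimates for conformal maps at $\infty$.

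The first step is a preparatory estimate. After translating so $W(0)=0$, the map $g_{K_\delta}$ is univalent on $\C\sem K_\delta^{\doub}$ with $g_{K_\delta}(z)=z+O(1/z)$ at $\infty$, and $K_\delta^{\doub}$ is contained in $\lin{D(0,r)}$ and symmetric about $\R$. A Koebe/area-theorem argument applied to $g_{K_\delta}$ in these centered coordinates, combined with the real symmetry of $K_\delta^{\doub}$, yields
$$[c_{K_\delta},d_{K_\delta}]\subset[W(0)-2r,W(0)+2r]\quad\mbox{and}\quad \hcap(K_\delta)\le r^2,$$
both saturated by the half-disc of radius $r$ centered at $W(0)$. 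Item (i) is then immediate, since the driving value at time $\delta$ satisfies $W(\delta)\in[c_{K_\delta},d_{K_\delta}]$.

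For (ii), set $h(z)=g_{K_\delta}(z)-z$, which extends by Schwarz reflection to an analytic function on $\C\sem K_\delta^{\doub}$ that vanishes at $\infty$. On the boundary $K_\delta^{\doub}\cup[a_{K_\delta},b_{K_\delta}]\subset\lin{D(W(0),r)}$, the image $g_{K_\delta}(z)\in[c_{K_\delta},d_{K_\delta}]\subset[W(0)-2r,W(0)+2r]$ and the source satisfies $|z-W(0)|\le r$, so $|h(z)|\le 3r$ on the boundary. The maximum modulus principle then delivers (ii) on the whole domain.

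For (iii), I would use the Nevanlinna-type representation
$$g_{K_\delta}(z)=z-\int_{[c_{K_\delta},d_{K_\delta}]}\frac{d\mu(x)}{z-x},\qquad \mu\ge 0,\ \mu(\R)=\hcap(K_\delta)\le r^2,$$
which holds because $z-g_{K_\delta}(z)$ is a Herglotz function vanishing at $\infty$ that extends analytically across $\R\sem[c_{K_\delta},d_{K_\delta}]$. Differentiating gives $g_{K_\delta}'(z)-1=\int(z-x)^{-2}\,d\mu(x)$; for $|z-W(0)|\ge 10r$ the distance from $z$ to $[c_{K_\delta},d_{K_\delta}]$ is at least $\tfrac{8}{10}|z-W(0)|$, so
$$|g_{K_\delta}'(z)-1|\le\frac{100}{64}\Big(\frac{r}{|z-W(0)|}\Big)^2<\frac12,$$
and the elementary bound $|\ln(1+w)|\le 2|w|$ for $|w|\le\tfrac12$ yields (iii) with comfortable slack in the constant $5$. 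The only technical input is the preparatory interval and capacity bound from the first step; once those are in hand, each of (i)--(iii) is a short calculation requiring no SLE-specific structure, which is why the proposition is stated as ``well known''.
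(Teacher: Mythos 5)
The paper gives no proof of this proposition (it is quoted as well known from \cite{Law-SLE}), so I judge your argument on its own merits. Your preparatory step and parts (i)--(ii) are correct and standard: the rescaled map $F(w)=\frac1r g_{K_\delta}(W(0)+r/w)$ is univalent on the punctured disc with expansion $\frac1w+O(w)$, and applying the coefficient bound $|a_2|\le 2$ to $w\mapsto (F(w)-v)^{-1}$ for omitted values $v$ gives both $[c_{K_\delta},d_{K_\delta}]\subset[W(0)-2r,W(0)+2r]$ and $\hcap(K_\delta)\le r^2$; part (ii) then follows from the maximum principle as you describe, provided the boundary values are read in the $\limsup$ sense (as $z$ tends to $\partial(K_\delta^{\doub}\cup[a_{K_\delta},b_{K_\delta}])$ one only knows $\dist(g_{K_\delta}(z),[c_{K_\delta},d_{K_\delta}])\to 0$, since $g_{K_\delta}$ need not extend continuously to the boundary).

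Part (iii) contains a genuine error: the Nevanlinna-type representation you wrote down is the one for the \emph{inverse} map $f_{K_\delta}=g_{K_\delta}^{-1}$, not for $g_{K_\delta}$. Two concrete symptoms. First, $z-g_{K_\delta}(z)$ does \emph{not} extend analytically to $\C\sem[c_{K_\delta},d_{K_\delta}]$ --- its domain excludes the two-dimensional set $K_\delta^{\doub}$ --- so the Pick/Herglotz representation theorem with a measure supported on $[c_{K_\delta},d_{K_\delta}]$ cannot be invoked for it. Second, your formula forces $g_{K_\delta}'\ge 1$ on $\R$ off the hull, whereas for the half-disc $g(z)=z+r^2/z$ one has $g'(x)=1-r^2/x^2<1$; so the identity is false as stated. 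The correct version is $f_{K_\delta}(w)=w-\int_{[c_{K_\delta},d_{K_\delta}]}\frac{\mu(dx)}{w-x}$ with $\mu\ge0$ and $\mu(\R)=\hcap(K_\delta)\le r^2$, legitimate because $f_{K_\delta}(w)-w$ is analytic precisely off $[c_{K_\delta},d_{K_\delta}]$ and has nonnegative imaginary part on $\HH$. The repair is routine but changes the computation: bound $|f_{K_\delta}'(g_\delta(z))-1|\le \hcap(K_\delta)\cdot\dist(g_\delta(z),[c_{K_\delta},d_{K_\delta}])^{-2}$, use part (ii) together with the first step to get $\dist(g_\delta(z),[c_{K_\delta},d_{K_\delta}])\ge|z-W(0)|-3r-2r\ge\frac12|z-W(0)|$, and conclude via $\ln g_\delta'(z)=-\ln f_{K_\delta}'(g_\delta(z))$, which still yields the constant $5$ (one gets roughly $4.2(r/|z-W(0)|)^2$). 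As written, however, the key identity you differentiate is not valid for $g_{K_\delta}$.
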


We use $f_t=f^W_t$ to denote the continuation of $(g^W_t)^{-1}$ from $\HH$ to $\lin\HH$, when the continuation exists. If for every $t\in[0,T)$,  $f^W_t$ exists, and $\eta(t):=f^W_t({W(t)})$ is continuous on $[0,T)$, then we say that $\eta$ is the chordal Loewner curve driven by $W$. Such $\eta$ may not exist in general; and when it exists, $\eta$ is a curve in $\lin\HH$ started from $W(0)$, and $\HH\sem K_t$ is the unbounded connected component of $\HH\sem \eta[0,t]$ for all $t$.

\subsection{Chordal SLE$_\kappa({\protect\ulin\rho})$ processes}\label{Section 2.2}
When the driving function $W$ equals $\sqrt\kappa B(t)$, where $\kappa>0$ and $B$ is a standard Brownian motion, the chordal Loewner curve driven by $W$ is known to exist (\cite{RS}) and is called a chordal SLE$_\kappa$ curve in $\HH$ from $0$ to $\infty$.
The chordal SLE$_\kappa(\ulin\rho)$ processes  are natural variants of chordal SLE$_\kappa$, where one keeps track of additional marked points.  We now review the definition and properties of chordal SLE$_\kappa(\ulin \rho)$ developed in \cite{MS1} that will be needed in this paper.

Let $\kappa>0$, $m\in\N$, $\rho_1,\dots,\rho_m\in\R$, $w\in\R$.   Let $v_1,\dots,v_m\in \R_w=\R\sem\{w\}\cup\{w^+,w^-\}$.  We require that for $\sigma\in\{+,-\}$, $\sum_{j:v_j=w^\sigma}\rho_j>-2$. We write $\ulin\rho$ and $\ulin v$ respectively for $(\rho_1,\dots,\rho_m)$ and $(v_1,\dots,v_m)$.
The (chordal) SLE$_\kappa(\ulin{ {\rho}})$ process in $\HH$ started from $w$ with force points $\ulin v$ is the chordal Loewner process driven by the function $W(t)$, $0\le t<T$, which   solves the SDE
$$dW(t)=\sqrt\kappa dB(t)+\sum_{j=1}^m\ind_{\{W(t)\ne \ha g^W_t(v_j)\}} \frac{\rho_j}{W(t)- \ha g^W_t(v_j)}\,dt, \quad  W(0)=w,$$
where $B(t)$ is a standard Brownian motion, and $\ha g^W_t$ are as in Definition \ref{Def-Rw}.

The SDE should be understood as an integral equation, i.e., $W-w-\sqrt\kappa B$ is absolutely continuous with the derivative being the summation in the SDE. The solution exists uniquely up to the first time $T$ (called a continuation threshold) that $ \sum_{j: g^W_t(v_j)=W(t)} \rho_j\le -2$.
If there does not exist a continuation threshold, then the lifetime is $\infty$.

We call $V_j(t):=\ha g^W_t(v_j)$ the force point process started from $v_j$. It is absolutely continuous with initial value $V_j(0)=v_j$ and derivative $\ind_{\{W(t)\ne {\color{blue} \ha {\color{black}{g}}}^W_t(v_j)\}}\frac 2{V_j(t)-W(t)}$. In fact, the set $\{t:W(t)={\color{blue} \ha{\color{black} g}}^W_t(v_j)\}$ has Lebesgue measure zero. So we will omit the factors $\ind_{\{W(t)\ne {\color{blue} \ha{\color{black} g}}^W_t(v_j)\}}$. Thus, $W$ and $V_1,\dots,V_m$ together solve the following system of SDE-ODE:
\BGE dW=\sqrt\kappa dB+\sum_{j=1}^m \frac{\rho_j dt}{W-V_j},\quad W_j(0)=w_j;\label{dW}\EDE
\BGE dV_j=\frac{2 dt}{V_j-W},\quad V_j(0)=v_j,\quad 1\le j\le m.\label{dVU}\EDE
Here we omit ``$(t)$'' for simplicity.
Moreover, $V_j\ge W$ if $v_j\ge w^+$; and $V_j\le W$ if $v_j\le w^-$. If $V_j(t_0)=V_k(t_0)$ at some point $t_0$, then $V_j(t)=V_k(t)$ for all $t\ge t_0$. If $v_j\ge v_k$, then $V_j\ge v_k$. If $v_j=v_k$ for some $j\ne k$, then $V_j=V_k$, and so $v_j$ and $v_k$ may be treated as a single force point with force value $\rho_j+\rho_k$.

An SLE$_\kappa(\ulin\rho)$ process a.s.\ generates a chordal Loewner curve $\eta$, which satisfies the following DMP  {(domain Markov property)}. Let $\F=(\F_t)_{t\ge 0}$ be the complete filtration generated by $\eta$. We write $\ulin V$ for $(V_1,\dots,V_m)$.  If $\tau$ an $\F$-stopping time, then on the event $\{\tau<T\}$, there is a curve $\eta^\tau$ in $\lin\HH$ such that $\eta(\tau+\cdot)=f_\tau(\eta^\tau)$, and the law of $\eta^\tau$ conditionally on $\F_\tau$ is an SLE$_\kappa(\ulin\rho)$ curve   started from $W(\tau)$ with force points $\ulin V(\tau)$, where when $V_j(\tau)=W(\tau)$, then $V_j(\tau)$ as a force point is treated as $W(\tau)^+$ (resp.\ $W(\tau)^-$) if $v_j\ge w^+$ (resp.\ $v_j\le w^-$).

\subsection{Extremal length} \label{Section 2.3}
Extremal length is a nonnegative quantity $\lambda(\Gamma)$ associated with a family of (piecewise $C^1$) curves  (cf.\ \cite[Definition 4-11]{Ahl}).
The extremal distance between $E$ and $F$ in $\Omega$, denoted by $d_\Omega(E,F)$ , is the extremal length of the family of curves in $\Omega$ connecting $E$ and $F$.

 Extremal length satisfies conformal invariance (cf.\ \cite[Section 4-1]{Ahl}) and comparison principle (\cite[Theorems 4.1 and 4.2]{Ahl}).  {The conformal invariance means that} if $\Gamma$ is a family of curves in a domain $D$, and $f$ is a conformal map on $D$, then $\lambda(f(\Gamma))=\lambda(\Gamma)$.  {The comparison principle means that} if every curve in $\Gamma$ contains a curve in $\Gamma'$, then $\lambda(\Gamma)\ge \lambda(\Gamma{ '})$. If every curve in $\Gamma$ contains a curve in $\Gamma_1$ and a curve in $\Gamma_2$, and $\Gamma_1$ and $\Gamma_2$ are respectively contained in $\Omega_1$ and $\Omega_2$, which are disjoint open sets, then $\lambda(\Gamma)\ge \lambda(\Gamma_1)+\lambda(\Gamma_2)$.

Suppose $R$ is a rectangle of dimension $a\times b$, and $I_1,I_2$ are sides of $R$ with length $a$. Then $d_R(I_1,I_2)=b/a$ (cf.\ \cite[Section 4-2]{Ahl}). By conformal invariance, we get the following result.

\begin{Example}
  For $x\in\R$ and $0<r_1<r_2$, let $A_{\HH}(x,r_1,r_2)$ denote the semi-annulus $\{z\in\HH:r_1<|z-x|<r_2\}$. Let $I_j=\{z\in\HH:|z-x|=r_j\}$, $j=1,2$. Then $d_{A_{\HH}(x,r_1,r_2)}(I_1,I_2)=\frac 1\pi\ln(\frac{r_2}{r_1})$.
\end{Example}

\begin{Proposition}
  Let $S_1$ and $S_2$ be a disjoint pair of connected nonempty closed subsets of $\lin\HH$ that intersect $\R$. Then we have
  $$\prod_{j=1}^2 \Big(\frac{\diam(S_j)}{\dist(S_1,S_2)}\wedge 1\Big)\le 144 e^{-\pi d_{\HH}(S_1,S_2)}.$$
  If $S_2$ is unbounded, then for any $z_0\in S_1$,
  $$\frac{\rad_{z_0}(S_1)}{\dist(z_0,S_2)}\wedge 1 \le 144 e^{-\pi d_{\HH}(S_1,S_2)}.$$
  \label{extrem-prop}
\end{Proposition}
\begin{proof}
  The first statement in the special case that $S_1$ and $S_2$ are bounded is exactly \cite[Lemma 2.9]{existence}. The general case follows from the special case  by replacing $S_j$ with $S_j\cap \{|z|\le n\}$ and letting $n\to \infty$. The second statement follows immediately from the first one.
\end{proof}

\subsection{Transition density of some diffusion processes} \label{Section 2.4}
In this subsection we review some results about the transition density of a family of Bessel-like diffusion processes, {which will be used in the proofs of Theorems \ref{Thm-est1} and \ref{m=1}. See Remarks \ref{Thm-est1-rem} and \ref{m=1-rem}.}
We first briefly review Jacobi polynomials (cf.\ \cite[Chapter 18]{NIST:DLMF}). For indices $\alpha_+,\alpha_->-1$,
Jacobi polynomials $P_n^{(\alpha_+,\alpha_-)}(x)$,  $n\in\N\cup\{0\}$, are a class of orthogonal polynomials w.r.t.\ the weight $w(s):=(1-s)^{\alpha_+}(1+s)^{\alpha_-}$. They satisfy
\BGE P_0^{(\alpha_+,\alpha_-)}\equiv 1;\label{P0}\EDE
\BGE \int_{-1}^1 w(s) P_n^{(\alpha_+,\alpha_-)}(s) P_m^{(\alpha_+,\alpha_-)}(s)  ds =0,\quad \mbox{if }n\ne m;\label{orthogonal}\EDE
\BGE \int_{-1}^1 w(s) P_n^{(\alpha_+,\alpha_-)}(s)^2 ds=\frac{2^{\alpha_++\alpha_-+1}\Gamma(n+\alpha_++1)\Gamma(n+\alpha_-+1)} {n!(2n+\alpha_++\alpha_-+1)\Gamma(n+\alpha_++\alpha_-+1)}=:{\cal A}_n^{(\alpha_+,\alpha_-)};\label{Jacobi-norm}\EDE
\BGE \sup_{x\in[-1,1]} |P_n^{(\alpha_+,\alpha_-)}(x)|\le \frac{\Gamma((\alpha_+\vee \alpha_-)+n+1)}{n!\Gamma((\alpha_+\vee \alpha_-)+1)} ,\quad \mbox{if }\alpha_+\vee \alpha_->-\frac 12.\label{Jacobi-bound}\EDE
$$\frac d{dx} P_n^{(\alpha_+,\alpha_-)}(x)=\frac {n+\alpha_++\alpha_-+1} 2 P_{n-1}^{(\alpha_++1,\alpha_-+1)}(x),\quad \mbox{if }n\ge 1;$$
$$P_n^{(\alpha_+,\alpha_-)}(1)=\frac{\Gamma(\alpha_++n+1)}{n!\Gamma(\alpha_++1)}.$$
When $n\ge 1$, using the last two displayed formulas, and applying (\ref{Jacobi-bound})  to $P_{n-1}^{(\alpha_++1,\alpha_-+1)}$, we find that for all $\alpha_+,\alpha_->-1$,
\BGE \sup_{x\in[-1,1]} |P_n^{(\alpha_+,\alpha_-)}(x)|\le \frac{\Gamma(\alpha_++n+1)}{n!\Gamma(\alpha_++1)} +   \frac{n(n+\alpha_++\alpha_-+1)\Gamma((\alpha_+\vee \alpha_-)+n+1)}{n!\Gamma((\alpha_+\vee \alpha_-)+2)}.\label{Jacobi-bound-general} \EDE
This inequality trivially holds for $n=0$.

\begin{Proposition}
  Let $\delta_+,\delta_-\in\R$. Let $B$  be a standard Brownian motion. Suppose $Z(t)$, $0\le t<T$, is a diffusion process that lies on an interval $I$ and satisfies the SDE
  \BGE dZ=\sqrt{1-Z^2}dB-\frac{\delta_+}4(Z+1)dt-\frac{\delta_-}4(Z-1)dt.\label{Bessel-SDE}\EDE
We consider the following four cases:
\begin{itemize}
  \item [(i)] $\delta_+>0,\delta_->0$, $I=[-1,1]$, and $T=\infty$;
  \item [(ii)] $\delta_+<2,\delta_-<2$,  $I=(-1,1)$, and $\lim_{t\uparrow T} Z(t)\in \{-1,1\}$;
  \item [(iii)] $\delta_+>0$, $\delta_-<2$,   $I= (-1,1]$, and $\lim_{t\uparrow T} Z(t)=-1$;
  \item [(iv)] $\delta_->0$, $\delta_+<2$, $I= [-1,1)$, and $\lim_{t\uparrow T} Z(t)=1$.
\end{itemize}
Then in each of the four cases, $Z$ has a transition density $p_t(x,y)$ in the sense that
  if $Z$ starts from $x\in I$,  then for any $t>0$ and any measurable set $A\subset I$,
$$\PP_x[ \{T>t\}\cap \{Z(t)\in A\}]=\int_A p_t(x,y)dy.$$
Moreover, $p_t(x,y)$ can be expressed   by
   \BGE p_t(x,y)= f(x)g(y)\sum_{n=0}^\infty \frac{P_n^{(\alpha_+,\alpha_-)}(x) P_n ^{(\alpha_+,\alpha_-)}(y)}{{\cal A}^{(\alpha_+,\alpha_-)}_n}  e^{-\beta_n t}.\label{Jacobi-series}\EDE
   where $P_n^{(\alpha_+,\alpha_-)}$ are Jacobi polynomials with indices $(\alpha_+,\alpha_-)$, ${\cal A}^{(\alpha_+,\alpha_-)}_n $ are normalization constants given by (\ref{Jacobi-norm}), and  $\alpha_+,\alpha_-,f,g,\beta_n$, $n\ge 0$, are given by the following formulas depending on cases.
   \begin{itemize}
    \item  In Case (i), $\alpha_+ =\frac{\delta_+}2-1$, $\alpha_- =\frac{\delta_-}2-1$,  $f(x)= 1$, $g(y)=(1-y)^{\alpha_+}(1+y)^{\alpha_-}$,  and $\beta_n=\frac 1 2 n(n+1+\alpha_++\alpha_-)$.
    \item In Case (ii),  $\alpha_+=1-\frac{\delta_+}2$, $\alpha_-=1-\frac{\delta_-}2$, $f(x)=(1-x)^{\alpha_+}(1+x)^{\alpha_-}$, $g(y)= 1$, and $\beta_n=\frac 1 2 (n+1)(n+\alpha_++\alpha_-)$.
     \item In Case (iii),
         $\alpha_+=\frac{\delta_+}2-1$, $\alpha_-=1-\frac{\delta_-}2$, $f(x)=(1+x)^{\alpha_-} $, $g(y)=(1-y)^{\alpha_+}$, and $\beta_n=\frac 1 2 (n+1+\alpha_+)(n+\alpha_-)$.
     \item In Case (iv), $\alpha_-=\frac{\delta_-}2-1$, $\alpha_+=1-\frac{\delta_+}2$, $f(x)=(1-x)^{\alpha_+}$, $g(y)=(1+y)^{\alpha_-}$, and $\beta_n=\frac 1 2 (n+1+\alpha_-)(n+\alpha_+)$.
  \end{itemize}
\label{Bessel-transition}
\end{Proposition}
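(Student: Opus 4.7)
The plan is to recognize $Z$ as (either directly or via a Doob $h$-transform) a Jacobi-type diffusion, whose transition density admits a Jacobi-polynomial spectral expansion. First I would rewrite the generator of $Z$ as
$$L=\frac{1-x^2}{2}\frac{d^2}{dx^2}-\frac{(\delta_++\delta_-)x+(\delta_+-\delta_-)}{4}\frac{d}{dx}$$
and compare it with the classical Jacobi ODE $(1-x^2)P_n''+[(\alpha_--\alpha_+)-(\alpha_++\alpha_-+2)x]P_n'+n(n+1+\alpha_++\alpha_-)P_n=0$, reading off the matching indices $\alpha_\pm=\delta_\pm/2-1$ and the eigenvalues $\beta_n=\frac12 n(n+1+\alpha_++\alpha_-)$ relevant to Case~(i). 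Feller's boundary classification, applied to the scale and speed functions of $L$, then shows that $+1$ (resp.\ $-1$) is entrance (not reached from the interior) when $\delta_+>0$ (resp.\ $\delta_->0$) and is a regular exit when $\delta_+<2$ (resp.\ $\delta_-<2$), justifying the intervals $I$ and exit behavior asserted in each of the four cases.

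In Case~(i) both endpoints are entrance, $T=\infty$, and the speed measure is proportional to $w(y)\,dy$ with $w(y)=(1-y)^{\alpha_+}(1+y)^{\alpha_-}$ (a finite measure because $\alpha_\pm>-1$). By (\ref{orthogonal})--(\ref{Jacobi-norm}) the normalized Jacobi polynomials form an orthonormal basis of $L^2(w)$ consisting of $L$-eigenfunctions with eigenvalues $-\beta_n$, so the spectral theorem applied to the (self-adjoint) Feller semigroup yields
$$p_t(x,y)=w(y)\sum_{n\ge 0}\frac{P_n^{(\alpha_+,\alpha_-)}(x)\,P_n^{(\alpha_+,\alpha_-)}(y)}{{\cal A}_n^{(\alpha_+,\alpha_-)}}\,e^{-\beta_n t},$$
which is the stated formula with $f\equiv 1$ and $g=w$. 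Uniform convergence on $[-1,1]^2$ for each fixed $t>0$ is guaranteed by (\ref{Jacobi-bound-general}) combined with the quadratic growth of $\beta_n$.

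For Cases~(ii)--(iv), apply a Doob $h$-transform with $h(x)=(1-x)^{p}(1+x)^{q}$, taking $p=1-\delta_+/2$ when $+1$ is an exit boundary and $p=0$ otherwise, and symmetrically for $q$. A direct calculation shows $Lh=-\mu h$ for an explicit constant $\mu\ge 0$, and that the tilted diffusion obeys the Case~(i) SDE with parameters $\tilde\delta_\sigma=4-\delta_\sigma$ at each exit boundary and $\tilde\delta_\sigma=\delta_\sigma$ otherwise (so $\tilde\delta_\pm>0$ in all three cases); its Jacobi indices $\tilde\alpha_\pm=\tilde\delta_\pm/2-1$ coincide with the $\alpha_\pm$ stated for the original case. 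Inverting the transform via $p_t(x,y)=\frac{h(x)}{h(y)}e^{-\mu t}\,\tilde p_t(x,y)$ and plugging in the Case~(i) formula for $\tilde p_t$ produces the claimed series, with $f(x)g(y)=\frac{h(x)\tilde w(y)}{h(y)}$ collapsing to the stated $f\cdot g$ in each case and with $\beta_n=\tilde\beta_n+\mu$ factoring into the stated form via a short algebraic identity such as $\frac12 n(n+1+\alpha_++\alpha_-)+\mu=\frac12(n+1+\alpha_+)(n+\alpha_-)$ in Case~(iii).

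The main technical obstacle is the rigorous justification of the $h$-transform in the presence of exit boundaries: one must verify that $h>0$ on the interior of $I$, that the exponential local martingale $M_t=\frac{h(Z_t)}{h(Z_0)}e^{\mu t}\ind_{\{T>t\}}$ is a true martingale, and that the tilted diffusion has both endpoints as entrance (so the tilt effectively ``removes the killing'' and yields a bona-fide Case~(i) process to which the argument of the second paragraph applies). These are delicate but standard one-dimensional-diffusion considerations; once they are in place, the reduction to Case~(i), the extraction of the series, and its termwise convergence reduce to algebraic manipulation together with (\ref{Jacobi-bound-general}).
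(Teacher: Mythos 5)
The paper does not actually prove this proposition: it explicitly omits the argument, citing \cite{tip} (Appendix B and the remark after Corollary 8.5 there) for the special case $\delta_+=\delta_-$ and asserting the general case is similar. So there is no in-paper proof to compare against; your proposal must be judged on its own. Its skeleton is sound and is the standard one: in Case (i) the generator $L$ is, after multiplying by $2$, exactly the Jacobi operator with indices $\alpha_\pm=\delta_\pm/2-1$, the speed measure is $w(y)\,dy$ with $w=(1-y)^{\alpha_+}(1+y)^{\alpha_-}$ finite precisely when $\delta_\pm>0$, and the spectral expansion in the orthogonal basis $\{P_n^{(\alpha_+,\alpha_-)}\}$ gives the stated kernel. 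I checked your $h$-transform bookkeeping in Case (iii): with $h(x)=(1+x)^{1-\delta_-/2}$ one gets $Lh=-\mu h$ with $\mu=\frac{\delta_+}{4}(1-\frac{\delta_-}{2})=\frac12\alpha_-(1+\alpha_+)$, the tilted drift corresponds to $(\tilde\delta_+,\tilde\delta_-)=(\delta_+,4-\delta_-)$, hence $\tilde\alpha_\pm$ equal the stated $\alpha_\pm$, $f(x)g(y)=h(x)\tilde w(y)/h(y)=(1+x)^{\alpha_-}(1-y)^{\alpha_+}$, and $\tilde\beta_n+\mu=\frac12(n+1+\alpha_+)(n+\alpha_-)$; Cases (ii) and (iv) work the same way. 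Your formula $p_t(x,y)=\frac{h(x)}{h(y)}e^{-\mu t}\tilde p_t(x,y)$ is also consistent with the reversibility relation $h\,p_t(x,y)=h\,p_t(y,x)$ recorded in the paper's remark.

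One point deserves more care than your sketch gives it. Your appeal to Feller's classification is phrased as if the boundary behavior were determined by the SDE, but for $0<\delta_\sigma<2$ the endpoint is a \emph{regular} boundary: both reflection (Case (i)) and killing (Cases (ii)--(iv)) are admissible, and indeed the four cases of the proposition overlap in parameter space and are distinguished only by the prescribed $I$ and exit behavior. Consequently, in that regime $L$ has more than one self-adjoint extension on $L^2(w)$, and the heart of the proof is to show that the process specified in each case corresponds to the particular extension whose eigenfunctions are the Jacobi polynomials (Neumann-type/no-flux for Case (i), Dirichlet-in-the-scale-sense for the killed endpoints). This is exactly the content you defer to ``delicate but standard one-dimensional-diffusion considerations,'' together with the true-martingale property of $h(Z_t)e^{\mu t}$ (which follows from boundedness of $h$ on $I$ and $Lh=-\mu h$ over any finite horizon). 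With those identifications supplied, the reduction to Case (i), the termwise convergence via (\ref{Jacobi-bound-general}) and the quadratic growth of $\beta_n$, and the algebra above complete the argument.
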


\begin{Remark}
  Cases (i) and (ii) of the proposition in the special case $\delta_+=\delta_-$ were proven in \cite[Appendix B]{tip}. The general case was mentioned in the remark after \cite[Corollary 8.5]{tip}. We state the proposition here for future reference, but omit the proof since it is similar to the proofs of the special cases.  {In all cases, there is a function $h(x)$ given by $h(x)=(1-x)^{\frac{\delta_+}2-1}(1+x)^{\frac{\delta_-}2-1}$ such that $h(x)p_t(x,y)=h(y) p_t(y,x)$.}
\end{Remark}

In each case of Proposition \ref{Bessel-transition}, we define a probability density $p_\infty$ by
\BGE {\cal Z}=\int_{-1}^1 g(x)dx,\quad p_\infty(x)= \frac{\ind_{[-1,1]}(x) g(x)} {\cal Z}.\label{invariant-def}\EDE
Note that ${\cal Z}\in(0,\infty)$. Using (\ref{P0},\ref{orthogonal},\ref{Jacobi-bound-general})  {and that $f(x) g(x)=w(x)$}, we get
\BGE \int_{-1}^1 p_\infty(x) p_t(x,y) d {x}=p_\infty(y) e^{-\beta_0 t},\quad y\in[-1,1],\quad t>0.\label{invariant=}\EDE
In Case (i), since $\beta_0=0$, $ p_\infty$ is the invariant density of $Z$. In Cases (ii-iv), since $\beta_0>0$, $ p_\infty$ is the quasi-invariant density of $Z$ with decay rate $\beta_0$. Because of (\ref{P0}), the first term ($n=0$) in the series (\ref{Jacobi-series}) is $ {\cal Z} f(x) p_\infty (y)e^{-\beta_0 t}/{\cal A}_0^{(\alpha_+,\alpha_-)}$. Since $\beta_n\ge \beta_1>\beta_0$ for $n\ge 1$, using (\ref{P0},\ref{Jacobi-norm},\ref{Jacobi-bound-general}) and Stirling's formula, we find that there are constants $C,L\in(0,\infty)$ depending only on $\alpha_+,\alpha_-$ such that
\BGE |p_t(x,y)- {\cal Z} f(x)  p_\infty(y)e^{-\beta_0 t}/{\cal A}_0^{(\alpha_+,\alpha_-)}|\le  C f(x)g(y) e^{-\beta_1 t}, \quad\mbox{if } t>L.\label{transition-rate}\EDE
This formula describes the convergence rate of the  transition density to the invariant/quasi-invariant density.
A simpler argument shows that  there are constants $C,L\in(0,\infty)$ depending only on $\alpha_+,\alpha_-$, such that
\BGE |p_t(x,y)|\le C f(x) g(y) e^{-\beta_0 t},\quad \mbox{if }t\ge L.\label{upper-bound-pt}\EDE
In Cases (ii)-(iv), we have $\beta_0>0$. So
\BGE \PP_x[T>t]=\int_{-1}^1 p_t(x,y)dy\le C {\cal Z} f(x) e^{-\beta_0 t}\le C {\cal Z} e^{-\beta_0 t},\quad \mbox{if }t\ge L.\label{T>t}\EDE
This shows that the lifetime of $Z$ in Cases (ii)-(iv) has an exponential tail.

If $X$ is a diffusion process that lies on $[0,1]$ and satisfies the SDE
  \BGE dX=\sqrt{X(1-X)}dB-\frac{\delta_+}4 Xdt-\frac{\delta_-}4(X-1)dt\label{Bessel-SDE-X}\EDE
for some standard Brownian motion $B$, then $Z:=2X-1$ satisfies SDE (\ref{Bessel-SDE}), and we may use Proposition \ref{Bessel-transition} to get a transition density of $X$.

\begin{Lemma}
  Let $B$ be a standard Brownian motion. Let $\delta_-,\delta_+\ge 2$. Suppose a continuous semimartingale $X(t)$, $0\le t<\infty$, lie {s} on $(0,1)$, and satisfies (\ref{Bessel-SDE-X}), and another continuous semimartingale $\til X(t)$, $0\le t<T$, lie {s} on $(0,1]$, and satisfies the SDE
 \BGE d\til X=\sqrt{\til X(1-\til X)} dB-Pdt-\frac{\delta_-}4 (\til X-1) dt.\label{Bessel-SDE-til-X}\EDE
 Assume that    $P\le \frac{\delta_+}4\til X$,  $X(0)\le \til X(0)$, and the $B$ in (\ref{Bessel-SDE-X}) and (\ref{Bessel-SDE-til-X}) are the same Brownian motion. Then a.s.\ $X(t)\le \til X(t)$ for all $t\in [0,T)$.
  \label{dominated}
\end{Lemma}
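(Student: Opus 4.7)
Let $D(t) := X(t) - \til X(t)$ on $[0, T)$. Subtracting the two SDEs,
\[
dD = \bigl(\sqrt{X(1-X)} - \sqrt{\til X(1-\til X)}\bigr) dB + \Bigl(-\frac{\delta_+}{4} X + P - \frac{\delta_-}{4} D\Bigr) dt.
\]
The hypothesis $P \le \frac{\delta_+}{4} \til X$ yields the drift bound
\[
-\frac{\delta_+}{4} X + P - \frac{\delta_-}{4} D \le -\frac{\delta_+}{4}(X - \til X) - \frac{\delta_-}{4} D = -\frac{\delta_+ + \delta_-}{4}\, D,
\]
which is nonpositive on $\{D > 0\}$. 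The diffusion coefficient $\sigma(u) := \sqrt{u(1-u)}$ satisfies
\[
|\sigma(x) - \sigma(y)|^2 \le |x(1-x) - y(1-y)| = |x-y|\cdot|1-x-y| \le |x-y|
\]
on $[0,1]$; this is exactly the $\frac12$-H\"older regularity required for the Yamada--Watanabe argument, since $\rho(u) := u$ satisfies $\int_{0^+} du/\rho(u) = \infty$.

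The plan is to run the Yamada--Watanabe comparison directly. Fix $t_0 \in [0, T)$; it suffices to prove $D(t_0) \le 0$ almost surely, after which continuity plus the arbitrariness of $t_0$ gives the pathwise statement. Choose a decreasing sequence $1 = a_0 > a_1 > a_2 > \cdots \downarrow 0$ with $\int_{a_n}^{a_{n-1}} du/u = n$, and build smooth $\psi_n \colon \R \to [0, \infty)$ with $\psi_n \equiv 0$ on $(-\infty,0]$, $\psi_n(x) \nearrow x^+$ pointwise, $0 \le \psi_n' \le \ind_{(0, \infty)}$, and $\psi_n''$ supported in $(a_n, a_{n-1})$ with $0 \le \psi_n''(x) \le \frac{2}{n x}$. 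Applying It\^o's formula to $\psi_n(D(t))$ and taking expectations on $[0,t_0]$, the martingale part vanishes (its integrand is bounded, as $\sigma \le \frac12$ and $\psi_n' \le 1$); the quadratic-variation term is bounded by
\[
\frac{1}{2}\, \EEE\!\int_0^{t_0}\! \psi_n''(D(s))\,\bigl(\sigma(X(s))-\sigma(\til X(s))\bigr)^2 ds \;\le\; \frac{1}{2}\, \EEE\!\int_0^{t_0}\! \frac{2}{n\, D(s)}\,\ind_{\{D(s)\in (a_n,a_{n-1})\}}\, D(s)\, ds \;\le\; \frac{t_0}{n};
\]
and the drift term is nonpositive, because $\psi_n'(D(s))$ vanishes on $\{D(s) \le 0\}$ while on $\{D(s) > 0\}$ the drift of $D$ is at most $-\frac{\delta_+ + \delta_-}{4} D(s) \le 0$. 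Together with $\psi_n(D(0)) = 0$ (from $D(0) \le 0$), this gives $\EEE[\psi_n(D(t_0))] \le t_0/n$. Letting $n \to \infty$ and applying monotone convergence forces $\EEE[D(t_0)^+] = 0$, so $D(t_0) \le 0$ a.s.

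The only genuine obstacle is the failure of Lipschitz regularity of $\sigma$ at $\{0,1\}$, which is exactly what the $\psi_n$ approximation is engineered to handle. Once the crucial pointwise drift inequality $-\frac{\delta_+}{4} X + P \le -\frac{\delta_+}{4} D$ is extracted from $P \le \frac{\delta_+}{4} \til X$, the rest is a standard application of the Yamada--Watanabe technique; the assumptions $\delta_\pm \ge 2$ play no role beyond ensuring that the two SDEs are well-posed on the stated state spaces.
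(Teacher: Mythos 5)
Your proof is correct in substance, but it takes a genuinely different route from the paper. The paper changes variables via $f(x)=\arccos(1-2x)$ so that both processes acquire constant diffusion coefficient $1$, and then runs a pathwise drift comparison on $\Theta=f(X)$ and $\til\Theta=f(\til X)$; the price is that $f$ fails to be $C^2$ at $x=1$, where $\til X$ is allowed to live, which forces the somewhat laborious ``$\til\Theta$ is nice on random intervals'' argument occupying most of the paper's proof. You instead apply the Yamada--Watanabe/Ikeda--Watanabe comparison technique directly to $D=X-\til X$: the key drift inequality $-\frac{\delta_+}{4}X+P-\frac{\delta_-}{4}D\le-\frac{\delta_++\delta_-}{4}D$ is exactly right, the $\frac12$-H\"older bound $|\sigma(x)-\sigma(y)|^2\le|x-y|$ on $[0,1]$ holds including at the endpoint $1$, and the $\psi_n$-regularization then gives $\EE[\psi_n(D(\cdot))]\le t/n$ pathwise-term by term (note that you do not even need integrability of the drift term, since $\psi_n'(D)\cdot(\mbox{drift of }D)\le 0$ pointwise and the quadratic-variation integrand is bounded by $2/n$). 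Your route entirely sidesteps the boundary difficulty at $1$ and is arguably cleaner; indeed the lemma is essentially a special case of the standard comparison theorem with one Lipschitz drift (e.g.\ Revuz--Yor IX.3.7), since the drift of $X$ is the Lipschitz function $b(x)=-\frac{\delta_+}{4}x-\frac{\delta_-}{4}(x-1)$ and the drift of $\til X$ dominates $b(\til X)$. What the paper's approach buys is a self-contained pathwise argument that reappears elsewhere in the author's work on radial Bessel processes. One small point to tighten: $T$ is a random lifetime, so ``fix $t_0\in[0,T)$'' is not quite well-posed; you should localize, e.g.\ run the It\^o computation on $[0,t_0\wedge\tau_k]$ for stopping times $\tau_k\uparrow T$ with $t_0$ deterministic, conclude $D(t_0\wedge\tau_k)\le 0$ a.s.\ for each $k$ and each rational $t_0$, and finish by continuity. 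Also, your closing remark is accurate: your argument only uses $\delta_\pm\ge 0$, whereas the paper's transformation genuinely exploits $\delta_-\ge 2$ (through the sign of $\delta_--1$ in the $\cot_2$ comparison).
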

\begin{proof}
  Let $f(x)=\arccos(1-2x)$, which is strictly increasing and maps $[0,1]$ onto $[0,\pi]$. Let $\Theta=f(X)\in (0,\pi)$ and $\til \Theta=f(\til X)\in (0,\pi]$.
   It suffices to show that a.s.\ $\Theta(t)\le \til \Theta(t)$ for all $t\in[ 0,T)$. Since $f$ is $C^2$ on $(0,1)$,   by It\^o's formula (cf.\ \cite{RY}) $\Theta$ satisfies the following SDE on $[0,\infty)$:
  $$d\Theta=dB+\frac{\delta_--1}4\cot_2(\Theta)dt-\frac{\delta_+-1}4 \tan_2(\Theta)dt,$$
  where $\cot_2:=\cot(\cdot/2)$ and $\tan_2:=\tan(\cdot/2)$. Thus,  $\Theta-B$ is $C^1$ with derivative  $\frac{\delta_--1}4\cot_2(\Theta)-\frac{\delta_+-1}4 \tan_2(\Theta)$.

  The above argument does not apply to $\til\Theta$ since $f$ is not $C^2$ at   $1$. Let $$\Psi=\ind_{\til X\in (0,1)}\cdot \Big(\frac{P}{\sqrt{\til X(1-\til X)}}-\frac 14 \tan_2(\til\Theta)\Big).$$
Since  $P\le \frac{\delta_+}4\til X$, we have $\Psi\le \frac{\delta_+-1 }4\tan_2(\til \Theta)$. For a (random) interval $I$, we say that $\til \Theta$ is {\it nice} on $I$ if $\til\Theta-B$ is $C^1$ on $I$ with derivative $\frac{\delta_--1}4\cot_2(\til\Theta)-\Psi$.
   Suppose $\tau$ is any finite stopping time. Let $T_1(\tau)=\inf(\{t\ge \tau: \til X(t)=1\}\cup \{\infty\})$. Then $\til X\in(0,1)$ on $[\tau,T_1(\tau))$. Since $f$ is $C^2$ on $(0,1)$, we may apply It\^o's formula to conclude that $\til\Theta$ satisfies the  SDE
  $$d\til\Theta=dB+\frac{\delta_--1}4\cot_2(\til\Theta)dt -\Psi dt$$
on $[\tau,T_1(\tau))$. Thus, a.s.\ $\til\Theta$ is nice on $[\tau,T_1(\tau))$.

  Let $\sigma_1$ and $\sigma_2$ be two random times taking values in $[0,\infty]$, which may not be stopping times. Let $E$ be any event on which $\sigma_1<\sigma_2$ and $\til X$ stays in $(0,1)$ on $(\sigma_1,\sigma_2)$. We claim that, a.s.\ on the event $E$, $\til\Theta$ is nice on $(\sigma_1,\sigma_2)$. First,  the claim holds if on the event $E$, $\sigma_1 $ and $\sigma_2$ are deterministic times $t_1<t_2$, and $\til X(t_1)\in (0,1)$. To see this, we apply the result of the last paragraph to $\sigma_1=t_1$ and use and the fact that $E\subset\{T_1(t_1)>t_2\}=\{[t_1,t_2]\subset [t_1,T_1(t_1))\}$.
Second,  the claim holds if on the event $E$, $\sigma_1$ and $\sigma_2$ take values in $\Q_+$, and $\til X(\sigma_1)\in (0,1)$. This is true because  for any $p<q\in \Q_+$, a.s.\ on the event $ E\cap \{\sigma_1=p\}\cap \{\sigma_2=q\}$, $\til\Theta$ is nice on $(\sigma_1,\sigma_2)$. Finally, for the general case, we construct two sequences of $\Q_+$-valued random times $(\sigma_1^{(n)})$ and  $(\sigma_2^{(n)})$ such that  $\sigma_1^{(n)}\downarrow \sigma_1$ and $\sigma_2^{(n)}\uparrow \sigma_2$. Let $E_n=E\cap \{\sigma_1^{(n)}<\sigma_2^{(n)}\}$, $n\in\N$. Then $E_n\uparrow E$. We have  {shown} that, for each $n\in\N$, a.s.\ on $E_n$, $\til\Theta$ is nice on  $(\sigma_1^{(n)},\sigma_2^{(n)})$.
 Since  $ (\sigma_1^{(n)},\sigma_2^{(n)})\uparrow (\sigma_1,\sigma_2)$ and $E_n\uparrow E$, the claim is proved.

Since $X(0)\le \til X(0)$, $\Theta(0)\le \til\Theta(0)$.   Now we prove that a.s.\ $\Theta \le \til \Theta $ on $[0,T)$. Let $$E=\{\exists t\in [0,T):\Theta(t)>\til\Theta(t)\},\quad E_1=\{\exists t\in [0,T):\til\Theta(t)=0\},\quad E_2=E\sem E_1.$$  Define random times $\sigma_1$ and $\sigma_2$  as follows. On $E_1$, let $\sigma_2=\inf\{t\ge 0: \til\Theta(t)=0\}$; on  $E_2$, let $$\sigma_2=\inf\{t\ge 0: \til \Theta(t)-\Theta(t)= \inf \{\til\Theta(s)-\Theta(s):0\le s<T\}/2\};$$  on $E=E_1\cup E_2$, let $\sigma_1=\max\{t\in [0,\sigma_2]: \til\Theta(t)\ge \Theta(t)\}$; and on   $E^c$, let $\sigma_1=0$ and $\sigma_2=\infty$.  Then on the event $E$, $\sigma_1<\sigma_2<\infty$, $\til\Theta(\sigma_1)=\Theta(\sigma_1)$, and $\til\Theta\in (0,\Theta)$ on $(\sigma_1,\sigma_2]$. By the claim, a.s.\ on the event $E$,
  $$(\til\Theta(\sigma_2)-B(\sigma_2))-(\til\Theta(\sigma_1)-B(\sigma_1))=\int_{\sigma_1}^{\sigma_2} \Big(\frac{\delta_--1}4\cot_2(\til\Theta(t))-\Psi(t)\Big)dt.$$
The fact that  $\Theta-B$ is $C^1$ on $[0,\infty)$ with derivative being $\frac{\delta_--1}4\cot_2(\Theta)-\frac{\delta_+-1}4 \tan_2(\Theta)$ implies
  $$(\Theta(\sigma_2)-B(\sigma_2))-(\Theta(\sigma_1)-B(\sigma_1))=\int_{\sigma_1}^{\sigma_2} \Big(\frac{\delta_--1}4\cot_2(\Theta(t))-\frac{\delta_+-1}4 \tan_2(\Theta(t))\Big)dt.$$
  Combining the above two displayed formulas and using that $\til\Theta(\sigma_1)=\Theta(\sigma_1)$, we get a.s.\ on $E$,
$$ \til\Theta(\sigma_2)-\Theta(\sigma_2)=\int_{\sigma_1}^{\sigma_2} \Big(\frac{\delta_--1}4(\cot_2(\til\Theta(t))-\cot_2(\Theta(t)))-\Psi(t)+\frac{\delta_+-1}4 \tan_2(\Theta(t))\Big)dt>0.$$
Here we use the facts that $0<\til\Theta<\Theta<\pi$ on  $(\sigma_1,\sigma_2)$, $\delta_->1$, and $\Psi\le \frac{\delta_+-1}4\tan_2(\til \Theta)$. However, since $\til\Theta(\sigma_2)<\Theta(\sigma_2)$,  $E$ must be a null event. So  a.s.\ $\Theta \le \til \Theta $ on $[0,T)$.
  \end{proof}

\section{One-point Estimate} \label{Section 3}
Let $\kappa>0$, $m\in\N$, $\ulin\rho=( \rho_1,\dots,\rho_m) \in\R^n $,  $w\in\R$, and $\ulin v=(v_1,\dots,v_m)\in\R_w^m$ (Definition \ref{Def-Rw}). Assume that $v_m\le v_{m-1}\le \cdots \le v_1\le w^-$.  Let $\eta$ be an SLE$_\kappa(\ulin\rho)$ curve in $\HH$ started from $w$ with force points $\ulin v$.
Let $\F$ be the complete filtration generated by $\eta$.
Let $W$ be the driving process for $\eta$, and let $V_j$ be the force point process   started from $v_j$, $1\le j\le m$. We write $\ulin V$ for $(V_1,\dots,V_m)$.
Then $W\ge V_1\ge\cdots\ge V_m$, and they satisfy the $\F$-adapted SDE/ODE (\ref{dW},\ref{dVU}).
Let $g_t$ and $K_t$ be the chordal Loewner maps and hulls generated by $\eta$.
For $u\in\R\sem \{w\}$, let
\BGE U(t)=g_t(u),\quad D_u(t)=g_t'(u),\quad 0\le t<\tau_u^*.\label{U-Du}\EDE Then $U$ and $D_u$ satisfy the following ODE on $[0,\tau_u^*)$:
\BGE dU=\frac{2 dt}{U-W},\quad \frac{d D_u}{D_u}=\frac{-2dt}{(U-W)^2}.\label{dUD}\EDE
The purpose of this section is to prove the following theorem.

\begin{Theorem}
Suppose that $\ulin\rho$ satisfies that $\rho_\Sigma:=\sum_{k=1}^m \rho_k >\frac\kappa 2-4$ and $\sigma_j:=2+\sum_{k=1}^j \rho_k\in(0, \sigma^*]$, $0\le j\le m-1$, for some $\sigma^*\ge 2$.  Then there is a constant $C\in(0,\infty)$ depending only on $\kappa,\rho_\Sigma,\sigma^*$ such that the following holds. Suppose $v_m<w^-$. Let  $u=v_m$ and  $v_0=w^-$.  Let $r\in (0,|w-u|)$ and $j_0=\max \{0\le j\le m: v_j\ge u+r\} $.  Let $\gamma=\frac{2\rho_\Sigma+8-\kappa}{2\kappa}$.Then
    \BGE \PP[\dist(\eta,u)\le r]\le C  \Big(\frac{r}{|v_{j_0}-u|}\Big)^{ {\gamma\sigma_{j_0}}} \prod_{j=0}^{j_0-1}\Big(\frac{|v_{j+1}-u|}{|v_j-u|}\Big) ^{{\gamma\sigma_{j}}}.\label{est1}\EDE
\label{Thm-est1}
\end{Theorem}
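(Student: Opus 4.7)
The bound (\ref{est1}) telescopes across the scales $|v_0-u|>|v_1-u|>\cdots>|v_{j_0}-u|>r$, suggesting a stage-by-stage argument: run the curve and, at each stage, apply the domain Markov property (DMP) of SLE$_\kappa(\ulin\rho)$ from Section~\ref{Section 2.2}. The heuristic behind the exponent $\gamma\sigma_j$ is that once the curve has approached $u$ to within scale $|v_j-u|$ but not yet $|v_{j+1}-u|$, the force points $v_{j+1},\ldots,v_m$ are so close to $u$ that they act effectively as a single force point located at $u$ with total value $\rho_{j+1}+\cdots+\rho_m$, while $v_1,\ldots,v_j$ remain at their original positions; the resulting effective configuration's boundary Green's function exponent at $u$ is exactly $\gamma\sigma_j$, with $\gamma=(2\rho_\Sigma+8-\kappa)/(2\kappa)$ depending on the \emph{total} force and $\sigma_j=2+\sum_{k=1}^j\rho_k$ depending on the \emph{partial} sum of the ``far'' forces.

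First I would reduce to a conformal-radius version via Koebe's 1/4 theorem: writing $V_0:=W$ and $Y_j(t):=V_j(t)-U(t)$, one has $\dist(\eta(t),u)\asymp Y_0(t)/D_u(t)$ when the curve is near $u$, so bounding $\PP[\dist(\eta,u)\leq r]$ reduces to bounding $\PP[\tau_r<\infty]$ for $\tau_r:=\inf\{t:Y_0(t)/D_u(t)\leq C_0 r\}$ with some universal $C_0$. Introduce the scale-adapted stopping times $T_j:=\inf\{t:Y_0(t)/D_u(t)\leq 2|v_j-u|\}$ for $j=0,\ldots,j_0$, with $T_{j_0+1}:=\tau_r$. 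The plan is to prove the one-stage bound
$$\PP[T_{j+1}<\infty\mid \F_{T_j}]\leq C\bigl(|v_{j+1}-u|/|v_j-u|\bigr)^{\gamma\sigma_j}\quad \mbox{on }\{T_j<\infty\}$$
(with $|v_{j_0+1}-u|:=r$ in the final stage), and then iterate via the DMP to deduce (\ref{est1}). Each single-stage bound should come from constructing a non-negative local supermartingale of the form $M^{(j)}(t)=D_u(t)^{\gamma\sigma_j}h_j(Y_0(t),\ldots,Y_m(t))$, where $h_j$ is a homogeneous function whose exponents on the variables and on the gaps $Y_{k-1}-Y_k$ are determined by the vanishing-drift condition from It\^o's formula applied to (\ref{dW}), (\ref{dVU}), (\ref{dUD}). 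Optional stopping between $T_j$ and $T_{j+1}$, combined with a uniform lower bound $h_j\geq c>0$ at time $T_{j+1}$ on $\{T_{j+1}<\infty\}$ (coming from Proposition~\ref{basic-chordal}(iii) and Proposition~\ref{extrem-prop}, which together force $Y_k(T_{j+1})\asymp|v_k-u|D_u(T_{j+1})$ for $k\leq j$), then converts the martingale identity into the desired one-stage bound.

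The main obstacle is the one-stage martingale estimate. Three technical issues arise. First, the exponents in $h_j$ must render $h_j$ non-negative on the relevant configuration region; this is why the sign hypotheses $\sigma_j>0$ and $\rho_\Sigma>\kappa/2-4$ (the latter giving $\gamma>0$) are built into the theorem. Second, $M^{(j)}$ is only a genuine martingale after auxiliary cut-offs at configurations where a Bessel-like ratio such as $(V_k-U)/(W-U)$ drifts toward the boundary $\{0,1\}$; the contributions from these cut-offs are absorbed into the constant using the Bessel transition-density estimates of Proposition~\ref{Bessel-transition} (Cases iii and iv) together with the pathwise comparison Lemma~\ref{dominated}, which is precisely why Section~\ref{Section 2.4} has been recorded. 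Third, the constant $C$ must depend only on $\kappa,\rho_\Sigma,\sigma^*$ and not on $m$ or on the particular configuration $\ulin v$; this uniformity is what is bought by the hypothesis $\sigma_j\leq\sigma^*$.
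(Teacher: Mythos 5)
Your outline assembles many of the right ingredients (reduction to conformal radius via Koebe, a martingale whose exponents are fixed by a zero-drift computation, comparison with the radial Bessel processes of Section \ref{Section 2.4}), but the core architecture --- proving a one-stage bound $\PP[T_{j+1}<\infty\mid\F_{T_j}]\le C\,(|v_{j+1}-u|/|v_j-u|)^{\gamma\sigma_j}$ and iterating it through the DMP --- has a genuine flaw: each stage contributes its own multiplicative constant, so the iteration yields $C^{j_0+1}\prod_j(\cdots)^{\gamma\sigma_j}$, and $j_0$ can be as large as $m$. Since the ratios $|v_{j+1}-u|/|v_j-u|$ may all be close to $1$ (clustered force points), there is no geometric decay to absorb $C^{j_0+1}$, and the hypothesis $\sigma_j\le\sigma^*$ does nothing to repair this: the resulting constant depends on $m$, which the theorem forbids. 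The per-stage constants are unavoidable in your scheme because the supermartingale values at $T_j$ and $T_{j+1}$ are only comparable to the target quantities up to Koebe constants, and these constants do not telescope. (A smaller but related error: at time $T_{j+1}$ one has $Y_k(T_{j+1})\asymp |v_{j+1}-u|\,D_u(T_{j+1})$ for all $k\le j+1$, by (\ref{Upj}), not $Y_k(T_{j+1})\asymp|v_k-u|\,D_u(T_{j+1})$ as you claim; the latter would feed a wrong normalization into your lower bound for $h_j$ at the stopping time.)

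The paper avoids the iteration entirely by running one global argument: a single time change $\Phi$ under which the conformal radius $\Up_0$ decays exactly exponentially, a single local martingale $\ha M=\ha E^{-\gamma\sigma_{m-1}}\ha R^{2\gamma}\prod_j\ha H_j^{-\gamma\rho_j}$ involving all force points at once, a single Girsanov tilt by $\ha M$, and a single comparison (Lemma \ref{dominated}) of the tilted ratio $\ha R$ with the Case (i) radial Bessel process. The telescoping product in (\ref{est1}) then falls out of evaluating $\ha M(0)/\ha M(L)$ at the terminal scale via (\ref{Koebe-H-hat}) and the monotonicity of the ratios $\ha H_{j,j+1}$, so only one round of Koebe constants ever enters. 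Note also that a pure optional-stopping/supermartingale bound cannot close even for a single stage, because $\ha M$ at the terminal time contains the factor $\ha R(L)^{2\gamma}$ and $\ha R(L)$ can be arbitrarily small; this is exactly why the change of measure and the transition-density bound (\ref{ptupper}) (used to control $\til\EE_n[\ha R(L)^{-2\gamma}]$) are needed, rather than serving merely to handle ``auxiliary cut-offs'' as your sketch suggests.
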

 {\begin{Remark}
  The theorem gives an upper bound of the probability that an SLE$_\kappa(\ulin\rho)$ curve $\eta$ gets near a boundary point $u$. This is usually the first step to prove the existence of $1$-point Green's function. The $u$ is assumed to be $v_m$, i.e., the force point farthest from the initial point. But we can also use the theorem to treat the case that   $u\in (-\infty,v_m)$ by including $u$ as a force point of $\eta$ with force value $0$ without changing the law of $\eta$. To prove the theorem, we replace the Euclidean distance by some conformal radius using Koebe's $1/4$ theorem, reparametrize the curve such that the conformal radius decreases exponentially, obtain a diffusion process using the force point processes, and finally compare the process with some radial Bessel process to bound the probability that its lifetime is greater than a given number. \label{Thm-est1-rem}
\end{Remark}}
\begin{proof}[ {Proof of Theorem \ref{Thm-est1}}]
 Throughout the proof, a constant is a number in $(0,\infty)$ depending only on $\kappa,\rho_\Sigma,\sigma^*$, whose value may vary. We write $X\lesssim Y$ if there is a constant $C$ such that $X\le CY$, and write $X\asymp Y$ if $X\lesssim Y$ and $Y\lesssim X$.

By adding force points with force value $0$, we may assume that there is $j_0\in \{1,\dots,m-1\}$ such that $v_{j_0}-u=r$, and  that $v_0=w^-$ is a force point. Let $V_0(t)$ be the force point process started from $v_0$, which also satisfies (\ref{dVU}). By merging force points, we assume that $v_{m-1}>v_m$. Let $U$ and $D_u$ be defined by (\ref{U-Du}).
Then $W\ge V_0\ge \cdots \ge V_{m-1}>V_m=U$ on $[0,\tau_u^*)$.  Since the part of $\eta$ after $\tau_u^*$ is disconnected from $u$ by $\eta[0,\tau_u^*)$, $\dist(\eta,u)=\dist(\eta[0,\tau_u^*),u)$.
So (\ref{est1}) becomes
  \BGE \PP[\dist(\eta[0,\tau_u^*),u)\le r]\lesssim \prod_{j=0}^{j_0-1}\Big(\frac{|v_{j+1}-u|}{|v_j-u|}\Big) ^{\gamma \sigma_j}.\label{est1''}\EDE

For $0\le t<\tau_u^*$, $u$ lies in  $\C\sem (K_t^{\doub}\cup [w,\infty))$; $g_t$ maps $\C\sem (K_t^{\doub}\cup [w,\infty))$  conformally onto $\C\sem [V_0(t),\infty)$, and sends $u$ to $U(t)$.
Define $ H_j$ and $Q_j$  on $[0,\tau_u^*)$ by
\BGE   H_j=\frac{V_j-U}{V_0-U},\quad Q_j=\frac{W-V_0}{W-V_j}.\label{DHQ}\EDE
Here if $\tau^*_{v_j}$ happens before $\tau^*_u$, then we define $Q_j$ to be constant $1$ on $[\tau^*_{v_j},\tau^*_u)$. Then $0=H_m<H_{m-1}\le \cdots \le H_0=1$ and $0\le Q_m\le\cdots\le Q_0=1$. By (\ref{dVU}), $H_j$ satisfies the ODE
\BGE \frac{d H_j}{H_j}=\frac{2(V_0-V_j)dt}{(W-U)(W-V_0)(W-V_j)}.\quad 0\le j\le m-1.\label{dH}\EDE
Define  $R=1-Q_m=\frac{V_0 -U }{W -U }\in(0,1]$ on  $[0,\tau_u^*)$.
Then $R$ starts from $1$ (because $V_0(0)=W(0)=w$), and
by (\ref{dW},\ref{dVU}) satisfies the SDE:
\BGE dR=R\Big[-\frac{\sqrt\kappa dB}{W-U}+\frac{(\kappa-2-\rho_m)dt}{(W-U)^2} -\frac{2dt}{(W-U)(W-V_{0})} -\sum_{j=0}^{m-1} \frac{\rho_j dt }{(W-U)(W-V_j)} \Big] .\label{dR}\EDE
 Define functions $\Up_j =\frac{|V_j -U |}{D_u }$ on $[0,\tau_u^*)$.
 By Koebe $1/4$ theorem,
\BGE \Up_j(t)\asymp |v_j-u|\wedge \dist(\eta[0,t],u).\label{Upj}\EDE
 Define $\Phi =-\frac \kappa2 \ln\Big(\frac{\Upsilon_0 }{|w-u|}\Big)$ on $[0,\tau_u^*)$.
Then $\Phi$ star{  t}s from $0$, and by (\ref{dVU},\ref{dUD})  satisfies the  ODE
\BGE d\Phi(t)=\frac{\kappa Rdt}{(W-U)(W-V_{0})}>0\quad\mbox{on }[0,\tau_u^*).\label{dPhi}\EDE
So $\Phi$ is  strictly increasing on $[0,\tau_u^*)$.
Since $v_0=w=\eta(0)$, by (\ref{Upj}) we have \BGE \dist(u,\eta[0,t])  \asymp \Up_0(t)=|w-u| e^{-\frac 2\kappa \Phi(t)}, \quad 0\le t<\tau_u^*\label{dist(u,eta)}\EDE

Let $\ha \tau_u^*=\sup \Phi[0,\tau_u^*)$.
 Define $\ha W,\ha U,\ha R,\ha D_u, \ha V_j,\ha Q_j,\ha H_j$  on $[0,\ha \tau_u^*)$, to be respectively the time-changes of $W,U, R, D_u, V_j,Q_j,H_j$ via $\Phi$. For example, $\ha R(t)=R(\Phi^{-1}(t))$, $0\le t<\ha \tau_u^*$. By (\ref{dH},\ref{dPhi}), $\ha H_j$ satisfies the following ODE on $[0,\ha \tau_u^*)$:
\BGE \frac{d\ha H_j}{\ha H_j}=\frac 2\kappa \cdot \frac 1{\ha R}\cdot \frac{\ha V_0-\ha V_j}{\ha W-\ha V_j}dt=\frac 2\kappa\cdot \frac{1-\ha Q_j}{\ha R}dt,\quad 0\le j\le m-1. \label{dhaH}\EDE
By (\ref{Upj},\ref{dist(u,eta)}), we have
\BGE \ha H_j(t)  =\frac{\Up_j(\Phi^{-1}(t))}{\Up_0(\Phi^{-1}(t))}\asymp \frac{|v_j-u|\wedge (|w-u|e^{-\frac 2\kappa t})}{|w-u| e^{-\frac 2\kappa t}}.
\label{Koebe-H-hat}\EDE
From (\ref{dR},\ref{dPhi}), for some standard Brownian motion $\ha B$, $\ha R$ satisfies the following SDE:
 \BGE d\ha R=\sqrt{\ha R(1-\ha R)}d\ha B+\frac{\kappa-2-\rho_m}{\kappa}(1-\ha R)dt -\frac 2\kappa dt-\sum_{j=0}^{m-1}  \frac{\rho_j}\kappa \ha Q_j dt. \label{dhaR}\EDE
By It\^o's formula, for any $c\in\R$,
$$\frac{d\ha R^c}{\ha R^c}=c \sqrt{\frac{1-\ha R}{\ha R}}d\ha B+\Big(c\frac{\kappa -2-\rho_m}\kappa+\frac {c(c-1)}2 \Big)\frac{1-\ha R}{\ha R} dt-\frac{2c}{\kappa \ha R} dt  - \frac c{\ha R}\sum_{j=0}^{m-1}  \frac{\rho_j}\kappa \ha Q_j dt$$
$$=c \sqrt{\frac{1-\ha R}{\ha R}}d\ha B+\Big(c\frac{\kappa -4-\rho_\Sigma}\kappa+\frac {c(c-1)}2 \Big)\frac{dt}{\ha R} -\Big(c\frac{\kappa -2-\rho_m}\kappa+\frac {c(c-1)}2 \Big)dt + c\sum_{j=0}^{m-1}  \frac{\rho_j}\kappa \frac{1- \ha Q_j}{\ha R}dt.$$
Let $\ha E(t)= e^{-\frac 2 \kappa t}$.
By taking $c=2\gamma$ in the above SDE and using (\ref{dhaH}), we find that
$$\ha M:=\ha E^{-\gamma\sigma_{m-1}} \ha R^{2\gamma}\prod_{j=0}^{m-1} \ha H_j^{-\gamma\rho_j}$$ is a positive local martingale on $[0,\ha\tau^*_u)$  and satisfies the SDE
\BGE \frac{d \ha M}{\ha M}
=\frac{2\rho_\Sigma+8-\kappa}{\kappa} \frac{1-\ha R}{\sqrt{\ha R(1-\ha R)}} d\ha B.\label{dhaM}\EDE
Let
$\ha H_{j,j+1}=\frac{\ha H_{j+1}}{\ha H_j}=\frac{\ha V_{j+1}-\ha U}{\ha V_j-\ha U}\in(0,1]$, $0\le j\le m-2$. By (\ref{dhaH}), each $\ha H_{j,j+1}$ satisfies $\frac{d\ha H_{j,j+1}}{\ha H_{j,j+1}}=\frac 2\kappa \frac{\ha Q_j-\ha Q_{j+1}}{\ha R}\ge 0$, and so is increasing.
Since $\sigma_j-\sigma_{j-1}=\rho_j$ and $\ha H_0=1$, we have
\BGE \ha M=\ha E^{-\gamma\sigma_{m-1}} \ha R^{2\gamma}  \prod_{j=0}^{m-2}\ha H_{j,j+1}^{\gamma\sigma_j-\gamma\sigma_{m-1}}. \label{haM-re}\EDE

We will apply Girsanov theorem. Let $\PP$ denote the current measure. Fix  $n\in\N$.  Let $\tau_n$ be the maximum element in $[0,\ha \tau_u^*]$ such that $|\ln(\frac{\ha M(t)}{\ha M(0)})|\le n$ on $[0,\tau_n)$. Then $\ha M(\cdot\wedge \tau_n)$ is a uniformly bounded martingale on $[0,\infty)$. Here when $\tau_n=\ha \tau_u^*$ and $t\ge \tau_n$, $\ha M(t\wedge \tau_n)$ is understood as $\lim_{t\uparrow \ha \tau_u^*} \ha M(t)$, which a.s. converges on the event $\{\tau_n=\ha \tau_u^*\}$. So we get $\EE[\frac{\ha M(\tau_n)}{\ha M(0)}]=1$. Now we define a new probability measure $\til\PP_n$ by $\frac{d\til\PP_n}{d\PP}=\frac{\ha M(\tau_n)}{\ha M(0)}$.
By Girsanov theorem and (\ref{dhaR},\ref{dhaM}), under the new measure $\til\PP_n$, $\ha {  R}$ satisfies the following SDE on $[0,\tau_n)$:
$$ d\ha R=\sqrt{\ha R(1-\ha R)}d\til B -\frac 2\kappa dt-\sum_{j=0}^{m-1}  \frac{\rho_j}\kappa \ha Q_j dt-\frac{2\rho_\Sigma +6-\rho_m}{\kappa}(\ha R-1)dt, $$
where $\til B$ is a  standard Brownian motion under $\til\PP_n$. Since $\sigma_j-\sigma_{j-1}=\rho_j$, $\sigma_0=2$, $\ha Q_0=1$, and $\ha Q_{m-1}=1-\ha R$, we may rewrite the above SDE as
 \BGE d\ha R=\sqrt{\ha R(1-\ha R)}d\til B -\sum_{j=0}^{m-2}  \frac{\sigma_j}\kappa (\ha Q_j-\ha Q_{j+1}) dt-\frac{\rho_\Sigma +4}{\kappa}(\ha R-1)dt. \label{dtilR}\EDE
Since $\ha Q_j\ge \ha Q_{j+1}$ and $\sigma_j\le \sigma^*$,  we get
$\sum_{j=0}^{m-2}  \frac{\sigma_j}\kappa (\ha Q_j-\ha Q_{j+1}) \le \frac{\sigma^*}\kappa (1-\ha Q_{m-1})\le  \Big(\frac{\sigma^*}\kappa\vee \frac 12\Big) \ha R$.
Thus, under $\til\PP$, $\ha R$ up to $\tau_n$ satisfies the property of $\til X$ in  Lemma \ref{dominated} with $\delta_-:=\frac 4\kappa(\rho_\Sigma +4)>2$ and $\delta_+:=\frac 4\kappa\sigma^* \vee 2\ge 2$. By Lemma \ref{dominated}, under $\til\PP_N$, up to $\tau_n$, $\ha R$ stochastically dominates the process $X$, which starts from $1$, and satisfies the SDE (\ref{Bessel-SDE-X}).
 By Proposition \ref{Bessel-transition} (i), $X$ has a transition density $p_t(x,y)$, which by (\ref{upper-bound-pt}) satisfies
 \BGE p_t(x,y)\lesssim y^{\frac{\delta_-}2-1}=y^{2\gamma}.\label{ptupper}\EDE

 Let $L>0$ be such that $|w-u|e^{-\frac 2\kappa L}=\frac r4$. By (\ref{dist(u,eta)}), we get (\ref{est1''}) if
 \BGE \PP[\ha \tau_u^*>L]\lesssim \prod_{j=0}^{j_0-1} \Big(\frac{|v_{j+1}-u|}{|v_j-u|}\Big)^{\gamma \sigma_j}. \label{est1'''}\EDE
 Since $\frac{d\til\PP_n}{d\PP}=\frac{\ha M(\tau_n)}{\ha M(0)}$ and $\ha M(\cdot\wedge \tau_n)$ is a uniformly bounded martingale under $\PP$, we get
$$ \PP[L<\tau_n\wedge \ha \tau_u^*]=\til\EE_n [\ind_{\{L<\tau_n\wedge \ha \tau_u^*\}}  {\ha M(0)}/{\ha M(L\wedge \tau_n\wedge L)}  ]=\til\EE_n [\ind_{\{L<\tau_n\wedge \ha \tau_u^*\}} {\ha M(0)}/{\ha M(L )} ].$$ Define  $\ha N(t)=\prod_{j=0}^{m-2}\Big( \frac{\ha H_{j,j+1}(t)}{\ha H_{j,j+1}(0)} \Big)^{ \gamma\sigma_{m-1}-\gamma\sigma_j} $.
Since $\ha E(L)=e^{-\frac 2\kappa L}\asymp \frac r{|w-u|}$, by (\ref{haM-re}),
$$  \frac{\ha M(0)}{\ha M(L )}=\ha E(L)^{\gamma\sigma_{m-1}} \ha R(L)^{-2 \gamma} \ha N(L)\asymp \Big(\frac r {|w-u|}\Big) ^{\gamma\sigma_{m-1}} \ha R(L)^{-2 \gamma} \ha N(L).$$
Since $0<\ha H_{j,j+1}(0)\le \ha H_{j,j+1}(L)\le 1$ and $\sigma^*\ge \sigma_j\ge 0$, by  (\ref{Koebe-H-hat})  and the fact that $|v_{j_0}-u|=r\asymp |w-u|e^{-\frac 2\kappa L}$, we have
$$\ha N(L)\le \prod_{j=0}^{j_0-1}\Big( \frac{1}{\ha H_{j,j+1}(0)}\Big)^{ \gamma\sigma_{m-1}} \cdot \prod_{j=j_0}^{m-2}\Big( \frac{\ha H_{j,j+1}(L)}{\ha H_{j,j+1}(0)}\Big)^{ \gamma\sigma_{m-1}} \cdot \prod_{j=0}^{j_0-1}\Big( \frac{\ha H_{j,j+1}(L)}{\ha H_{j,j+1}(0)}\Big)^{ -\gamma\sigma_j}$$
$$\le \ha H_{m-1}(0)^{-\gamma\sigma_{m-1}}\Big(\frac{\ha H_{m-1}(L)}{\ha H_{j_0}(L)}\Big)^{\gamma\sigma_{m-1}} \ha H_{j_0}(L)^{-\gamma\sigma_*}\prod_{j=0}^{j_0-1} \ha H_{j,j+1}(0)^{\gamma\sigma_j}$$
$$\asymp \Big(\frac{r}{|w-u|}\Big)^{-\gamma\sigma_{m-1}} \prod_{j=0}^{j_0-1} \Big(\frac{|v_{j+1}-u|}{|v_j-u|}\Big)^{\gamma\sigma_j}.$$
Here we used that $\ha H_{m-1}(L)/\ha H_{m-1}(0)\asymp |w-u|/r$ and $\ha H_{j_0}(L)\asymp 1$.
Since $\ha R$ under $\til \PP$ up to $\tau_n$ stochastically dominates the process $X$  with transition density $p_t(x,y)$, and $-2\gamma<0$, by (\ref{ptupper}) we get
$$\til \EE_n\Big[\ind_{\{L<\tau_n\wedge \ha \tau_u^*\}}\ha R(L)^{-2\gamma}\Big] \le  \int_0^1 y^{-2\gamma} p_L(1,y) dy\lesssim 1.$$

By the last four displayed formulas  we get
$ \PP[L<\tau_n\wedge \ha \tau_u^*]\lesssim  \prod_{j=0}^{j_0-1} \Big(\frac{|v_{j+1}-u|}{|v_j-u|}\Big)^{\gamma \sigma_j}$, which implies the desired estimate (\ref{est1'''}) as $n$ tends to $\infty$.
\end{proof}

\section{One-point Green's Function} \label{Section 4}
Let $\kappa,\rho_\Sigma,\sigma^*,\sigma_*\in\R$ satisfy that $\kappa>0$, $\rho_\Sigma>(-2)\vee(\frac\kappa 2-4)$ and $\sigma^*\ge 2\ge \sigma_*>0$.
In this section, a constant depends only on $\kappa,\rho_\Sigma,\sigma^*,\sigma_*$.
We will use the following positive constants:
\BGE \gamma=\frac{2\rho_\Sigma+8-\kappa}{2\kappa},\quad  \alpha=(\rho_\Sigma+2)\gamma,\quad \alpha_*=\sigma_*\gamma,\quad \alpha^*=\sigma^*\gamma.\label{alpha}\EDE
We write $X\lesssim Y$ or $Y\gtrsim X$ if there is a constant $C>0$ such that $X\le C Y$; and write $X\asymp Y$ if $X\lesssim Y$ and $Y\lesssim X$. We write $X=O(Y)$ as $R\to 0^+$ if there are   constants  $C,c>0$ such that when $0<R<c$, we have $X\le C Y$.

Let $m\in\N$ and $\ulin\rho=(\rho_1,\dots,\rho_m)\in\R^m$ satisfy that $\sum_{j=1}^m \rho_j=\rho_\Sigma$ and   $\sigma_j:=2+\sum_{k=1}^j\rho_k\in [\sigma_*,\sigma^*]$, $0\le j\le m-1$. We also define $\sigma_m=2+\rho_\Sigma=2+\sum_{j=1}^m\rho_j$, but do not require $\sigma_m\in[\sigma_*,\sigma^*]$. Define
\BGE \alpha_0=2\gamma,\quad \alpha_j=\rho_j \gamma,\quad 1\le j\le m.\label{alphaj}\EDE
Note that   $\alpha=\sum_{j=0}^m \alpha_j$.
Let $w\in\R$ and $v_m\le \cdots\le v_1\in(-\infty,w)\cup\{ w^-\}\subset\R_w$. We write $\ulin v$ for $(v_1,\dots,v_m)$. Let $\eta$ be an SLE$_\kappa(\ulin\rho)$ curve in $\HH$ started from $w$ with force points $\ulin v$. The assumption that $\sum_{k=1}^j\rho_k=\sigma_j-2\ge \sigma_*-2>-2$, $1\le j\le m-1$, and $\sum_{k=1}^m  \rho_k=\rho_\Sigma>-2$ implies that the continuation threshold is never met, and the lifetime  of $\eta$ is $\infty$.

Define  $G:\{(w,\ulin v,u)\in\R^{m+2}:w\ge v_1\ge \cdots\ge v_m>u\}\to (0,\infty)$ by
 \BGE G(w,\ulin v;u)= |w-u|^{-\alpha_0} \prod_{j=1}^m |v_j-u|^{-\alpha_j}\label{G-univ}\EDE
 \BGE  = \Big(\frac{|v_1-u|}{|w-u|}\Big)^{\sigma_0\gamma}\cdot \Big(\frac{|v_2-u|}{|v_1-u|}\Big)^{\sigma_1\gamma} \cdots \Big(\frac{|v_m-u|}{|v_{m-1}-u|}\Big)^{\sigma_{m-1}\gamma}\cdot\Big(\frac 1{|v_m-u|}\Big)^{\sigma_m\gamma} .\label{G-rewrite}\EDE
For  $u\in\R$, $r>0$, and $\FF\in\{\C,\R\}$, let $B_{\FF}(u,r)=\{z\in\FF:|z-u|\le r\}$, and $\tau^u_{r;\FF}=\inf(\{t: \eta(t)\in B_{\FF}(u,r)\}\cup \{\infty\})$. Then $\eta\cap B_{\FF}(u,r)\ne\emptyset$ iff $\tau^u_{r;\FF}<\infty$. We also define $\tau^u_r=\tau^u_{r;\C}$.

The purpose of this section is to prove the following theorem.

\begin{Theorem} [One-point Green's Function]
Suppose either $\rho_\Sigma\in ((-2)\vee (\frac\kappa 2-4),\frac\kappa 2-2)$ and $\FF\in \{\R,\C\}$, or $\rho_\Sigma\ge \frac\kappa 2-2$ and $\FF=\C$.
  There are a constant $C_{\FF}>0$ depending only on $\FF,\kappa,\rho_\Sigma$, and a constant $\beta\in(0,1)$ depending only on $\kappa,\rho_\Sigma,\sigma_*$  such that, for any $u\in(-\infty,v_m)$, as $ r/{|v_m-u|}\to 0^+$,
   \BGE \PP[\eta\cap B_{\FF}(u,r)\ne \emptyset]=C_{\FF} G(w,\ulin v;u) r^{\alpha} + O\Big(\Big(\frac{|v_m-u|}{|w-u|}\Big)^{\alpha_*} \Big(\frac r{|v_m-u|}\Big)^{\alpha+\beta}\Big).\label{Green-1pt}\EDE
Recall that the implicit constants in the $O$ symbol  depend only on $\kappa,\rho_\Sigma,\sigma^*,\sigma_*$.
  \label{main-thm1}
\end{Theorem}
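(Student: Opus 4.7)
The plan is to use the standard Girsanov and invariant-measure strategy of \cite{LR, Mink-real} for one-point boundary Green's functions of SLE$_\kappa$, carrying it over to the multi-force-point setting by exploiting the same time change $\Phi$ and Jacobi-type variables $\ha R,\ha H_{j,j+1}$ that already appeared in the proof of Theorem \ref{Thm-est1}. The two key inputs are a positive local martingale whose value at the hitting time $\tau^u_{r;\FF}$ encodes the Euclidean distance $r$ (via Koebe's $1/4$ theorem), and the exponential ergodicity of the time-changed diffusion under the weighted measure, which is supplied by Proposition \ref{Bessel-transition}.

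First I would construct the positive local martingale
$$M_t \;=\; D_u(t)^{\alpha}\,G(W(t),\ulin V(t);U(t)), \qquad 0\le t<\tau_u^*,$$
where $U(t)=g_t(u)$, $D_u(t)=g_t'(u)$, and $G$ is as in (\ref{G-univ}). A direct It\^o calculation using (\ref{dW}), (\ref{dVU}), (\ref{dUD}) confirms that the exponents $\alpha_0=2\gamma$, $\alpha_j=\rho_j\gamma$, $\alpha=(\rho_\Sigma+2)\gamma$ fixed in (\ref{alpha}) and (\ref{alphaj}) are exactly those that cancel the drift. After the standard localization already used in the proof of Theorem \ref{Thm-est1}, I would define a weighted measure $\til\PP$ from $M$. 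By Girsanov, the driving function $W$ picks up an additional drift proportional to $1/(W-U)$, turning $\eta$ into an SLE$_\kappa(\ulin\rho;\rho_u)$-type process with an extra force point at $u$, whose force value $\rho_u$ is chosen precisely so that the conformal radius $\Upsilon_0(t)$ from $u$ tends to $0$ under $\til\PP$ and the curve terminates at $u$.

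Next I would reparametrize by $\Phi(t)=-\tfrac{\kappa}{2}\log(\Upsilon_0(t)/|w-u|)$ exactly as in (\ref{dPhi}). Under $\til\PP$ the marginal $\ha R$ satisfies SDE (\ref{dtilR}), and Lemma \ref{dominated} together with Proposition \ref{Bessel-transition}(i) (applicable because $\sigma_j\in[\sigma_*,\sigma^*]$ ensures both effective diffusion parameters are at least $2$) supplies a unique Jacobi-type invariant density $p_\infty$ and exponential convergence at rate $\beta_1-\beta_0>0$, as in (\ref{transition-rate}). The monotone nondecreasing $\ha H_{j,j+1}$'s saturate near $1$ once $\ha R$ has equilibrated past $v_j$. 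Writing
$$\PP[\eta\cap B_\FF(u,r)\ne\emptyset] \;=\; M_0\,\til\EE\bigl[M_{\tau^u_{r;\FF}}^{-1}\,\ind_{\{\tau^u_{r;\FF}<\infty\}}\bigr],$$
applying Koebe's theorem ($\Upsilon_0\asymp r$ at $\tau^u_{r;\FF}$), and using the factorized form (\ref{G-rewrite}) together with the telescoping identity (\ref{haM-re}), the expectation factors as $r^{\alpha}\,G(w,\ulin v;u)$ times a bounded $\til\PP$-expectation of a functional of $(\ha R,\ha H_{j,j+1})$. Its limit as $r/|v_m-u|\to 0$ is an integral against $p_\infty$ that depends only on $\FF,\kappa,\rho_\Sigma$, because the $\sigma_j$-dependent prefactors in (\ref{haM-re}) collapse at the invariant point where all $\ha H_{j,j+1}=1$; this identifies $C_\FF$.

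Finally, for the error term, the exponential rate in $\Phi$-time translates through $\Phi\asymp -\tfrac{\kappa}{2}\log(r/|w-u|)$ into a polynomial rate $(r/|v_m-u|)^\beta$, with $\beta$ governed by the spectral gap $\beta_1-\beta_0$ together with the saturation rates of the $\ha H_{j,j+1}$'s. The prefactor $(|v_m-u|/|w-u|)^{\alpha_*}$ comes from a two-stage argument: Theorem \ref{Thm-est1} provides exactly this bound up to the first time $\eta$ enters the disk of radius $|v_m-u|$ around $u$, and the strong Markov property combined with conformal covariance reduces what happens afterwards to an analogous problem with $|w-u|$ replaced by $|v_m-u|$. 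I expect the hardest step to be the joint ergodic analysis of the degenerate diffusion $(\ha R,\ha H_{j,j+1})$: since the $\ha H_{j,j+1}$'s carry no Brownian component, their mixing is driven entirely by saturation near $1$, and the bookkeeping required to turn this into the explicit product structure of (\ref{G-rewrite}) with a sharp error exponent uniform in $v_1,\dots,v_m$ (subject only to $\sigma_j\in[\sigma_*,\sigma^*]$) is the main technical load.
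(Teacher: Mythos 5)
Your overall architecture (the local martingale $M_t=D_u(t)^\alpha G(W(t),\ulin V(t);U(t))$, the time change $\Phi$, the radial-Bessel analysis) is in the right family, but the central step of your plan does not go through for $m\ge 2$, and this is precisely the difficulty the paper's proof is structured to avoid. Under the measure weighted by $M$, the process $\ha R$ satisfies (\ref{dtilR}), whose drift contains the term $\sum_{j=0}^{m-2}\frac{\sigma_j}{\kappa}(\ha Q_j-\ha Q_{j+1})$. This depends on all the force-point variables, so $\ha R$ alone is \emph{not} a Markov diffusion and Proposition \ref{Bessel-transition} does not give it a transition density, an invariant density $p_\infty$, or a spectral gap; Lemma \ref{dominated} only yields one-sided stochastic domination, which suffices for the upper bound of Theorem \ref{Thm-est1} but not for the exact first-order asymptotics with an identified constant $C_\FF$. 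What you would need is quantitative convergence to equilibrium of the degenerate joint system $(\ha R,\ha H_{0,1},\dots,\ha H_{m-2,m-1})$, uniform in the force-point configuration — you correctly flag this as the ``main technical load,'' but you supply no mechanism for it, and the paper contains none either. Separately, the positivity $C_\FF>0$, which is part of the statement, is not addressed by your outline; in the paper this requires its own argument (the last paragraph of the proof of Theorem \ref{m=1}, using the quasi-invariant density and an extremal-length/contradiction argument).

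The paper circumvents the joint-ergodicity problem by a two-step reduction you should adopt. First, the exact asymptotics are established only for $m=1$ (Theorem \ref{m=1}), where $\ha R$ genuinely is an autonomous diffusion of type (\ref{Bessel-SDE-X}) under the \emph{original} measure (case (iii) of Proposition \ref{Bessel-transition}, since the curve eventually separates from $u$); the constant arises as the limit $Q_\FF$ of a renewal-type functional equation for $q_\FF(s)$, not as an integral against $p_\infty$. Second, for general $m$, the martingale $M$ is used only through optional stopping at $\tau^u_R$ to transport the factor $G(w,\ulin v;u)$, i.e.\ $\EE[\ind_{\{\tau<\tau_u^*\}}M(\tau)]=G(w,\ulin v;u)$ — no Girsanov reweighting. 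At time $\tau^u_R$ the images of the force points are clustered within $O(R)$ of each other relative to their distance to $U(\tau)$ (an extremal-length estimate), and Lemma \ref{lemma-merge} shows via the SLE coordinate-change martingales of \cite{SW} and Koebe distortion that the $m$-force-point law is then absolutely continuous with respect to the one-force-point SLE$_\kappa(\rho_\Sigma)$ law with Radon--Nikodym derivative $1+O(\delta/R)$. This replaces the ``saturation of the $\ha H_{j,j+1}$'s'' by a concrete, quantitative comparison of laws, and the error exponent $\beta$ and the prefactor $(|v_m-u|/|w-u|)^{\alpha_*}$ then come out of optimizing the intermediate scale $R$ against the $m=1$ rate $\beta_\Sigma$ and the merging rate $\beta_*$. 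Without Lemma \ref{lemma-merge} or an equivalent substitute, your proposal does not close.
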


\begin{Remark}
The theorem does not include the case that $\FF=\R$ and $\rho_\Sigma\ge \frac\kappa 2-2$ because in that case $\eta$ a.s.\ does not hit $(-\infty,v_m)$.
  We do not have exact formulas of $C_{\FF}$  in general (unless $\FF=\R$ and $\rho=0$), as happened in \cite{Mink-real,LR}.  We do have an exact formula of $\beta$: $\frac 1\beta=\frac{\rho_\Sigma+4}{\rho_\Sigma+3}+\frac{\alpha_*+1}{\alpha_*}$,  but it is very unlikely to be sharp. {To prove the theorem, we will prove Theorem \ref{m=1} after some lemmas, which is the ``$m=1$'' case of Theorem \ref{main-thm1}. The general case of Theorem \ref{main-thm1} follows from the special case and the fact that the force point functions tend to merge together.}
\end{Remark}

From now on, we fix $u\in(-\infty, v_{  m})$. Let $L_0=|w-v_m|\ge 0$ and $L_1=|v_m-u|>0$.  Using that $\sigma^*\ge \sigma_k\ge \sigma_*>0$, $|w-u|\ge |v_1-u|\ge \cdots \ge |v_m-u|>0$,  and  (\ref{alpha},\ref{G-rewrite}), we get
 \BGE \Big(\frac{L_1}{L_0+L_1}\Big)^{\alpha^*}\Big(\frac {r}{L_1}\Big)^\alpha\le G(w,\ulin v;u) r^\alpha\le\Big(\frac{L_1}{L_0+L_1}\Big)^{\alpha_*}\Big(\frac {r}{L_1}\Big)^\alpha\le \Big(\frac {r}{L_1}\Big)^\alpha. \label{G-upper}\EDE

By treating $u$ as a force point of $\eta$ with force value $0$,  we may apply Theorem \ref{Thm-est1} to conclude that
\BGE \PP[\dist(\eta,u)\le r]\lesssim  ( {L_1}/{(L_0+L_1)} )^{\alpha_*} ( {r}/{L_1} )^{\alpha},\quad\mbox{for any } r\in(0,L_1),\label{est-u}\EDE
which implies that
\BGE \PP[\dist(\eta,u)\le r]\lesssim  ( {r}/{L_1})^{\alpha}\quad\mbox{for any } r>0.\label{est-u*}\EDE
When $v_m<w^-$, i.e., $L_0>0$, by Theorem \ref{Thm-est1},
\BGE \PP[\dist(\eta,v_m)\le r]\lesssim ( {r}/{L_0} )^{\alpha_*},\quad \mbox{for any }r>0.\label{est-vm}\EDE

Let $K_t,g_t,\F,W,\ulin V=(V_1,\dots,V_m)$ be as in the previous section. Let $U$ and $D_u$ be defined by (\ref{U-Du}).
 We have the following  estimate.

\begin{Lemma}
Let  $R>r\in (0,|v_m-u|)$. Let $\tau$ be an $\F$-stopping time.
Then
\BGE\PP[\tau^u_r<\infty |\F_\tau,\{\tau\le \tau^u_R\}\}]\lesssim ( r/R)^{\alpha}.\label{interior-estimate-formula}\EDE
\label{interior-estimate}
\end{Lemma}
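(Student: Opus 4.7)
The plan is to condition at time $\tau$, apply the domain Markov property (DMP), and transfer the estimate through $g_\tau$ using Koebe's theorem, thereby reducing to the unconditional bound (\ref{est-u*}). Since $\tau^u_R$ is an $\F$-stopping time, the event $A:=\{\tau\le \tau^u_R\}$ lies in $\F_\tau$. We may further restrict to $A\cap\{\tau<\tau^*_u\}$: on $A\cap \{\tau\ge \tau^*_u\}$, $u$ has already been disconnected from $\infty$ by $K_\tau$, so the future curve cannot reach any $B_\C(u,r)$ and the bound is trivial. On $A\cap\{\tau<\tau^*_u\}$ the map $g_\tau$ extends by Schwarz reflection to a conformal map on an open disc around $u$, with $g_\tau(u)=U(\tau)$ and $g_\tau'(u)=D_u(\tau)$. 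By the DMP, conditionally on $\F_\tau$ the re-parametrized future curve is an SLE$_\kappa(\ulin\rho)$ from $W(\tau)$ with force points $\ulin V(\tau)$; since $r<R$, the event $\{\tau^u_r<\infty\}\cap A$ coincides with the event that this image curve meets $g_\tau(B_\C(u,r)\cap\HH)$.

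Assume $r\le R/2$, else $(r/R)^\alpha\gtrsim 1$ and the bound is trivial. Koebe's growth theorem applied to $g_\tau$ on the disc around $u$ of radius $R$ gives $g_\tau(B_\C(u,r)\cap\HH)\subset B_\C(U(\tau),4rD_u(\tau))$, and Koebe's $1/4$ theorem gives $g_\tau(B_\C(u,R))\supset B_\C(U(\tau),R D_u(\tau)/4)$. In the new coordinates the analog of $|v_m-u|$ is $|V_m(\tau)-U(\tau)|$: this equals $g_\tau(v_m)-U(\tau)$ when $v_m$ has not been swallowed by $K_\tau$, and equals $c_{K_\tau}-U(\tau)=g_\tau(a_{K_\tau}^-)-U(\tau)$ otherwise. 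In either case the relevant preimage ($v_m$ or $a_{K_\tau}$) lies on $\R$ at distance $\ge R$ from $u$, hence outside $B_\C(u,R)$; the Koebe $1/4$ inclusion then forces $|V_m(\tau)-U(\tau)|\ge R D_u(\tau)/4$ on $A$.

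Combining the two inclusions with the unconditional bound (\ref{est-u*}) applied to the conditional SLE (with $|V_m(\tau)-U(\tau)|$ playing the role of $L_1$ and $4rD_u(\tau)$ the role of $r$) gives
\[\PP[\tau^u_r<\infty\mid \F_\tau]\,\ind_A\;\lesssim\;\Big(\frac{4rD_u(\tau)}{|V_m(\tau)-U(\tau)|}\Big)^{\alpha}\ind_A\;\lesssim\;\Big(\frac{r}{R}\Big)^{\alpha},\]
which is the desired estimate. The main technical subtlety is controlling the geometry at $u$ uniformly on $A$: one must treat separately the case where $v_m$ has been absorbed into $K_\tau$ (so that $V_m(\tau)=c_{K_\tau}$ rather than $g_\tau(v_m)$) and the case where $u$ itself has been disconnected; the key point is that the Koebe lower bound $|V_m(\tau)-U(\tau)|\ge R D_u(\tau)/4$ holds uniformly in both regimes because $a_{K_\tau}\ge u+R$ on $A$.
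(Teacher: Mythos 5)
Your proposal is correct and follows essentially the same route as the paper's proof: reduce to the event where $\tau<\tau^*_u$ and $u$ is still at distance about $R$ from $K_\tau$, apply the DMP, use Koebe's $1/4$ and distortion theorems to get $|V_m(\tau)-U(\tau)|\ge D_u(\tau)R/4$ and $\rad_{U(\tau)}(g_\tau(B_\C(u,r)))\le 4D_u(\tau)r$, and conclude via (\ref{est-u*}). No substantive differences.
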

\begin{proof}
We may assume that $r<R/2$ since the conditional probability is bounded by $1$. We only need to prove (\ref{interior-estimate-formula}) with $E:=\{ \tau\le \tau^u_R\}$ replaced by  $E_0:=\{ \tau<\tau^*_u\wedge \tau^u_R\}$ since the conditional probability on $E\sem E_0$ is $0$. Suppose $E_0$ occurs. By DMP of SLE$_\kappa(\ulin\rho)$, the conditional probability equals the probability that an SLE$_\kappa(\ulin\rho)$ curve $\eta^\tau$ in $\HH$ started from $W(\tau)$ with force points $\ulin V(\tau)$ visits $g_\tau(B_{\C}(u,r))$. Recall that $g_\tau$ maps $\C\sem (K_\tau^{\doub}\cup [v_m,\infty))$, which contains $u$, conformally onto $\C\sem [V_m(\tau),\infty)$.
By Koebe $1/4$ theorem, $|V_m(\tau)-U(\tau)|\ge D_u(\tau) R/4$. Since $r<R/2$, by Koebe distortion theorem, $\rad_{U(\tau)} (g_\tau(B_{\C}(u,r)))\le 4 D_u(\tau)r$. So $\rad_{U(\tau)} (g_\tau(B_{\C}(u,r)))/|V_m(\tau)-U(\tau)|\le 16 r/R$. By (\ref{est-u*}), the conditional probability that $\eta^\tau$ visits $g_\tau(B_{\C}(u,r))$ is bounded by a positive constant
times $(r/R)^{\alpha}$. So we have (\ref{interior-estimate-formula}).
\end{proof}

For $t\ge 0$, we write $\pa_{\HH}^+ K_t$ for the union of the ``right side''  of $\pa K_t$ and the real interval $[b_{K_t},\infty)$. More precisely, $\pa_{\HH}^+ K_t$ is the $g_{t}^{-1}$ image of the interval $[W(t),\infty)$ as a set of prime ends of $\HH\sem K_t$.
We have the following  estimate.

\begin{Lemma}
Let $\tau$ be an $\F$-stopping time. On the event $\tau<\infty$, let $I$ be an $\F_\tau$-measurable crosscut of ${\HH\sem K_\tau}$, which disconnects $g_\tau^{ {-1}}(V_m(\tau))=v_m\wedge a_{K_\tau}$, viewed as a prime end of ${\HH\sem K_\tau}$,  from $\pa^+_{\HH} K_\tau$ in ${\HH\sem K_\tau}$. Then
\BGE \PP[\eta \mbox{ visits }I|\F_\tau,\{\tau<\infty\}]\lesssim \exp (-\pi \alpha_* d_{\HH\sem K_\tau}(I,\pa^+_{\HH} K_\tau) ).\label{boundary-estimate-formula}\EDE
\label{boundary-estimate}
\end{Lemma}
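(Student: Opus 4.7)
The plan is to reduce to the case $\tau=0$ via the DMP of SLE$_\kappa(\ulin\rho)$ and conformal invariance of extremal length, upgrade the extremal-distance hypothesis into a Euclidean containment $\lin I\subset B_\C(v_m,O(Le^{-\pi d^*}))$ using Proposition \ref{extrem-prop}, and then apply the one-point boundary estimate (\ref{est-vm}) derived from Theorem \ref{Thm-est1}. Concretely, on $\{\tau<\infty\}$ the map $g_\tau$ sends $I$ to a crosscut $I':=g_\tau(I)$ of $\HH$ that separates $V_m(\tau)$ from $[W(\tau),\infty)$, and by conformal invariance $d_\HH(I',[W(\tau),\infty))=d_{\HH\sem K_\tau}(I,\pa^+_\HH K_\tau)=:d^*$. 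By DMP, conditionally on $\F_\tau$ the pushed-forward curve $\eta^\tau$ is an SLE$_\kappa(\ulin\rho)$ in $\HH$ from $W(\tau)$ with force points $\ulin V(\tau)$ satisfying the standing assumptions of this section, and $\{\eta\text{ visits }I\}$ translates to $\{\eta^\tau\text{ visits }I'\}$. Hence it suffices to prove the following: for $\eta$ an SLE$_\kappa(\ulin\rho)$ in $\HH$ from $w$ with force points $\ulin v$ as in this section, and $I$ a crosscut of $\HH$ with endpoints $a<b$ on $\R$ satisfying $v_m\in(a,b)$ and $b\le w$, one has $\PP[\eta\text{ visits }I]\lesssim e^{-\pi\alpha_* d^*}$ with $d^*=d_\HH(I,[w,\infty))$ and $L:=w-v_m$.

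For $d^*$ below a fixed constant the bound is trivial since the right-hand side is bounded below; assume $d^*$ is so large that $144 e^{-\pi d^*}<\tfrac 12$, which in particular forces $b<w$. I would then apply the second form of Proposition \ref{extrem-prop} to $S_1=\lin I$, $S_2=[w,\infty)$, and $z_0=b\in S_1$ to get $\rad_b(\lin I)\le 144(w-b)e^{-\pi d^*}$. Since $a\in\lin I$ and $v_m\in(a,b)$, this yields $|b-v_m|\le b-a\le \rad_b(\lin I)\le 144(w-b)e^{-\pi d^*}$, whence $L\le(w-b)(1+144 e^{-\pi d^*})<\tfrac{3}{2}(w-b)$, so $w-b\asymp L$. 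Combining, $\rad_b(\lin I)\le 144 L e^{-\pi d^*}$ and $|b-v_m|\le 144 L e^{-\pi d^*}$, which together give $\lin I\subset B_\C(v_m,288 L e^{-\pi d^*})$. In particular $\{\eta\text{ visits }I\}\subseteq\{\dist(\eta,v_m)\le 288 L e^{-\pi d^*}\}$, and applying (\ref{est-vm}) with $L_0=L$ and $r=288 L e^{-\pi d^*}$ gives $\PP[\eta\text{ visits }I]\lesssim(288 e^{-\pi d^*})^{\alpha_*}\lesssim e^{-\pi\alpha_* d^*}$, as claimed. The degenerate case $V_m(\tau)=W(\tau)$ (i.e.\ $L=0$), where (\ref{est-vm}) is inapplicable, is absorbed into the trivial range: Proposition \ref{extrem-prop} then forces $d^*\le \ln(144)/\pi$, and the bound is vacuous.

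The main obstacle is the geometric step: converting the extremal-distance hypothesis into the Euclidean bound $\lin I\subset B_\C(v_m,O(Le^{-\pi d^*}))$ at the correct scale. The key choice is $z_0=b$ (the right endpoint of $I$) in Proposition \ref{extrem-prop}; the crucial bookkeeping is the one-line bootstrap that $w-b\asymp L$, without which the right-hand side of the Euclidean bound would remain in terms of $w-b$ rather than $L$, and the exponent would not match. Once $\lin I$ is localized near $v_m$, the multi-force-point structure enters only through (\ref{est-vm}), which automatically delivers the correct exponent $\alpha_*=\sigma_*\gamma$.
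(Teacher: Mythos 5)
Your proof is correct and follows essentially the same route as the paper's: DMP to reduce to a fresh SLE$_\kappa(\ulin\rho)$ and the crosscut $g_\tau(I)$, conformal invariance of extremal length, Proposition \ref{extrem-prop} to localize the crosscut within distance $O(Le^{-\pi d^*})$ of $V_m(\tau)$, and then the one-point estimate (\ref{est-vm}). The only difference is cosmetic — you base the radius bound at the right endpoint $b$ and bootstrap $w-b\asymp L$, whereas the paper states the bound directly relative to $V_m(\tau)$ — and your handling of the degenerate case $V_m(\tau)=W(\tau)$ is a welcome extra detail.
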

\begin{proof}
   By DMP   the conditional probability equals the probability that an SLE$_\kappa(\ulin\rho)$ curve $\eta^\tau$ in $\HH$ started from $W(\tau)$ with force points $\ulin V(\tau)$  visits $g_\tau(I)$. By the assumption of $I$, $g_\tau(I)$ is a crosscut of $\HH$ that disconnects $V_m(\tau)$ from $[W(\tau),\infty)$.
   By conformal invariance of extremal length, $d_{\HH}(g_\tau(I), [W(\tau),\infty))=d_{\HH\sem K_\tau}(S,\pa^+_{\HH} K_\tau)$.  By Proposition \ref{extrem-prop}, $1\wedge \frac{\rad_{V_m(\tau)} (g_\tau(I))}{|W(\tau)-V_m(\tau)|}\lesssim \exp({-\pi d_{\HH}(g_\tau(I), [W(\tau),\infty))|})$. So we get the conclusion using (\ref{est-vm}).
\end{proof}

The following lemma is the first two-point estimate in this paper.

\begin{Lemma}
Assume $v_m<w$.  Then for any $r_0,r_1>0$,
   \BGE \PP[\dist(v_m,\eta)\le r_0,\dist(u,\eta)\le r_1]\lesssim ( {r_0}/{L_0} )^{\alpha_*}  ( {r_1}/{L_1} )^{\alpha}.\label{two-pt-ineq}\EDE
\label{two-pt-lem}
\end{Lemma}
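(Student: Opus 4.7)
The plan is to split based on which ball $\eta$ hits first, writing
$$\{\dist(v_m,\eta)\le r_0,\ \dist(u,\eta)\le r_1\}=A\sqcup B,$$
with $A=\{\tau^{v_m}_{r_0}\le \tau^u_{r_1}<\infty\}$ and $B=\{\tau^u_{r_1}<\tau^{v_m}_{r_0}<\infty\}$, and bounding $\PP[A]$ and $\PP[B]$ separately. In each case I would first use a one-point estimate to control the probability of the first hit, then apply the DMP of SLE$_\kappa(\ulin\rho)$ together with Koebe's theorem to reduce the conditional probability of the second hit to a one-point estimate for the shifted curve $\eta^\tau$, an SLE$_\kappa(\ulin\rho)$ from $W(\tau)$ with force points $\ulin V(\tau)$.

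For case $A$, set $\tau=\tau^{v_m}_{r_0}$. Estimate (\ref{est-vm}) gives $\PP[\tau<\infty]\lesssim (r_0/L_0)^{\alpha_*}$. Since on $A$ the hull $K_\tau$ stays at distance $\asymp L_1$ from $u$, Koebe's $1/4$ theorem applied to $g_\tau$ at $u$ is valid at scale $L_1$: it gives $L_1'':=|V_m(\tau)-U(\tau)|\asymp D_u(\tau)L_1$, and places $g_\tau(B_\C(u,r_1)\cap(\HH\sem K_\tau))$ inside $B_\C(U(\tau),O(D_u(\tau)r_1))$. Applying (\ref{est-u}) to the shifted curve would then bound the conditional probability by $\lesssim (L_1''/(L_0''+L_1''))^{\alpha_*}(r_1/L_1)^\alpha\le (r_1/L_1)^\alpha$, and the product yields $\PP[A]\lesssim (r_0/L_0)^{\alpha_*}(r_1/L_1)^\alpha$.

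For case $B$, set $\tau'=\tau^u_{r_1}$. On $B$ the point $v_m$ is not yet swallowed, since otherwise $\tau^{v_m}_{r_0}\le\tau^*_{v_m}\le\tau'$, contradicting $B$; hence $g_{\tau'}'(v_m)$ is well defined. Estimate (\ref{est-u}) gives $\PP[\tau'<\infty]\lesssim (L_1/(L_0+L_1))^{\alpha_*}(r_1/L_1)^\alpha$. By DMP and Koebe distortion at $v_m$, the conditional probability that $\eta^{\tau'}$ subsequently hits $B_\C(v_m,r_0)$ reduces to $\eta^{\tau'}$ visiting $B_\C(V_m(\tau'),O(g_{\tau'}'(v_m)r_0))$; by (\ref{est-vm}) applied to the shifted curve this is $\lesssim (g_{\tau'}'(v_m)r_0/L_0'')^{\alpha_*}$ with $L_0'':=|W(\tau')-V_m(\tau')|$. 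To close the total product as $\lesssim (r_0/L_0)^{\alpha_*}(r_1/L_1)^\alpha$, I need the Koebe-type comparison $L_0''/g_{\tau'}'(v_m)\gtrsim L_0L_1/(L_0+L_1)$, after which the product becomes $(r_0/(L_0+L_1))^{\alpha_*}(r_1/L_1)^\alpha\le (r_0/L_0)^{\alpha_*}(r_1/L_1)^\alpha$.

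The hard part is precisely the Koebe comparison just mentioned: I must show that $L_0''/g_{\tau'}'(v_m)$ is comparable to $\min(L_0,L_1)$ rather than merely to $\min(\dist(v_m,K_{\tau'}),a_{K_{\tau'}}-v_m)$, which is what Koebe's $1/4$ theorem gives directly. I plan to derive this by combining the $1/4$ theorem at $v_m$ with the topological facts on $B$ that no loop of $\eta[0,\tau']$ encloses $v_m$ (forcing $\dist(v_m,K_{\tau'})\ge r_0$) and that $a_{K_{\tau'}}>v_m$, together with a geometric bound on $a_{K_{\tau'}}-v_m$ coming from $\eta[0,\tau']$ having to travel from $w$ past $v_m$ to within $r_1$ of $u$. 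Should this direct route prove too delicate, a backup is to replace (\ref{est-vm}) by Lemma~\ref{boundary-estimate} applied to a small semicircular crosscut around $v_m$ separating it from $\pa^+_\HH K_{\tau'}$, and to bound its extremal distance to $\pa^+_\HH K_{\tau'}$ via Proposition~\ref{extrem-prop}.
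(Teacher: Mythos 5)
Your decomposition into $A=\{\tau^{v_m}_{r_0}\le\tau^u_{r_1}<\infty\}$ and $B=\{\tau^u_{r_1}<\tau^{v_m}_{r_0}<\infty\}$ is too coarse, because the order of the two first-hitting times does not control the geometry you need for the Koebe reductions. In case $A$, the claim that on $A$ the hull stays at distance $\asymp L_1$ from $u$ at time $\tau=\tau^{v_m}_{r_0}$ is false: the ordering only gives $\dist(u,\eta[0,\tau])\ge r_1$, and the curve may have approached $u$ to within distance barely above $r_1$ before first entering $B_{\C}(v_m,r_0)$. On that part of $A$ one has $|V_m(\tau)-U(\tau)|\asymp D_u(\tau)r_1$, the image $g_\tau(B_{\C}(u,r_1))$ has radius comparable to $|V_m(\tau)-U(\tau)|$, and (\ref{est-u*}) for the shifted curve gives only $O(1)$; your product then degenerates to $(r_0/L_0)^{\alpha_*}$ and the factor $(r_1/L_1)^\alpha$ is lost. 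Case $B$ has the mirror problem: at $\tau'=\tau^u_{r_1}$ the curve may have passed within distance barely above $r_0$ of $v_m$, so the comparison $L_0''/g_{\tau'}'(v_m)\gtrsim L_0\wedge L_1$ that you flag as the hard step is simply false; the topological input you propose yields only $\dist(v_m,K_{\tau'})\ge r_0$, hence $L_0''/g_{\tau'}'(v_m)\gtrsim r_0$ and a trivial conditional bound. (Your justification that $v_m$ is unswallowed at $\tau'$ via ``$\tau^{v_m}_{r_0}\le\tau^*_{v_m}$'' is also wrong — the curve can swallow $v_m$ by hitting $(-\infty,v_m-r_0)$ — though the conclusion does hold on $B$ because a swallowed but unvisited $v_m$ could never be approached afterwards.) Your backup for $B$ — the crosscut around $v_m$, Lemma \ref{boundary-estimate} and Proposition \ref{extrem-prop} — does work and is essentially the paper's treatment of this sub-event (the term (\ref{EN})): every path from $\{|z-v_m|=r_0\}$ to $\pa_{\HH}^+K_{\tau'}$ crosses both $A_{\HH}(v_m,r_0,L_1/2)$ and $A_{\HH}(u_1,r_1,L_1/2)$ with $u_1=u$ here, giving a conditional factor $\lesssim (r_0/L_1)^{\alpha_*}(r_1/L_1)^{\alpha_*}$, which suffices. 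But nothing analogous rescues case $A$ treated as a single block.

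The missing idea is a multi-scale decomposition of $A$ according to how deep the excursion toward $u$ has gone when $B_{\C}(v_m,r_0)$ is first hit. With $\tau^0=\tau^{v_m}_{r_0}$ and $\tau^1_n=\tau^u_{L_1e^{-n}/2}$, the paper splits $A$ into $\{\tau^0<\tau^1_1\}$ (where your Koebe argument is legitimate) and the events $\{\tau^1_n<\tau^0<\tau^1_{n+1}\}$ for $1\le n\le N-1$. On the $n$-th event, reaching scale $L_1e^{-n}$ near $u$ costs $e^{-\alpha n}$ by (\ref{est-u},\ref{interior-estimate-formula}); returning from there to $B_{\C}(v_m,r_0)$ costs an additional $(r_0/L_1)^{\alpha_*}e^{-\alpha_* n}$ by the extremal-length estimate (\ref{boundary-estimate-formula}), since the connecting path must cross the two semi-annuli; and descending from scale $L_1e^{-n}$ to $r_1$ afterwards costs $e^{-\alpha(N-n)}$. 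The gain $e^{-\alpha n}$ on the way in exactly cancels the loss $e^{\alpha n}$ on the way back out, and the surviving $e^{-\alpha_* n}$ makes the sum over $n$ converge. This interplay — the deeper the curve goes toward $u$ before turning back, the more it pays to return to $v_m$ — is the substance of the two-point estimate and cannot be bypassed by conditioning only on which ball is hit first.
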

\begin{proof}
 If $r_1\ge L_1/6$, then we get  (\ref{two-pt-ineq}) by applying (\ref{est-vm}) to bound $\PP[\dist(v_m,\eta)\le r_0]$. If $r_0\ge (L_0\wedge L_1)/6$ and $r_1<L_1/6$, then we get (\ref{two-pt-ineq}) by applying (\ref{est-u}) to bound $\PP[\dist(u,\eta)\le r_1]$ and using the fact that $\frac{L_1}{L_0+L_1}\asymp \frac{L_0\wedge L_1}{L_0}$.
   So we may now assume that $r_0<(L_0\wedge L_1)/6$  and $r_1< L_1/6$.
   WLOG, we may assume that $r_1=e^{-N}L_1/2$ for some $N\in\N$. Let $\tau^0=\tau^{v_m}_{r_0}$ and $\tau^1_n=\tau^u_{L_1 e^{-n}/2}$
   , $n\in\Z$.  Then (\ref{two-pt-ineq}) is equivalent to
\BGE \PP[\tau^0<\infty,\tau^1_{N}<\infty]\lesssim ( {r_0}/{L_0} )^{\alpha_*}     e^{-\alpha N} .\label{transform*}\EDE

By (\ref{est-vm},\ref{est-u}),
\BGE \PP[\tau^0<\infty]\lesssim (r_0/L_0)^{\alpha_*};\label{tau1<infty}\EDE
\BGE \PP[\tau^1_n<\infty]\lesssim ({L_1}/({L_0}+L_1))^{\alpha_*} e^{-\alpha n},\quad n\in\N.\label{tau2n}\EDE
By (\ref{interior-estimate-formula}),
\BGE \PP[\tau^1_N<\infty|\F_{\tau^1_n},\tau^1_{n}<\infty]\lesssim e^{-\alpha(N-n)},\quad n\le N.\label{tau2n-N}\EDE
Note that, when $\tau^1_n<\tau^0$, every curve in $\HH\sem K_{\tau^1_n}$ connecting $I:=\{z\in\lin\HH:|z-v_m|= r_0\}$ with $\pa_{\HH}^+ K_{\tau^1_n}$ must cross  $A_{\HH}(u,L_1 e^{-n}/2,L_1/2)$  and $A_{\HH}(v_m,r_0,L_1/2)$. By comparison principle of extremal length,  $$d_{ \HH\sem K_{\tau^1_n}}(I,\pa_{\HH}^+ K_{\tau^1_n})\ge \frac 1\pi \ln \Big(\frac{L_1/2}{r_0} \Big)+\frac 1\pi \ln\Big (\frac{L_1/2}{L_1 e^{-n}/2 }\Big).$$
So by (\ref{boundary-estimate-formula}),
\BGE \PP[\tau^0<\infty|\F_{\tau^1_n},\tau^1_n<\tau^0]\lesssim ({r_0}/{L_1  })^{\alpha_*} e^{-\alpha_* n},\quad n\in\N;\label{tau2n-tau1}\EDE

We now use (\ref{tau1<infty}-\ref{tau2n-tau1}) to prove (\ref{transform*}). First, (\ref{tau1<infty}) and (\ref{tau2n-N}) (applied to $n=1$) together imply that
\BGE \PP[ \tau^0<\tau^1_1\le\tau^1_{N}<\infty ]\lesssim ( {r_0}/{L_0} )^{\alpha_*}     e^{-\alpha N}. \label{E0}\EDE
Second, (\ref{tau2n},\ref{tau2n-tau1},\ref{tau2n-N})  together imply that
\BGE \PP[\tau^1_n<\tau^0<\tau^1_{n+1}\le \tau^1_N<\infty]\lesssim ( {r_0}/{L_0  })^{\alpha_*}  e^{-\alpha N}e^{-\alpha_* n} ,\quad 1\le n\le N-1.\label{En}\EDE
Third,  (\ref{tau2n})  and (\ref{tau2n-tau1}) together (applied to $n=N$) imply that
\BGE  \PP[\tau^1_N<\tau^0<\infty]\lesssim ( {r_0}/{L_0  })^{\alpha_*}  e^{-\alpha N}e^{-\alpha_* N} .\label{EN}\EDE
Summing up  (\ref{E0},\ref{En},\ref{EN}), we get the desired inequality (\ref{transform*}).
\end{proof}

We are now able to prove Theorem \ref{main-thm1} in the case $m=1$.

\begin{Theorem}
Suppose $m=1$. Define $ \beta_\Sigma=\frac{\rho_\Sigma +3}{\rho_\Sigma +4}$.  Then there is a constant $C_{\FF}>0 $  depending only on $\kappa,\rho_\Sigma$ such that, as $r/{L_1}\to 0^+$,
  \BGE \PP[\eta\cap B_{\FF}(u,r)\ne\emptyset]=C_{\FF} \Big(\frac{L_1}  { L_0+L_1}\Big)^{\alpha_0} \Big(\frac r{L_1}\Big)^{\alpha}\Big(1+O\Big(\frac r{L_1}\Big)^{\beta_\Sigma}\Big),\label{one-force-point}\EDE
  where the implicit constants also depend only on $\kappa,\rho_\Sigma$.
  \label{m=1}
\end{Theorem}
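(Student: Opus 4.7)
The plan is to adapt the martingale/Girsanov/ergodicity strategy from \cite{LR, Mink-real} to the one-force-point setting. I would first enlarge the configuration by adding a virtual force point $v_0 = w^-$ with force value $0$ (as in the proof of Theorem~\ref{Thm-est1}) and by treating $u$ itself as an additional force point of force value $0$; neither modification changes the law of $\eta$, but both place us in the framework already employed for Theorem~\ref{Thm-est1}. Consider the candidate local martingale
\[
M_t \;=\; D_u(t)^{\alpha}\,(W(t)-U(t))^{-\alpha_0}\,(V_1(t)-U(t))^{-\alpha_1},
\]
with $\alpha_0=2\gamma,\ \alpha_1=\rho_\Sigma\gamma,\ \alpha=(\rho_\Sigma+2)\gamma$. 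An It\^o computation using \eqref{dW}--\eqref{dVU} and \eqref{dUD} confirms that the drift of $dM_t/M_t$ vanishes exactly for these exponents, so $M$ is a positive local martingale with $M_0 = G(w,v_1;u)$, matching the main constant in \eqref{one-force-point}.

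Reparametrize by $\Phi(t) = -\tfrac{\kappa}{2}\log(\Up_0(t)/L)$ where $\Up_0 = (V_0-U)/D_u$ and $L = L_0+L_1$. By \eqref{dist(u,eta)} the event $\{\tau^u_r<\infty\}$ coincides, up to an $O(1)$ slack in $\Phi$-time independent of $r$, with $\{\ha\tau^*_u > T^*\}$ for $T^* := \tfrac{\kappa}{2}\log(L/r)$. Use $M$ (truncated at a stopping time $\tau_n$ to enforce uniform integrability, then letting $n\to\infty$) as a Radon--Nikodym derivative to pass to a new measure $\til\PP$. Under $\til\PP$, Girsanov combined with the same time-change calculation already performed in the proof of Theorem~\ref{Thm-est1} shows that the pair
\[
(\ha R,\ha H_1)\;:=\;\bigl((V_0-U)/(W-U),\ (V_1-U)/(V_0-U)\bigr)
\]
satisfies the coupled system \eqref{dtilR}--\eqref{dhaH}. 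Here $\ha H_1$ is monotone increasing to $1$ at the algebraic rate quantified by \eqref{Koebe-H-hat}, and in that limit $\ha R$ becomes the ergodic Bessel-like diffusion of Case~(i) of Proposition~\ref{Bessel-transition} with $\delta_-=4(\rho_\Sigma+4)/\kappa$ and a strictly positive invariant density $p_\infty$.

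The change-of-measure identity
\[
\PP[\tau^u_r<\infty] \;=\; M_0\cdot\til\EE\!\left[M_{\tau^u_r}^{-1}\,\ind_{\{\tau^u_r<\infty\}}\right]
\]
combined with the Koebe estimate $M_{\tau^u_r}^{-1} = r^{\alpha}\,\Psi(\ha R(T^*),\ha H_1(T^*))(1+o(1))$ for an explicit bounded function $\Psi$ rewrites the left-hand side as $M_0\,r^{\alpha}\,\til\EE[\Psi(\ha R(T^*),\ha H_1(T^*))]$ up to a controllable error. The ergodic convergence of $\ha R$ to $p_\infty$, combined with the explicit rate $\ha H_1\uparrow 1$, gives $\til\EE[\Psi]\to C_\C$ with error $O((r/L_1)^{\beta_\Sigma})$ after converting $\Phi$-time decay rates to powers of $r/L_1$; this yields \eqref{one-force-point} for $\FF=\C$. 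For $\FF=\R$, I would reduce to the $\FF=\C$ case via the standard argument relating interval hitting to disc hitting (condition on the first disc-hitting time and use the DMP to compute the conditional probability that the tip lands on the real segment), producing the same main exponent with a different constant $C_\R$.

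The main obstacle is the genuine two-dimensionality of the rescaled dynamics $(\ha R,\ha H_1)$: since $\ha H_1$ is monotone and nonergodic, the one-dimensional ergodic convergence used in \cite{Mink-real} cannot be cited directly. Quantifying the coupling so as to show that $\ha R$ mixes on a shorter timescale than $\ha H_1\uparrow 1$, and identifying the sharp exponent $\beta_\Sigma=(\rho_\Sigma+3)/(\rho_\Sigma+4)$ from the interplay between the spectral gap of the limiting $\ha R$-diffusion and the algebraic rate of $\ha H_1\uparrow 1$, will be the technical heart of the proof. The earlier lemmas of this section (the interior estimate \eqref{interior-estimate-formula}, the boundary estimate \eqref{boundary-estimate-formula}, and the two-point bound \eqref{two-pt-ineq}) feed in at this step to control ``bad'' events on which the curve first approaches $v_1$ rather than $u$.
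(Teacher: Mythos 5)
Your proposal takes a genuinely different route from the paper's (a Girsanov tilt by the one-point local martingale followed by an ergodicity argument for the tilted configuration, in the spirit of Lawler--Rezaei), but as written it has an unresolved central step, and that step is exactly what the paper's proof is built to avoid. The paper does not change measure at all in proving Theorem \ref{m=1}. It defines $R=(V_1-U)/(W-U)$ and the time change $\Phi=-\frac\kappa2\ln(|V_1-U|/(D_ux))$ using the \emph{actual} force point $V_1$ rather than $V_0=\ha g_t(w^-)$; with this choice $\ha R$ is, under the original measure, a single autonomous diffusion of the form (\ref{Bessel-SDE-X}) with finite lifetime (Case (iii) of Proposition \ref{Bessel-transition}). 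The asymptotics then follow from the quasi-invariant density and the spectral expansion of the transition kernel, via a renewal inequality ((\ref{ineq-q})) showing that $s^{-\kappa\beta_0/2}\int_0^1 p_{\FF}(x,s)p^{\ha R}_\infty(x)\,dx$ converges as $s\to0^+$; the error exponent is $\beta_\Sigma=\frac{\beta_1-\beta_0}{\frac2\kappa+\beta_1-\beta_0}$, i.e.\ it comes entirely from the spectral gap of this one-dimensional quasi-stationary diffusion. By inserting $v_0=w^-$ and basing $\Phi$ on $\Up_0$ you create the two-dimensional, non-autonomous system $(\ha R,\ha H_1)$, and your own write-up concedes that quantifying its mixing and extracting $\beta_\Sigma$ from ``the interplay between the spectral gap and the rate $\ha H_1\uparrow1$'' remains to be done. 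That unresolved coupling estimate \emph{is} the proof; without it you only have the main-order heuristic. Moreover the proposed mechanism is not the correct one: $\beta_\Sigma$ involves no algebraic rate of $\ha H_1\uparrow1$.

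Two further gaps. First, the reduction of $\FF=\R$ to $\FF=\C$ by ``conditioning on the first disc-hitting time and computing the probability that the tip lands on the real segment'' is not an available computation: the conditional law of the configuration at $\tau^u_{r;\C}$ is precisely the ergodic limit you have not established, and the ratio of interval-hitting to disc-hitting probabilities is another unknown constant, not a by-product of the DMP. The paper instead runs the identical renewal argument for $p_{\R}(x,s)$ and $p_{\C}(x,s)$ in parallel from the start. Second, you never address positivity of $C_{\FF}$, which in the paper requires a separate, nontrivial argument (especially for $\FF=\R$, where one must rule out $Q_{\R}=0$ using the quasi-invariant starting law together with an extremal-length estimate). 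If you want to salvage your route, the most economical fix is to drop the auxiliary force point $v_0$ and define the radial variable with $V_1$: the relevant process becomes one-dimensional and the ergodicity input reduces to the explicit rate (\ref{transition-invariant}), at which point you have essentially rederived the paper's argument.
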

 {\begin{Remark}
  The idea of the proof is that we reparametrize $\eta$ by some conformal radius, use the driving function and force point process to construct a radial Bessel process of finite lifetime, and apply the results in Section \ref{Section 2.4}. \label{m=1-rem}
\end{Remark}}

\begin{proof}[ {Proof of Theorem \ref{m=1}}]
Since $m=1$, $\rho_\Sigma=\rho_1$. Throughout the proof, a constant depends only on $\kappa,\rho_1$.
By translation and scaling, it suffices to derive (\ref{one-force-point}) in the case that $w=1$ and $u=0$.
For $x\in(0,1]$, let $\eta_x$ be an SLE$_\kappa(\rho_1)$ curve in $\HH$ started from $w=1$ with the force point $v_1=x$. Here if $x=1$, then the force point $v_1$ is set to be $1^-$.
For $s\in(0,1)$, let $p_{\FF}(x,s)$ denote the probability that  $\eta_x$ hits $B_{\FF}(0, x\cdot s)$.    By scaling and translation, we need to show there is a constant  $C_{\FF}> 0$  such that
\BGE p_{\FF}(x,s)=C_{\FF} x^{\alpha_0} s^{\alpha}(1+O(s^{\beta_\Sigma})),\quad \mbox{as }s\to 0.\label{pC0}\EDE

Define  $R$ and $\Phi$   on $[0,\tau_u^*)$ by
$$R= \frac{V_1 -U }{W -U },\quad \Phi = -\frac \kappa2 \ln\Big(\frac{|V_1-U| }{D_u x}\Big);$$
and let  $\ha R(t)= R\circ \Phi^{-1}(t)$, $0\le t<\ha \tau_u^*:=\sup \Phi[0,\tau^*_u)$. These functions differ from the $R,\Phi,\ha R$ in the proof of Theorem \ref{Thm-est1} by that $V_1$ and $v_1$ are used here in place of $V_0$ and $v_0$.
An argument similar to the one used to derive (\ref{dhaR}) shows that  $\ha R$ satisfies the SDE (\ref{Bessel-SDE-X})  with $\delta_+:=\frac 4\kappa (\rho_1+2)>0$ and $\delta_-:=\frac 4\kappa(\kappa-4-\rho_\Sigma)<2$. Here we use the assumption that
$\rho_1>-2$ and $\rho_\Sigma>\frac\kappa 2-4$.

As $t\uparrow \tau_u^*$, $\eta_x(t)$ tends to $\infty$ (when $\tau_u^*=\infty$) or some point on $(-\infty,u)$ (when $\tau_u^*<\infty$). In either case, by comparison principle, the extremal distance between $[u,v_1\wedge a_{K_t}]$ and $\pa_{\HH}^+ K_t$ in $\HH\sem K_t$ tends to $\infty$ as $t\uparrow \tau_u^*$. Thus, by conformal invariance of extremal distance and Proposition \ref{extrem-prop},   $\lim_{t\uparrow \tau^*_u}R(t)=0$, which implies that $\lim_{t\uparrow \ha \tau^*_u}\ha R(t)=0$. So we may apply Proposition \ref{Bessel-transition} (iii) and (\ref{invariant-def},\ref{transition-rate}) to conclude that $\ha R$ has a transition density $p^{\ha R}_t(x,y)$ and a quasi-invariant density $p^{\ha R}_\infty(y)=C_1 (1-y)^{\alpha_+}$ with decay rate $\beta_0$,  such that
\BGE p^{\ha R}_t(x,y)=C_2 x^{\alpha_-} p^{\ha R}_\infty(y) e^{-\beta_0 t}(1+O(e^{-(\beta_1-\beta_0)t})),\quad \mbox{as }t\to\infty,\label{transition-invariant}\EDE
where $C_1,C_2$ are positive constants, and
$$ \alpha_+=\frac{\delta_+}2-1,\quad \alpha_-=1-\frac{\delta_-}2,\quad \beta_n=\frac 12(n+1+\alpha_+)(n+\alpha_-),\quad n=0,1.$$

Let $L>0$, and $\tau_L=\inf(\{t\in [0,\tau_u^*):\Phi(t)\ge L\}\cup \{\infty\}) $.  By Koebe $1/4$ theorem, if $\tau_L<\tau^*_u$,
\BGE x e^{-\frac 2\kappa L}=x e^{-\frac 2\kappa \Phi(\tau_L)}=\frac{|V_1(\tau_L)-U(\tau_L)|}{D_u(\tau_L)} \le \dist(u,\eta_x[0,\tau_L])\wedge x\le 4x e^{-\frac 2\kappa L}.\label{Koebe-dist}\EDE
Let $\til g_t=\frac{g_t-U(t)}{W(t)-U(t)}$, $0\le t<\tau^*_u$. Then $\til g_t$ maps $\HH\sem K_t$ conformally onto $\HH$, fixes $\infty$ and $0$, and sends $\eta_x(t)$ and $v_1\wedge a_{K_t}$ respectively to $1$ and $R(t)$.
By DMP, there is a random curve $\eta^L$ in $\lin\HH$ defined on the event $\{\tau_L<\tau^*_u\}$, whose law conditionally on $\F_{\tau_L}$ and $\{\tau_L<\tau^*_u\}$ is that of $\eta_{\ha R(L)}$, i.e., an SLE$_\kappa(\rho_1)$ curve started from $1$ with the force point $ \ha R(L)$, such that $\eta_x(\tau_L+\cdot)=\til g_{\tau_L}^{-1}(\eta^L)$.
Suppose $s< \frac 14 e^{-\frac 2\kappa L}$. Then $\eta_x$ hits $B_{\FF}(0,xs)$ iff $\tau_L<\tau_u^*$ and  $\eta^L$ hits $\til g_{\tau_L}(B_{\FF}(0,xs))$.
Since $\til g_{\tau_L}'(u)=\frac{D_u(\tau_L)}{W(\tau_L)-U(\tau_L)}=\ha R(L)e^{\frac 2\kappa L} $, by (\ref{Koebe-dist}) and  Koebe distortion theorem,
$$B_{\FF}\Big(0,  \frac{\ha R(L) se^{\frac 2\kappa L} }{(1+4s e^{\frac 2\kappa L})^2}\Big) \subset \til g_{\tau_L}(B_{\FF}(0, xs)) \subset  B_{\FF}\Big(0, \frac{\ha R(L) s e^{\frac 2\kappa L} }{(1-4s e^{\frac 2\kappa L})^2}\Big),$$
which implies that
$$p_{\FF}(x,s)\gtreqless  \EE\Big[\ind_{\{\tau_L<\tau_u^*\}}\cdot p_{\FF}\Big(\ha R(L), \frac{s e^{\frac 2\kappa L} }{(1\pm 4s e^{\frac 2\kappa L})^2}\Big)\Big].$$
Here when we choose $+$ (resp.\ $-$) in $\pm$, the inequality holds with $\ge$ (resp.\ $\le$).
Since $\ha R$ has a transition density $p^{\ha R}_t(x,y)$, the above inequality further implies that
\BGE p_{\FF}(x,s) \gtreqless \int_0^1 p^{\ha R}_L(x,y) p_{\FF} \Big(y,   \frac{s e^{\frac 2\kappa L}}{(1\pm 4s e^{\frac 2\kappa L})^2}\Big) dy.\label{p(x,s)-integral}\EDE
Let $p_{\FF}(s)=\int_0^1 p_{\FF}(x,s) p^{\ha R}_\infty(x) dx$ and  $q_{\FF}(s)=s^{-\frac \kappa 2\beta_0 } p_{\FF}(s)$.  By (\ref{invariant=}), if $0<s< \frac 14 e^{-\frac 2\kappa L}$, then
$$p_{\FF}(s) \gtreqless \int_0^1\!\int_0^1 p^{\ha R}_\infty(x) p^{\ha R}_L(x,y) p_{\FF}  \Big(y, \frac{se^{\frac 2\kappa L} }{(1\pm 4s e^{\frac 2\kappa L})^2}\Big) dy dx$$
$$=e^{-\beta_0L} \int_0^1 p^{\ha R}_\infty(y)  p_{\FF}  \Big(y, \frac{se^{\frac 2\kappa L} }{(1\pm 4s e^{\frac 2\kappa L})^2}\Big) dy
=e^{-\beta_0L} p_{\FF}\Big(\frac{se^{\frac 2\kappa L} }{(1\pm 4s e^{\frac 2\kappa L})^2}\Big),
$$
and so
\BGE q_{\FF}(s)  \gtreqless  (1\pm 4s e^{\frac 2\kappa L})^{-\kappa \beta_0} q_{\FF}\Big(\frac{se^{\frac 2\kappa L} }{(1\pm 4s e^{\frac 2\kappa L})^2}\Big).\label{ineq-q}\EDE
Let $\eps\in(0,1)$, $a_+=\eps$, and $x_0=\frac{a_+/4}{(1+a_+)^2}$. Choose $a_-\in(0,a_+)$ such that $a_++\frac 1{a_+}+2=a_-+\frac 1{a_-}-2$. Then $\frac{a_-/4}{(1+a_-)^2}=x_0$. Let $s\in (0,a_-/4)$. Then there are $L_+,L_->0$ such that
 $4s e^{\frac 2\kappa L_\pm}=a_\pm$, which implies $\frac{se^{\frac 2\kappa L_\pm} }{(1\pm 4s e^{\frac 2\kappa L_\pm})^2}=\frac{a_\pm /4}{(1\pm a_\pm)^2}=x_0$. By (\ref{ineq-q}),
 \BGE q_{\FF}(s)\gtreqless (1\pm a_\pm) ^{-\kappa \beta_0} q_{\FF}(x_0 ) \gtreqless (1\pm \eps)^{-\kappa \beta_0}q_{\FF}(x_0 ) ,\quad \forall s\in (0,a_-). \label{ineq-q2}\EDE
 This inequality implies that $\lim_{s\to 0^+} q_{\FF}(s)$ converges. Let $Q_{\FF}$ denote the limit.
On the other hand, for any $x\in(0,1/16)$, there are $a_+>a_-\in(0,1)$ such that $\frac{a_\pm/4}{(1\pm a_\pm)^2}= x$, and as $x\to 0^+$, $a_\pm=O(x)$. For  $s\in(0,a_-/4)$, there are $L_+>L_->0$ such that $4s e^{\frac 2\kappa L_\pm}=a_\pm$. Using (\ref{ineq-q2}), we get $q_{\FF}(s) \gtreqless (1\pm a_\pm) ^{-\kappa \beta_0} q_{\FF}(x)$. Sending $s$ to $0^+$, we get $q_{\FF}(x)=Q_{\FF} (1+O(x))$ as $x\to 0^+$. Thus,
 \BGE p_{\FF}(s)=Q_{\FF}  s^{\frac \kappa 2\beta_0 } (1+O(s)),\quad \mbox{as } s\to 0^+.\label{p-Q}\EDE

Setting $L =(\frac 2\kappa+(\beta_1-\beta_0))^{-1}\ln(1/s)>0$ and using (\ref{transition-invariant},\ref{p(x,s)-integral}), we find that as $s\to 0^+$,
 $$ p_{\FF}(x,s) \gtreqless  \int_0^1 C_2 x^{\alpha_-} p^{\ha R}_\infty(y) e^{-\beta_0 L} (1+O(e^{-(\beta_1-\beta_0) L})) p_{\FF}  \Big(y,   \frac{s e^{\frac 2\kappa L}}{(1\pm 4s e^{\frac 2\kappa L})^2}\Big) dy.$$
 $$=C_2 x^{\alpha_-} e^{-\beta_0 L} (1+O(e^{-(\beta_1-\beta_0) L})) p_{\FF} \Big(  \frac{s e^{\frac 2\kappa L}}{(1\pm 4s e^{\frac 2\kappa L})^2}\Big)$$
 $$=C_2 Q_{\FF} x^{\alpha_-} s^{\frac \kappa2 \beta_0}(1+O(e^{-(\beta_1-\beta_0)L})+O(s e^{\frac 2\kappa L}))=C_2Q_{\FF} x^{\alpha_-}  s^{\frac \kappa 2 \beta_0} (1+O(s^{\frac{\beta_1-\beta_0}{\frac 2\kappa+\beta_1-\beta_0}})).$$
 Since  ${\frac \kappa2 \beta_0}=\alpha$, $\alpha_-=\alpha_0$, and $\frac{\beta_1-\beta_0}{\frac 2\kappa+\beta_1-\beta_0}=\beta_\Sigma$, we conclude that (\ref{pC0}) holds with $C_{\FF}=C_2 Q_{\FF}$.

Finally, we check that $C_{\FF}>0$, which is equivalent to that $Q_{\FF}>0$ since $C_2>0$.
Since $q_{\FF}\ge 0$, we have $Q_{\FF}\ge 0$.  If $Q_{\FF}=0$, then (\ref{pC0}) implies that there is  $c_0\in(0,1)$  such that  $p_{\FF}(x,c_0)=0$ for all $0<x\le 1$. First, suppose $\FF=\C$. By the properties of the transition density  of $\ha R$,  $\PP[\tau_L<\tau^*_u]>0$ for any $L>0$. By (\ref{Koebe-dist}), $\PP[\dist(u,\eta_x)\le \eps]>0$ for any $\eps>0$, which contradicts that $p_{\C}(x,c_0)=0$. Second, suppose $\FF=\R$.
Then we have $\rho_\Sigma<\frac\kappa 2-2$ by assumption. Let $\eta_{\bf x}$ be an SLE$_\kappa(\rho_\Sigma)$ curve started from $1$ with a random force point ${\bf x}\in(0,1)$, whose law is the quasi-invariant density of $\ha R$.
Since $\rho_\Sigma<\frac\kappa 2-2$, $\eta_{\bf x}$ a.s.\ hits $(-\infty,0)$ (cf.\ \cite{MS1}). So there is $N<0$ such that $\PP[\eta_{\bf x}\cap (N{\bf x},0)\ne \emptyset]>0$.  Choose $L>0$ such that $\frac{c_0 e^{\frac 2\kappa L}}{576}>|N|+1$. Let $\til g_{\tau_L}$ and $\eta^L$ be associated with $\eta_{\bf x}$. Recall that, conditionally on $\F_\tau$, $\eta^L$ is an SLE$_\kappa(\rho_1)$ curve started from $1$ with the force point $\ha R(L)$.  Let $X=\til g_{\tau_L}(-c_0 {\bf x})<0$ and $r=\dist(0,\eta_{\bf x}[0,\tau_L])$. By (\ref{Koebe-dist}), $r\le 4{\bf x} e^{-\frac 2\kappa L}$. Since  $(-\infty,-c_0 {\bf x})$ and $[0,{\bf x}\wedge a_{K_{\tau_L}})$ could be separated by $\A_{\HH}(0,r,c_0{\bf x})$ in $\HH\sem K_{\tau_L}$, by Proposition \ref{extrem-prop} and conformal invariance and comparison principle of extremal length,  $\frac{\ha R(L)}{\ha R(L)-X}\le \frac{144r}{c_0{\bf x}}\le \frac{576 {\bf x} e^{-\frac 2\kappa L}}{c_0{\bf x}}<\frac 1{|N|+1}$, which implies that $X<N \ha R(L)$.  Since $\ha R(L)$ is the force point for $\eta^L$, and conditionally on $\{\tau_L<\tau^*_u\}$ has the same law as ${\bf x}$ (because the law of ${\bf x}$ is the quasi-invariant density of $\ha R$), we know that
 $(\eta^L,\ha R(L))$ conditionally on $\{\tau_L<\tau^*_u\}$ has the same law as $(\eta_{\bf x},{\bf x})$. From $X<N \ha R(L)$ we know that  $\PP[\eta^L\cap (X,0)\ne\emptyset]>0$, which implies that $\PP[\eta_{\bf x}\cap (c_0 {\bf x},0)\ne\emptyset]>0$. But this contradicts that $p_{\R}({\bf x},c_0)=0$.
\end{proof}

\begin{Remark}
  Theorem \ref{m=1} holds with slight modification if $\eta$ has two force points $v_1,u$, and the force values satisfy $\rho_1>-2$ and $\rho_1+\rho_u>\frac\kappa 2-4$. In this case, we redefine $\rho_\Sigma,\alpha,\beta$ using $\rho_\Sigma=\rho_1+\rho_u$, $\alpha=\frac{(\rho_1+2)(2\rho_\Sigma+8-\kappa)}{2\kappa}$, $\beta=\frac{\rho_1+\rho_\Sigma +6}{\rho_1+\rho_\Sigma +8}$, still define $\alpha_0$ using (\ref{alphaj}), and allow all constants to depend on $\kappa,\rho_1,\rho_\Sigma$. Then the proof works without further modification.
\end{Remark}

\begin{Lemma}
Let $\delta=|v_1-v_m|$ and $R\ge 10\delta$. Suppose  $|w-v_m|\ge R$.
Let $\PP_m$ denote the law of the $m$-force-point SLE$_\kappa(\ulin\rho)$ curve $\eta$ in Theorem \ref{main-thm1}.
Let $\PP_1$ denote the law of a one-force-point SLE$_\kappa(\rho_\Sigma)$ curve started from $w$ with the force point $v_m$.  Then $\PP_m$ and $\PP_1$ are mutually absolutely continuous on $\F_{\tau^{v_m}_R}$, and there is a constant $C>0$ depending only on $\kappa,\rho_\Sigma,\sigma^*$ such that
\BGE \Big|\ln\Big(\frac{d\PP_m|\F_{\tau^{v_m}_R}}{d\PP_1|\F_{\tau^{v_m}_R}}\Big)\Big|\le C\cdot \frac {\delta } R.\label{dP1/P2}\EDE
  \label{lemma-merge}
\end{Lemma}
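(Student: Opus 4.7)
The plan is to apply Girsanov's theorem. Under $\PP_1$, the driving function satisfies $dW = \sqrt\kappa\,dB_1 + \frac{\rho_\Sigma}{W-V_m}\,dt$, and under $\PP_m$ it satisfies $dW = \sqrt\kappa\,dB_m + \sum_{j=1}^m \frac{\rho_j}{W-V_j}\,dt$; both measures generate the same filtration $\F$ (each $V_j$ is a deterministic functional of $W$ via the ODE (\ref{dVU})), so the two driving SDEs are related by a change of measure. Girsanov then identifies
\[
\log\frac{d\PP_m}{d\PP_1}\bigg|_{\F_{\tau^{v_m}_R}} = \frac{1}{\sqrt\kappa}\int_0^{\tau^{v_m}_R}\Delta(t)\,dB_1(t) - \frac{1}{2\kappa}\int_0^{\tau^{v_m}_R}\Delta(t)^2\,dt,
\]
where, using $\sum_j \rho_j = \rho_\Sigma$,
\[
\Delta(t) = \sum_{j=1}^m\frac{\rho_j}{W-V_j} - \frac{\rho_\Sigma}{W-V_m} = \sum_{j=1}^{m-1}\frac{\rho_j(V_j-V_m)}{(W-V_j)(W-V_m)}.
\]

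The first step is the deterministic bound $0 \le V_j(t) - V_m(t) \le \delta$ for all $t$: from the ODEs, $\frac{d}{dt}(V_j - V_m) = 2(V_m-V_j)/[(V_j-W)(V_m-W)] \le 0$, so the difference is non-increasing and at most its initial value $v_j - v_m \le \delta$. The second step is the geometric bound $|W(t) - V_j(t)| \asymp |W(t) - V_m(t)| \gtrsim R$ for $t \le \tau^{v_m}_R$: since the curve stays at distance $\ge R$ from $v_m$ and $|v_j - v_m| \le \delta \le R/10$, each $v_j$ is also at distance $\ge 9R/10$ from the curve, and a boundary Koebe estimate as in the proof of Lemma \ref{interior-estimate} passes this bound to the image $|W - V_j|$. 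Combining, $|\Delta(t)| \lesssim \delta/R^2$ for $t \le \tau^{v_m}_R$.

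The main obstacle is that the time integral $\int_0^{\tau^{v_m}_R}\Delta(t)^2\,dt$ is not controlled just by $|\Delta(t)|^2 \lesssim (\delta/R^2)^2$ times $\tau^{v_m}_R$, since $\tau^{v_m}_R$ may be as large as $|w-v_m|^2$. To dodge this, I would rewrite $\frac{d\PP_m}{d\PP_1}|_{\F_t}$ directly as the (normalized) ratio of the two explicit SLE$_\kappa$-martingales
\[
M_m(t) = \prod_{j=1}^m (W-V_j)^{\rho_j/\kappa} g_t'(v_j)^{h_{\rho_j}} \prod_{j<k}(V_j - V_k)^{\rho_j\rho_k/(2\kappa)},\qquad M_1(t) = (W-V_m)^{\rho_\Sigma/\kappa} g_t'(v_m)^{h_{\rho_\Sigma}},
\]
with $h_\rho := \rho(\rho+4-\kappa)/(4\kappa)$. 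Taking logs and Taylor-expanding each factor around its ``merged at $v_m$'' counterpart reduces the problem to three types of small increments: $(W-V_j)/(W-V_m) = 1 + O(\delta/R)$ from Step 1; $g_t'(v_j)/g_t'(v_m) = 1 + O(\delta/R)$ via the Koebe distortion estimate $|g_t''(v_m)/g_t'(v_m)| \lesssim 1/\dist(v_m,K_t) \lesssim 1/R$; and $(V_j - V_k)/[g_t'(v_m)(v_j-v_k)] = 1 + O(\delta/R)$ from the same Taylor expansion of $g_t$ about $v_m$. The subtle point will be arranging the corresponding exponents into Abel-summed combinations controlled by the bounded $\sigma_j \in [\sigma_*, \sigma^*]$ rather than by the a priori unbounded $\sum_j |\rho_j|$ or $\sum_j |h_{\rho_j}|$; this telescoping is where the constant $C$ depending only on $\kappa,\rho_\Sigma,\sigma^*$ emerges.
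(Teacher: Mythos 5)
Your route is the paper's: after discarding the Girsanov time-integral formulation, you pass to the ratio $N=M_m/M_1$ of the two Schramm--Wilson local martingales (exactly the paper's $N_m/N_1$), identify the Radon--Nikodym derivative with $N(\tau)/N(0)$, and reduce to showing each constituent ratio is $1+O(\delta/R)$ up to $\tau^{v_m}_R$. One caveat on your geometric step: $|W(t)-V_m(t)|\gtrsim R$ is not true in general (only $|W-V_m|\gtrsim g_t'(v_m)R$ by Koebe $1/4$), so your ODE bound $|V_j-V_m|\le\delta$ does not by itself give $|V_j-V_m|/|W-V_m|\lesssim \delta/R$; one must instead use the distortion theorem to get $|V_j-V_m|\lesssim g_t'(v_m)\delta$ so that the derivative factors cancel. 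This is repairable and is what the paper does.

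The genuine gap is at the step you flag as ``subtle'' and defer. Per-factor estimates of the form $1+O(\delta/R)$, even after Abel summation converts the exponents $\rho_j$, $h_{\rho_j}$, $\rho_j\rho_k$ into the bounded $\sigma_j$, still leave a sum of $m$ (resp.\ $m^2$ for the cross terms) logarithmic increments each of size $O(\delta/R)$, i.e.\ a bound $O(m^2\delta/R)$ that is not uniform in $m$; the lemma requires $C$ independent of $m$. What closes the argument in the paper is a sign-definiteness property of the \emph{regrouped} increments, not just boundedness of the coefficients: the first-order ratios $Y_j=\frac{W-V_j}{W-V_{j+1}}$ lie in $(0,1]$ by the ordering of the force points, and the second-order ratios $E_{j_1,j_2}=\frac{D_{j_1,j_2}D_{j_1+1,j_2+1}}{D_{j_1+1,j_2}D_{j_1,j_2+1}}$ (with $D_{j,k}=(V_j-V_k)/(v_j-v_k)$) are shown, by an explicit computation with the Loewner ODE, to be decreasing in $t$ from initial value $1$, hence also in $(0,1]$. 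Only because all these logarithms have one sign can one write $\sum_j|\ln Y_j|=|\ln\prod_j Y_j|=|\ln\frac{W-V_1}{W-V_m}|\lesssim\delta/R$ and $\sum_{j,k}|\ln E_{j,k}|=|\ln(D_1/D_m)|\lesssim\delta/R$, after which the bounded $\sigma_j$ suffice. This submodularity of $\ln D_{j,k}$ is a structural fact about the flow, not a consequence of your per-factor Taylor expansions, and without it (or a substitute) your plan does not yield a constant depending only on $\kappa,\rho_\Sigma,\sigma^*$.
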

\begin{proof}
Throughout this proof, an explicit or implicit constant depends only on $\kappa,\rho_\Sigma,\sigma^*$.
 First suppose $\eta$ follows the law $\PP_m$. When $\tau^*_{v_m}<\infty$, we have $W=V_1=\cdots=V_m$ at $\tau^*_{v_m}$. The DMP asserts that, under $\PP_m$, conditionally on $\F_{\tau^*_{v_m}}$ and the event $\{\tau^*_{v_m}<\infty\}$, the law of $\eta(\tau^*_{v_m}+\cdot)$ is that of the $f_{\tau^*_{v_m}}$-image of an SLE$_\kappa(\rho_\Sigma)$ curve in $\HH$ started from $W(\tau^*_{v_m})$ with force point $W(\tau^*_{v_m})^-$. Here we use the fact that all force point processes merge together after the time $\tau^*_{v_m}$. The  statement clearly also holds if $\eta$ follows the law $\PP_1$. Thus, it suffices to prove that (\ref{dP1/P2}) holds with $\tau_0:=\tau^*_{v_1}\wedge \tau^{v_m}_R$ in place of $\tau^{v_m}_R$.

Since $W(0)=w>v_1=V_1(0)$, we have $W>V_1$ on $[0,\tau_{v_1}^*)$.
Let   $D_j(t)=g_t'(v_j)$. Then each $D_j$ satisfies the ODE $\frac{d D_j}{D_j}=\frac{-2}{(W-V_j)^2}$.  Define $N_1$ and $N_m$ on $[0,\tau_{v_1}^*)$ by
$$N_1={D_m^{ \frac{\rho_\Sigma(\rho_\Sigma+4-\kappa)}{4\kappa}} |W-V_m|^{ \frac{\rho_\Sigma}\kappa}  },\quad N_m={\prod_{j=1}^m \Big( D_j^{\frac{\rho_j(\rho_j+4-\kappa)}{4\kappa}} |W-V_j|^{\frac{\rho_j}\kappa}  \Big)\prod_{1\le j<k\le m} |V_j-V_k|^{\frac{\rho_j\rho_k}{2\kappa}}},$$
and let $N=N_m/N_1$. Here for $j\in\{1,m\}$, $N_j$ is a local martingale introduced in \cite{SW}, such that $\PP_j$ can be obtained by locally weighting a chordal SLE$_\kappa$  curve (with no force point) from $w$ to $\infty$ by $N_j/N_j(0)$. Thus, if we locally weight $\PP_1$ by $N/N(0)$, then we get $\PP_m$. More precisely, this means that,
if $\tau$ is an $\F$-stopping time with $\tau\le \tau_{v_1}^*$ such that $N/N(0)$ is uniformly bounded on $[0,\tau)$, then
\BGE \frac{d\PP_m|\F_{\tau}}{d\PP_1|\F_{\tau}}=\frac{N(\tau)}{N(0)}.\label{P2/P1-tau-N}\EDE
Here if $\tau=\tau_{v_1}^*$,  then $N(\tau)$ is understood as $\lim_{t\to \tau^-} N(t)$, which exists a.s.\ on the event $\{\tau=\tau_{v_1}^*\}$.
These facts can also be checked directly using It\^o's formula and Girsanov theorem.
We claim that
\BGE \sup_{0\le t<\tau_0} \Big|\ln\Big(\frac{N(t)}{N(0)}\Big)\Big|\lesssim \frac {\delta } R.\label{N/N-bound}\EDE
If the claim is true, then $N/N(0)$ is uniformly bounded on $[0,\tau_0)$, and $\ln( \frac{N(\tau_0)}{N(0)})\lesssim \frac {\delta } R$. So we may apply (\ref{P2/P1-tau-N}) to $\tau=\tau_0$ and get (\ref{dP1/P2}) with $\tau_0$ in place of $\tau^{v_m}_R$.

Now we prove (\ref{N/N-bound}). Let $X_j = \frac{W -V_j }{W-V_m} $. Then $0\le X_1\le \cdots\le X_m=1$ on $[0,\tau_{v_1}^*)$. Define positive functions $D_{j,k}$ on $[0,\tau_{v_1}^*)$, $1\le j,k\le m+1$, such that,
$$D_{j,k}(t)=\left\{\begin{array}{ll}
\frac{V_j(t)-V_k(t)}{v_j-v_k},& \mbox{if }j,k\le m\mbox{ and }v_j\ne v_k;\\
D_j(t),&\mbox{if }j,k\le m\mbox{ and }v_j=v_k;\\
1, & \mbox{if }j\mbox{ or }k=m+1.
\end{array}
\right.$$
Then we have
\BGE N/N(0)=D_{m,m}^{-\frac{\rho_\Sigma(\rho_\Sigma+4-\kappa)}{4\kappa}} \prod_{j=1}^{m-1}( X_j/X_j(0))^{\frac{\rho_j}\kappa} \prod_{j_1=1}^m \prod_{j_2=1}^m D_{j_1,j_2}^{\frac{\rho_{j_1} \rho_{j_2}}{4\kappa}} \prod_{j=1}^m D_{j,j}^{\frac{\rho_j(4-\kappa)}{4\kappa}} \label{M-re1}.\EDE

Define $Y_j$, $1\le j\le m-1$, and $E_{j_1,j_2}$, $1\le j_1,j_2\le m$, on $[0,\tau_{v_1}^*)$ by
$$Y_j=\frac{X_j}{X_{j+1}},\quad E_{j_1,j_2}=\frac{D_{j_1,j_2}D_{j_1+1,j_2+1}}{D_{j_1+1,j_2}D_{j_1,j_2+1}}.$$
Then each $Y_j\in(0,1]$.  By (\ref{dVU}) and the ODE for $D_j$,  $E_{j_1,j_2}$ satisfies the ODE:
$$\frac{d E_{j_1,j_2}}{E_{j_1,j_2}}=-2\prod_{s=1}^2 \Big(\frac 1{W-V_{j_s}}-\frac 1{W-V_{j_s+1}}\Big) dt\le 0,$$
where $\frac{1}{W-V_{m+1}}$ is understood as $0$. So $E_{j_1,j_2}$ is decreasing in $t$. From $E_{j_1,j_2}(0)=1$, we get $0<E_{j_1,j_2}\le 1$. Since $X_m=D_{k_1,m+1}=D_{m+1,k_2}=1$, we get
$$X_j=\prod_{k=j}^{m-1} Y_k,\quad D_{j_1,j_2}=\prod_{k_1=j_1}^m \prod_{k_2=j_2}^m E_{k_1,k_2}.$$
Recall that $ \sum_{j=1}^k \rho_j=\sigma_k-2$, $1\le k\le m$. So
$$ \prod_{j=1}^{m-1} X_j^{ {\rho_j} }= \prod_{j=1}^{m-1} \prod_{k=j}^{m-1} Y_k^{ {\rho_j} }=\prod_{k=1}^{m-1} \prod_{j=1}^k Y_k^{{\rho_j} } =\prod_{k=1}^{m-1} Y_k^{ {\sigma_k-2} };$$
$$\prod_{j_1=1}^m \prod_{j_2=1}^m D_{j_1,j_2}^{ {\rho_{j_1} \rho_{j_2}} } =
\prod_{j_1=1}^m \prod_{j_2=1}^m  \prod_{k_1=j_1}^m \prod_{k_2=j_2}^m E_{k_1,k_2}^{ {\rho_{j_1} \rho_{j_2}} }$$
$$=\prod_{k_1=1}^m \prod_{k_2=1}^m \prod_{j_1=1}^{k_1} \prod_{j_2=1}^{k_2} E_{k_1,k_2}^{ {\rho_{j_1} \rho_{j_2}} }=\prod_{k_1=1}^m \prod_{k_2=1}^m  E_{k_1,k_2}^{ {(\sigma_{k_1}-2)(\sigma_{k_2}-2)} }$$
$$\prod_{j=1}^m D_{j,j}^{\rho_j}=\prod_{j=1}^m \prod_{k_1=j}^m \prod_{k_2=j}^m E_{k_1,k_2}^{ {\rho_{j}} }=\prod_{k_1=1}^m \prod_{k_2=1}^m \prod_{j=1}^{k_1\wedge k_2} E_{k_1,k_2}^{\rho_j} =\prod_{k_1=1}^m \prod_{k_2=1}^m E_{k_1,k_2}^{\sigma_{k_1\wedge k_2}-2}.$$

Let $S=\{(j,k)\in\N^2:1\le j,k\le m\}\sem\{(m,m)\}$.  By (\ref{M-re1}) and the above formulas,
$$N/N(0)= \prod_{j=1}^{m-1} (Y_j/Y_j(0)) ^{\frac{\sigma_j-2}\kappa}   \prod_{(k_1,k_2)\in S} E_{k_1,k_2}^{\frac{(\sigma_{k_1}-2)(\sigma_{k_2}-2)}{4\kappa}+\frac{(4-\kappa)(\sigma_{k_1\wedge k_2}-2)}{4\kappa}}.$$
Thus, it suffices to show that (\ref{N/N-bound}) holds with $N/N(0)$ replaced respectively by
$$\prod_{j=1}^{m-1} Y_j ,\quad \prod_{j=1}^{m-1} Y_j ^{ {\sigma_j } },\quad \prod_{(j,k)\in S} E_{j,k}^{ {\sigma_j\sigma_k}  }, \quad \prod_{(j,k)\in S} E_{j,k}^{ {\sigma_j}  },\quad \prod_{(j,k)\in S} E_{j,k}^{ {\sigma_{j\wedge k}}  }, \quad \prod_{(j,k)\in S} E_{j,k} .$$

 We observe that $\prod_{(j,k)\in S} E_{j,k}={D_1}/{D_m}$. Since $E_{j,k}\le 1$ and $\sigma_j\le \sigma^*\vee (\rho_\Sigma+2)$, we have
$$  \prod_{(j,k)\in S} E_{j,k}^{ {\sigma_j}  },\prod_{(j,k)\in S} E_{j,k}^{ {\sigma_{j\wedge k}}  },
\prod_{(j,k)\in S} E_{j,k}^{ {\sigma_j\sigma_k}  }\in \Big[\Big(\frac{D_1}{D_m}\Big)^{ (\sigma^*\vee (\rho_\Sigma+2))^2 },1\Big].$$
Thus, to prove the statement for $\prod_{(j,k)\in S} E_{j,k}^{ {\sigma_j\sigma_k}  }$, $\prod_{(j,k)\in S} E_{j,k}^{ {\sigma_j}  }$, and  $\prod_{(j,k)\in S} E_{j,k}$, it suffices to show that (\ref{N/N-bound}) holds with $\frac N{N(0)}$ replaced by $\frac{D_1}{D_m}$.

Since $\prod_{j=1}^{m-1} Y_j=X_1=\frac{|W-V_1|}{|W-V_m|}$, $0<Y_j\le 1$ and $\sigma_j\le \sigma^*$,   to prove the statement for $\prod_{j=1}^{m-1} Y_j  $ and $ \prod_{j=1}^{m-1} Y_j  ^{ {\sigma_j } }$, it suffices to show that  (\ref{N/N-bound}) holds with $\frac N{N(0)}$ replaced by $\frac{|W-V_1|}{|W-V_m|}$.

The statements for  $ \frac {D_1}{D_m} $ follow immediately from Koebe distortion theorem since $D_j(t)=g_t'(v_j)$, $j=1,m$, $g_t$ is analytic on $B_{\C}(v_m,R)$, and $|v_1-v_m|=\delta \le R/10$. The statements for  $\frac{|W-V_1|}{|W-V_m|}$  follows from a combination of Koebe distortion theorem with Koebe $1/4$ theorem. To see this, let $t\in [0,\tau_0)$. Since $W(t)=g_t(\eta(t))$, and $|\eta(t)-v_m|\ge R$, by Koebe $1/4$ theorem, $|W(t)-V_m(t)|\ge D_m(t) R/4$. By Koebe distortion theorem, $|V_1(t)-V_m(t)|\le \frac {D_m(t)\delta }{(1-\delta /R)^2}$. Since $\frac{\delta } R\le \frac 1{10}$, we have
$\frac{|V_1(t)-V_m(t)|}{|W(t)-V_m(t)|}\le \frac{4 \delta /R}{(1-\delta /R)^2}\le 5 \frac {\delta } R\le\frac 12$.
Since $|\ln(1\pm x)|\le \ln (4)|x|$ when $|x|\le\frac 12$, we get $\ln(\frac{|W(t)-V_1(t)|}{|W(t)-V_m(t)|})\lesssim \frac {\delta } R$, as desired.
\end{proof}

\begin{Lemma}
 Let $C_{\FF}>0$  be as in Theorem \ref{m=1}. Let $\delta=|v_1-v_m|$. Let $\beta_\Sigma=\frac{\rho_\Sigma+3}{\rho_\Sigma+4}$ and $\beta_*=\frac{\alpha_*}{\alpha_*+1}$. Then  as $\frac r{L_1},\frac {\delta}{L_0\wedge L_1 } \to 0^+$,
\BGE \PP[\eta\cap B_{\FF}(u,r)\ne\emptyset]= C_{\FF}\Big(\frac r{L_1}\Big)^\alpha \Big(\Big(\frac{L_1}{L_0+L_1}\Big)^{\alpha_0} + O\Big (\frac r{L_1}\Big)^{\beta_\Sigma}+O\Big (\frac {\delta}{L_0}\Big)^{\beta_*} \Big)\label{v-close}\EDE
\BGE =C_{\FF} G(w,\ulin v;u) r^{\alpha}+ \Big(\frac r{L_1}\Big)^\alpha\Big( O\Big (\frac r{L_1}\Big)^{\beta_\Sigma}+O\Big (\frac {\delta}{L_0 }\Big)^{\beta_*}+O\Big(\frac {\delta}{L_1 }\Big)\Big) .\label{v-close'}\EDE
\label{v-close-lemma}
\end{Lemma}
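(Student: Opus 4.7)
The plan is to compare $\PP_m$, the law of the $m$-force-point curve $\eta$, with $\PP_1$, the law of a one-force-point SLE$_\kappa(\rho_\Sigma)$ started from $w$ with single force point $v_m$; Theorem \ref{m=1} already provides the exact asymptotic for $\PP_1$, which will furnish the main term on the right-hand side of (\ref{v-close}). To transfer this to $\PP_m$, I will introduce an intermediate scale $R$ with $10\delta\le R\le L_0$ (to be optimized) and, for each $\PP\in\{\PP_m,\PP_1\}$, decompose
$$\PP[\tau^u_{r;\FF}<\infty]=\PP[\tau^u_{r;\FF}<\tau^{v_m}_R]+\PP[\tau^{v_m}_R\le\tau^u_{r;\FF}<\infty].$$

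For the first summand, the event $\{\tau^u_{r;\FF}<\tau^{v_m}_R\}$ belongs to $\F_{\tau^{v_m}_R}$ (via the standard fact that $\{S<T\}\in\F_{S\wedge T}$ for stopping times $S,T$), so Lemma \ref{lemma-merge} will yield
$\PP_m[\tau^u_{r;\FF}<\tau^{v_m}_R]=\PP_1[\tau^u_{r;\FF}<\tau^{v_m}_R](1+O(\delta/R))$.
The second summand is bounded, under each of $\PP_m$ and $\PP_1$, by the two-point estimate Lemma \ref{two-pt-lem} at scales $r_0=R$ and $r_1=r$, giving $O((R/L_0)^{\alpha_*}(r/L_1)^\alpha)$ in each case. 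Combining these with Theorem \ref{m=1} applied to $\PP_1$ produces
$$\PP_m[\tau^u_{r;\FF}<\infty]=C_\FF\Big(\frac{L_1}{L_0+L_1}\Big)^{\alpha_0}\Big(\frac{r}{L_1}\Big)^\alpha\big(1+O((r/L_1)^{\beta_\Sigma})\big)\big(1+O(\delta/R)\big)+O\Big((R/L_0)^{\alpha_*}(r/L_1)^\alpha\Big).$$

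The only real obstacle is choosing $R$ to balance the two competing error contributions $\delta/R$ (from absolute continuity) and $(R/L_0)^{\alpha_*}$ (from the two-point estimate). Setting them proportional dictates $R\asymp(\delta L_0^{\alpha_*})^{1/(\alpha_*+1)}$, which one checks lies in $[10\delta,L_0]$ once $\delta/L_0$ is sufficiently small, and gives $\delta/R\asymp(R/L_0)^{\alpha_*}\asymp(\delta/L_0)^{\beta_*}$ with $\beta_*=\alpha_*/(\alpha_*+1)$. Absorbing the factor $(L_1/(L_0+L_1))^{\alpha_0}\le 1$ into the $O$-symbols and expanding the product of $1+O(\cdot)$ factors will then yield (\ref{v-close}).

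Finally, (\ref{v-close'}) will follow from (\ref{v-close}) by a direct calculation on the Green's function. Using (\ref{G-rewrite}) together with $|v_j-u|=L_1+O(\delta)$ for $1\le j\le m$ and $|v_0-u|=L_0+L_1$, the telescoping product simplifies to
$$G(w,\ulin v;u)\,r^\alpha=\Big(\frac{L_1}{L_0+L_1}\Big)^{\alpha_0}\Big(\frac{r}{L_1}\Big)^\alpha\big(1+O(\delta/L_1)\big),$$
where the total $O(\delta/L_1)$ collects the boundedly many factors of $(1+O(\delta/L_1))^{\sigma_j\gamma}$. Substituting this into (\ref{v-close}) and using $G(w,\ulin v;u)r^\alpha\le(r/L_1)^\alpha$ converts the leading coefficient to $C_\FF G(w,\ulin v;u)r^\alpha$ and introduces precisely the extra $O(\delta/L_1)$ term appearing in (\ref{v-close'}).
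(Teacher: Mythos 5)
Your proposal is correct and follows essentially the same route as the paper: comparing $\PP_m$ with $\PP_1$ on $\F_{\tau^{v_m}_R}$ via Lemma \ref{lemma-merge}, controlling the event $\{\tau^{v_m}_R\le\tau^u_{r;\FF}<\infty\}$ by the two-point estimate of Lemma \ref{two-pt-lem}, invoking Theorem \ref{m=1} for $\PP_1$, and optimizing $R=\delta^{1/(\alpha_*+1)}L_0^{\alpha_*/(\alpha_*+1)}$ to balance $\delta/R$ against $(R/L_0)^{\alpha_*}$, with the same telescoping computation on $G$ for the second display.
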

\begin{proof}   It suffices to prove (\ref{v-close}) since by (\ref{G-univ},   $G(w,\ulin v;u) =  (\frac{L_1}{L_0+L_1} )^{\alpha_0}(\frac 1{L_1} )^\alpha(1+O(\frac{\delta}{L_1}))$ as $ \delta/{L_1}\to 0^+$.

Suppose $ {\delta}/{L_0 }\in(0,1)$ is very small. Choose $R\in(\delta,L_0)$ to be determined later.
Let $\PP_m$ and $\PP_1$ be as in Lemma \ref{lemma-merge}. Note that $\PP_m=\PP$. By Lemma \ref{two-pt-lem},
$$ |\PP_j[\tau^u_{r;\FF}<\infty]-\PP_j[\tau^u_{r;\FF}< \tau^{v_m}_R]|\lesssim  (  R/{L_0} )^{\alpha_*} (  r/{L_1} )^{\alpha},\quad j=1,m. $$
By Lemma \ref{lemma-merge} and (\ref{est-u*}), if $10\delta<R$, then
$$|\PP_m[\tau^u_{r;\FF}<\tau^{v_m}_R]-\PP_1[\tau^u_{r;\FF}<\tau^{v_m}_R]|\lesssim  ( \delta/ R)  ( r/{L_1})^{\alpha}.$$
By Theorem \ref{m=1},
$$\Big|\PP_1[\tau^u_{r;\FF}<\infty]-C_{\FF}\Big(\frac{L_1}{L_0+L_1}\Big)^{\alpha_0}\Big(\frac{r}{L_1}\Big)^\alpha \Big|\lesssim \Big(\frac{r}{L_1}\Big)^{\alpha+\beta_\Sigma}.$$
Combining the above three displayed formulas, we find that, when $\delta/R<1/10$,
$$\Big|\PP_m[\tau^u_{r;\FF}<\infty]-C_{\FF}\Big(\frac{L_1}{L_0+L_1}\Big)^{\alpha_0}\Big(\frac{r}{L_1}\Big)^\alpha \Big|\lesssim \Big(\frac r{L_1}\Big)^{\alpha}\Big(\Big(\frac R{L_0}\Big)^{\alpha_*}+   \frac \delta R  + \Big(\frac{r}{L_1}\Big)^{ \beta_\Sigma}\Big).$$
Setting $R=\delta^{\frac 1{\alpha_*+1}} L_0^{\frac{\alpha_*}{\alpha_*+1}}$ in the above estimate, we get  (\ref{v-close}).
\end{proof}

\begin{proof}[Proof of Theorem \ref{main-thm1}] Let $C_{\FF}>0$ be as in Theorem \ref{m=1}.
  Let $r<L_1$. Fix $R\in (r,L_1)$ to be determined later.  Define $M$ on $[0,\tau_u^*)$ by
$$ M=D_u^\alpha G(W,\ulin V;U)=D_u^{\alpha} |W-U|^{-\alpha_0} \prod_{j=1}^m |V_j-U|^{-\alpha_j}
  .$$
A straightforward application of It\^o's formula shows that $M$ is a local martingale. By (\ref{G-upper}),
\BGE M\le  ( {|V_m-U|}/{|W-U|} )^{\alpha_*} (  {D_u}/{|V_m-U|} )^\alpha .\label{M-other}\EDE
Let $\tau=\tau^u_R$.
By Koebe $1/4$ theorem, $\frac{D_u}{|V_m-U|}\le \frac 4R $ on $[0,\tau\wedge \tau_u^*)$. So by (\ref{M-other}) $M\le (\frac 4R)^\alpha$ on $[0,\tau\wedge \tau_u^*)$. By a standard extremal length argument, as $t\uparrow \tau_u^*$, $\frac{|V_m(t)-U(t)|}{|W(t)-U(t)|}\to 0$. This implies by (\ref{M-other}) that, on the event $\{\tau_u^*\le \tau^u_R\}$, as $t\uparrow \tau_u^*$, $M(t)\to 0$. Thus, $M(t\wedge \tau\wedge \tau_u^*)$, $0\le t<\infty$, is a uniformly bounded martingale, where when $t\wedge \tau\wedge \tau_u^*\ge \tau_u^*$, $M(t\wedge \tau\wedge \tau_u^*)$ is set to be $0$. In particular, we have $\EE[\ind_{\{\tau<\tau_u^*\}} M(\tau)]=M(0)=G(w,\ulin v;u)$.

By DMP, $\PP[\tau^u_{r;\FF}<\infty|\F_\tau,\tau<\tau^*_u]$ equals the probability that an SLE$_\kappa(\ulin\rho)$ curve $\eta^\tau$ in $\HH$ started from $W(\tau)$ with force points $\ulin V(\tau)$  hits $g_\tau(B_{\FF}(u,r))$. Suppose $\tau<\tau_u^*$.   By Koebe distortion theorem,  there are $r_1,r_2=D_u(\tau) r(1+O(r/R))$ such that
$$  B_{\FF}(U(\tau),r_1)\subset g_\tau(B_{\FF}(u,r))\subset B_{\FF}(U(\tau),r_2).$$
 By Koebe $1/4$ theorem, $|V_m(\tau)-U(\tau)|\ge D_u(\tau) R/4$. Note that the interval $[v_m\wedge a_{K_\tau},v_1\wedge a_{K_\tau}]$ can be disconnected in $\HH\sem K_\tau$ from both $(-\infty,u]$ and $\pa_{\HH}^+ K_\tau$ by  $A_{\HH}(u,R,|v_m-u|)$. Thus, by Proposition \ref{extrem-prop}, $\frac{|V_1(\tau)-V_m(\tau)|}{|W(\tau)-V_m(\tau)|}$ and $\frac{|V_1(\tau)-V_m(\tau)|}{|V_m(\tau)-U(\tau)|}$ both have the order $O(\frac{R}{|v_m-u|})$.
Since $M(\tau)=D_u(\tau)^\alpha G(W(\tau),\ulin V(\tau);U(\tau))$,
by (\ref{v-close'})   and the above estimates, the conditional probability that $\eta^\tau$ hits $g_\tau(B_{\FF}(u,r))$ equals
$$C_{\FF}M(\tau)  r^{\alpha}+\Big(\frac{r}{R}\Big)^\alpha\Big( O\Big(\frac{r}{R}\Big)^{\beta_\Sigma}+ O\Big(\frac{R}{|v_m-u|}\Big)^{\beta_*}\Big).$$
Setting $R$ such that $R^{\beta_\Sigma+\beta_*}=r^{\beta_\Sigma} |v_m-u|^{\beta_*}$, and letting $\beta=\frac{\beta_*\beta_\Sigma}{\beta_*+\beta_\Sigma}$,  we see that the above conditional probability equals
$C_{\FF}M(\tau)  r^{\alpha}+ (\frac{r}{R} )^\alpha O  (\frac{r}{|v_m-u|} )^\beta$. Thus,
$$\PP[\eta\cap B_{\FF}(u,r)\ne\emptyset]=
\EE [\ind_{\{\tau<\tau_u^*\}}C_{\FF}M(\tau) r^{\alpha}] + \PP[\tau<\tau_u^*]  \Big(\frac{r}{R}\Big)^\alpha O \Big(\frac{r}{|v_m-u|}\Big)^\beta $$
$$=C_{\FF} G(w,\ulin v;u) r^\alpha+ \Big(\frac{|v_m-u|}{|w-u|}\Big)^{\alpha_*}\cdot  O\Big(\frac{r}{|v_m-u|}\Big)^{\alpha+\beta},$$
where we used $\EE[\ind_{\{\tau<\tau_u^*\}} M(\tau)]=G(w,\ulin v;u)$ and $\PP[\tau<\tau_u^*]\lesssim (\frac{|v_m-u|}{|w-u|} )^{\alpha_*} (\frac R{|v_m-u|})^\alpha$ by (\ref{est-u}).
\end{proof}

Combining Theorem \ref{main-thm1} with the lower bound of $G(w,\ulin v;u)$ in (\ref{G-upper}), we get the following lower bound of the hitting probability.

\begin{Corollary}
There is a constant $c\in(0,1)$  such that when
${r}/{L_1}\le c ( {L_1}/({L_0+L_1}))^{\frac{\alpha^*-\alpha_*}\beta}$,
  we have
  $\PP[ \tau^u_{r;\FF}<\infty]\ge C_{\FF} G(w,\ulin v;u) r^\alpha/2$.
  \label{Cor-lower-1pt}
\end{Corollary}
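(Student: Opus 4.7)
The plan is to combine Theorem \ref{main-thm1} with the lower bound $G(w,\ulin v;u)\,r^\alpha \ge (L_1/(L_0+L_1))^{\alpha^*}(r/L_1)^\alpha$ from the first inequality in (\ref{G-upper}), and to verify that the smallness condition hypothesized on $r/L_1$ forces the error term in Theorem \ref{main-thm1} to be at most half the main term.

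First I would invoke Theorem \ref{main-thm1} to extract constants $c_0, C' > 0$ (depending only on $\kappa,\rho_\Sigma,\sigma^*,\sigma_*,\FF$) such that whenever $r/L_1 \le c_0$,
\begin{equation*}
\PP[\tau^u_{r;\FF}<\infty] \ge C_\FF\, G(w,\ulin v;u)\, r^\alpha - C'\Big(\frac{L_1}{L_0+L_1}\Big)^{\alpha_*}\Big(\frac{r}{L_1}\Big)^{\alpha+\beta}.
\end{equation*}
Applying (\ref{G-upper}) to bound the main term from below by $C_\FF (L_1/(L_0+L_1))^{\alpha^*}(r/L_1)^\alpha$, a sufficient condition for the error to be at most half the main term is
\begin{equation*}
C'\Big(\frac{L_1}{L_0+L_1}\Big)^{\alpha_*}\Big(\frac{r}{L_1}\Big)^{\alpha+\beta} \le \frac{C_\FF}{2}\Big(\frac{L_1}{L_0+L_1}\Big)^{\alpha^*}\Big(\frac{r}{L_1}\Big)^{\alpha},
\end{equation*}
which rearranges to $r/L_1 \le \big(C_\FF/(2C')\big)^{1/\beta}(L_1/(L_0+L_1))^{(\alpha^*-\alpha_*)/\beta}$.

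Setting $c := \min\big(c_0,\, (C_\FF/(2C'))^{1/\beta}\big)$, the hypothesis $r/L_1 \le c(L_1/(L_0+L_1))^{(\alpha^*-\alpha_*)/\beta}$ yields both the applicability threshold $r/L_1 \le c_0$ (since $\alpha^*\ge \alpha_*$ and $L_1/(L_0+L_1)\le 1$ together give $(L_1/(L_0+L_1))^{(\alpha^*-\alpha_*)/\beta}\le 1$) and the sufficient condition above, whence $\PP[\tau^u_{r;\FF}<\infty]\ge C_\FF G(w,\ulin v;u) r^\alpha/2$. There is no serious obstacle here; the argument is pure bookkeeping, with the only subtlety being to track the two exponents $\alpha^*$ and $\alpha_*$ carefully so as to identify the correct exponent $(\alpha^*-\alpha_*)/\beta$ on the geometric factor $L_1/(L_0+L_1)$.
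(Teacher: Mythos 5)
Your proposal is correct and is exactly the argument the paper intends: the paper gives no separate proof beyond the remark that the corollary follows by ``combining Theorem \ref{main-thm1} with the lower bound of $G(w,\ulin v;u)$ in (\ref{G-upper})'', and your bookkeeping — extracting the threshold $c_0$ and constant $C'$ from the $O(\cdot)$ term, lower-bounding the main term via the first inequality of (\ref{G-upper}), and solving for the exponent $(\alpha^*-\alpha_*)/\beta$ — fills in precisely those details. The check that the hypothesis also guarantees $r/L_1\le c_0$ (using $\alpha^*\ge\alpha_*$ and $L_1/(L_0+L_1)\le 1$) is the right final touch.
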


Define $G_{t,\FF}(u)$ for $t\in[0,\infty)$ by
\BGE
G_{t;\FF}(u)=\left\{\begin{array}{ll}
C_{\FF} D_u(t)^\alpha G(W(t),V(t);U(t)), & \mbox{if } t<\tau_u^*;\\
0, &\mbox{if }t\ge \tau_u^*.
\end{array}
\right.
\label{Gt}
\EDE
By Koebe $1/4$ theorem and (\ref{G-upper}),
\BGE G_{t;\FF}(u)\lesssim \dist(u, \eta[0,t])^{-\alpha}.\label{Gt-upper}\EDE

The following lemmas will be used in the next section.

\begin{Lemma}
  For any
  $\F$-stopping time $\tau$ and $R>r\in (0,|v_m-u|)$, as $r/R\to 0^+$,
  \BGE \PP[ \tau^u_{r;\FF}<\infty|\F_\tau, \tau<\tau^u_R ]=   G_{\tau;\FF}(u) r^\alpha  +O(r/R)^{\alpha+\beta}.\label{Green-r-R}\EDE
  \label{Cor-main-thm1}
\end{Lemma}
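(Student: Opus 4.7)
The plan is to reduce to the unconditional one-point Green's function estimate (Theorem \ref{main-thm1}) via the domain Markov property, handling distortion of the target disc by Koebe's theorems. Throughout, implicit constants depend only on $\kappa,\rho_\Sigma,\sigma^*,\sigma_*$. We may assume $r/R$ is small enough that the asymptotics below apply; otherwise the claim follows from the trivial bound on the conditional probability.

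First I would restrict to the event $\{\tau < \tau^*_u\}$, since on $\{\tau^*_u \le \tau < \tau^u_R\}$ the hitting probability vanishes and $G_{\tau;\FF}(u)=0$ by definition (\ref{Gt}). On $\{\tau < \tau^*_u \wedge \tau^u_R\}$, the DMP of SLE$_\kappa(\ulin\rho)$ says that the conditional law of $\eta$ after $\tau$ is (the $f_\tau$-image of) an SLE$_\kappa(\ulin\rho)$ curve $\eta^\tau$ from $W(\tau)$ with force points $\ulin V(\tau)$, so $\PP[\tau^u_{r;\FF}<\infty\mid\F_\tau,\tau<\tau^*_u]$ equals the probability that $\eta^\tau$ hits $g_\tau(B_{\FF}(u,r))$.

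Next, by Koebe's $1/4$ theorem applied at $u\in\HH\setminus(K_\tau^{\doub}\cup[v_m,\infty))$ (which maps to $\C\setminus[V_m(\tau),\infty)$ under $g_\tau$) and the assumption $\dist(u,\eta[0,\tau])\ge R$, we have $|V_m(\tau)-U(\tau)|\ge D_u(\tau) R/4$. Since $r<R$, Koebe's distortion theorem then yields
\[
B_{\FF}(U(\tau),r_1)\subset g_\tau(B_{\FF}(u,r))\subset B_{\FF}(U(\tau),r_2),\qquad r_i=D_u(\tau) r\bigl(1+O(r/R)\bigr).
\]
Now apply Theorem \ref{main-thm1} to $\eta^\tau$ at the boundary point $U(\tau)\in(-\infty,V_m(\tau))$, with target radii $r_1,r_2$. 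The leading term produces
\[
C_{\FF}\, G(W(\tau),\ulin V(\tau);U(\tau))\, r_i^\alpha = C_{\FF}\,D_u(\tau)^\alpha G(W(\tau),\ulin V(\tau);U(\tau))\, r^\alpha\bigl(1+O(r/R)\bigr) = G_{\tau;\FF}(u)\,r^\alpha+\mathcal E_1,
\]
where, using $G_{\tau;\FF}(u)\lesssim \dist(u,\eta[0,\tau])^{-\alpha}\le R^{-\alpha}$ from (\ref{Gt-upper}), the multiplicative error satisfies $\mathcal E_1\lesssim (r/R)^{\alpha+1}\le (r/R)^{\alpha+\beta}$ since $\beta<1$.

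Finally I would handle the additive error from Theorem \ref{main-thm1}, which has the form
\[
O\!\left(\Bigl(\tfrac{|V_m(\tau)-U(\tau)|}{|W(\tau)-U(\tau)|}\Bigr)^{\alpha_*}\Bigl(\tfrac{r_i}{|V_m(\tau)-U(\tau)|}\Bigr)^{\alpha+\beta}\right).
\]
The first factor is $\le 1$, and the Koebe $1/4$ bound $|V_m(\tau)-U(\tau)|\ge D_u(\tau)R/4$ combined with $r_i\asymp D_u(\tau)r$ gives $r_i/|V_m(\tau)-U(\tau)|\lesssim r/R$, so this error is also $O\bigl((r/R)^{\alpha+\beta}\bigr)$. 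Combining these estimates gives (\ref{Green-r-R}). The only nontrivial check is that the multiplicative distortion and the Theorem \ref{main-thm1} residual both collapse to the same $(r/R)^{\alpha+\beta}$ scale; this is the one place where the Koebe lower bound on $|V_m(\tau)-U(\tau)|$, available because $\tau<\tau^u_R$, is essential.
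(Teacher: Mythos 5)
Your argument is correct and follows essentially the same route as the paper's own proof: reduce to the event $\{\tau<\tau^*_u\wedge\tau^u_R\}$, apply the DMP, sandwich $g_\tau(B_{\FF}(u,r))$ between discs of radii $D_u(\tau)r(1+O(r/R))$ via Koebe's $1/4$ and distortion theorems, and invoke Theorem \ref{main-thm1} together with (\ref{Gt-upper}) to absorb both the multiplicative and additive errors into $O(r/R)^{\alpha+\beta}$. Your write-up simply makes explicit the error bookkeeping that the paper leaves implicit.
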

\begin{proof}
If $\tau^*_u\le \tau<\tau^u_R$, the part of $\eta$ after $\tau$ is disconnected from $u$ by $\eta[0,\tau^*_u]$, which has distance greater than $R$ from $u$. Thus, $\PP[\ \tau^u_{r;\FF}<\infty|\F_\tau, \tau^*_u\le \tau<\tau^u_R ]=0$. So it suffices to prove (\ref{Green-r-R}) with $\tau<\tau^*_u\wedge \tau^u_R$ in place of $\tau<\tau^u_R$.
   By DMP, $\PP[  \tau^u_{r;\FF}<\infty|\F_\tau, \tau<\tau^*_u\wedge \tau^u_R ]$ equals the probability that an SLE$_\kappa(\ulin\rho)$ curve $\eta^\tau$ started from $W(\tau)$ with force points $\ulin V(\tau)$ visits $g_\tau(B_{\FF}(u,r))$. Assume that $\tau<\tau^*_u\wedge \tau^u_R$.   By Koebe's $1/4$ theorem and distortion theorem, $|U(\tau)-V_m(\tau)|\ge D_u(\tau) R/4$, and  there are $r_1,r_2=D_u(\tau) r(1+O(r/R))$ such that
   $ B_{\FF}(U(\tau),r_1)\subset g_\tau(B_{\FF}(u,r))\subset B_{\FF}(U(\tau),r_2)$. So we get the estimate of the conditional probability by applying Theorem \ref{main-thm1} and (\ref{Gt-upper}) .
 \end{proof}

\begin{Lemma}
  Let $t_1<t_2\in [0,\tau_u^*)$. Suppose $I$ is a crosscut of $\HH\sem K_{t_1}$ connecting a point on the real interval $(u,v_m\wedge a_{K_{t_1}}]$ with a point on $\pa_{\HH}^+ K_{t_1}$ such that $\eta[t_1,t_2]$ lies in the closure of the bounded connected component of $(\HH\sem K_{t_1})\sem I$, and $d_{\HH\sem K_{t_1}}(I,(-\infty,u])>\ln(1440)/\pi$. Then
  \BGE|\ln(G_{t_2;\FF}(u))-\ln(G_{t_1;\FF}(u))|\lesssim \exp({-\pi d_{\HH\sem K_{t_1}}(I,(-\infty,u])}).\label{Gt1t2-eqn}\EDE
  \label{Gt1t2}
\end{Lemma}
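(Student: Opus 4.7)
The plan is to transport the picture at time $t_1$ to $\HH$ via $g_{t_1}$, bound the radius of the new hull using the extremal-length hypothesis and Proposition \ref{extrem-prop}, and apply Proposition \ref{basic-chordal} term-by-term to the decomposition
$$\ln G_{t_2;\FF}(u)-\ln G_{t_1;\FF}(u)=\alpha\ln\til g_{t_2-t_1}'(U(t_1))-\alpha_0\ln\frac{|W(t_2)-U(t_2)|}{|W(t_1)-U(t_1)|}-\sum_{j=1}^m\alpha_j\ln\frac{|V_j(t_2)-U(t_2)|}{|V_j(t_1)-U(t_1)|},$$
where $\til K_s:=g_{t_1}(K_{t_1+s}\sem K_{t_1})$ is the Loewner chain driven by $\til W(s):=W(t_1+s)$ with chordal Loewner maps $\til g_s$. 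Set $d:=d_{\HH\sem K_{t_1}}(I,(-\infty,u])$. The hypothesis places $\til K_{t_2-t_1}$ inside the bounded component of $\HH\sem g_{t_1}(I)$, so every curve in $\HH$ from $\til K_{t_2-t_1}$ to $(-\infty,U(t_1)]$ must cross $g_{t_1}(I)$; comparison of extremal lengths combined with conformal invariance gives $d_\HH(\til K_{t_2-t_1},(-\infty,U(t_1)])\ge d$. Proposition \ref{extrem-prop} applied at $z_0=W(t_1)\in\lin{\til K_{t_2-t_1}}$ yields $r:=\rad_{W(t_1)}(\til K_{t_2-t_1})\le 144\,|W(t_1)-U(t_1)|\,e^{-\pi d}$; since $d>\ln(1440)/\pi$ we have $r\le |W(t_1)-U(t_1)|/10$, so Proposition \ref{basic-chordal} applies to $(\til K_s)$.

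The crucial additional ingredient is a per-force-point lower bound of the form $V_k(t_1)-U(t_1)\gtrsim W(t_1)-U(t_1)$. For unswallowed $v_k$, introduce the conformal map $\phi_k(z):=(g_{t_1}(z)-V_k(t_1))/(V_k(t_1)-U(t_1))$ from $\HH\sem K_{t_1}$ onto $\HH$, which sends $u\to -1$, $v_k\to 0$, $(-\infty,u]\to(-\infty,-1]$, and the prime ends of $\pa_\HH^+ K_{t_1}$ to $[Y_k,\infty)$, where $Y_k:=(W(t_1)-V_k(t_1))/(V_k(t_1)-U(t_1))>0$. Since $p^*\le v_m\le v_k$, where $p^*\in(u,v_m\wedge a_{K_{t_1}}]$ denotes the left endpoint of $I$, the image $\phi_k(I)$ is a crosscut from some $\alpha'\in(-1,0]$ to some point in $[Y_k,\infty)$, hence $\rad_{\alpha'}(\phi_k(I))\ge Y_k$. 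Conformal invariance preserves $d$, so Proposition \ref{extrem-prop} with $S_1=\phi_k(I)$, $S_2=(-\infty,-1]$, $z_0=\alpha'$ gives $Y_k\le 144\,e^{-\pi d}(\alpha'+1)\le 144\,e^{-\pi d}$, and therefore $(W(t_1)-U(t_1))/(V_k(t_1)-U(t_1))=1+Y_k\lesssim 1$. The same argument, with $V_k(t_1)$ replaced by $c_{K_{t_1}}$ and $v_k$ by the prime end at $a_{K_{t_1}}^-$, handles the swallowed case and yields $(W(t_1)-U(t_1))/(c_{K_{t_1}}-U(t_1))\lesssim 1$.

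With these bounds in hand, Proposition \ref{basic-chordal}(i)--(iii) applied to the chain $(\til K_s)$ gives $|W(t_2)-W(t_1)|\le 2r$, $|U(t_2)-U(t_1)|\le 3r$, $|V_j(t_2)-V_j(t_1)|\le 3r$ for unswallowed $v_j$, and $|\ln\til g_{t_2-t_1}'(U(t_1))|\lesssim (r/|W(t_1)-U(t_1)|)^2$. For swallowed $v_j$ (where $V_j(t)=c_{K_t}$), a case split on whether $c_{K_{t_1}}<a_{\til K_{t_2-t_1}}$ or $c_{K_{t_1}}\in[a_{\til K_{t_2-t_1}},b_{\til K_{t_2-t_1}}]$, combined with Proposition \ref{basic-chordal}(ii), gives $|c_{K_{t_2}}-c_{K_{t_1}}|\lesssim r$. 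Combining these estimates with the lower bounds of the previous paragraph and $r\le 144\,e^{-\pi d}|W(t_1)-U(t_1)|$, each term in the log-decomposition is $\lesssim e^{-\pi d}$, which is the desired bound. The main obstacle is the middle step: translating the single global extremal-length hypothesis into the per-force-point ratio bound $(W(t_1)-U(t_1))/(V_k(t_1)-U(t_1))\lesssim 1$, which is what makes a single $e^{-\pi d}$ estimate uniformly control every $V_k$-term; the swallowed-force-point analysis, including the motion of $c_{K_t}$ between $t_1$ and $t_2$, is the other technical complication.
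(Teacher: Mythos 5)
Your proof is correct and follows essentially the same route as the paper: the same log-decomposition of $G_{t;\FF}(u)$, the same push-forward by $g_{t_1}$, the same use of Proposition \ref{extrem-prop} to get $\rad_{W(t_1)}(\til K)\le 144e^{-\pi d}|W(t_1)-U(t_1)|$, and the same term-by-term application of Proposition \ref{basic-chordal}. The one place you diverge is the force-point bound: you run a separate extremal-length computation (via the renormalized map $\phi_k$) for each $V_k$ to get $Y_k\le 144e^{-\pi d}$, whereas the paper gets the same information in one stroke by noting that $\til I=g_{t_1}(I)$ is a crosscut joining $(U(t_1),V_m(t_1)]$ to $[W(t_1),\infty)$, so its bounded complementary component already contains the whole interval $[V_m(t_1),W(t_1)]$ as well as $\til K$, and the single bound $\rad_{W(t_1)}(\til I)\lesssim e^{-\pi d}|W(t_1)-U(t_1)|$ controls every $|V_j(t_1)-W(t_1)|$ simultaneously. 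Your per-point argument is valid (and yields the equivalent estimate $W(t_1)-V_k(t_1)\lesssim e^{-\pi d}(W(t_1)-U(t_1))$), just more laborious than necessary; the swallowed-point bookkeeping you flag is likewise handled implicitly in the paper by the uniform formula $V_j(t_2)=\lim_{x\uparrow V_j(t_1)}g_{\til K}(x)$ together with the ordering $V_m\le\cdots\le V_1\le W$.
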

\begin{proof}
Let $\til I=g_{t_1}(I)$. Then $\til I$ is a crosscut in $\HH$ connecting $(U(t_1),V_m(t_1)]$ with $[W(t_1),\infty)$, and so it encloses $W(t_1),V_1(t_1),\dots,V_m(t_1)$. Moreover, since $\eta[t_1,t_2]$ does not intersect the unbounded component of $(\HH\sem K_{t_1})\sem I$, we see that $\til I$ also encloses $\til K:=g_{t_1}(K_{t_2}\sem K_{t_1})$. Let $X=d_{\HH\sem K_{t_1}}(I,(-\infty,u])$. By conformal invariance of extremal length, $d_{\HH}(\til I,(-\infty,g_t(u)])=X$. Since  $X\ge \ln(1440)/\pi$, by Proposition \ref{extrem-prop},
\BGE  {\rad_{W(t_1)}([V_m(t_1),W(t_1)]\cup \til K)} \le
 {\rad_{W(t_1)}(\til I)}\le 144 e^{-\pi X}{|W(t_1)-U(t_1)|}.\label{rad}\EDE

In order to prove (\ref{Gt1t2-eqn}), it suffices to prove that (\ref{Gt1t2-eqn}) holds with $D_u({t_\iota})$, $|W(t_\iota)-U(t_\iota)|$, and $|V_j(t_\iota)-U(t_\iota)|$, $1\le j\le m$, respectively, in place of $G_{t_\iota;\FF}(u)$, $\iota=1,2$. Since $\til K=g_{t_1}(K_{t_2}\sem K_{t_1})$, we have $g_{t_2}=g_{\til K}\circ g_{t_1}$. So $U(t_2)=g_{\til K}(U(t_1))$ and $D_u(t_2)=g_{\til K}'(U(t_1))D_u(t_1)$.

Note that $g_{t_1}(K_{t_1+s}\sem K_{t_1})$ and $g_{t_1+s}\circ g_{t_1}^{-1}$, $s\ge 0$, are chordal Loewner hulls driven by $W(t_1+s)$, $s\ge 0$, and $\til K$ is the hull in the process at the time $t_2-t_1$. Let $r= {\rad_{W(t_1)}(\til I)} $ and $R=|U(t_1)-W(t_1)|$. Then $\frac rR\le 144 e^{-\pi X}<\frac 1{10}$ by  (\ref{rad}) and that $X>\ln(1440)/\pi$.
By Proposition \ref{basic-chordal} (i,ii) and (\ref{rad}), $|W(t_2)-W(t_1)|\le 2 r$ and $|U(t_2)-U(t_1)|\le 3r$. Since $|U(t_1)-W(t_1)|=R>10 r$, these estimates imply $|\frac{|U(t_2)-W(t_2)|}{|U(t_1)-W(t_1)|}-1|\le \frac {5r}R\le \frac 12$, and so
$$\Big|\ln\Big(\frac{|U({t_2})-W(t_2)|}{|U({t_1})-W(t_1)|}\Big)\Big|\le 2  \Big|\frac{|U({t_2})-W(t_2)|}{|U({t_1})-W(t_1)|}-1\Big|\le \frac{10 r}R\le 1440 e^{-\pi X}.$$
This means that (\ref{Gt1t2-eqn}) holds with $|U({t_\iota})-W(t_\iota)|$ in place of $G_{t_\iota;\FF}(u)$, $\iota=1,2$.
Fix $j\in\{1,\dots,m\}$. By (\ref{rad}), we have $|V_j(t_1)-W(t_1)|\le 2r$. So $|U({t_1})-V_j(t_1)|\ge R-2r\ge \frac 4 5 R\ge 8r$. We have $V_j(t_2)=\lim_{x\uparrow V_j(t_1)} g_{\til K}(x)$. By Proposition \ref{basic-chordal} (i), $|V_j(t_1)-V_j(t_2)|\le 3r$. Thus,
$|\frac{|U({t_2})-V_j(t_2)|}{|U({t_1})-V_j(t_1)|}-1|\le \frac {6r}{4 R/5}\le \frac 34$, which implies
$$\Big|\ln\Big(\frac{|U({t_2})-V_j(t_2)|}{|U({t_1})-V_j(t_1)|}\Big)\Big|\le 4  \Big|\frac{|U({t_2})-V_j(t_2)|}{|U({t_1})-V_j(t_1)|}-1\Big|\le \frac{30 r}R\le 4320 e^{-\pi X}.$$
This means that (\ref{Gt1t2-eqn}) holds with $|U({t_\iota})-V_j(t_\iota)|$ in place of $G_{t_\iota;\FF}(u)$, $\iota=1,2$. Finally,
by Proposition \ref{basic-chordal} (iii), $|\ln ( {D_u(t_2)}/{D_u(t_1)})|\le 5 (r/R)^2 \le 72 e^{-\pi X}$. So (\ref{Gt1t2-eqn}) also  holds with $D_u(t_\iota)$ in place of $G_{t_\iota;\FF}(u)$, $\iota=1,2$.  The proof is now complete.
\end{proof}

\section{Two-point Green's Function} \label{Section 5}
 We use the setup of the previous section. Now we fix $u_1>u_2\in (-\infty,v_{  m})$, and let $L_0=|w-v|$, $L_1=|v-u_1|$, $L_2=|u_1-u_2|$, and $ L_\wedge =L_1\wedge L_2$.
We are going to prove the following theorem.

\begin{Theorem} [Two-point Green's function]
There is a number $G_{\FF}(w,\ulin v;u_1,u_2)\in (0,\infty)$ (depending also on $\kappa,\ulin\rho$), which  satisfies
\BGE G(w,\ulin v;u_1) L_2^{-\alpha} \lesssim G_{\FF}(w,\ulin v;u_1,u_2)\lesssim \Big(\frac{L_1}{L_0+L_1}\Big)^{\alpha_*} L_1^{-\alpha}L_2^{-\alpha}\label{G(u1,u2)-upper}\EDE
such that, for some constants $\zeta_1,\zeta_2\in(0,1)$, as $\frac{r_1}{L_\wedge},\frac{r_2}{L_2}\to 0^+$,
$$ \PP[\tau^{u_j}_{r_j;\FF}<\infty, j=1,2]=G_{\FF}(w,\ulin v; u_1,u_2) r_1^\alpha r_2^\alpha  $$
 \BGE+ \Big(\frac{L_1}{L_0+L_1}\Big)^{{\alpha_*}}  \Big(\frac{r_1}{L_1}\Big)^\alpha \Big(\frac{r_2}{L_2}\Big)^\alpha \Big[O \Big(\frac{r_1}{L_\wedge}\Big)^{\zeta_1}+O\Big(\frac{r_2}{L_2}\Big)^{\zeta_2}\Big]. \label{G(u1,u2)-est}\EDE
In particular, we have
\BGE G_{\FF}(w,\ulin v;u_1,u_2)=\lim_{r_1,r_2\to 0^+} r_1^{-\alpha}r_2^{-\alpha} \PP[\eta\cap B_{\FF}(u_j,r_j)\ne \emptyset,j=1,2].\label{Green-2pt}\EDE
\label{main-thm2}
\end{Theorem}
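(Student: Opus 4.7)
The plan is to adapt the two-point Green's function argument of \cite{Mink-real, LR} to the multi-force-point setting, using the one-point Green's function (Theorem \ref{main-thm1}), the near-martingale property encoded in Lemma \ref{Gt1t2}, and the conditional estimate Lemma \ref{Cor-main-thm1}. Write $\tau_j := \tau^{u_j}_{r_j;\FF}$. Since the two balls are disjoint for small $r_1, r_2$, I decompose
\[
\PP[\tau_1 < \infty, \tau_2 < \infty] = \PP[\tau_1 < \tau_2 < \infty] + \PP[\tau_2 < \tau_1 < \infty],
\]
and treat each term symmetrically. On $\{\tau_1 < \tau_2\}$, DMP at time $\tau_1$ and Theorem \ref{main-thm1} applied to the image of $B_{\FF}(u_2, r_2)$ under $g_{\tau_1}$ (which, after Koebe distortion, is an approximate disc of radius $D_{u_2}(\tau_1) r_2$ around $U_2(\tau_1)$) yield
\[
\PP[\tau_1 < \tau_2 < \infty] = r_2^\alpha \, \EE\bigl[\ind_{\{\tau_1 < \tau_2\}} G_{\tau_1;\FF}(u_2)\bigr] + (\text{error}).
\]

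The heart of the proof is to extract the $r_1^\alpha$ scaling from the remaining expectation. I introduce an intermediate scale $R$ with $r_1 \ll R \ll L_\wedge$ and stopping time $\sigma_R := \tau^{u_1}_R \wedge \tau_2$. On $\{\sigma_R = \tau^{u_1}_R < \tau_2\}$, Lemma \ref{Cor-main-thm1} gives the conditional probability of subsequently hitting $B_{\FF}(u_1, r_1)$ as $G_{\sigma_R;\FF}(u_1) r_1^\alpha + O(r_1/R)^{\alpha+\beta}$, while Lemma \ref{Gt1t2} applied to a semicircular crosscut of radius $\asymp R$ around $u_1$ shows that $G_{t;\FF}(u_2)$ varies by a multiplicative factor $1 + O(R/L_2)$ between times $\sigma_R$ and $\tau_1$ (using that the relevant extremal distance is at least $\frac{1}{\pi}\log(L_2/R) - O(1)$). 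Combining these,
\[
\EE\bigl[\ind_{\{\tau_1 < \tau_2\}} G_{\tau_1;\FF}(u_2)\bigr] = r_1^\alpha \, \mathcal{J}_{12}(R) + (\text{error}),
\]
where $\mathcal{J}_{12}(R) := \EE[\ind_{\{\tau^{u_1}_R < \tau_2\}} G_{\tau^{u_1}_R;\FF}(u_1) G_{\tau^{u_1}_R;\FF}(u_2)]$, with an analogous $\mathcal{J}_{21}(R)$ from the opposite ordering. The natural candidate is
\[
G_{\FF}(w, \ulin v; u_1, u_2) := \lim_{R \to 0^+} \bigl(\mathcal{J}_{12}(R) + \mathcal{J}_{21}(R)\bigr).
\]

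Existence of this limit I will prove by showing $\mathcal{J}_{12}$ is Cauchy in $R$: comparing $\mathcal{J}_{12}(R)$ with $\mathcal{J}_{12}(R/2)$, the integrand on the common event changes by the multiplicative factor from Lemma \ref{Gt1t2}, while the indicator difference is supported on trajectories that enter $B(u_1,R)\setminus B(u_1, R/2)$ before $\tau_2$, whose contribution is bounded by a power of $R$ via Theorem \ref{main-thm1}. Summing over dyadic scales yields a convergent geometric-type series. The upper bound in \eqref{G(u1,u2)-upper} follows from the above formulas combined with the one-point bounds \eqref{est-u}, \eqref{est-u*}, and the explicit form of $G_{t;\FF}$ in \eqref{Gt}, while the lower bound uses Corollary \ref{Cor-lower-1pt} at a scale $R \asymp L_\wedge$ to exhibit a positive-probability configuration in which $\eta$ approaches both points to a controlled scale. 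Exponents $\zeta_1, \zeta_2$ in \eqref{G(u1,u2)-est} emerge by optimizing $R$ against the errors from Lemma \ref{Cor-main-thm1} and Lemma \ref{Gt1t2}. The principal obstacle will be controlling the accumulation of errors in the Cauchy step, especially in the regime $r_1 \asymp r_2$ where both orderings contribute comparably; a secondary subtlety arises for $\FF = \R$, where the extremal-length comparison in Lemma \ref{Gt1t2} must be applied with care to ensure that the approximation $G_{\tau_1;\FF}(u_2) \approx G_{\sigma_R;\FF}(u_2)$ does not degenerate as $R$ shrinks.
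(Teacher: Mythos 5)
Your overall architecture (an intermediate scale near $u_1$, DMP combined with the one-point Green's function, Lemma \ref{Gt1t2} to freeze $G_{\cdot;\FF}(u_2)$, then a Cauchy argument in the cutoff scale) matches the paper's strategy in broad strokes, but there are concrete gaps. The most structural one is the symmetric decomposition over the two orderings. Because $u_2<u_1<v_m$, the two orderings are \emph{not} symmetric: the event that $\eta$ comes within $R_2$ of $u_2$ before coming within $r_1$ of $u_1$ carries an extra boundary exponent $\alpha_*$ and is purely an error term (this is Lemma \ref{two-pt-cor}, in particular \eqref{21-ineq} and \eqref{212-ineq}), so there is no second main term $\mathcal J_{21}$, and attempting to extract one via DMP at $\tau_2$ runs into the problem that Lemma \ref{Cor-main-thm1} conditions on \emph{not yet} having approached the target point. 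Relatedly, your $\mathcal J_{12}(R)$ is defined through the event $\{\tau^{u_1}_R<\tau_2\}$ with $\tau_2=\tau^{u_2}_{r_2;\FF}$ at the \emph{final} scale, so $\lim_{R\to0}\mathcal J_{12}(R)$ still depends on $r_2$ and cannot define $G_{\FF}(w,\ulin v;u_1,u_2)$; one must also cut off $u_2$ at an intermediate scale $R_2$ (the paper's event $E_2^{R_1}=\{\tau^1_{R_1;\FF}<\tau^2_{R_2}\}$), and controlling both replacements is exactly where \eqref{21-ineq}--\eqref{212-ineq} enter. Moreover, the a priori factorized bound $\PP[\tau^j_{r_j}<\infty,j=1,2]\lesssim (\frac{L_1}{L_0+L_1})^{\alpha_*}(\frac{r_1}{L_1})^{\alpha}(\frac{r_2}{L_2})^{\alpha}$ (Lemma \ref{two-pt-lem-u1u2}), which you use implicitly in every error estimate, does not follow from one-point bounds alone; it requires the back-and-forth decomposition over alternating approach scales, which is a substantial lemma in its own right.

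The second gap is in the application of Lemma \ref{Gt1t2}. That lemma requires $\eta[t_1,t_2]$ to stay in the closure of the bounded component cut off by the crosscut $I$; between $\tau^{u_1}_{R}$ and $\tau^{u_1}_{r_1;\FF}$ the curve may well exit the disc $B(u_1,S_1)$ for any $S_1>R$, and on that event the claimed multiplicative stability of $G_{t;\FF}(u_2)$ fails. You must isolate the event that $\eta[\tau^1_{R_1;\FF},\tau^1_{r_1;\FF}]$ does not return to the circle of radius $S_1$ about $u_1$ and show its complement contributes at most $(\frac{R_1}{S_1})^{\alpha_*}$ times the main order (Lemma \ref{comeout-2}, \eqref{E-P}); this forces a three-scale hierarchy $r_1\ll R_1\ll S_1\ll L_\wedge$ whose joint optimization against the errors from Lemma \ref{Cor-main-thm1} is what produces $\zeta_1,\zeta_2$. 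Finally, your dyadic Cauchy argument for $\mathcal J_{12}$ is unnecessary once these points are fixed: the approximating expectation $\EE[\ind_{E_2^{R_1}}G_{\tau^1_{R_1;\FF}}(u_1)G_{\tau^1_{R_1;\FF}}(u_2)]$ depends only on the fixed intermediate scales and not on $r_1,r_2$, so the normalized probabilities are automatically Cauchy as $r_1,r_2\to0^+$.
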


We now write  $\tau^j_{r;\FF}$ for $\tau^{u_j}_{r;\FF}$ and $\tau^j_r$ for $\tau^{u_j}_r=\tau^j_{r;\C}$.  Let $U_j(t)=g_t(u_j)$ and $D_j(t)=g_t'(u_j)$.

\begin{Lemma}
 Let $r_1\in (0,L_1)$ and $r_2>0$.
  Then
  \BGE \PP[\tau^j_{r_j}<\infty,j=1,2]\lesssim  ( {L_1}/({L_0+L_1}) )^{{\alpha_*}}  ( {r_1}/{L_1} )^{\alpha} ( {r_2}/{L_2} )^{\alpha}.\label{transform-2}\EDE
\label{two-pt-lem-u1u2}
\end{Lemma}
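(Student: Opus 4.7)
The plan is to adapt the chain-decomposition of Lemma~\ref{two-pt-lem}, treating $u_1$ as the ``closer'' target (analogous to the role of $v_m$ there) and $u_2$ as the ``farther'' one (analogous to $u$). The new feature is that $u_1$ itself is now an interior point of $(-\infty,v_m)$, so the single-point factor at $u_1$ must be the full boundary estimate $(L_1/(L_0+L_1))^{\alpha_*}(r_1/L_1)^\alpha$ rather than $(r_0/L_0)^{\alpha_*}$. The trivial regime $r_1\ge L_\wedge/6$ or $r_2\ge L_2/6$ is handled directly by (\ref{est-u}) applied to the other point, so I assume both radii are small and, after a harmless discretization, $r_2=L_2 e^{-N}/2$ for some $N\in\N$.

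Setting $\tau^1=\tau^{u_1}_{r_1}$ and $\tau^2_n=\tau^{u_2}_{L_2 e^{-n}/2}$ for $0\le n\le N$, I decompose $\{\tau^1<\infty,\tau^2_N<\infty\}$ into $E_{-1}=\{\tau^1<\tau^2_0\}$, $E_n=\{\tau^2_n\le\tau^1<\tau^2_{n+1}\}$ for $0\le n\le N-1$, and $E_N=\{\tau^2_N\le\tau^1\}$. The four ingredients I assemble are: (i) the one-point estimate (\ref{est-u}) at $u_1$, giving $\PP[\tau^1<\infty]\lesssim(L_1/(L_0+L_1))^{\alpha_*}(r_1/L_1)^\alpha$; (ii) (\ref{est-u}) at $u_2$ viewed as a target at distance $L_1+L_2$ from $v_m$, giving a bound on $\PP[\tau^2_n<\infty]$ with the expected $(L_2 e^{-n}/(L_1+L_2))^\alpha$ factor; (iii) the interior chaining of Lemma~\ref{interior-estimate}, which yields $\PP[\tau^2_N<\infty\mid\F_{\tau^2_n},\tau^2_n<\infty]\lesssim e^{-\alpha(N-n)}$ and an analogous bound after $\tau^1$ with $R\asymp L_2$ that handles $E_{-1}$; and (iv) the crossed conditional estimate $\PP[\tau^1<\infty\mid\F_{\tau^2_n},\tau^2_n<\tau^1]$, obtained via DMP and the one-point bound applied to the transferred SLE started at $W(\tau^2_n)$ with force points $\ulin V(\tau^2_n)$.

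The main technical obstacle is step (iv). A clean way to organize it is to exploit the positive local martingale $M_1(t)=D_1(t)^\alpha G(W(t),\ulin V(t);U_1(t))$ from the proof of Theorem~\ref{main-thm1}: on $\tau^2_n<\tau^1$, the conditional one-point estimate for the transferred SLE at the image of $u_1$ reads $\PP[\tau^1<\infty\mid\F_{\tau^2_n}]\lesssim r_1^\alpha M_1(\tau^2_n)$, while optional stopping yields $\EE[\ind_{\tau^2_n<\infty}M_1(\tau^2_n)]\le M_1(0)=G(w,\ulin v;u_1)$. To produce the target bound, these two estimates must be interleaved with (iii) and combined with extremal-length control, via Proposition~\ref{extrem-prop}, of the ratio $|V_m(\tau^2_n)-U_1(\tau^2_n)|/|W(\tau^2_n)-U_1(\tau^2_n)|$, so that the $\alpha_*$- and $\alpha$-factors split correctly and the resulting geometric series in $e^{-\alpha n}$ and $e^{-\alpha_* n}$ telescope to $(L_1/(L_0+L_1))^{\alpha_*}(r_1/L_1)^\alpha(r_2/L_2)^\alpha$. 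Separate groupings may be needed in the regimes $L_1\le L_2$ and $L_1>L_2$ to ensure the correct power of $L_1$ (not of $L_\wedge$) appears in the final $\alpha$-factor.
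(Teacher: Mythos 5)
Your overall framing identifies the right factors, but the decomposition you propose is too coarse, and the step you yourself flag as the main obstacle is where the argument actually breaks. Recording only where $\tau^1$ falls among the times $\tau^2_n$ does not control how close $\eta$ has already come to $u_1$ by the time $\tau^2_n$: on $\{\tau^2_n\le\tau^1\}$ the curve may already be at distance barely larger than $r_1$ from $u_1$, in which case the transferred one-point bound at $U_1(\tau^2_n)$ contributes essentially no power of $r_1$, and the missing decay would have to come from the probability that $\eta$ got that close to $u_1$ before $\tau^2_n$ --- which is exactly the two-point estimate being proved. Refining the decomposition by also recording the scale of the prior approach to $u_1$ does not close the circle, because the curve can alternate between deepening its approach to $u_1$ and to $u_2$ arbitrarily many times, and each switch is only worth the boundary exponent $\alpha_*$ of Lemma \ref{boundary-estimate}, which may be strictly smaller than $\alpha$. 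This is precisely the difficulty the paper's remark after the lemma points to: the actual proof (following Lawler--Werness) decomposes $\{\tau^1_{N_1},\tau^2_{N_2}<\infty\}$ over \emph{all} interleaving patterns $\xi=(S_1,S_2)\in\Xi^{(\ii,\oo)}_s$ of the scales at which the curve switches targets, bounds each pattern by a telescoping product of interior estimates (exponent $\alpha$ per scale) and crossing estimates (an extra $e^{-\alpha_* n}$ at each switch), and then verifies that the sum over all patterns converges. Nothing in your plan produces this summable combinatorial structure.

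The martingale shortcut in your step (iv) also does not work as stated. The bound $\PP[\tau^1<\infty\mid\F_{\tau^2_n}]\lesssim r_1^\alpha M_1(\tau^2_n)$ is not available unconditionally: by (\ref{G-upper}) the main term $G_{\tau;\R}(u_1)r_1^\alpha$ can be as small as the lower bound with exponent $\alpha^*$, while the error in (\ref{Green-1pt}) only carries the exponent $\alpha_*$, so the error term need not be dominated by $r_1^\alpha M_1$ (this is exactly why Corollary \ref{Cor-lower-1pt} requires the restriction on $r/L_1$). Even granting the bound, optional stopping gives $\EE[\ind_{\{\tau^2_n<\infty\}}M_1(\tau^2_n)]\le G(w,\ulin v;u_1)$ with no surviving factor $e^{-\alpha n}$ or $(r_2/L_2)^\alpha$; retaining both would require a weighted two-point estimate, which is again circular at this stage. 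A smaller issue: in the regime $r_1\ge L_\wedge/6$, $r_2<L_2/6$ with $L_0\ge L_1$, applying (\ref{est-u}) at $u_2$ yields the prefactor $(\frac{L_1+L_2}{L_0+L_1+L_2})^{\alpha_*}$, which is not $\lesssim(\frac{L_1}{L_0+L_1})^{\alpha_*}$ when $L_2$ is large; the paper has to invoke Lemma \ref{two-pt-lem} there rather than a one-point bound.
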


\begin{Remark} This lemma is another two-point estimate. Its proof is more complicated than that of Lemma \ref{two-pt-lem}.
 The main reason is that the boundary exponent $\alpha_*$ in Lemma \ref{boundary-estimate} agrees with the exponent $\alpha_*$ for $v_m$ in Lemma \ref{two-pt-lem}, but may be strictly less than the exponent $\alpha$ for $u_1$ in this lemma. The idea of the proof, originated in \cite{LW}, is to exponentially bound the probability that $\eta$ travels back and forth between small discs centered at $u_1$ and $u_2$.
\end{Remark}

\begin{proof}
  If $r_2\ge L_2/6$, then we get (\ref{transform-2}) by applying (\ref{est-u}) to bound $\PP[\tau^1_{r_1}<\infty]$. Suppose $r_2<L_2/6$ and $r_1\ge L_\wedge /6$. From $\frac 1{L_1+L_2}\asymp \frac{L_\wedge}{L_1 L_2}$, we get $\frac{r_2}{L_1+L_2} \lesssim \frac{r_1r_2}{L_1L_2}$.
  There are two cases. Case 1. $L_1\ge L_0$. In this case, since $\frac{L_1}{L_0+L_1}\gtrsim 1$, we get (\ref{transform-2}) by applying (\ref{est-u*}) to bound $\PP[\tau^2_{r_2}<\infty]$.
 Case 2. $L_1\le L_0$. In this case, $ (\frac{L_1+r_1}{L_0} )^{\alpha_*}\lesssim   (\frac{L_1}{L_0+L_1} )^{\alpha_*}$, and we get (\ref{transform-2}) by applying  (\ref{two-pt-ineq}) to bound $\PP[\tau^{v_m}_{L_1+r_1}<\infty,\tau^{u_2}_{r_2}<\infty]$.
  So we always have (\ref{transform-2}) when $r_2<L_2/6$ and $r_1\ge L_\wedge/6$. From now on, we assume that $r_1 <L_\wedge /6$ and $r_2<L_2/6$.

Let $L_1'=L_\wedge $ and $L_2'=L_2$. WLOG, we assume $r_j= e^{-N_j}L_j'/2$, $j=1,2$, for some $N_1,N_2\in\N$. With a slight abuse of notation, we write $\tau^j_n$ for $\tau^j_{L_j' e^{-n}/2}$.
It suffices to  prove
\BGE \PP[\tau^j_{N_j}<\infty,j=1,2]\lesssim  ( {L_1}/({L_0+L_1}) )^{\alpha_*} ( {L_\wedge}/{L_1} )^{\alpha} e^{-\alpha(N_1+N_2)}.\label{weaker}\EDE

By (\ref{est-u}),
\BGE \PP[\tau^1_n<\infty]\lesssim ( {L_1}/({L_0+L_1}) )^{\alpha_*} ( {L_\wedge}/{L_1} )^{\alpha} e^{-\alpha n} ;\label{tau1n'}\EDE
\BGE \PP[\tau^2_n<\infty]\lesssim ( ({L_1+L_2})/({L_0+L_1+L_2}))^{\alpha_*} e^{-\alpha n}.\label{tau2n'}\EDE
By (\ref{interior-estimate-formula}),
\BGE \PP[\tau^j_{n_2}<\infty|\F_{\tau^j_{n_1}},\tau^j_{n_1}<\infty]\lesssim e^{-\alpha(n_2-n_1)},\quad n_2>n_1.\label{tau1-tau2}\EDE
Note that if $\tau^1_{n_1-1}<\tau^2_{n_2}<\tau^1_{n_1}$, at the time $\tau^2_{n_2}$, $\{|z-u_1|=L_1' e^{1-n_1}/2\}\cap (\HH\sem K_{\tau^2_{n_2}})$ has a connected component $I$, which disconnects $v_m\wedge a_{K_{\tau^2_{n_2}}}$ from $\infty$; and in order for $\eta$ to visit $\{|z-u_1|\le L_1' e^{-n}/2\}$, it must visit $I$ after $\tau^2_{n_2}$. Since any curve in $\HH\sem K_{\tau^2_{n_2}}$ connecting $I$ and $\pa_{\HH}^+ K_{\tau^2_{n_2}}$ must cross   $A_{\HH}(u_1,L_\wedge e^{1-n_1}/2,L_2/2)$ and $A_{\HH}(u_2,   L_2 e^{-n_2}/2,L_2/2)$, we have
$$d_{\HH\sem K_{\tau^2_{n_2}}}(I,\pa_{\HH}^+ K_{\tau^2_{n_2}})\ge \frac 1\pi \ln\Big(\frac{L_2/2}{L_\wedge e^{1-n_1}/2}\Big) +\frac 1\pi \ln\Big(\frac{L_2/2}{L_2 e^{-n_2}/2}\Big).$$
So by (\ref{boundary-estimate-formula}),
\BGE \PP[\tau^1_{n_1}<\infty|\F_{\tau^2_{n_2}},\tau^1_{n_1-1}<\tau^2_{n_2}<\tau^1_{n_1}]\lesssim  ( {L_\wedge}/{L_2} )^{\alpha_*} e^{-\alpha_* (n_1+n_2)}.\label{tau2-tau1}\EDE

Now we prove (\ref{weaker}) using (\ref{tau1n'}-\ref{tau2-tau1}).
We break the event $E:=\{\tau^j_{N_j}<\infty,j=1,2\}$ into a disjoint union of events according to the order of those hitting times $\tau^j_{n}$, $n\le N_j$, $j=1,2$. For  $j=1,2$, $\tau^j_n$ is increasing in $n$. But the order between any $\tau^1_{n_1}$ and $\tau^2_{n_2}$ is uncertain.
For  $\ii,\oo\in\{1,2\}$ and $s\in\N$, let $\Xi^{(\ii,\oo)}_s$ be the collection of $(S_1,S_2)$ such that
\begin{itemize}
  \item $S_j\subset \{n\in\N:n\le N_j\}$, $j=1,2$;
  \item $|S_{\ii}|=s-|\ii-\oo|$ and $|S_{3-\ii}|=s-1$.
\end{itemize}

For $\ii,\oo\in\{1,2\}$ and $\xi=(S_1,S_2)\in\Xi^{(\ii,\oo)}_s$, we order the elements in $S_j$ as $n^{(j)}_1<\cdots <n^{(j)}_{s_j}$, where $s_j=|S_j|$, $j=1,2$.  We also set $n^{(j)}_0=0$ and $n^{(j)}_{s_j+1}=N_j+1$. Define
$$E_{(\ii,\oo);\xi}=\Big(\bigcap_{k=1}^{s} \Big \{\tau^{\ii}_{n^{(\ii)}_{k}-1}< \tau^{3-\ii}_{n^{(3-\ii)}_{k-1}}\Big\}\Big)\bigcap\Big( \bigcap_{k=1}^{s-|\ii-\oo|}\Big \{\tau^{3-\ii}_{n^{(3-\ii)}_k-1}<\tau^{\ii}_{n^{(\ii)}_k}\Big\}\Big)\bigcap  \{\tau^{\oo}_{N_{\oo}}<\infty\}$$
$$=\{\tau^{\ii}_{n^{(\ii)}_0}\le \tau^{\ii}_{n^{(\ii)}_1-1}<\tau^{3-\ii}_{n^{(3-\ii)}_0}\le \tau^{3-\ii}_{n^{(3-\ii)}_1-1}<\tau^{\ii}_{n^{(\ii)}_1}\le \cdots \le \tau^{3-\oo}_{N_{3-\oo}}<\tau^{\oo}_{n^{\oo}_{s_{\oo}}}\le \tau^{\oo}_{N_{\oo}}<\infty\}. $$

The symbols $\ii$ and $\oo$ respectively stand for ``in'' and ``out'': on the event $E_{(\ii,\oo);\xi}$, $\tau^{\ii}_0$ happens before $\tau^{3-\ii}_0$, and $\tau^{\oo}_{N_{\oo}}$ happens after $\tau^{3-\oo}_{N_{3-\oo}}$.
It is straightforward to check that
\BGE E=\bigcup_{\ii\in\{1,2\}} \bigcup_{\oo\in\{1,2\}}  \bigcup_{s\in\N} \bigcup_{\xi\in \Xi^{(\ii,\oo)}_s} E_{(\ii,\oo);\xi} .\label{unionE}\EDE

Using (\ref{tau1n'}-\ref{tau2-tau1} we find that, for some constant $C\ge 1$,
 \BGE \PP[E_{\ii,\oo;\xi}]\le \Big(\frac{L_1}{L_0+L_1}\Big)^{\alpha_*} \Big(\frac{L_\wedge}{L_1}\Big)^{\alpha} \prod_{j\in\{1,2\}} \prod_{k=1}^{s_j+1} C e^{-\alpha(n^{(j)}_k-n^{(j)}_{k-1} )} \prod_{k=1}^{s_2+|\oo-2|} C e^{-\alpha_*(n^{(2)}_k+n^{(1)}_{k-|\ii-1|} )}\label{PE-pre}\EDE
\BGE \le C^{3s+3} \Big(\frac{L_1}{L_0+L_1}\Big)^{\alpha_*} \Big(\frac{L_\wedge}{L_1}\Big)^{\alpha}   e^{-\alpha N_1} e^{-\alpha N_2} \prod_{n\in S_1} e^{-\alpha_* n} \prod_{n\in S_2} e^{-\alpha_* n}.\label{PE}\EDE
To see that (\ref{PE-pre}) holds, consider two cases. Case 1. $\ii=1$. By (\ref{tau1n'},\ref{tau1-tau2},\ref{tau2-tau1}), $$\PP[\tau^1_{n^{(1)}_1}<\infty]\le C\Big (\frac{L_1}{L_0+L_1}\Big)^{\alpha_*} \Big (\frac{L_\wedge }{L_1} \Big)^{\alpha} e^{-n_1^{(1)}\alpha};$$ \BGE \PP[\tau^j_{n^{(j)}_{k}-1}<\infty|\F_{\tau^j_{n^{(j)}_{k-1}}},\tau^j_{n^{(j)}_{k-1}}<\infty]\le  C e^{-\alpha(n^{(j)}_k-n^{(j)}_{k-1})}; \label{same1}\EDE \BGE \PP[\tau^1_{n^{(1)}_{k-|\ii-1|}}<\infty | \F_{\tau^2_{n^{(2)}_{k}-1}},\tau^2_{n^{(2)}_{k}-1}<\infty]\le C \Big (\frac {L_\wedge}{L_2}\Big)^{\alpha_*}
e^{-\alpha_*(n^{(2)}_k+n^{(1)}_{k-|\ii-1|})}.\label{same2}\EDE  Note that the product of upper bounds is bounded by the RHS of (\ref{PE-pre})  since $\frac{L_\wedge}{ L_2}\le 1$. Also note that $s_2+|\oo-2|=s-|\ii-\oo|$.

Case 2. $\ii=2$. Then $\PP[\tau^2_{n^{(2)}_1}<\infty]\le C(\frac{L_1+L_2}{L_0+L_1+L_2})^{\alpha_*} (\frac{L_2}{L_1+L_2} )^\alpha e^{-n^{(2)}_1\alpha }$ by \ref{tau2n'}. We still have (\ref{same1},\ref{same2}).   The product of these upper bounds is also bounded by the RHS of (\ref{PE-pre})  because (i) $ \frac{L_2}{L_1+L_2} \asymp \frac{L_\wedge}{L_1} $; (ii) there is at least one factor $(\frac{L_\wedge}{L_2})^{\alpha_*}$ coming from (\ref{same2}); (iii) $ \frac{L_1+L_2}{L_0+L_1+L_2} \cdot \frac {L_\wedge}{L_2} \lesssim \frac{L_1}{L_0+L_1} $; and (iv) $s_2+|o-2|=s$.

To get (\ref{PE}), we note that  $s_1,s_2\in \{s,s-1\}$, $\sum_{k=1}^{s_j+1} (n^{(j)}_k-n^{(j)}_{k-1})=N_j+1$,   $j=1,2$, and $n^{(2)}_k$ and $n^{(1)}_{k-|\ii-1|}$, $1\le k\le s_2+|\oo-2|$, exhaust the sets $S_1$ and $S_2$.

Combining (\ref{unionE},\ref{PE}), we get (\ref{weaker}) because
$$ \sum_{\ii,\oo\in\{1,2\}} \sum_{s=1}^\infty \sum_{\xi=(S_1,S_2)\in \Xi^{(\ii,\oo)}_s} C^{3s+3}  \prod_{k=1}^{s_1} e^{-\alpha_* n^{(1)}_k} \prod_{k=1}^{s_2} e^{-\alpha_* n^{(2)}_k}$$
$$\le   \sum_{\ii,\oo\in\{1,2\}} \sum_{s=1}^\infty C^{3s+3} \sum_{n^{(1)}_1=1}^\infty e^{-\alpha_* n^{(1)}_1}\cdots \sum_{n^{(1)}_{s_1}=s_1}^\infty e^{-\alpha_* n^{(1)}_{s_1}} \sum_{n^{(2)}_1=1}^\infty e^{-\alpha_* n^{(2)}_1}\cdots \sum_{n^{(2)}_{s_2}=s_2}^\infty e^{-\alpha_* n^{(2)}_{s_2}}    $$
$$= \sum_{\ii,\oo\in\{1,2\}} \sum_{s=1}^\infty C^{3s+3} \prod_{k=1}^{s_1} \frac{e^{-k\alpha_*}}{1-e^{-\alpha_*}} \prod_{k=1}^{s_2} \frac{e^{-k\alpha_*}}{1-e^{-\alpha_*}}\le \sum_{\ii,\oo\in\{1,2\}} \sum_{s=1}^\infty \frac{C^{3s+3} e^{-\alpha_*(s_1^2+s_2^2)/2} }{(1-e^{-\alpha_*})^{s_1+s_2}}=: C_*.$$
Here in the second line we used the fact that $n^{(j)}_k\ge k$. We have $C_*<\infty$ because $s_1$ and $s_2$ are determined by  $s_{\ii}=s-|\ii-\oo|$ and $s_{3-\ii}=s-1$.
\end{proof}

The following lemma improves the two-point estimate in the case that $\eta$ first visits a small disc centered at $u_2$ and then visits a small disc centered at $u_1$.

\begin{Lemma}
For $r_1\in(0,L_\wedge /6)$, $R_2>r_2>0$, we have
  \BGE \PP[\tau^2_{R_2}<\tau^1_{r_1;\FF}<\infty]\lesssim \Big(\frac{L_1}{L_0+L_1}\Big)^{\alpha_*}  \Big(\frac{r_1}{L_1}\Big)^{\alpha} \Big(\frac{R_2}{L_2}\Big)^{\alpha+\alpha_*};\label{21-ineq}\EDE
    \BGE \PP[\tau^2_{R_2}<\tau^1_{r_1;\FF}<\tau^2_{r_2}<\infty]\lesssim \Big(\frac{L_1}{L_0+L_1}\Big)^{\alpha_*} \Big(\frac{r_1}{L_1}\Big)^{\alpha} \Big(\frac{r_2}{L_2}\Big)^{\alpha} \Big(\frac{R_2}{L_2}\Big)^{\alpha_*}.\label{212-ineq}\EDE
\label{two-pt-cor}
\end{Lemma}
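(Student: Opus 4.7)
My plan is to prove (5.5) first and derive (5.6) by a further application of the domain Markov property (DMP). Throughout, $\lesssim$ hides constants depending only on $\kappa,\rho_\Sigma,\sigma^*,\sigma_*$. I may assume the nontrivial regime $R_2<L_2/6$ and $r_1<L_\wedge/6$, since in the complementary regimes (5.5) follows directly from the one-point estimate (\ref{est-u}) applied to $u_1$ or $u_2$ together with the standard comparisons between $L_1/(L_0+L_1)$ and the four geometric scales.

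For (5.5), set $\tau=\tau^2_{R_2}$. By the DMP, on $\{\tau<\infty\}$ the conditional law of $\eta(\tau+\cdot)$ is the $f_\tau$-image of an SLE$_\kappa(\ulin\rho)$ curve $\eta^\tau$ from $W(\tau)$ with force points $\ulin V(\tau)$, and the conditional probability that $\eta^\tau$ visits $g_\tau(B_\FF(u_1,r_1))$ is what I must estimate. In the chosen regime $\dist(u_1,K_\tau^{\doub}\cup[v_m,\infty))\gtrsim L_\wedge$, so Koebe's distortion theorem gives $g_\tau(B_\FF(u_1,r_1))\subset B_\FF(U_1(\tau),c\,D_{u_1}(\tau)r_1)$, and Koebe's $1/4$ theorem gives $D_{u_1}(\tau)\lesssim|V_m(\tau)-U_1(\tau)|/L_\wedge$. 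Applying the one-point estimate (\ref{est-u}) to $\eta^\tau$ yields
\[
\PP[\tau^1_{r_1;\FF}<\infty\mid\F_\tau,\tau<\infty]\lesssim \Big(\frac{|V_m(\tau)-U_1(\tau)|}{|W(\tau)-U_1(\tau)|}\Big)^{\alpha_*}\Big(\frac{r_1}{L_\wedge}\Big)^\alpha.
\]

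The main task is now to bound the expectation of this quantity on $\{\tau<\infty\}$ so that the sharp combined factor $(R_2/L_2)^{\alpha+\alpha_*}$ emerges rather than the naive $(R_2/L_2)^\alpha$ one gets by replacing the extremal-length ratio by $1$. For this I propose a dyadic partitioning in the spirit of Lemma \ref{two-pt-lem}: split $\{\tau<\infty\}$ according to the deepest dyadic scale $R_2 e^{-n}$ of the approach of $\eta$ to $u_2$ before any return toward $u_1$, bound the occurrence of each depth via the one-point estimates (\ref{est-u}) and (\ref{interior-estimate-formula}) (giving a factor $(R_2 e^{-n}/L_2)^\alpha$ times the boundary factor $(L_1/(L_0+L_1))^{\alpha_*}$), and then use Proposition \ref{extrem-prop} and conformal invariance of extremal length applied to the crosscut $A_\HH(u_2,R_2 e^{-n},L_2/3)$ -- which any curve in $\HH\sem K_\tau$ joining $[u_1,v_m]$ to $\pa^+_\HH K_\tau$ must cross -- to bound the extremal-length ratio by $e^{-n}$. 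Since the corresponding boundary-crossing estimate (Lemma \ref{boundary-estimate}) contributes $e^{-\pi\alpha_* n}$, summing the resulting geometric series over $n$ exposes the additional factor $(R_2/L_2)^{\alpha_*}$.

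For (5.6), condition at $\sigma=\tau^1_{r_1;\FF}$ on the event of (5.5) and again apply DMP. The conditional probability that $\eta^\sigma$ subsequently visits $g_\sigma(B_\FF(u_2,r_2))$ should be $\lesssim(r_2/R_2)^\alpha$: heuristically, after the curve has visited $B_\C(u_2,R_2)$ at the earlier time $\tau<\sigma$, the reconformed picture at $\sigma$ presents $u_2$ at a conformal scale no larger than $\asymp R_2$ relative to the hull, so an interior-type one-point estimate in the spirit of (\ref{interior-estimate-formula}) (combined with Koebe) produces the ratio $(r_2/R_2)^\alpha$ exactly. Multiplying with (5.5) and simplifying $(R_2/L_2)^{\alpha+\alpha_*}(r_2/R_2)^\alpha=(r_2/L_2)^\alpha(R_2/L_2)^{\alpha_*}$ gives (5.6). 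The main obstacle is the dyadic bookkeeping for (5.5): carefully combining the interior one-point estimates with the boundary extremal-length estimate to produce the sharp combined exponent $\alpha+\alpha_*$ on the single scale $R_2/L_2$, paralleling the scheme used in Lemma \ref{two-pt-lem} but adapted to the asymmetric geometry created by the time-ordering constraint $\tau^2_{R_2}<\tau^1_{r_1;\FF}$.
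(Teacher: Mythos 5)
Your overall strategy for (\ref{21-ineq}) --- condition at $\tau=\tau^2_{R_2}$, apply the DMP together with Koebe's theorems and the one-point estimate (\ref{est-u}), and extract the extra factor $(R_2/L_2)^{\alpha_*}$ from an extremal-length crossing of a semi-annulus around $u_2$ --- is the right one, but there is a genuine gap. In your displayed conditional bound you assert that $\dist(u_1,K_\tau^{\doub}\cup[v_m,\infty))\gtrsim L_\wedge$ on $\{\tau<\infty\}$; this is false, since nothing prevents $\eta[0,\tau^2_{R_2}]$ from approaching $u_1$ to a distance as small as roughly $r_1$ before it first enters $B_{\C}(u_2,R_2)$, in which case Koebe gives $r_1'/L_1'\asymp r_1/\dist(u_1,\eta[0,\tau])$, not $r_1/L_\wedge$. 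This uncontrolled proximity to $u_1$ is the actual difficulty, and your proposed dyadic decomposition does not address it because it is indexed by the depth of approach to $u_2$. No sum over scales $R_2e^{-n}$ is needed to produce $(R_2/L_2)^{\alpha_*}$: that factor comes from the single semi-annulus $A_{\HH}(u_2,R_2,L_2/2)$, which any path from $[u_1,v_m\wedge a_{K_\tau}]$ to $\pa_{\HH}^+K_\tau$ must cross at time $\tau=\tau^2_{R_2}$. The paper instead decomposes over the events $E_n=\{\tau^1_{e^{1-n}L_\wedge/3}<\tau<\tau^1_{e^{-n}L_\wedge/3}\}$ recording how close $\eta[0,\tau]$ has come to $u_1$; on $E_n$ the prior probability carries $(e^{-n}L_\wedge/L_1)^\alpha$ by (\ref{transform-2}), the conditional one-point estimate carries $(r_1/(e^{-n}L_\wedge))^\alpha$, and a second crossing, of $A_{\HH}(u_1,e^{-n}L_\wedge,L_2/2)$, supplies the convergent factor $e^{-n\alpha_*}$. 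The case $r_1\ge e^{-n}L_\wedge/6$, where Koebe distortion no longer applies to $B(u_1,r_1)$, needs a separate argument via (\ref{est-vm}), which you also do not address.

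Your derivation of (\ref{212-ineq}) from (\ref{21-ineq}) by conditioning at $\sigma=\tau^1_{r_1;\FF}$ also fails: the claim that the conditional probability of subsequently hitting $B_{\FF}(u_2,r_2)$ is $\lesssim(r_2/R_2)^\alpha$ is unjustified because by time $\sigma$ the curve may already have approached $u_2$ to a distance far smaller than $R_2$. Lemma \ref{interior-estimate} applies only to stopping times occurring \emph{before} $\tau^{u_2}_{R_2}$, whereas here $\sigma>\tau^{u_2}_{R_2}$; the conformal scale of $u_2$ seen from time $\sigma$ is $\asymp\dist(u_2,\eta[0,\sigma])$, which is only known to be $\le R_2$, so the comparison goes in the wrong direction. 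The paper avoids this by conditioning once, at $\tau=\tau^2_{R_2}$ where $\dist(u_2,\eta[0,\tau])=R_2$ exactly, and applying the \emph{unordered} two-point estimate (\ref{transform-2}) to the conditional curve with $r_2'/L_2'\lesssim r_2/R_2$; this yields (\ref{212-ineq}) within the same decomposition that proves (\ref{21-ineq}), rather than as a corollary of it.
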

\begin{proof}
  If $R_2\ge L_2/6$, these formulas clearly follow from (\ref{est-u},\ref{transform-2}). Suppose now $R_2< L_2/6$. If $r_2\ge R_2/6$, then (\ref{212-ineq}) follows from (\ref{21-ineq}). Suppose now $r_2< R_2/6$. Let $E_{r_1}$ and $E_{r_1,r_2}$ be respectively the events in (\ref{21-ineq}) and (\ref{212-ineq}), and let    $\tau=\tau^2_{R_2}$.

Let $E_0=\{\tau<\tau^1_{L_\wedge/3}\}$ and $E_n=\{\tau^1_{e^{1-n} L_\wedge /3}<\tau<\tau^1_{e^{-n}L_\wedge/3 }\}$, $n\in\N$. By (\ref{est-u},\ref{transform-2}),
  \BGE \PP[E_0]\lesssim \Big(\frac{L_1+L_2}{L_0+L_1+L_2}\Big)^{\alpha_*} \Big(\frac{R_2}{L_1+L_2}\Big)^{\alpha}\lesssim  \Big(\frac{L_1}{L_0+L_1}\Big)^{\alpha_*}  \Big(\frac{L_2}{L_\wedge}\Big)^{\alpha_*}   \Big(\frac{L_\wedge }{L_1}\Big)^{\alpha} \Big(\frac{R_2}{L_2}\Big)^{\alpha} ;\label{tauR20}\EDE
  \BGE \PP[E_n]\lesssim \Big(\frac{L_1}{L_0+L_1}\Big)^{\alpha_*} \Big(\frac{e^{-n}L_\wedge}{L_1}\Big)^{\alpha} \Big(\frac{R_2}{L_2}\Big)^{\alpha},\quad n\ge 1. \label{tauR2n}\EDE
  Here we used that $ {L_1L_2}\asymp (L_1+L_2)L_\wedge$  in the second inequality of (\ref{tauR20}).

By DMP, $\PP[E_{r_1}|\F_\tau,\tau<\infty]$ equals the probability that an SLE$_\kappa(\ulin\rho)$ curve $\eta^\tau$ in $\HH$ started from $W(\tau)$ with force points $\ulin V(\tau)$ visits $g_\tau(B_{\FF}(u_1,r_1))$, and  $\PP[E_{r_1,r_2}|\F_\tau,\tau<\infty]$ equals the probability that such $\eta^\tau$ visits $g_\tau(B_{\FF}(u_1,r_1))$ and $g_\tau(B_{\C}(u_2,r_2))$ in order. Let $L_0'=|W(\tau)-V_m(\tau)|$, $L_1'=|V_m(\tau)-U_1(\tau)|$, $L_2'=|U_1(\tau)-U_2(\tau)|$, and $r_j'= \rad_{U_2(\tau)}(g_\tau(B_{\C}(u_j,r_j)))$, $j=1,2$.    Since $r_2\le R_2/6$, by Koebe's $1/4$ theorem and distortion theorem, $L_2'\ge D_2(\tau) R_2/4$ and  $r_2'\le \frac {36}{25} D_2(\tau) r_2$, which imply
\BGE r_2'/L_2'\le 6(r_2/R_2)<1.\label{L2'r2'}\EDE

Suppose some $E_n$, $n\ge 0$, happens. Consider two cases. Case 1. $r_1< e^{-n} L_\wedge /6$. By Koebe's $1/4$ theorem and distortion theorem,
$L_1'\ge D_1(\tau) e^{-n} L_\wedge /4$, $r_1'\le   \frac {36}{25} D_1(\tau) r_1$, and so
\BGE r_1'/L_1'\le 6  r_1/(e^{-n}L_\wedge)<1 .\label{L1'r1'0}\EDE

Note that any curve in $\HH\sem K_\tau$ connecting $[u_1,v_m\wedge a_{K_\tau}]$ and $\pa_{\HH}^+ K_\tau$ crosses
$A_{\HH}(u_2,R_2,L_2/2)$, and when $e^{-n}L_\wedge<L_2/2$, also crosses
 $ A_{\HH}(u_1,e^{-n}L_\wedge,L_2/2)$. So by Proposition \ref{extrem-prop} and  the comparison principle and conformal invariance of extremal length,
\BGE  {L_1'}/({L_0'+L_1'})\lesssim  ({e^{-n}L_\wedge} /{L_2}) \cdot ({R_2} /{L_2}) .\label{L01/L1}\EDE

Combining (\ref{L2'r2'},\ref{L1'r1'0},\ref{L01/L1}) with (\ref{est-u},\ref{transform-2})   we get
\BGE \EE[E_{r_1}|\F_\tau,E_n]\lesssim \Big(\frac{e^{-n}L_\wedge} {L_2}\Big)^{\alpha_*} \Big(\frac{R_2} {L_2}\Big)^{\alpha_*} \Big(\frac{r_1}{e^{-n} L_\wedge }\Big)^\alpha;\label{condition-Er1}\EDE
\BGE \EE[E_{r_1,r_2}|\F_\tau,E_n]\lesssim \Big(\frac{e^{-n}L_\wedge} {L_2}\Big)^{\alpha_*} \Big(\frac{R_2} {L_2}\Big)^{\alpha_*} \Big(\frac{r_1}{e^{-n} L_\wedge }\Big)^\alpha\Big(\frac{r_2}{R_2}\Big)^\alpha.
\label{condition-Er1r2}\EDE
Combining these inequalities with (\ref{tauR20},\ref{tauR2n}) we get that, for all $n\ge 0$,
\BGE \PP[E_{r_1}\cap E_n] \lesssim e^{-n\alpha_*} \Big(\frac{L_1}{L_0+L_1}\Big)^{\alpha_*} \Big(\frac{r_1}{L_1}\Big)^{\alpha}  \Big(\frac{R_2}{L_2}\Big)^{\alpha+\alpha_*} ;\label{Er1En}\EDE
\BGE \PP[E_{r_1,r_2}\cap E_n] \lesssim e^{-n\alpha_*} \Big(\frac{L_1}{L_0+L_1}\Big)^{\alpha_*} \Big(\frac{r_1}{L_1}\Big)^{\alpha} \Big(\frac{r_2}{L_2}\Big)^{\alpha}  \Big(\frac{R_2}{L_2}\Big)^{\alpha_*} .\label{Er1r2En}\EDE

 Case 2. $r_1\ge  e^{-n} L_\wedge /6$.
 Then $n\ge 1$ since it is assumed that $r_1< L_\wedge /6$.
 Suppose  $E_n\cap \{\tau<\tau^1_{r_1;\FF}\}$ happens.
 Using Proposition \ref{extrem-prop}  and
  two separation semi-annuli $A_{\HH}(u_1,  e^{1-n} L_\wedge/3, L_2/2)$ and $A_{\HH}(u_2,R_2,L_2/2)$,  we find that
\BGE 1\wedge \frac{\rad_{V_m(\tau)}(g_\tau(B_{\FF}(u_1,r_1)\cap\lin\HH))}{L_0'}\lesssim \frac{ e^{-n}L_\wedge}{L_2}\cdot \frac{R_2}{L_2}
.\label{1wedge vm}\EDE
Combining (\ref{1wedge vm}) with (\ref{est-vm}), we find that $ \EE[E_{r_1}|\F_\tau,E_n]\lesssim  e^{-n \alpha_*}(\frac{L_\wedge}{L_2})^{\alpha_*} (\frac{R_2} {L_2} )^{\alpha_*}  $, which together with  (\ref{tauR2n}) and that $e^{-n}L_\wedge \le 6r_1$ implies (\ref{Er1En}).   Similarly, combining (\ref{1wedge vm}) with (\ref{L2'r2'}) and (\ref{two-pt-ineq}), we find that  $ \EE[E_{r_1,r_2}|\F_\tau,E_n]\lesssim   e^{-n \alpha_*}   (\frac{R_2}{L_2})^{\alpha_*}(\frac{r_2}{R_2})^{\alpha}$, which together with  (\ref{tauR2n}) and that $e^{-n}L_\wedge \le 6 r_1$ implies  (\ref{Er1r2En}). So (\ref{Er1En}) and (\ref{Er1r2En})  hold in both cases. Finally, summing up (\ref{Er1En}) and (\ref{Er1r2En})  over $n\in\N\cup\{0\}$, we get (\ref{21-ineq}) and (\ref{212-ineq}).
\end{proof}

Combining Lemma \ref{two-pt-cor} with Corollary \ref{Cor-lower-1pt}, we get the following corollary.

\begin{Corollary}
    There is a constant $c\in(0,1)$ such that when
  \BGE \frac{r_1}{L_\wedge}\le c \Big(\frac{L_1}{L_0+L_1}\Big)^{\frac{\alpha^*-\alpha_*}\beta}\quad \mbox{and}\quad \frac{r_2}{L_2}\le c \Big(\frac{L_1}{L_0+L_1}\Big)^{\frac{\alpha^*-\alpha_*}{\alpha+\alpha_*}}, \label{2pt-condition}\EDE
  we have
  \BGE \PP[\tau^j_{r_j;\FF}<\infty,j=1,2]\gtrsim G(w,\ulin v;u_1) L_2^{-\alpha} r_1^\alpha r_2^\alpha.\label{lower-prob}\EDE
  \label{Cor-lower-2pt}
\end{Corollary}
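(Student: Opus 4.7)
\emph{Plan.} The idea is to bound from below the probability of the ordered event that $\eta$ first enters $B_{\FF}(u_1,r_1)$ before coming within distance $R_2$ of $u_2$, and then subsequently enters $B_{\FF}(u_2,r_2)$, for a carefully chosen auxiliary radius $R_2>r_2$. Corollary~\ref{Cor-lower-1pt} applied at $u_1$ handles the first entry; DMP and a second application of Corollary~\ref{Cor-lower-1pt} handle the subsequent entry at $u_2$, producing the factor $L_2^{-\alpha}$.

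\emph{Proof sketch.} The first inequality in (\ref{2pt-condition}) implies the hypothesis of Corollary~\ref{Cor-lower-1pt} at $u_1$, so $\PP[\tau^1_{r_1;\FF}<\infty]\ge \tfrac 12 C_{\FF}\,G(w,\ulin v;u_1)\,r_1^\alpha$. Choose $R_2 := c_1 L_2\bigl(L_1/(L_0+L_1)\bigr)^{(\alpha^*-\alpha_*)/(\alpha+\alpha_*)}$ with $c_1$ a small constant. By the wrong-order bound (\ref{21-ineq}) of Lemma~\ref{two-pt-cor} together with the lower bound $G(w,\ulin v;u_1)\gtrsim (L_1/(L_0+L_1))^{\alpha^*}L_1^{-\alpha}$ from (\ref{G-upper}), one obtains $\PP[\tau^2_{R_2}<\tau^1_{r_1;\FF}<\infty]\lesssim c_1^{\alpha+\alpha_*}\,C_{\FF}\,G(w,\ulin v;u_1)\,r_1^\alpha$; for $c_1$ small this is $\le \tfrac 14 C_{\FF}\,G(w,\ulin v;u_1)\,r_1^\alpha$, so subtracting yields $\PP[\tau^1_{r_1;\FF}<\tau^2_{R_2}]\gtrsim G(w,\ulin v;u_1)\,r_1^\alpha$. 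Shrinking the universal constant $c$ in (\ref{2pt-condition}) so that $c<c_1/2$ also ensures $r_2\le R_2/2$. Now let $\tau:=\tau^1_{r_1;\FF}$. By DMP, conditionally on $\F_\tau$ and on $\{\tau<\tau^2_{R_2}\}$, the image $g_\tau(\eta(\tau+\cdot))$ is a chordal SLE$_\kappa(\rho_1,\dots,\rho_m,0)$ from $W(\tau)$ with force points $V_1(\tau),\dots,V_m(\tau),U_1(\tau)$ (adjoining $u_1$ as a $0$-force point does not change the law but forces $\sigma^*$ to be enlarged to include $2+\rho_\Sigma$, which only affects constants through $\rho_\Sigma$). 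Koebe's $1/4$ and distortion theorems at $u_2$ (using $\dist(u_2,K_\tau^{\doub})\gtrsim R_2\ge 2r_2$) give $\rad_{U_2(\tau)}(g_\tau(B_{\FF}(u_2,r_2)))\asymp D_2(\tau)r_2$ and $|U_1(\tau)-U_2(\tau)|\asymp D_2(\tau)L_2$; combined with the bound $|W(\tau)-U_1(\tau)|\lesssim D_2(\tau)L_2$ discussed below, the new configuration has $L_1'/(L_0'+L_1')\asymp 1$ and $r_2'/L_1'\asymp r_2/L_2$, which by the second inequality in (\ref{2pt-condition}) (shrinking $c$ again if necessary) lies in the range of Corollary~\ref{Cor-lower-1pt}. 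That corollary produces the conditional lower bound $\gtrsim (r_2/L_2)^\alpha$, and the tower property yields (\ref{lower-prob}).

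\emph{Main obstacle.} The most delicate piece is the geometric estimate $|W(\tau)-U_1(\tau)|\lesssim D_2(\tau)L_2$ (equivalently, $L_0'/L_1'\lesssim 1$, so that the boundary factor $(L_1'/(L_0'+L_1'))^{\alpha^*}$ is bounded below by a positive constant). Near $u_1$ the map $g_\tau$ has severe distortion, because the conformal radius of $\HH\sem K_\tau^{\doub}$ at $u_1$ is only $\asymp r_1\ll L_2$; a direct Koebe bound at $u_1$ gives $|W(\tau)-U_1(\tau)|\lesssim D_1(\tau)r_1$, which cannot be transferred to the $D_2(\tau)$-scale without further work. The remedy is to extend $g_\tau$ by Schwarz reflection to $\C\sem K_\tau^{\doub}$, apply Koebe's $1/4$ theorem at the interior points $u_1$ and $u_2$ to obtain $|c_{K_\tau}-U_1(\tau)|\asymp D_1(\tau)r_1$ and $|c_{K_\tau}-U_2(\tau)|\asymp D_2(\tau)L_2$, use $W(\tau)\in[c_{K_\tau},d_{K_\tau}]$, and bound $d_{K_\tau}-c_{K_\tau}$ by $D_2(\tau)L_2$ via Proposition~\ref{extrem-prop} applied to separating semi-annuli around $u_1$ and the hull, in the spirit of Lemmas~\ref{two-pt-lem} and~\ref{two-pt-lem-u1u2}.
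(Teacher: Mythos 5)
Your overall architecture is the same as the paper's: first get $\PP[\tau^1_{r_1;\FF}<\tau^2_{\cdot}]\gtrsim G(w,\ulin v;u_1)r_1^\alpha$ by combining Corollary \ref{Cor-lower-1pt} with the wrong-order bound (\ref{21-ineq}), then condition at $\tau:=\tau^1_{r_1;\FF}$ and apply Corollary \ref{Cor-lower-1pt} once more to produce the factor $(r_2/L_2)^\alpha$. You have also correctly isolated the crux: after conditioning, one must show that the boundary ratio $L_1'/(L_0'+L_1')$ of the new configuration is bounded below by a constant. (The paper does this without adjoining $u_1$ as a force point: it keeps the force points $\ulin V(\tau)$, targets $U_2(\tau)$, and proves in (\ref{12}) that $|V_m(\tau)-U_2(\tau)|/|W(\tau)-U_2(\tau)|>\frac12$; your variant with the extra zero-force point at $U_1(\tau)$ needs the equivalent statement $|W(\tau)-U_1(\tau)|\lesssim |U_1(\tau)-U_2(\tau)|$.)

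The gap is in your proposed proof of that crux. The inequality $d_{K_\tau}-c_{K_\tau}\lesssim D_2(\tau)L_2$ is false in general, and no choice of separating semi-annuli can rescue it. The interval $[c_{K_\tau},d_{K_\tau}]$ is the image of the \emph{entire} hull boundary, and its length is comparable to $\diam(K_\tau)\gtrsim L_0+L_1$ (consider a hull that is essentially a thin lid over $[u_1,w]$: then $g_\tau\approx\id$ away from the hull, $d_{K_\tau}-c_{K_\tau}\approx L_0+L_1$, while $D_2(\tau)L_2\approx L_2$, which can be arbitrarily smaller). The point is that the \emph{right} side of $\pa K_\tau$ is fully exposed to $\infty$ and contributes length of order $\diam(K_\tau)$ to $d_{K_\tau}-W(\tau)$; only the \emph{left} side, whose image is $[c_{K_\tau},W(\tau)]$, is small. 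It is small precisely because that boundary arc can only be reached, within $\HH\sem K_\tau$, by passing through the gap between the tip $\eta(\tau)$ and $\R$ near $u_1$, so it is shielded from $(-\infty,u_2]$ by $A_{\HH}(u_1,r_1,L_2)$; Proposition \ref{extrem-prop} then gives $\frac{W(\tau)-V_m(\tau)}{W(\tau)-U_2(\tau)}\le 144\,\frac{r_1}{L_2}$, which is exactly the paper's (\ref{12}) and yields $W(\tau)-U_2(\tau)\lesssim c_{K_\tau}-U_2(\tau)\lesssim D_2(\tau)L_2$. So you should replace the bound on $d_{K_\tau}-c_{K_\tau}$ by a bound on $W(\tau)-c_{K_\tau}$ obtained from this shielding argument. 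A secondary point in the same step: your route also tacitly needs the lower bound $|U_1(\tau)-U_2(\tau)|\gtrsim D_2(\tau)L_2$, which does not follow from Koebe at $u_2$ alone when $\dist(u_2,K_\tau^{\doub}\cup[a_{K_\tau},b_{K_\tau}])\ll L_2$ (the curve may already have approached $u_2$ to within $O(r_2)$ before time $\tau$ on the conditioning event); the paper's formulation avoids this because it only needs the upper bound $|V_m(\tau)-U_2(\tau)|\le D_2(\tau)L_2$ together with (\ref{12}).
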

\begin{proof}
  By  Corollary \ref{Cor-lower-1pt}  and (\ref{21-ineq},\ref{G-upper}), there is a constant $c_1\in(0,1)$  such that when (\ref{2pt-condition}) holds with $c_1$ in place of $c$, we have
  \BGE\PP  [\tau^1_{r_1;\FF}<\tau^2_{r_2}] \gtrsim   G(w,\ulin v;u_1) r_1^\alpha.\label{lower-prob-tau}\EDE
  Suppose that $\tau^1_{r_1;\FF}<\tau^2_{r_2}$ and (\ref{2pt-condition}) holds with $c_1$ in place of $c$. If $\eta(\tau^1_{r_1;\FF})\not\in\R$, the union of $[v_m\wedge a_{K_{\tau^1_{r_1;\FF}}},a_{K_{\tau^1_{r_1;\FF}}}]$ and the boundary arc of $\pa K_{\tau^1_{r_1;\FF}}$ from $a_{K_{\tau^1_{r_1;\FF}}}$ to $\eta(\tau^1_{r_1;\FF})$  can be disconnected from $(-\infty,u_2]$ in $\HH\sem K_{\tau^1_{r_1;\FF}}$ by  $A_{\HH}(u_1,r_1,L_2)$. By Proposition \ref{extrem-prop} and conformal invariance and comparison principle of extremal length, if $\frac{r_1}{L_2}<\frac 1{288}$, then $\frac{|W(\tau^1_{r_1;\FF})-V_m(\tau^1_{r_1;\FF})|}{|W(\tau^1_{r_1;\FF})-U_2(\tau^1_{r_1;\FF})|}\le 144 \frac{r_1}{L_2}< \frac 12$. If $\eta(\tau^1_{r_1;\FF})\in\R$, then from $r_1<L_1=|v_m-u_1|$ we see that $\eta(\tau^1_{r_1;\FF})<v_m$, which implies $W(\tau^1_{r_1;\FF})=V_m(\tau^1_{r_1;\FF})$. So in any case we have
\BGE \frac{|V_m(\tau^1_{r_1;\FF})-U_2(\tau^1_{r_1;\FF})|}{|W(\tau^1_{r_1;\FF})-U_2(\tau^1_{r_1;\FF})|}=1- \frac{|W(\tau^1_{r_1;\FF})-V_m(\tau^1_{r_1;\FF})|}{|W(\tau^1_{r_1;\FF})-U_2(\tau^1_{r_1;\FF})|} > \frac 12. \label{12}\EDE
By Koebe  $1/4$ theorem, on the event $\{\tau^1_{r_1;\FF}<\tau^2_{r_2}\}$,
\BGE g_{\tau^1_{r_1;\FF}}(B_{\FF}(u_2,r_2))\supset B_{\FF}(U_2(\tau^1_{r_1;\FF}),D_2(\tau^1_{r_1;\FF}) r_2/4) ;\label{contains}\EDE
\BGE |V_m(\tau^1_{r_1;\FF})-U_2(\tau^1_{r_1;\FF})|\le D_2(\tau^1_{r_1;\FF}) \dist(u_2,\eta[0,\tau^1_{r_1;\FF}]\cup [v_m,\infty))\le  D_2(\tau^1_{r_1;\FF}) L_2 .\label{Vm-U}\EDE
By DMP, Corollary \ref{Cor-lower-1pt}, and (\ref{G-upper},\ref{12},\ref{contains},\ref{Vm-U}), there is a constant $c_2\in(0,1)$  such that when $\frac{r_1}{L_2}<\frac 1{288}$ and $\frac{r_2}{L_2}<c_2$,
\BGE \PP[\tau^2_{r_2;\FF}<\infty|\F_{\tau^1_{r_1;\FF}},\tau^1_{r_1;\FF}<\tau^2_{r_2}]\gtrsim  ( {r_2}/{L_2})^\alpha.\label{tau2r2}\EDE
Thus,  if (\ref{2pt-condition}) holds for $c:=c_1\wedge c_2\wedge \frac 1{288}$, then (\ref{lower-prob-tau}) and (\ref{tau2r2}) both hold, which together imply (\ref{lower-prob}).
\end{proof}

For  $0\le t<\tau^*_u$ and $r>0$, let $I_t(u,r)$ denote the connected component of $\{|z-u|=r\}\cap (\HH\sem K_t)$ which disconnects $u$ from $\infty$. Note that one endpoint of $I_t(u,r)$ is $u-r$.

\begin{Lemma}
Let $S_1>R_1>r_1\in (0, L_\wedge /6)$ and $r_2\in(0,L_2/2)$.
  Then
  \BGE \PP[\tau^1_{r_1;\FF},\tau^2_{r_2}<\infty;\eta[\tau^1_{R_1;\FF},\tau^1_{r_1;\FF}]\cap I_{\tau^1_{R_1;\FF}}(u_1,S_1)\ne \emptyset]\lesssim \Big(\frac{L_1}{L_0+L_1}\Big)^{{\alpha_*}}\Big (\frac{R_1}{S_1}\Big)^{{\alpha_*}} \Big(\frac{r_1r_2}{L_1L_2}\Big)^{\alpha} ;\label{E-P}\EDE
    \BGE \PP[\tau^1_{r_1;\FF}<\infty;\eta[\tau^1_{R_1;\FF},\tau^1_{r_1;\FF}]\cap I_{\tau^1_{R_1;\FF}}(u_1,S_1)\ne \emptyset]\lesssim \Big(\frac{L_1}{L_0+L_1}\Big)^{{\alpha_*}}\Big (\frac{R_1}{S_1}\Big)^{{\alpha_*}} \Big(\frac{r_1}{L_1}\Big)^{\alpha} ;\label{E-P'}\EDE
  \label{comeout-2}
\end{Lemma}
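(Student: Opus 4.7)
\textbf{Plan of proof of Lemma \ref{comeout-2}.}

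The plan is to introduce two intermediate stopping times $\tau_1:=\tau^1_{R_1;\FF}$ and $\tau_2:=\inf\{t>\tau_1:\eta(t)\in I_{\tau_1}(u_1,S_1)\}$, and factorize the probabilities in (\ref{E-P}) and (\ref{E-P'}) through three stages via DMP. On the relevant events one has $\tau_1<\tau_2<\tau^1_{r_1;\FF}<\infty$, so by the tower property and DMP applied sequentially at $\tau_1$ and $\tau_2$ it suffices to multiply bounds for (i) $\PP[\tau_1<\infty]$, (ii) $\PP[\tau_2<\infty\mid\F_{\tau_1},\tau_1<\infty]$, and (iii) $\PP[\tau^1_{r_1;\FF}<\infty$ and $\tau^2_{r_2}<\infty\mid\F_{\tau_2},\tau_2<\infty]$ (the $\tau^2_{r_2}$ condition only for (\ref{E-P})).

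For (i) the one-point estimate (\ref{est-u}) yields $\PP[\tau_1<\infty]\lesssim (L_1/(L_0+L_1))^{\alpha_*}(R_1/L_1)^\alpha$. For (ii), DMP at $\tau_1$ reduces the conditional probability to the probability that a fresh SLE$_\kappa(\ulin\rho)$ from $W(\tau_1)$ with force points $\ulin V(\tau_1)$ visits the crosscut $I_{\tau_1}(u_1,S_1)$; Lemma \ref{boundary-estimate} combined with the comparison principle and the semi-annulus $A_{\HH}(u_1,R_1,S_1)$ (giving extremal distance $\ge\frac{1}{\pi}\ln(S_1/R_1)$) then gives $\PP[\tau_2<\infty\mid\F_{\tau_1},\tau_1<\infty]\lesssim (R_1/S_1)^{\alpha_*}$. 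For (iii), DMP at $\tau_2$ reduces to a hitting probability for a fresh SLE from $W(\tau_2)$ with force points $\ulin V(\tau_2)$; applying Lemma \ref{two-pt-lem-u1u2} (for (\ref{E-P})) or (\ref{est-u}) (for (\ref{E-P'})) and using $\dist(u_1,K_{\tau_2})\le R_1$ together with Koebe's $1/4$ and distortion theorems yields $L_1^{\mathrm{new}}:=|V_m(\tau_2)-U_1(\tau_2)|\asymp D_1(\tau_2)R_1$ and $r_1^{\mathrm{new}}\asymp D_1(\tau_2)r_1$, so $r_1^{\mathrm{new}}/L_1^{\mathrm{new}}\asymp r_1/R_1$; similarly $r_2^{\mathrm{new}}/L_2^{\mathrm{new}}\asymp r_2/L_2$. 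This gives the conditional bound $\lesssim (r_1/R_1)^\alpha(r_2/L_2)^\alpha$ for (\ref{E-P}) and $\lesssim (r_1/R_1)^\alpha$ for (\ref{E-P'}).

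Multiplying the three factors and using $(R_1/L_1)^\alpha (r_1/R_1)^\alpha=(r_1/L_1)^\alpha$ collapses to the claimed bounds. The hard part will be step (ii): Lemma \ref{boundary-estimate} requires $I_{\tau_1}(u_1,S_1)$ to disconnect $v_m\wedge a_{K_{\tau_1}}$ from $\pa_{\HH}^+ K_{\tau_1}$, which may fail when $S_1<L_1$ and the hull is localized so that the $S_1$-arc has both endpoints in $(-\infty,v_m)\cap\R$, leaving $v_m$ on the same side as $\pa_{\HH}^+ K_{\tau_1}$. Handling this will likely require case analysis on $S_1\gtrless L_1$ and on the position of $a_{K_{\tau_1}}$ relative to $v_m$, or the use of an adapted separating crosscut combining $I_{\tau_1}(u_1,S_1)$ with a suitable boundary segment so that Lemma \ref{boundary-estimate} can be invoked with the $\alpha_*$ exponent in every configuration.
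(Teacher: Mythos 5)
Your three-stage factorization does not close, and the gap is in step (iii), not primarily in the disconnection issue you flag for step (ii). The relation $L_1^{\mathrm{new}}=|V_m(\tau_2)-U_1(\tau_2)|\asymp D_1(\tau_2)R_1$ is only an upper bound ($\lesssim$): between $\tau_1$ and $\tau_2$ the curve may approach $u_1$ to within some $\rho\ll R_1$ before escaping to $I_{\tau_1}(u_1,S_1)$, in which case $L_1^{\mathrm{new}}\asymp D_1(\tau_2)\rho$ and the conditional probability of returning to $B_{\FF}(u_1,r_1)$ is of order $(r_1/\rho)^\alpha\gg(r_1/R_1)^\alpha$. So your step (iii) is false as a uniform conditional bound given $\F_{\tau_2}$. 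The cost of a deep approach to scale $\rho=e^{-n}R_1$ is paid elsewhere: an extra $e^{-n\alpha}$ in the prior and an extra $e^{-n\alpha_*}$ in the escape probability. Recovering the lemma therefore requires decomposing over the closest-approach scale and verifying that these three contributions combine into a summable series; this is exactly the paper's decomposition into the events $E^1_{n_1}$ (and $E^2_{n_2}$, which is likewise needed because at the escape time the curve may already have come arbitrarily close to $u_2$, so the $u_2$-factor $(r_2/L_2)^\alpha$ in your step (iii) suffers from the same defect).

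Second, the extremal-length input to your step (ii) is incorrect: a curve in $\HH\sem K_{\tau_1}$ from $I_{\tau_1}(u_1,S_1)$ to $\pa^+_\HH K_{\tau_1}$ need not cross $A_{\HH}(u_1,R_1,S_1)$ at all --- it can run from the endpoint of $I_{\tau_1}(u_1,S_1)$ lying on $\pa K_{\tau_1}$ around the outside of the hull to $[b_{K_{\tau_1}},\infty)$ without ever entering $B(u_1,S_1)$ --- so $d_{\HH\sem K_{\tau_1}}(I_{\tau_1}(u_1,S_1),\pa^+_\HH K_{\tau_1})$ is not bounded below by $\frac1\pi\ln(S_1/R_1)$, and Lemma \ref{boundary-estimate} does not deliver $(R_1/S_1)^{\alpha_*}$ at this stage. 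In the paper the factor $(R_1/S_1)^{\alpha_*}$ is instead produced at the later time $\tau_*$ (your $\tau_2$) via the estimate (\ref{extrem-est-WV}): once the curve has escaped to radius $S_1$ after penetrating to depth $R_1e^{-n_1}$, the set $[U_1(\tau_*),V_m(\tau_*)]\cup g_{\tau_*}(B_{\FF}(u_1,r_1)\cap\lin\HH)$ is conformally squeezed relative to $W(\tau_*)$ by the annulus $A_{\HH}(u_1,R_1e^{1-n_1},S_1)$, and this feeds the $(L_1/(L_0+L_1))^{\alpha_*}$ factor of the two-point estimate (\ref{transform-2}) applied after $\tau_*$. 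In other words, the $(R_1/S_1)^{\alpha_*}$ belongs to your stage (iii), not to stage (ii), and obtaining it requires tracking the penetration depth $n_1$.
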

\begin{proof} We only prove (\ref{E-P}). The proof of (\ref{E-P'}) is  simpler. Let $E$ denote the event in (\ref{E-P}).
 Let $\tau_*=\inf(\{t\ge \tau^1_{R_1;\FF}:\eta(t)\in {I_{\tau^1_{R_1;\FF}}(u_1,S_1)}\}\cup\{\infty\})$. Then $E=\{\tau_*<\tau^1_{r_1;\FF}<\infty;\tau^2_{r_2}<\infty\}$. We may assume $r_2<L_2/6$ for otherwise (\ref{E-P}) reduces to (\ref{E-P'}).
 Let $R_2=L_2/2$.
 Let $N_1,N_2\in\N$ be such that $e^{-N_j} R_j\le r_j<e^{1-N_j} R_j$, $j=1,2$.
Define
$$ E^1_{n_1}=\{\tau^1_{e^{1-n_1} R_1;\FF}\le \tau_*<\tau^1_{e^{-n_1} R_1;\FF}\},\quad 1\le n_1\le N_1;$$
$$E^2_{n_2}=\{\tau^2_{e^{1-n_2} R_2}< \tau^1_{R_1;\FF}<\tau^2_{e^{-n_2}R_2}\},\quad 1\le n_2\le N_2;$$
$$E^2_0=\{\tau^1_{R_1;\FF}<\tau^2_{R_2}\},\quad E^2_{N_2+1}=\{\tau^2_{e^{-N_2} R_2}< \tau^1_{R_1;\FF}<\infty\}.$$
When $E$ happens, we have $\tau^1_{R_1;\FF}<\infty$ and $\tau^1_{R_1;\FF}<\tau_*<\tau^1_{r_1;\FF}\le \tau^1_{e^{-N_1}R_1;\FF}$ because $r_1\ge e^{-N_1} R_1$. So we have
\BGE E\subset\bigcup_{n_1=1}^{N_1} \bigcup_{n_2=0}^{N_2+1} (E^1_{n_1}\cap E^2_{n_2}).\label{E-union}\EDE
Note that all $E^1_{n_1}\cap E^2_{n_2}\in\F_{\tau_*}$.
By (\ref{est-u},\ref{21-ineq}), for all $n_1,n_2$,
\BGE \PP[E^1_{n_1}\cap E^2_{n_2}]
\lesssim  \Big(\frac{L_1}{L_0+L_1}\Big)^{{\alpha_*}}\Big(\frac{e^{1-n_1}R_1}{L_1}\Big)^\alpha \Big(\frac{ e^{1-n_2}R_2 }{L_2}\Big)^{\alpha+{\alpha_*}} .\label{n2>0}\EDE

  Suppose for some $1\le n_1\le N_1$ and $0\le n_2\le N_2$, $E^1_{n_1}\cap E^2_{n_2}$ happens.
  Let $I= I_{\tau_*}(u_1,R_1 e^{1-n_1})$. Then $I$ is a crosscut of $\HH\sem K_{\tau_*}$, which disconnects $[u_1,v_m\wedge a_{K_{\tau_*}}]\cup (B_{\FF}(u_1,r_1)\cap \lin\HH)$ from $\infty$, and can be disconnected from $\pa_{\HH}^+ K_{\tau_*}$ in $\HH\sem K_{\tau_*}$ by $A_{\HH}(u,  R_1 e^{1-n_1},S_1)$.
   By Proposition \ref{extrem-prop} and conformal invariance and comparison principle of extremal length, \BGE 1\wedge \frac{\rad_{U_1(\tau_*)}([U_1(\tau_*),V_m(\tau_*)]\cup g_{\tau_*}(B_{\FF}(u_1,r_1)\cap \lin\HH))}{|W(\tau_*)-U_1({\tau_*})|}\lesssim \frac{R_1 e^{1-n_1}}{S_1}.\label{extrem-est-WV}\EDE
  By Koebe $1/4$ theorem,
  $$|V_m(\tau_*)-U_1({\tau_*})|\ge D_1({\tau_*}) R_1 e^{-n_1}/4,\quad |U_1(\tau_*)-U_2({\tau_*})|\ge D_2({\tau_*}) R_2 e^{-n_2}/4.$$
  By Koebe distortion theorem, for $j=1,2$, if $R_je^{-n_j}>6r_j$, then
  $$\rad_{U_j({\tau_*})}  (g_{\tau_*}(B_{\C}(u_j,r_j)))\le \frac 32 D_j({\tau_*}) r_j <|V_m(\tau_*)-U_j({\tau_*})|.$$
  Combining these three estimates with DMP  and (\ref{transform-2}), we see that, for $1\le n_1\le N_1$ and $1\le n_2\le N_2$, when $R_j e^{-n_j}>6r_j$, $j=1,2$,
  \BGE \PP [E|\F_{\tau_*},E^1_{n_1}\cap E^2_{n_2}]\lesssim \Big(\frac{R_1 e^{1-n_1}}{S_1}\Big)^{{\alpha_*}} \Big(\frac{r_1}{R_1 e^{-n_1}}\Big)^\alpha  \Big(\frac{r_2}{R_2  e^{-n_2}}\Big)^\alpha.\label{two-pt-bdry-temp1}\EDE
  This estimate also holds without assuming $R_j e^{-n_j}>6r_j$, $j=1,2$. In fact, if $R_j e^{-n_j}>6r_j$ holds  for only one of $j\in\{1,2\}$, then we get (\ref{two-pt-bdry-temp1}) using (\ref{est-u}) instead of (\ref{transform-2}). If  $R_j e^{-n_j}\le 6r_j$ for $j\in\{1,2\}$, then (\ref{two-pt-bdry-temp1}) follows from (\ref{est-vm},\ref{extrem-est-WV}). A similar argument shows that
   \BGE \EE[E|\F_{\tau_*},E^1_{n_1}\cap E^2_{N_2+1} ]\le \EE[\tau^1_{r_1;\FF}<\infty |\F_{\tau_*},E^1_{n_1}\cap E^2_{N_2+1} ]\lesssim \Big(\frac{R_1 e^{1-n_1}}{S_1}\Big)^{{\alpha_*}} \Big(\frac{r_1}{R_1 e^{-n_1}}\Big)^\alpha.\label{two-pt-bdry-temp2-N}\EDE
   Here when $R_1 e^{-n_1}>6r_1$ we use  (\ref{est-u}); and when $R_1 e^{-n_1}\le 6 r_1$, we use (\ref{est-vm}).

  Combining (\ref{n2>0}) with (\ref{two-pt-bdry-temp1},\ref{two-pt-bdry-temp2-N}), we see that, for all $n_1,n_2$,
  $$\PP[E\cap E^1_{n_1}\cap E^2_{n_2}]\lesssim \Big(\frac{L_1}{L_0+L_1}\Big)^{{\alpha_*}} \Big(\frac{R_1 }{S_1}\Big)^{{\alpha_*}} \Big(\frac{r_1}{L_1}\Big)^\alpha \Big(\frac{r_2}{L_2}\Big)^\alpha  e^{-(n_1+n_2){\alpha_*}}.$$
  Summing up these inequalities and using (\ref{E-union}), we get the desired upper bound of $\PP[E]$.
\end{proof}

\begin{proof}[Proof of Theorem \ref{main-thm2}]
Suppose $r_1\in(0,L_\wedge/6 )$ and $r_2\in (0,L_2 /6)$. Choose
$S_1>R_1\in (r_1, L_\wedge/6  )$ and $R_2\in (r_2,L_2/6 )$, whose values are to  be determined later.  Recall the $I_t(u,r)$ defined before Lemma \ref{comeout-2}.
Let $\ha\tau^1_{R_1,S_1}=\inf(\{t\ge \tau^1_{R_1;\FF}:\eta(t)\in I_{\tau^1_{R_1}}(u_1, S_1)\}\cup\{\infty\})$. Define the   events:
$$ E=\{\tau^j_{r_j;\FF}<\infty,j=1,2\}; \quad   \ha E_1=\{\tau^1_{r_1;\FF}<\ha \tau^1_{R_1,S_1}\};\quad E_{ 2}^{x}=\{\tau^1_{x;\FF}<\tau^2_{R_2}\},\quad x\in\{r_1,R_1\}.$$
In the following, we use $X\st{e}{\approx}Y$ to denote the approximation relation $|X-Y|=e$, and call $e$ the error term.  We are going to use the following approximation relations:
\BGE
\begin{aligned}
 &\PP[E] \st{e_1}{\approx} \PP[E\cap \ha E_1]\st{e_2}{\approx} \PP[E\cap \ha E_1\cap E_2^{r_1}]
=\EE[\ind_{\ha E_{1}\cap E_2^{r_1}} \PP[\tau^2_{r_2;\FF}<\infty|\F_{\tau^1_{r_1;\FF}},E_2^{r_1}]]\\ \st{e_3}{\approx}& \EE[\ind_{ \ha E_{1}\cap E_2^{r_1}} G_{\tau^1_{r_1;\FF};\FF}(u_2) ]r_2^\alpha
\st{e_4}{\approx} \EE[\ind_{\ha E_{1}\cap E_2^{r_1}} G_{\tau^1_{R_1;\FF};\FF}(u_2) ]r_2^\alpha\\
\st{e_5}{\approx}& \EE[\ind_{\ha E_{1}\cap E_2^{R_1} } G_{\tau^1_{R_1;\FF};\FF}(u_2)] r_2^\alpha\st{e_6}{\approx} \EE[\ind_{\{\tau^1_{r_1;\FF}<\infty\}\cap E_2^{R_1}} G_{\tau^1_{R_1;\FF};\FF}(u_2)] r_2^\alpha\\
=&\EE[\ind_{E_2^{R_1}} G _{\tau^1_{R_1;\FF}}(u_2) \PP[\tau^1_{r_1;\FF}<\infty|\F_{\tau^1_{R_1;\FF}}]] r_2^\alpha \st{e_7}{\approx} \EE[\ind_{E_2^{R_1}} G _{\tau^1_{R_1;\FF}}(u_1) G _{\tau^1_{R_1;\FF}}(u_2) ] r_1^\alpha r_2^\alpha.
\end{aligned}
\label{approximation}
\EDE

By (\ref{est-u}),
\BGE \PP[E_2^{x}]\le \PP[\tau^1_{x;\FF}<\infty] \lesssim   ( {L_1}/({L_0+L_1}) )^{{\alpha_*}}  ( {x}/{L_1} )^{\alpha},\quad x\in\{r_1,R_1\}.\label{Ex-upper}\EDE
By Lemma \ref{Cor-main-thm1}, there is a constant $c_1\in(0,1)$ such that if $r_1/R_1<c_1$,
\BGE |\PP[\tau^1_{r_1;\FF}<\infty|\F_{\tau^1_{R_1}},\tau^1_{R_1}<\infty]-  G_{\tau^1_{R_1};\FF}(u_1)r_1^\alpha|\lesssim  (r_1/R_1)^{\alpha+\beta};\label{G1r1}\EDE
and if $r_2/R_2<c_1$, then for $x=r_1$ or $R_1$,
\BGE |\PP[\tau^2_{r_2;\FF}<\infty|\F_{\tau^1_{x}},E_2^x]-  G_{\tau^1_{x};\FF}(u_2)r_2^\alpha|\lesssim (r_2/R_2)^{\alpha+\beta}, \label{G2r2}\EDE
and so
\BGE  G_{\tau^1_{x};\FF}(u_2)r_2^\alpha \lesssim \PP[\tau^2_{r_2;\FF}<\infty|\F_{\tau^1_{x}} ]+ (r_2/R_2)^{\alpha+\beta}\quad \mbox{on }E_2^{x}.\label{G<}\EDE

We now bound the error terms in (\ref{approximation}). By (\ref{E-P}),
$$e_1=\PP[E\sem \ha E_1]\lesssim \Big(\frac{L_1}{L_0+L_1}\Big)^{{\alpha_*}}\Big (\frac{R_1}{S_1}\Big)^{{\alpha_*}} \Big(\frac{r_1}{L_1}\Big)^{\alpha} \Big(\frac{r_2}{L_2}\Big)^{\alpha}.$$
Since $E\sem E_2^{r_1}=\{\tau^2_{R_2}<\tau^1_{r_1}<\tau^2_{r_2}<\infty\}\cup \{\tau^2_{r_2}<\tau^1_{r_1}<\infty\}$, by (\ref{21-ineq},\ref{212-ineq}),
$$e_2\le \PP[E\sem E_{2}^{r_1}]\lesssim  \Big(\frac{L_1}{L_0+L_1}\Big)^{{\alpha_*}} \Big(\frac{R_2}{L_2}\Big)^{{\alpha_*}} \Big(\frac{r_1}{L_1}\Big)^\alpha \Big(\frac{r_2}{L_2}\Big)^\alpha.$$
Combining (\ref{Ex-upper}) and (\ref{G2r2}) with $x=r_1$, we find that,   when $r_2/R_2<c_1$,
$$e_3\lesssim \Big(\frac{L_1}{L_0+L_1}\Big)^{{\alpha_*}} \Big(\frac{r_1}{L_1}\Big)^\alpha \Big(\frac{r_2}{R_2}\Big)^{\alpha+\beta}.$$
By (\ref{21-ineq}),
$\PP[\ha E_1\cap(  E_2^{R_1} \sem  E_2^{r_1})]\le \PP[\tau^2_{R_2}<\tau^1_{r_1;\FF}<\infty] \lesssim (\frac{L_1}{L_0+L_1} )^{{\alpha_*}} (\frac{r_1}{L_1} )^\alpha (\frac{R_2}{L_2} )^{\alpha+{\alpha_*}}$.
By (\ref{G-upper}) and Koebe's $1/4$ theorem,    $\ind_{E_2^{R_1}}G_{\tau^1_{R_1;\FF}}(u_2)\lesssim R_2^{-\alpha}$. These estimates together imply that
$$e_5\lesssim \Big(\frac{L_1}{L_0+L_1}\Big)^{{\alpha_*}} \Big(\frac{R_2}{L_2}\Big)^{ {\alpha_*}} \Big(\frac{r_1}{L_1}\Big)^\alpha \Big(\frac{r_2}{L_2}\Big)^{\alpha}.$$
By (\ref{G<},\ref{E-P},\ref{E-P'}), when $r_2/R_2<c_1$,
$$e_6\lesssim \EE[\ind_{E_2^{R_1}\cap (\{\tau^1_{r_1;\FF}<\infty\}\sem \ha E_1)} (\PP[\tau^2_{r_2;\FF}<\infty|\F_{\tau^1_{R_1}}]+(r_2/R_2)^{\alpha+\beta})]$$
$$\le \PP[(\{\tau^1_{r_1;\FF}<\infty\}\sem \ha E_1)\cap \{\tau^2_{r_2;\FF}<\infty\}]+(r_2/R_2)^{\alpha+\beta} \PP[ \{\tau^1_{r_1;\FF}<\infty\}\sem \ha E_1]$$
$$\lesssim \Big(\frac{L_1}{L_0+L_1}\Big)^{{\alpha_*}}\Big (\frac{R_1}{S_1}\Big)^{{\alpha_*}} \Big(\frac{r_1}{L_1}\Big)^{\alpha}\Big[ \Big(\frac{r_2}{L_2}\Big)^{\alpha}+ \Big(\frac{r_2}{R_2}\Big)^{\alpha+\beta} \Big]  .$$
By (\ref{G1r1},\ref{G<}), Koebe $1/4$ theorem, and (\ref{est-u},\ref{transform-2}), when $r_j/R_j<c_1$, $j=1,2$,
$$e_7\lesssim (r_1/R_1)^{\alpha+\beta}\EE[\ind_{E_2^{R_1}} \PP[\tau^2_{r_2;\FF}<\infty|\F_{\tau^1_{R_1}}]+(r_2/R_2)^{\alpha+\beta}] $$
$$\le (r_1/R_1)^{\alpha+\beta}( \PP[\tau^1_{R_1;\FF}<\infty,\tau^2_{r_2;\FF}<\infty]+(r_2/R_2)^{\alpha+\beta} \PP[\tau^1_{R_1;\FF}<\infty])$$
$$\lesssim \Big(\frac{L_1}{L_0+L_1}\Big)^{{\alpha_*}} \Big(\frac{r_1}{R_1}\Big)^\beta \Big(\frac{r_1}{L_1}\Big)^\alpha\Big[\Big(\frac{r_2}{L_2}\Big)^\alpha+\Big(\frac{r_2}{R_2} \Big)^{\alpha+\beta}\Big].$$

When $\tau^1_{R_1}<\infty$, $I_{\tau^1_{R_1}}(u_1, S_1)$ can be disconnected in $\HH\sem K_{\tau^1_{R_1}}$ from $(-\infty,u_2]$ by  $ A_{\HH}(u,S_1,L_2)$. So $d_{\HH\sem K_{\tau^1_{R_1}}}(I_{\tau^1_{R_1}}(u_1, S_1), (-\infty,u_2])\ge \frac 1\pi \ln(\frac{L_2}{S_1})$. On the event $\ha E_1$, by Lemma \ref{Gt1t2}, if $L_2/S_1> 1440$, $|G_{\tau^1_{R_1};\FF}(u_2)-G_{\tau^1_{r_1};\FF}(u_2)|\lesssim \frac{S_1}{L_2} G_{\tau^1_{r_1};\FF}(u_2)  $. Combining this with (\ref{G<},\ref{est-u},\ref{transform-2}), we get, when $S_1/L_2<1/ 1440$ and $r_2/R_2<c_{1}$,
$$e_4 \lesssim  ({S_1}/{L_2}) ( \EE[\ind_{  E_2^{r_1}} \PP[\tau^2_{r_2;\FF}<\infty|\F_{\tau^1_{r_1}}] ]+ ( {r_2}/{R_2})^{\alpha+\beta} \PP[ E_2^{r_1}])$$
$$\le  ({S_1}/{L_2}) ( \PP[\tau^j_{r_j;\FF}<\infty,j=1,2]+  ( {r_2}/{R_2} )^{\alpha+\beta} \PP[\tau^1_{r_1;\FF}<\infty] )$$
$$\lesssim \frac{S_1}{L_2} \Big(\frac{L_1}{L_0+L_1}\Big )^{{\alpha_*}} \Big(\frac{r_1}{L_1}\Big)^\alpha \Big[ \Big(\frac{r_2}{L_2}\Big)^\alpha+\Big(\frac{r_2}{R_2}\Big)^{\alpha+\beta}\Big].$$

Now we determine $R_1,S_1,R_2$. Let $\ha r_1\in [r_1,L_\wedge/6)$ and $\ha r_2\in [r_2,L_2/6)$. Define
$$R_1=(L_\wedge /6)^{\frac{\alpha_*}{\alpha_*\beta+{\alpha_*}+\beta}} \cdot \ha r_1^{\frac{\alpha_*\beta+\beta}{\alpha_*\beta+{\alpha_*}+\beta}}, \quad S_1=(L_\wedge /6)^{\frac{ {\alpha_*}+\beta }{\alpha_*\beta+{\alpha_*}+\beta}}\cdot \ha r_1^{\frac{\alpha_*\beta }{\alpha_*\beta +{\alpha_*}+\beta}},$$ $$R_2=(L_2/6)^{\frac{\alpha+{\alpha_*}}{\alpha+{\alpha_*}+\beta}} \cdot \ha r_2^{\frac{\beta}{\alpha+{\alpha_*}+\beta}}.$$
Then $ r_1\le \ha r_1<R_1<S_2<L_\wedge /6$ and $r_2\le \ha r_2<R_2<L_2/6$.
Straightforward computation shows that  there is a constant $c\in(0,1)$   such that if $
\ha r_1/L_\wedge<c$ and $\ha r_2/L_2<c$,
then $r_j/R_j<c_1$, $j=1,2$, and $S_1/L_2<1/1440$, and so the estimates for $e_j$, $1\le j\le 7$, all hold, and we have
$$|r_1^{-\alpha}r_2^{-\alpha} \PP[\tau^j_{r_j;\FF}<\infty,j=1,2]-\EE[\ind_{E_2^{R_1}}G _{\tau^1_{R_1}}(u_1) G _{\tau^1_{R_1}}(u_2) ]| $$ $$\le r_1^{-\alpha} r_2^{-\alpha} \sum_{j=1}^7 e_j\lesssim \Big(\frac{L_1}{L_0+L_1}\Big )^{{\alpha_*}} \Big(\frac{ 1}{L_1}\Big)^\alpha \Big(\frac{1}{L_2}\Big)^\alpha\Big[ \Big(\frac{\ha r_1}{L_\wedge}\Big)^{\zeta_1}+ \Big(\frac{\ha r_2}{L_2}\Big)^{\zeta_2}\Big],$$
where $\zeta_1:=\frac{\alpha_*\beta}{\alpha_*\beta+\alpha_*+\beta}$ and $\zeta_2:=\frac{\alpha_*\beta}{\alpha+\alpha_*+\beta}$.
Now we treat $\ha r_1,\ha r_2$ as fixed. Since the RHS of the above inequality and $\EE[\ind_{E_2^{R_1}}G _{\tau^1_{R_1}}(u_1) G _{\tau^1_{R_1}}(u_2) ]$ do not depend on $r_1,r_2$, we see that the family $r_1^{-\alpha}r_2^{-\alpha} \PP[\tau^j_{r_j;\FF}<\infty,j=1,2]$ is Cauchy as $r_1,r_2\to 0^+$.
So $G_{\FF}(w,v ;u_1,u_2)\in [0,\infty)$ could be well defined by (\ref{Green-2pt}), and satisfies (\ref{G(u1,u2)-est}) by the above inequality. Formula (\ref{G(u1,u2)-upper}) follows immediately from (\ref{transform-2}) and (\ref{lower-prob}), which implies that $G_{\FF}(w,v ;u_1,u_2)\in(0,\infty)$.
\end{proof}

\begin{Remark}
With some more work, one can prove that $G_{\FF}(w,\ulin v;u_1,u_2)$ has an expression that resembles the ordered two-interior-point  Green's function for chordal SLE$_\kappa$ in \cite{LW}:
$$G_{\FF}(w,\ulin v;u_1,u_2)=  G_{0;\FF}( u_1) \EE_*[ G_{\tau^*_{u_1};\FF}(u_2)],$$
where $\EE_*$ stands for the expectation w.r.t.\ the SLE$_\kappa(\ulin\rho,\kappa-8-2\rho_\Sigma)$ curve started from $w$ with force points $(\ulin v,u_1)$. Such curve almost surely hits $u_1$ at the time $\tau^*_{u_1}$, and can be understood as the SLE$_\kappa(\ulin\rho)$ curve $\eta$ conditioned to pass through $u_1$.
\end{Remark}

\section{Minkowski Content Measure} \label{Section 6}
Let $\eta$ be as in Theorems \ref{main-thm1} and \ref{main-thm2}. The purpose of this section is to use those theorems to construct a covariant Borel measure, called the Minkowski content measure, on $\eta\cap(-\infty,v_m]$, which is closely related to the Minkowski contents of subsets of $\eta\cap(-\infty,v_m]$.

\subsection{General theory} \label{Section 6.1}
In this subsection, we first review the  Minkowski contents, and then define the Minkowski content measures, and derive some basic properties.

Let $n\in\N$. Let $\lambda^n$ denote the Lebesgue measure on $\R^n$. Fix $d\in(0,n)$. Let $S\subset\R^n$. For $r>0$, let $B(S,r)=\{x\in\R^n: \dist(x,S)\le r\}$, and $\Cont(S;r)=r^{d-n} \lambda^n(B(S,r))$. The upper and lower ($d$-dimensional) Minkowski content of $S$ are respectively defined by
$$\lin{\Cont}(S):=\limsup_{r\to 0^+} \Cont(S;r),\quad \ulin{\Cont}(S):=\liminf_{r\to 0^+} \Cont(S;r).$$
When $\lin{\Cont}(S)=\ulin{\Cont}(S)$, the common value, denoted by $\Cont(S)$ or $\Cont_d(S)$, is called the ($d$-dimensional) Minkowski content of $S$.

\begin{Remark}
 The definition here  differs from that in \cite[3.2.37]{MC} in two aspects. First, we allow $d$ to be not an integer; second, we omit a multiplicative constant: $\frac{\Gamma(\frac{n-d}2+1)}{\pi^{\frac{n-d}2}}$  for simplicity.
\end{Remark}

 For $S_1,S_2\subset \R^n$,   from $\lambda^n(B(S_1\cup S_2,r))\le \lambda^n (B(S_1,r))+\lambda^n(B(S_2,r))$, we get
\BGE \lin{\Cont}(S_1\cup S_2)\le   \lin{\Cont} (S_1)+\lin{\Cont} (S_2).\label{Cont-union-upper-n}\EDE
\BGE \ulin{\Cont}(S_1\cup S_2)\le \ulin{\Cont}(S_1)+\lin{\Cont}(S_2).\label{Cont-union-lower-n}\EDE

\begin{Lemma}
  For any compact sets $S_1,S_2\subset\R^n$,
  \BGE \ulin\Cont(S_1)+\ulin\Cont(S_2)\le \ulin\Cont(S_1\cup S_2)+\lim_{\eps\to 0^+} \lin\Cont(S_1\cap B(S_1\cap S_2,\eps)). \label{Cont-union-bigger}\EDE
\end{Lemma}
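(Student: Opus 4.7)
The plan is to start from the set identity $B(S_1\cup S_2, r) = B(S_1,r)\cup B(S_2,r)$, which combined with inclusion-exclusion gives the pointwise identity
$$\Cont(S_1;r) + \Cont(S_2;r) = \Cont(S_1\cup S_2;r) + r^{d-n}\lambda^n(B(S_1,r)\cap B(S_2,r)).$$
Taking $\liminf_{r\to 0^+}$ on both sides and using $\liminf(a+b)\ge \liminf a + \liminf b$ on the left and $\liminf(a+b)\le \liminf a + \limsup b$ on the right (all quantities finite, since $S_1, S_2$ are bounded), we reduce \eqref{Cont-union-bigger} to the single estimate
$$\limsup_{r\to 0^+} r^{d-n}\lambda^n(B(S_1,r)\cap B(S_2,r)) \le \lim_{\eps\to 0^+}\lin\Cont(S_1\cap B(S_1\cap S_2,\eps)).$$

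Next I would establish the following geometric containment: for any $\eps>0$ there is $r_0>0$ such that for all $r<r_0$,
$$S_1\cap B(S_2, 2r) \subset S_1\cap B(S_1\cap S_2,\eps).$$
This is the only step where compactness is used, and it is proved by contradiction: otherwise there exist $\eps>0$, $r_k\downarrow 0$, and $x_k\in S_1$ with $\dist(x_k,S_2)\le 2r_k$ but $\dist(x_k, S_1\cap S_2)>\eps$. Compactness of $S_1$ and closedness of $S_2$ force a subsequential limit $x\in S_1\cap S_2$, contradicting the lower bound $\eps$.

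Given this containment, the triangle inequality yields
$$B(S_1,r)\cap B(S_2,r) \subset B(S_1\cap B(S_2,2r),\, r),$$
because any $x$ in the intersection has a witness $x_1\in S_1$ with $|x-x_1|\le r$ and $\dist(x_1,S_2)\le 2r$. Hence for $r<r_0$,
$$r^{d-n}\lambda^n(B(S_1,r)\cap B(S_2,r)) \le \Cont\bigl(S_1\cap B(S_1\cap S_2,\eps);\,r\bigr).$$
Taking $\limsup_{r\to 0^+}$ gives the bound by $\lin\Cont(S_1\cap B(S_1\cap S_2,\eps))$; since this holds for every $\eps>0$ and the right-hand side is monotone decreasing as $\eps\downarrow 0$ (the limit therefore existing in $[0,\infty]$), sending $\eps\to 0^+$ completes the reduction and hence the proof.

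The only real obstacle is the compactness step above; everything else is bookkeeping with $\liminf$/$\limsup$ and the elementary inclusion-exclusion identity for $\lambda^n$-measures of $r$-neighbourhoods.
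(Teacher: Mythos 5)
Your proposal is correct and follows essentially the same route as the paper: both arguments reduce the lemma to the containment $B(S_1,r)\cap B(S_2,r)\subset B\bigl(S_1\cap B(S_1\cap S_2,\eps),r\bigr)$ for all sufficiently small $r$ (your two-step version via $S_1\cap B(S_2,2r)$ is the same compactness fact the paper extracts from $S_1\cap B(S_2,1/k)\downarrow S_1\cap S_2$), and then take $\liminf$ in $r$ followed by $\eps\to 0^+$. The only cosmetic difference is that you phrase the first step as an inclusion--exclusion identity rather than the subadditivity inequality, which changes nothing.
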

\begin{proof}
  Fix $\eps>0$. Since $ S_1\cap B(S_2,1/k)\downarrow S_1\cap S_2$,  there is $k_0\in\N$ such that $S_1\cap B(S_2,1/k_0)\subset B(S_1\cap S_2,\eps)$. Let $r\in(0,1/(2k_0))$. Suppose $x\in B(S_1,r)\cap B(S_2,r)$. Then there exist $y_j\in S_j$ such that $|x-y_j|\le r$, $j=1,2$. Then $|y_1-y_2|\le 2r\le 1/k_0$. So $y_1\in S_1\cap B(S_2,1/k_0)\subset S_1\cap B(S_1\cap S_2,\eps)$, which implies that $x\in B(S_1\cap B(S_1\cap S_2,\eps),r)$. So we get
  $B(S_1,r)\cap B(S_2,r)\subset B(S_1\cap B(S_1\cap S_2,\eps),r)$. Thus,
  $$\Cont(S_1;r)+\Cont(S_2;r)\le \Cont(S_1\cup S_2;r)+\Cont(S_1\cap B(S_1\cap S_2,\eps);r).$$
 We then get (\ref{Cont-union-bigger}) by first sending $r$ to $0^+$ and then sending $\eps$ to $0^+$.
\end{proof}

For $T\subset S\subset\R$, we write $\pa_S T$ for the relative boundary of $T $ in $S$, i.e.,  $S\cap \lin { T}\cap \lin{S\sem T}$.

\begin{Definition}
Let ${\cal M}$ denote the family of all sets in $\R^n$ that can be expressed as the intersection of an open set with a closed set.
 {Fix} $S\in\cal M$.  A $d$-dimensional Minkowski content measure on $S$  is a Borel measure $\mu$ on $S$ which satisfies that, for any compact set $K\subset S$, (i) $\mu(K)<\infty$, and (ii) whenever $\mu(\pa_S K)=0$, $\Cont_d(K)$ exists and equals $\mu(K)$. \label{Def-Mink}
\end{Definition}

\begin{Remark}
  A Minkowski content measure $\mu$  {on any $S\in\cal M$} is $\sigma$-finite since  {$S$} can be expressed as a countable union of compact sets.  {Given such $\mu$}, for $1\le k\le n$,  {let} $E_{\mu,k}$  {denote the set of} points $c\in\R$ such that $\mu\{x\in\R^n:x_k=c\}>0$.  {Then $E_{\mu,k}$} is countable. Let $\cal R_\mu$ denote the family of  {$n$-dimensional rectangles} $R=\prod_{k=1}^n I_k\subset \R^n$ such that each $I_k$ is a compact interval with $\pa I_k\cap E_{\mu,k}=\emptyset$. Then $\mu(\pa R)=0$ for $R\in \cal R_{\mu}$, and every compact  {$n$-dimensional}  rectangle in $\R^n$ could be approximated by elements in $\cal R_\mu$. \label{sigma-finite}
\end{Remark}

\begin{Lemma}
  The Minkowski content measure on any $S\in\cal M$ is unique {, when it exists}. \label{unique}
\end{Lemma}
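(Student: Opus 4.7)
The plan is to deduce uniqueness from Dynkin's $\pi$--$\lambda$ theorem by producing a $\pi$-system of compact subsets of $S$ on which any two Minkowski content measures must agree and which generates the subspace Borel $\sigma$-algebra of $S$. Write $S=U\cap F$ with $U$ open and $F$ closed, and let $\mu_1,\mu_2$ be two Minkowski content measures on $S$. For each $k\in\{1,\dots,n\}$, I would set $E_k:=E_{\mu_1,k}\cup E_{\mu_2,k}$, which is still countable by Remark \ref{sigma-finite}, and let ${\cal R}$ denote the family of compact rectangles $R=\prod_{k=1}^n I_k\subset U$ such that $\partial I_k\cap E_k=\emptyset$ for every $k$.

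The first step is to observe that for any $R\in{\cal R}$, the set $K:=R\cap S=R\cap F$ is compact and satisfies $\partial_S K\subset \partial R$. Indeed, any $x\in K\cap \overline{S\setminus K}$ lies in $R$ and is a limit of points in $S\setminus R$, hence on $\partial R$. Since $\partial R$ is contained in a finite union of coordinate hyperplanes $\{x_k=c\}$ with $c\notin E_{\mu_j,k}$, this yields $\mu_j(\partial_S K)=0$ for $j=1,2$, and Definition \ref{Def-Mink} forces $\mu_1(K)=\Cont_d(K)=\mu_2(K)$. I would then verify that the collection ${\cal K}:=\{R\cap S:R\in{\cal R}\}$ is a $\pi$-system --- the intersection of two members of ${\cal R}$ is again in ${\cal R}$, since its faces are restrictions of faces of the original rectangles and the containment in $U$ is preserved --- and that ${\cal K}$ generates the subspace Borel $\sigma$-algebra of $S$: around each $x\in S\subset U$ and inside any given open neighborhood of $x$ one can, thanks to the countability of the $E_k$'s, fit a rectangle from ${\cal R}$, so by Lindel\"of's theorem every relatively open subset of $S$ is a countable union of elements of ${\cal K}$.

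Finally, the same construction yields a countable cover of $S$ by elements of ${\cal K}$, each of which has finite $\mu_j$-measure by clause (i) of Definition \ref{Def-Mink}. Hence $\mu_1$ and $\mu_2$ are $\sigma$-finite and agree on the generating $\pi$-system ${\cal K}$, and Dynkin's theorem concludes $\mu_1=\mu_2$ on all Borel subsets of $S$. The main --- though still routine --- obstacle is the topological identification $\partial_S(R\cap S)\subset \partial R$ together with the stability of ${\cal R}$ under intersection with preservation of the $E_k$-avoidance; both follow by tracking coordinate-wise how the faces of a product rectangle are inherited.
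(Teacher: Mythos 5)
Your proposal is correct and follows essentially the same route as the paper: both arguments work with the family of compact rectangles contained in the open set whose faces avoid the countable exceptional sets $E_{\mu_j,k}$ of Remark \ref{sigma-finite}, use the inclusion $\pa_S(R\cap F)\subset \pa R$ to force $\mu_1(R\cap S)=\Cont_d(R\cap S)=\mu_2(R\cap S)$, and then extend to all Borel sets. The only difference is that you justify the final extension explicitly via a $\pi$--$\lambda$ (Dynkin) argument, which is just a careful spelling-out of the paper's appeal to approximation by such rectangles.
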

\begin{proof}
  Let $S=F\cap G\in \cal M$, where $F$ is closed and $G$ is open. Let $\mu_1,\mu_2$ be Minkowski content measures on $S$. Let ${\cal R}_{\mu_1,\mu_2}^G$ be the family of $R\in {\cal R}_{\mu_1}\cap {\cal R}_{\mu_2}$ that are contained in $G$. For any $R\in{\cal R}_{\mu_1,\mu_2}^G$, from $\mu_j(\pa R)=0$ and $\pa_S(R\cap F)\subset \pa R$, we get $\mu_j(R)=\mu_j(R\cap F)=\Cont(R\cap F)$, $j=1,2$, which implies that $\mu_1(R)=\mu_2(R)$. Since  every compact rectangle in $G$ can be approximated by elements in ${\cal R}_{\mu_1,\mu_2}^G$, we get $\mu_1=\mu_2$.
\end{proof}

\begin{Lemma}
 (i) If $\mu$ is the Minkowski content measure on $S\in\cal M$, then for any compact set $K\subset S$, $\lin\Cont(K)\le \mu(K)$. (ii) If $S\subset\R^n$ is compact, and $\mu$ is a   measure on  $S$ such that $\Cont(S)=\mu(S)<\infty$, and for any compact set $K\subset S$, $\lin\Cont(K)\le \mu(K)$, then $\mu$ is the Minkowski content measure on $S$. \label{Lemma-upper-cont}
\end{Lemma}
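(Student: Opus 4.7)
The plan is to approximate compact subsets of $S$ by slightly enlarged ones whose relative boundary carries no $\mu$-mass, invoke the defining property of the Minkowski content measure in one direction, and exploit the subadditivity bound (\ref{Cont-union-upper-n}) in the other.

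For part (i), let $K\subset S=F\cap G$ be compact, with $F$ closed and $G$ open as in Definition \ref{Def-Mink}. Since $K$ is compact and contained in the open set $G$, I would choose $\eps_0>0$ with $B(K,\eps_0)\subset G$; then for each $\eps\in(0,\eps_0)$ the set $K_\eps:=B(K,\eps)\cap S=B(K,\eps)\cap F$ is compact and satisfies $K\subset K_\eps\subset S$. A short topological check in the subspace topology of $S$ gives $\pa_S K_\eps\subset\{x\in S:\dist(x,K)=\eps\}$. These level sets are pairwise disjoint in $\eps$, and $\mu(K_{\eps_0})<\infty$ by Definition \ref{Def-Mink}(i), so only countably many of them can have positive $\mu$-measure. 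For every other $\eps$ the definition of a Minkowski content measure yields $\Cont(K_\eps)=\mu(K_\eps)$, while $B(K,r)\subset B(K_\eps,r)$ forces $\lin\Cont(K)\le\lin\Cont(K_\eps)=\mu(K_\eps)$. Sending $\eps\to 0^+$ through good values and applying downward continuity of $\mu$ on $K_\eps\downarrow K$ (legitimate because $\mu(K_{\eps_0})<\infty$) gives $\lin\Cont(K)\le\mu(K)$.

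For part (ii), fix a compact $K\subset S$ with $\mu(\pa_S K)=0$ and set $K':=\lin{S\sem K}$, which is compact because $S$ is. Since $K$ is closed in $S$, one verifies $K\cup K'=S$ and $K\cap K'=\pa_S K$, hence by additivity of $\mu$ and the hypothesis $\mu(S)=\Cont(S)$,
\[
\mu(K)+\mu(K')=\mu(S)=\Cont(S).
\]
Because $B(S,r)\subset B(K,r)\cup B(K',r)$, the same reasoning as in (\ref{Cont-union-upper-n}) gives $\Cont(S;r)\le\Cont(K;r)+\Cont(K';r)$. Taking $\liminf_{r\to 0^+}$ of both sides, using the standard inequality $\liminf(a_r+b_r)\le\liminf a_r+\limsup b_r$, the existence of $\Cont(S)=\lim\Cont(S;r)$, and the hypothesis $\lin\Cont(K')\le\mu(K')$,
\[
\Cont(S)\le\ulin\Cont(K)+\lin\Cont(K')\le\ulin\Cont(K)+\mu(K')=\ulin\Cont(K)+\Cont(S)-\mu(K),
\]
so $\mu(K)\le\ulin\Cont(K)$. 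Combined with the hypothesis $\lin\Cont(K)\le\mu(K)$, this forces $\lin\Cont(K)=\ulin\Cont(K)=\mu(K)$, i.e.\ $\Cont(K)$ exists and equals $\mu(K)$.

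The most delicate point is the approximation in (i): confirming the inclusion $\pa_S K_\eps\subset\{\dist(\cdot,K)=\eps\}$ in the subspace topology of the potentially rough set $S\in\cal M$, and then extracting countability of the bad $\eps$ from finiteness of $\mu$ on $K_{\eps_0}$. Once those topological/measure-theoretic preliminaries are settled, both halves of the lemma reduce to routine downward continuity of $\mu$ and the elementary inclusion of $r$-neighborhoods.
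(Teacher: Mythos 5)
Your proof is correct and follows essentially the same strategy as the paper: outer approximation of $K$ by compact subsets of $S$ with $\mu$-null relative boundary in part (i), and the decomposition $S=K\cup\lin{S\sem K}$ with $K\cap\lin{S\sem K}=\pa_S K$ together with the inequality $\ulin\Cont(S_1\cup S_2)\le\ulin\Cont(S_1)+\lin\Cont(S_2)$ in part (ii), which is exactly the paper's (\ref{Cont-union-lower-n}). The only difference is cosmetic: in (i) the paper produces the approximants as finite unions of rectangles from $\cal R_\mu$ (Remark \ref{sigma-finite}), whereas you use the sets $B(K,\eps)\cap F$ and the standard observation that the pairwise disjoint level sets $\{\dist(\cdot,K)=\eps\}$ can charge $\mu$ for only countably many $\eps$; both devices are valid and your topological claims ($K_\eps$ compact and contained in $S$, $\pa_S K_\eps\subset\{\dist(\cdot,K)=\eps\}$, $K_\eps\downarrow K$ along good radii) check out.
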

\begin{proof}
(i) Let $S=F\cap G$, where $F$ is closed and $G$ is open. Let $K$ be a compact subset of $S$. Then for any $k>0$, $K$ can be covered by finitely many rectangles in $\cal R_\mu$ that are contained in $B(K,1/k)\cap G$. Let $K_k$ be the union of these rectangles. Then $K_k$ is a compact subset of $ B(K,1/k)\cap G$, and $\mu(\pa K_k)=0$. Now $K_k\cap S=K_k\cap F$ is a compact subset of $S$, and $\pa_S(K_k\cap S)\subset \pa K_k$. So $\mu(\pa_S (K_k\cap S))=0$. Thus, $\lin\Cont(K)\le \lin\Cont(K_k\cap S)=\mu(K_k\cap S)\le \mu(B(K,1/k))$. Sending $k$ to $\infty$, we get  $\lin\Cont(K)\le \mu(K)$.

(ii) Let $K$ be a compact subset of $S$ such that $\mu(\pa_S K)=0$. Let $L=\lin{S\sem K}$. Then $L$ is also a compact subset of $S$, $S=K\cup L$, and $K\cap L=\pa_S K$. From $\mu(\pa_S K)=0$ we get $\Cont(S)=\mu(S)=\mu(K)+\mu(L)$.
Thus, by the assumption and (\ref{Cont-union-lower-n}),
$$\mu(S)=\Cont(S)\le \ulin\Cont(K)+\lin\Cont(L)\le \lin\Cont(K)+\lin\Cont(L)\le \mu(K)+\mu(L)=\mu(S).$$
So we get $\Cont(K)=\mu(K)$, and conclude that $\mu$ is the Minkowski content measure on $S$.
\end{proof}
 {
\begin{Corollary}
  Let $S\in\cal M$ and $\mu$ be a measure on $S$. Suppose there exists a family of compact subsets $\{F_\iota\}_{\iota\in I}$ of $S$ such that (i) for every $\iota\in I$, $\Cont(F_\iota)=\mu(F_\iota)<\infty$;   (ii) for every compact $K\subset S$, there exists $\iota\in I$ such that $K\subset F_\iota$; and (iii) every compact $K\subset S$ can be expressed as $\bigcap_{n=1}^\infty J_n$, where $(J_n)$ is a decreasing sequence of sets such that each $J_n$ is a finite disjoint union of elements in $\{F_\iota\}$. Then $\mu$ is the Minkowski content measure on $S$. \label{cor-Mink}
\end{Corollary}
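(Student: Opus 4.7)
My plan is to verify the two properties in Definition \ref{Def-Mink} directly. Finiteness on compacts is free from hypothesis (ii): any compact $K\subset S$ sits inside some $F_\iota$, so $\mu(K)\le\mu(F_\iota)=\Cont(F_\iota)<\infty$. The substantive task is the identity $\Cont_d(K)=\mu(K)$ for every compact $K\subset S$ with $\mu(\pa_S K)=0$, and I will establish matching upper and lower bounds.

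First I would prove the general upper bound $\lin\Cont(K)\le\mu(K)$ for every compact $K\subset S$, requiring no boundary assumption. By (iii), write $K=\bigcap_n J_n$ with $J_n\downarrow K$ and $J_n=\bigsqcup_{k=1}^{k_n} F_{\iota_k^{(n)}}$ a finite disjoint union. Since disjoint compact sets are separated by positive distance, for all sufficiently small $r>0$ the neighborhoods $B(F_{\iota_k^{(n)}},r)$ are pairwise disjoint, so $\Cont(J_n;r)=\sum_k\Cont(F_{\iota_k^{(n)}};r)$; letting $r\to 0^+$ and invoking (i) yields $\Cont(J_n)=\sum_k\mu(F_{\iota_k^{(n)}})=\mu(J_n)$. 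From $K\subset J_n$, together with $\mu(J_1)<\infty$ and continuity of $\mu$ along decreasing sequences, I get $\lin\Cont(K)\le\Cont(J_n)=\mu(J_n)\downarrow\mu(K)$.

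For the lower bound I now assume $\mu(\pa_S K)=0$ and set $L_n:=\lin{J_n\sem K}$, a compact set satisfying $J_n=K\cup L_n$. Any point of $K$ lying in $L_n$ is a limit of points of $J_n\sem K\subset S\sem K$, so $L_n\cap K\subset K\cap\lin{S\sem K}=\pa_S K$; hence $\mu(L_n)=\mu(L_n\sem K)\le\mu(J_n)-\mu(K)\to 0$. Applying the upper bound from the previous paragraph to the compact set $L_n$ yields $\lin\Cont(L_n)\le\mu(L_n)\to 0$. Then (\ref{Cont-union-lower-n}) combined with $\ulin\Cont(J_n)=\Cont(J_n)=\mu(J_n)$ gives
\[
\mu(K)\le\mu(J_n)=\ulin\Cont(J_n)\le\ulin\Cont(K)+\lin\Cont(L_n)\le\ulin\Cont(K)+\mu(L_n),
\]
and sending $n\to\infty$ produces $\mu(K)\le\ulin\Cont(K)$, which together with the upper bound forces $\Cont_d(K)=\mu(K)$.

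The hard step will be the lower bound, whose engine is the set-theoretic inclusion $L_n\cap K\subset\pa_S K$: this is the sole place where the hypothesis $\mu(\pa_S K)=0$ enters, and it is what converts the obvious estimate on $\mu(J_n\sem K)$ into a vanishing estimate on $\mu(L_n)$. The other essential ingredient is finite additivity of Minkowski content across a disjoint decomposition of compacts, which is how property (i) propagates from the distinguished family $\{F_\iota\}$ to the approximants $J_n$, and then via monotone approximation to an arbitrary compact $K$.
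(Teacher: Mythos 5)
Your proof is correct. It follows the same two-bound strategy as the paper --- an upper bound $\lin\Cont(K)\le\mu(K)$ for arbitrary compacts obtained from the approximants $J_n$ of hypothesis (iii), then a lower bound via the decomposition $J_n=K\cup L_n$ with $\mu(K\cap L_n)\le\mu(\pa_S K)=0$ and inequality (\ref{Cont-union-lower-n}) --- but it is organized differently. The paper picks a single $F_\iota\supset K$ from hypothesis (ii) and applies Lemma \ref{Lemma-upper-cont}(ii) to the restricted measure on $F_\iota$ (where $\Cont(F_\iota)=\mu(F_\iota)$ is given directly by hypothesis (i)), so it only ever needs the subadditivity (\ref{Cont-union-upper-n}) to control $\lin\Cont(J_n)$. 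You instead reprove the content of Lemma \ref{Lemma-upper-cont}(ii) inline on the approximants $J_n$ and pass to the limit in $n$; this requires the stronger fact that $\Cont(J_n)$ \emph{exists} and equals $\sum_k\Cont(F_{\iota_k^{(n)}})$ exactly, which you correctly supply via the positive separation of disjoint compacts. The payoff of your route is that it is self-contained and makes visible that hypothesis (ii) is used only for the finiteness clause of Definition \ref{Def-Mink}; the cost is the extra additivity lemma that the paper's factorization through $F_\iota$ avoids.
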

\begin{proof}
Let $K_0\subset S$ be compact. By (ii) there is $\iota\in I$ such that $K_0\subset F_\iota$. So we have $\mu(K_0)\le \mu(F_\iota)<\infty$ by (i). Suppose further that $\mu(\pa_S K_0)=0$. Let $\mu_{F_\iota}=\mu(\cdot\cap F_\iota)$ be a measure on $F_\iota$. Then $\Cont(F_\iota)=\mu_{F_\iota}(F_\iota)<\infty$. Let $K\subset F_\iota$ be compact. By (iii) we express $K=\bigcap J_n$,  where $(J_n)$ is a decreasing sequence of sets each of which is a finite disjoint union of elements in $\{F_\iota\}$. By (\ref{Cont-union-upper-n}) and (i), each $J_n$ satisfies $\lin\Cont(J_n)\le \mu(J_n)$. Since $\mu(J_n)\downarrow \mu(K)$, we get $\lin\Cont(K)\le \mu(K)$.
 By Lemma \ref{Lemma-upper-cont} (ii), $\mu_{F_\iota}$ is the Minkowski content measure on ${F_\iota}$. Since $K_0$ is a compact subset of ${F_\iota}$, and $\mu_{F_\iota}(\pa_{F_\iota} K_0)\le\mu(\pa_S K_0)=0$, we get $\Cont(K_0)=\mu_{F_\iota}(K_0)=\mu(K_0)$. Thus, $\mu$ is the Minkowski content measure on $S$.
\end{proof}}

\begin{Example}
  Let $n=1$. Let $B$ be a $1$-dimensional Brownian motion, and let $L_t$ be the usual local time of $B$ at $0$ (cf.\ \cite{RY}). Let $Z$ be the zero set $\{t:B_t=0\}$. There is a  {deterministic} constant $C_0\in(0,\infty)$ such that for any $t_2\ge t_1\ge 0$, $\Cont_{1/2}(Z\cap [t_1{  ,}t_2])=C_0 (L_{t_2}-L_{t_1})$ (cf.\ \cite{Law-BM}). {By applying Corollary \ref{cor-Mink} with $\{F_\iota\}=\{Z\cap [t_1,t_2]:t_2\ge t_1\ge 0\}$, we can conclude that $C_0 dL$ is the $1/2$-dimensional Minkowski content measure on $Z$.}
  \label{Example-1}
\end{Example}

\begin{Example}
Let $n=2$. Let $\kappa\in(0,8)$ and $d=1+\frac \kappa 8$.  Let $\eta$ be an SLE$_\kappa$ curve in $\HH$ from $0$ to $\infty$. The natural parametrization of $\eta$ is a random continuously strictly increasing function $\theta$ that satisfies that   for any $ {t_2\ge t_1\ge 0}$, $\Cont_d(\eta[ {t_1,t_2}])=\theta( {t_2})-\theta( {t_1})$ (\cite{LR}).  {By applying Corollary \ref{cor-Mink} with $\{F_\iota\}=\{\eta([t_1,t_2]):t_2\ge t_1\ge 0\}$, we can conclude that $\mu_\eta:=\eta_*(d\theta)$ is the $d$-dimensional Minkowski content measure on the image of $\eta$. Here we use the fact that $\eta^{-1}(K)$ is compact for any compact set $K$, which follows from the transience of $\eta$, i.e., $\lim_{t\to\infty} \eta(t)=\infty$.}
\end{Example}

\begin{Example}
  Let $n\ge 3$. Let $B$ be an $n$-dimensional Brownian motion. There is a constant $C_n\in(0,\infty)$ such that for any $t_2\ge t_1\ge 0$, $\Cont_{2}(B[t_1,t_2])=C_n (t_2-t_1)$ (cf.\ \cite{Law-BM}). Then $C_n B_* {(} \lambda^1|_{[0,\infty)} {)}$ is the $2$-dimensional Minkowski content measure on the image of $B$.  {The argument is similar to that in the previous example.}
\end{Example}

\begin{Theorem}
  Let $S\subset\R^n$ be compact. For $r>0$, let $\mu_r$ be the measure such that $d\mu_r=r^{d-n} \ind_{B(S,r)} d\lambda^n$. Then $\mu$ is the Minkowski content measure on $S$ iff $\mu_r\to \mu$ weakly as $r\to 0^+$. \label{weak}
\end{Theorem}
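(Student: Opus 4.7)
I will establish the two implications separately, leaning on the tools developed above.

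\emph{Forward direction.} Assume $\mu$ is the Minkowski content measure on $S$. The plan is first to show that $\mu_r(R) \to \mu(R)$ for every rectangle $R$ in the class ${\cal R}_\mu$ of Remark \ref{sigma-finite}, and then to upgrade to weak convergence by approximation. Given $R \in {\cal R}_\mu$ and small $\epsilon > 0$, I will choose $R^-_\epsilon, R^+_\epsilon \in {\cal R}_\mu$ with $R^-_\epsilon \subset R \subset R^+_\epsilon$ and $\dist(R^-_\epsilon, \pa R), \dist(R, \pa R^+_\epsilon) > \epsilon$; such rectangles exist because each exceptional set $E_{\mu, k}$ is countable. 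A short geometric check will yield the sandwich
\BGE B(S \cap R^-_\epsilon, r) \subset B(S, r) \cap R \subset B(S \cap R^+_\epsilon, r) \quad \mbox{for } r < \epsilon,\label{sandwich}\EDE
and hence $\Cont_d(S \cap R^-_\epsilon; r) \le \mu_r(R) \le \Cont_d(S \cap R^+_\epsilon; r)$. Each $S \cap R^\pm_\epsilon$ is a compact subset of $S$ whose relative boundary is contained in the $\mu$-null set $\pa R^\pm_\epsilon$, so Definition \ref{Def-Mink} yields $\Cont_d(S \cap R^\pm_\epsilon) = \mu(S \cap R^\pm_\epsilon)$. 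Taking $r \to 0^+$ and then $\epsilon \to 0^+$ (using $\mu(\pa R) = 0$ and continuity of $\mu$) gives $\mu_r(R) \to \mu(R)$. The upgrade is then routine since all $\mu_r$ for $r \le 1$ and $\mu$ are supported in the fixed compact set $B(S, 1)$ with uniformly bounded total mass: any bounded continuous $f : \R^n \to \R$ can be approximated uniformly on $B(S, 1)$ by a step function on a finite partition of a large rectangle containing $B(S,1)$ into rectangles in ${\cal R}_\mu$, whence $\int f\, d\mu_r \to \int f\, d\mu$.

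\emph{Reverse direction.} Assume $\mu_r \to \mu$ weakly. The plan is to apply Lemma \ref{Lemma-upper-cont}(ii). First, for any closed $F \subset \R^n \sem S$ one has $\mu_r(F) = 0$ for $r < \dist(F, S)$, so the Portmanteau upper bound gives $\mu(F) = 0$; hence $\mu$ is supported on $S$. Testing weak convergence against a continuous bump function equal to $1$ on $B(S, 1)$ yields $\mu_r(\R^n) \to \mu(\R^n)$, i.e., $\Cont_d(S; r) \to \mu(S)$, so $\Cont_d(S) = \mu(S) < \infty$. For any compact $K \subset S$ and $\epsilon > 0$, the inclusions $B(K, r) \subset B(K, \epsilon)$ and $B(K, r) \subset B(S, r)$ for $r < \epsilon$ give
\BGEN \Cont_d(K; r) = r^{d-n} \lambda^n(B(K, r)) \le r^{d-n} \lambda^n(B(S, r) \cap B(K, \epsilon)) = \mu_r(B(K, \epsilon)). \EDEN
The Portmanteau upper bound for the closed set $B(K, \epsilon)$ gives $\limsup_{r \to 0^+} \mu_r(B(K, \epsilon)) \le \mu(B(K, \epsilon))$, and then $\epsilon \to 0^+$ with $B(K, \epsilon) \downarrow K$ (and $\mu(B(K, 1)) < \infty$) yields $\lin\Cont(K) \le \mu(K)$. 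Lemma \ref{Lemma-upper-cont}(ii) then implies $\mu$ is the Minkowski content measure on $S$.

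\emph{Main obstacle.} The key technical point is the sandwich inclusion \eqref{sandwich}: for $x \in B(S, r) \cap R$ with $r < \epsilon$, the point of $S$ realizing $\dist(x, S) \le r$ lies in $R^+_\epsilon$ since $x \in R$ and $\dist(R, \pa R^+_\epsilon) > \epsilon > r$, while symmetrically any $y \in S \cap R^-_\epsilon$ is at distance $> \epsilon > r$ from $\pa R$, placing $B(S \cap R^-_\epsilon, r)$ inside $R$. Everything else reduces to bookkeeping, although some care is needed to choose the approximating rectangles with both the correct nesting and membership in ${\cal R}_\mu$.
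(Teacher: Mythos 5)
Your proposal is correct. The ``reverse'' implication (weak convergence $\Rightarrow$ Minkowski content measure) is essentially the paper's own argument: identify the support and the total mass, bound $\lin{\Cont}(K)\le \mu(B(K,\epsilon))$ using the closed-set half of the portmanteau theorem, and invoke Lemma \ref{Lemma-upper-cont}(ii). One small slip there: to conclude that $\mu$ is carried by $S$ you cite the portmanteau \emph{upper} bound for a closed $F$ disjoint from $S$, but that inequality reads $\limsup_r\mu_r(F)\le\mu(F)$ and only yields $\mu(F)\ge 0$; you should instead apply the \emph{lower} bound to the open sets $\{x:\dist(x,S)>\delta\}$ (or note that a weak limit of measures carried by the closed set $B(S,\delta)$ is carried by $B(S,\delta)$ for every $\delta>0$). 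The conclusion is correct and the fix is immediate.

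The ``forward'' implication is where you genuinely diverge from the paper. The paper argues by compactness: since $|\mu_r|=\Cont(S;r)\to\Cont(S)=|\mu|$ and all the $\mu_r$ are carried by $B(S,1)$, the family is weakly precompact; any subsequential limit $\mu'$ is shown to satisfy $\mu'(K)\ge\mu(K)$ first for compact $K\subset S$ with $\mu(\pa_S K)=0$ and then for all compact $K$ by approximation, and since $|\mu'|=|\mu|$ this forces $\mu'=\mu$. You instead compute the limit directly on the convergence-determining class ${\cal R}_\mu$ of Remark \ref{sigma-finite} via the sandwich $B(S\cap R^-_\epsilon,r)\subset B(S,r)\cap R\subset B(S\cap R^+_\epsilon,r)$, whose key inclusion you argue correctly, and then pass to general continuous test functions by uniform approximation. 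Your route is more hands-on and avoids extracting subsequences, at the cost of some rectangle bookkeeping in the last step (e.g.\ one should note that $\mu_r(R_j\cap R_k)\to 0$ for adjacent rectangles of the partition, which follows from your own sandwich applied to the degenerate rectangle $R_j\cap R_k$); the paper's route is shorter but leans on precompactness and a second appeal to the defining property of the Minkowski content measure. Both yield complete proofs.
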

\begin{proof}
  First, suppose $\mu_r\to \mu$ weakly as $r\to 0^+$. Since $\mu_r$ is supported by $B(S,r)$, $\mu$ is supported by $S$.  Since  $|\mu_r|=\Cont(S;r)$ by definition, we get $|\mu|=\lim |\mu_r|=\Cont(S)$.
  Let $K$ be a compact subset of $S$. Fix $r_0>0$. Then for any $r\in(0,r_0)$, $\Cont(K;r)\le \mu_r(B(K;r_0))$. Sending $r$ to $0^+$, we get $\lin\Cont(K)\le \limsup_{r\to 0^+} \mu_r(B(K,r_0))\le \mu(B(K,r_0))$ since $B(K,r_0)$ is closed. Sending $r_0$ to $0^+$, we get $\lin\Cont(K)\le \mu(K)$. By Lemma \ref{Lemma-upper-cont} (ii), $\mu$ is the Minkowski content measure on $S$.

  Second, suppose $\mu$ is the Minkowski content measure on $S$. Then $|\mu|=\Cont(S)=\lim_{r\to 0^+} |\mu_r|$. Thus, the family $(\mu_r)_{r\in(0,1]}$ is pre-compact w.r.t.\ the weak convergence.  Suppose $\mu_{r_n}\to \mu'$ weakly for some sequence $r_n\to 0$. Then $\mu'$ is supported by $S$, and $|\mu'|=\lim|\mu_{r_n}|=\lim\Cont(S;r_n)=\Cont(S)$.
Let $K$ be a compact subset of $S$ such that $\mu(\pa_S K)=0$. Then $\Cont(K)=\mu(K)$. Fix $r_0\in(0,1]$. Choose $n$ big enough such that $r_n<r_0$. Then $\mu_{r_{n}}(B(K,r_0))\ge \Cont(K,r_{n})$. First sending $n$ to $\infty$ and then sending $r_0$ to $0^+$, we get $\mu'(K)\ge \Cont(K)=\mu(K)$. Now let $K$ be any compact subset of $S$. Then we could find a decreasing sequence of compact subsets $(K_n)$ of $S$ such that $K=\bigcap_n K_n$ and $\mu(\pa_S K_n)=0$ for each $n$. Then $\mu'(K)=\lim\mu'(K_n)\ge \lim\mu(K_n)=\mu(K)$. Since this holds for any compact $K\subset S$ and $|\mu'|=|\mu|$, we get $\mu'=\mu$. This means that any subsequential weak limit of $(\mu_r)$ as $r\to 0^+$ is $\mu$. Thus, $\mu_r\to \mu$ weakly as $r\to 0^+$.
\end{proof}

\begin{Theorem} [Restriction Property]
  Let $\mu$ be the Minkowski content measure on $S\in \cal M$. Let $F\subset\R^n$ be closed and $G\subset\R^n$ be open. Suppose $\mu(\pa_{S} (S\cap F))=0$. Let $S'=S\cap F\cap G$ and $\mu'=\mu|_{S'}$. Then $\mu'$ is the Minkowski content measure on $S'$. \label{restriction}
\end{Theorem}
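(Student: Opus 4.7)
My approach proceeds in three steps: verify $S' \in \cal M$, check the finiteness clause, then establish the Minkowski content identity via a set-theoretic inclusion of relative boundaries.

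First, writing $S = F_0 \cap G_0$ with $F_0$ closed and $G_0$ open, I get $S' = (F_0 \cap F) \cap (G_0 \cap G) \in \cal M$. For finiteness, any compact $K \subset S'$ is also a compact subset of $S$, so $\mu'(K) = \mu(K) < \infty$ by the defining property of $\mu$ on $S$.

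The main step is: for any compact $K \subset S'$ with $\mu'(\partial_{S'} K) = 0$, I will show $\Cont_d(K) = \mu'(K)$ via the inclusion
\[ \partial_S K \;\subset\; \partial_{S'} K \,\cup\, \partial_S(S \cap F). \]
To prove this, take $x \in \partial_S K$, so $x \in K$ and $x \in \overline{S \setminus K}$. Decompose $S \setminus K = (S \setminus F) \cup (S \setminus G) \cup (S' \setminus K)$. Since $x \in K \subset G$ and $G$ is open, $x \notin \overline{G^c} \supseteq \overline{S \setminus G}$, so $x \in \overline{S \setminus F} \cup \overline{S' \setminus K}$. If $x \in \overline{S \setminus F}$, then combined with $x \in S \cap F \subset \overline{S \cap F}$, we obtain $x \in \partial_S(S \cap F)$; otherwise $x \in K \cap \overline{S' \setminus K} = \partial_{S'} K$. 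From the inclusion and subadditivity, $\mu(\partial_S K) \le \mu(\partial_{S'} K) + \mu(\partial_S(S \cap F)) = 0$, using $\mu(\partial_{S'} K) = \mu'(\partial_{S'} K) = 0$ and the hypothesis $\mu(\partial_S(S \cap F)) = 0$. Since $K \subset S$ is compact with vanishing $\mu$-relative boundary in $S$, the definition of $\mu$ as Minkowski content measure on $S$ gives $\Cont_d(K) = \mu(K) = \mu'(K)$, as required.

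The only subtle point is the boundary inclusion above, which is a routine topological decomposition rather than a deep obstacle: the hypothesis $\mu(\partial_S(S \cap F)) = 0$ is designed precisely to absorb the contribution from points of $K$ that are limits of $S \setminus F$, while openness of $G$ lets us discard $\overline{S \setminus G}$ from the decomposition since every point of $K$ has a neighborhood inside $G$. No new analysis of $r$-neighborhoods is required because the argument is entirely reduced to the already-established Minkowski property of $\mu$ on $S$.
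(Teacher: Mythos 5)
Your proof is correct and follows essentially the same route as the paper's: both arguments reduce to the inclusion $\pa_S K\subset \pa_{S'}K\cup \pa_S(S\cap F)$, conclude $\mu(\pa_S K)=0$, and then invoke the defining property of $\mu$ on $S$. The only difference is cosmetic — the paper derives the inclusion in two steps via $S_F=S\cap F$ (using $\pa_{S'}K=\pa_{S_F}K$, which holds since $S'$ is relatively open in $S_F$ and $K$ is compact), whereas you verify it directly by decomposing $S\sem K$ and discarding $\lin{S\sem G}$ via the openness of $G$.
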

\begin{proof}
  We have $S'\in \cal M$ since $\cal M$ is closed under finite intersections. Let $K\subset S'$ be compact. Then $K$ is also a compact subset of $S$, and so $\mu'(K)=\mu(K)<\infty$. Suppose $\mu'(\pa_{S'} K)=0$. Then $\mu(\pa_{S'} K)=0$. Let $S_F=S\cap F$.  Since $S'=S_F\cap G$ is relatively open in $S_F$, and $K\subset S'$ is compact, we get $\pa_{S'} K=\pa_{S_F} K$. From  $K\subset S_F\subset S$, we get $\pa_{S}K\subset \pa_{S_F} K\cup \pa_{S} S_F$. Thus, $\mu(\pa_{S} K)\le \mu(\pa_{S'} K)+\mu(\pa_{S} S_F)=0$.    Since $\mu$ is the Minkowski content measure on $S$, we get $\Cont(K)=\mu(K)=\mu'(K)$. So $\mu'$ is the Minkowski content measure on $S'$.
\end{proof}

The following theorem asserts that the Minkowski content measures are preserved under covariant transforms \cite[Definition 1.1]{cov}.  {For an open set $G\subset\R^n$, we say that a map $\phi:G\to \R^n$ is conformal if it is $C^1$ and the Jacobian matrix at every $x\in G$ is some positive number $s(x)$ times an orthogonal matrix, and the $|\phi'(x)|$ is understood as such (unique) number $s(x)$. When $n=1$, this means that $\phi$ is $C^1$ and $\phi'\ne 0$. When $n=2$,   $\phi$ is  holomorphic or anti-holomorphic on each component of $G$ with nonzero complex derivatives. When $n\ge 3$, by Liouville's theorem, $\phi$ is a M\"obius transformation on each component of $G$.}

\begin{Theorem}[Conformal Covariance]
Let $\mu$ be the Minkowski content measure on  $S\in\cal M$, which is relatively closed in an open set $G\subset \R^n$. Let $\phi:G\to\R^n$ be conformal and injective.
Then $\nu:=\phi_*(|\phi'|^d\cdot \mu)$ is the Minkowski content measure on $\phi(S)$, where
 $|\phi'|^d\cdot \mu$ is the measure  {$\mu$ weighted by} $|\phi'|^d$, and $\nu$ is the pushforward of $|\phi'|^d\cdot \mu$ under $\phi$. \label{phi(K)-Cont}
\end{Theorem}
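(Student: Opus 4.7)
The plan is to verify Definition \ref{Def-Mink} directly for $\nu$ on $\phi(S)$. Since $\phi:G\to \phi(G)$ is a homeomorphism onto an open set (by the inverse function theorem together with injectivity), every compact $K'\subset\phi(S)$ has the form $K'=\phi(K)$ with $K:=\phi^{-1}(K')\subset S$ compact, and $\pa_{\phi(S)}K'=\phi(\pa_S K)$. Because $|\phi'|$ is continuous and strictly positive on $K$, we have $\nu(K')=\int_K |\phi'|^d\,d\mu<\infty$, and $\nu(\pa_{\phi(S)}K')=0$ is equivalent to $\mu(\pa_S K)=0$. It therefore suffices to prove: for every compact $K\subset S$ with $\mu(\pa_S K)=0$, $\Cont(\phi(K))$ exists and equals $\int_K|\phi'|^d\,d\mu$.

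Fix such a $K$ and $\epsilon>0$. Using Remark \ref{sigma-finite}, we intersect $K$ with a finite collection of small rectangles from ${\cal R}_\mu$ (whose sides carry no $\mu$-mass) to obtain a partition $K=K_1\cup\cdots\cup K_N$ with: (a) $\mu(\pa_S K_i)=0$ and $\mu(K_i\cap K_j)=0$ for $i\ne j$; (b) at some base point $x_i\in K_i$ the oscillation of $|\phi'|$ on $K_i$ is at most $\epsilon|\phi'(x_i)|$; and (c) $\diam(K_i)$ is small enough that, for any prescribed $\delta>0$, there is a compact neighborhood $U_i\subset G$ of $K_i$ on which a first-order Taylor expansion of $\phi$ gives
\begin{equation*}
(1-\delta)|\phi'(x_i)|\,|x-y|\le |\phi(x)-\phi(y)|\le (1+\delta)|\phi'(x_i)|\,|x-y|,\quad x,y\in U_i.
\end{equation*}
This produces the sandwich $\phi\bigl(B(K_i,r/((1+\delta)|\phi'(x_i)|))\cap U_i\bigr)\subset B(\phi(K_i),r)\subset \phi\bigl(B(K_i,r/((1-\delta)|\phi'(x_i)|))\bigr)$ for all sufficiently small $r$; the change of variables $\lambda^n(\phi(A))=\int_A|\phi'|^n\,d\lambda^n$, combined with (b), then yields
\begin{equation*}
\Cont(\phi(K_i);r)=|\phi'(x_i)|^d\bigl(1+O(\delta)+O(\epsilon)\bigr)\Cont\bigl(K_i;r/((1\pm\delta)|\phi'(x_i)|)\bigr).
\end{equation*}
Since $\mu(\pa_S K_i)=0$, the right-hand side converges as $r\to 0^+$ to $|\phi'(x_i)|^d\mu(K_i)\cdot(1+O(\delta)+O(\epsilon))$.

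Summing the upper bounds via (\ref{Cont-union-upper-n}) and replacing the Riemann-type sum $\sum_i|\phi'(x_i)|^d\mu(K_i)$ by $\int_K|\phi'|^d\,d\mu=\nu(\phi(K))$ (using (b)) gives $\lin\Cont(\phi(K))\le (1+O(\delta)+O(\epsilon))\nu(\phi(K))$. For the lower bound, iterating (\ref{Cont-union-bigger}) yields $\ulin\Cont(\phi(K))\ge \sum_i\ulin\Cont(\phi(K_i))-E$, where $E$ is a finite sum of overlap terms of the form $\lim_{\eps\to 0^+}\lin\Cont\bigl(\phi(K_i)\cap B(\phi(K_i)\cap\phi(K_j),\eps)\bigr)$. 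Applying the previously established upper bound to the shrinking sets $\phi(K_i)\cap B(\phi(K_i)\cap\phi(K_j),\eps)$ bounds each such term by $\nu$ of an arbitrarily small neighborhood of $\phi(K_i\cap K_j)$; outer regularity of $\nu$ together with $\nu(\phi(K_i\cap K_j))=\int_{K_i\cap K_j}|\phi'|^d\,d\mu=0$ forces $E=0$. Hence $\ulin\Cont(\phi(K))\ge (1-O(\delta)-O(\epsilon))\nu(\phi(K))$, and letting $\delta,\epsilon\to 0^+$ completes the argument.

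The main obstacle is coordinating the partition so that (a)--(c) hold simultaneously, and then handling the overlap errors in the lower bound. The key device is to establish the upper bound for arbitrary compact subsets first; that upper bound is exactly the tool that forces the overlap terms $\lin\Cont\bigl(\phi(K_i)\cap B(\phi(K_i)\cap\phi(K_j),\eps)\bigr)$ to vanish with $\eps$, since $\mu(K_i\cap K_j)=0$ makes the $\nu$-measure of the shrinking neighborhoods tend to $0$. The local-to-global combination in the lower bound is the subtlest step, as (\ref{Cont-union-bigger}) only yields an inequality with an error term that must be independently controlled.
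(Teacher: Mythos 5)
Your proposal is correct and follows essentially the same route as the paper's proof: reduce to $K=\phi^{-1}(K_\phi)$, partition $K$ by rectangles from ${\cal R}_\mu$ on which $|\phi'|$ has small oscillation, sandwich $B(\phi(K_i),r)$ between images of neighborhoods of $K_i$ to compare Minkowski contents, sum the upper bounds by subadditivity, and control the overlap terms in (\ref{Cont-union-bigger}) using the upper bound $\lin\Cont\le\mu$ on compact subsets together with $\mu(K_i\cap K_j)=0$. The only cosmetic difference is that you phrase the local distortion via Taylor expansion at base points $x_i$ while the paper uses the ratio $M_K/m_K\le c$ on each rectangle; the substance is identical.
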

\begin{proof}
 Since $S$ is relatively closed in $G$ and $\phi$ is a homeomorphism on $G$, both $\phi(G)$ and $\phi(G\sem S)$ are open. So $\phi(S)=\phi(G)\sem \phi(G\sem S)\in \cal M$.

Let $K$ be any compact subset of $G$. Let $m_K=\min_{x\in K} |\phi'(x)|$ and $M_K=\max_{x\in K} |\phi'(x)|$. Let $c>1$. Then there is $\delta>0$ such that $|\phi'(x)|<cM_K$ for $x\in B(K,\delta)$, and $|((\phi)^{-1})'(y)|\le c/m_K$ for $y\in B(\phi(K),\delta)$. Thus, for $r>0$ small enough, we have
$$\phi(B(K,\frac r{ cM_K}))\subset B(\phi(K),r)\subset \phi(B(K,\frac {r}{m_K/c})),$$
which then implies that
$$\frac{(m_K/c)^n}{(cM_K)^{n-d}} \Cont(K;\frac {r}{cM_K})\le \Cont(\phi(K );r)\le \frac {(cM_K)^n}{(m_K/c)^{n-d}} \Cont(K ;\frac {r}{m_K/c}).$$
First sending $r$ to $0^+$ and then sending $c$ to $1^+$, we get
\BGE  {m_K^n}{M_K^{d-n}}\cdot \ulin\Cont(K)\le \ulin\Cont(\phi(K))\le \lin\Cont(\phi(K))\le  {M_K^n}{m_K^{d-n}} \cdot \lin\Cont(K).\label{phi(K)-ineq}\EDE
Now assume $K\subset S$ and $\mu(\pa_S K)=0$. Since $\Cont(K)=\mu(K)$ and $m_K\le |\phi'|\le M_K$ on $K$, we get
\BGE \frac {m_k^n}{M_K^n} \int_K |\phi'(x)|^d \mu (dx)\le \ulin{\Cont}(\phi (K ))\le \lin{\Cont}(\phi(K )) \le  \frac {M_K^n}{m_K^n} \int_K |\phi'(x)|^d \mu (dx). \label{phimM}\EDE

Let $K_\phi\subset \phi(S)$ be compact. Let $K=\phi^{-1}(K_\phi)$. Then $K\subset S$ is compact. So $\mu(K)<\infty$. Since $|\phi'|$ is bounded on $K$,  $\nu(K_\phi)<\infty$. Suppose $\nu(\pa_{\phi(S)} K_\phi)=0$. We want to show that $\Cont(K_\phi)=\nu(K_\phi)$.  Since $\pa_S K=\phi^{-1}(\pa_{\phi(S)} K_\phi)$, we get $\mu(\pa_S K)=0$.

Fix $c>1$. We may find   $R_1,\dots,R_N\in {\cal R}_\mu$ which intersect each other only at their boundaries such that $K\subset \bigcup_{j=1}^N R_j\subset G$, and for each $1\le j\le N$, $ \max_{x\in R_k}|\phi'(x)|\le c \min_{x\in R_k}|\phi'(x)|$. Let $K_j=K\cap R_j$, $1\le j\le N$. Then $\mu(\pa_S K_j)\le \mu(\pa_S K)+\mu(\pa R_j)=0$. By (\ref{phimM}),
\BGE c^{-n} \int_{K_j} |\phi'(x)|^d \mu (dx)\le \ulin{\Cont}(\phi (K_j ))\le \lin{\Cont}(\phi(K_j )) \le  c^n \int_{K_j} |\phi'(x)|^d \mu (dx). \label{phimM-j}\EDE
Summing up the last inequality of (\ref{phimM-j}) over $1\le j\le N$, and using (\ref{Cont-union-upper-n}) and that $\mu(K_j\cap K_k)\le \mu(R_j\cap R_k)=0$ when $j\ne k$, we get
$\lin{\Cont}(K_\phi)   \le  c^n \int_{K} |\phi'(x)|^d \mu (dx)$.

Let $1\le k<j\le N$. Since $\mu(K_j\cap K_k)=0$, we get $\lim_{\delta\to 0^+} \lin\Cont( K_j\cap B(K_j\cap K_k,\delta))=0$ by Lemma \ref{Lemma-upper-cont} (i). By (\ref{phi(K)-ineq}),   $\lim_{\eps\to 0^+} \lin\Cont( \phi(K_j)\cap B(\phi(K_j)\cap \phi(K_k),\eps))=0$. By (\ref{Cont-union-upper-n}), we then get
$\lim_{\eps\to 0^+} \lin\Cont( \phi(K_j)\cap B(\phi(K_j)\cap \bigcup_{k=1}^{j-1} \phi(K_k),\eps))=0$ for $2\le j\le n$. By (\ref{Cont-union-bigger}) and induction, we get $\ulin\Cont(K_\phi)\ge \sum_{j=1}^N \ulin\Cont( \phi(K_j))$, which together with the first inequality of (\ref{phimM-j})  implies that  $\ulin\Cont(K_\phi)\ge c^{-n} \int_{K} |\phi'(x)|^d \mu (dx)$. Combining this inequality with $\lin{\Cont}(K_\phi)   \le  c^n \int_{K} |\phi'(x)|^d \mu (dx)$   and sending $c$ to $1^+$, we get $\Cont(K_\phi)=\int_{K} |\phi'(x)|^d \mu (dx)=\nu(K_\phi)$. Thus, $\nu$ is the Minkowski content measure on $\phi(S)$.
\end{proof}

\begin{Remark}
  It's easy to extend the notions of Minkowski content and Minkowski content measure  to Riemannian manifolds. The propositions in this subsection will still hold.
\end{Remark}

\begin{Remark} Most well-known deterministic fractal sets do not have non-trivial Minkowski contents. However, we may define the average Minkowski content by
$$\Cont_d^{\aaa}(S)=\lim_{t\to \infty} \frac 1t \int_0^t \Cont_d(S;e^{-r})dr,$$
and use this to define the average Minkowski content measure.  {All} results in this subsection  hold   {with the word ``average'' added before ``Minkowski'' and $\Cont$ replaced by $\Cont^{\aaa}$ except that Theorem \ref{weak} requires moderate modification. The correct statement is: for any compact set $S\subset\R^n$, $\mu$ is the average Minkowski content measure on $S$ iff $\mu$ is the weak limit of $\mu_t$ as $t\to \infty$, where $\mu_t$ is the measure on $\R^n$ that satisfies $d\mu_t/d\lambda^n=\frac 1t\int_0^t e^{(n-d)s} \ind_{B(S,e^{-s})} ds$.}
\end{Remark}

\begin{Example}
   Let $S\subset [0,1]$ be the $1/3$-Cantor set. Let $d=\frac{\ln 2}{\ln 3}$. Then $\Cont_d(S,e^{-t})$, $t\ge 0$, has period $\ln(3)$, and is not constant. So the Minkowski content $\Cont_d(S)$ does not exist, but the average Minkowski content $\Cont_d^{\aaa}(S)=C_0$ for some $C_0\in(0,\infty)$.  {Then the average Minkowski content measure on $S$ exists and equals $C_0\mu$, where $\mu$ is the Haar probability measure on $S$.}
\end{Example}

\subsection{Boundary Minkowski content of SLE$_\kappa({\protect\ulin\rho})$} \label{Section 6.2}
Now we come back to the SLE$_\kappa(\ulin\rho)$ curve $\eta$ in Theorem \ref{main-thm1} with the additional assumption that $\rho_\Sigma\in (\frac\kappa 2-4,\frac\kappa 2-2)$. This means that $\eta$ intersects the interval $(-\infty,v_m)$.
Let
$$d=1-\alpha=\frac{(\rho_\Sigma+4)(\kappa-4-2\rho_\Sigma)}{2\kappa}.$$
It is known that $d$ is the Hausdorff dimension of $\eta\cap (-\infty,v_m)$ (cf.\ \cite{MW}). We are going to prove the existence of the $d$-dimensional { {Minkowski}} content measure on $\eta\cap (-\infty,v_m)$.

We write $G_1(u)$ for the function $C_{\R} G(w,\ulin v;u)$ in Theorem \ref{main-thm1}, and $G_2(u_1,u_2)$ for the function $G_{\R}(w,\ulin v;u_1,u_2)$ in Theorem \ref{main-thm2}. We also define $G_2(u_1,u_2)=G_2(u_2,u_1)$ if $u_1<u_2\in (-\infty, v_m)$.  {We use $\lambda$ to denote the Lebesgue measure on $\R$.}

\begin{Theorem}
Almost surely  the (random) $d$-dimensional Minkowski content measure $\mu_\eta$ on $\eta\cap (-\infty,v_m]$ in $\R$ exists, is atomless, and satisfies the following properties.
  \begin{itemize}
    \item [(i)] If $\phi$ is a deterministic nonnegative measurable function on $(-\infty,v_m]$, then
  \BGE\EE\Big[\int \phi(u) \mu_\eta(du)\Big]=\int \phi(u) G_1(u) \lambda(du).\label{EC1-phi}\EDE
  \item [(ii)]  If $\psi$ is a deterministic nonnegative measurable function on $(-\infty,v_m]^2$, then
  \BGE \EE\Big[\int\!\!\int \psi(u_1,u_2) \mu_\eta^2(du_1\otimes du_2)\Big]=\int \!\!\int \psi(u_1,u_2) G_2(u_1,u_2) \lambda^2(du_1\otimes du_2).\label{EC2-psi}\EDE
  \item [(iii)] For any deterministic compact set $S\subset (-\infty,v_m]$ with $\Cont(\pa_{ {\R}} S)=0$, almost surely
  \BGE  \Cont(\eta\cap S)=\mu_\eta(S).\label{Cont=muL}\EDE
   \item [(iv)] The (minimal) support of $\mu_{ {\eta}}$ (i.e., the smallest closed set $F\subset \R$ such that $\mu_{ {\eta}}(F^c)=0$) is a.s.\ $\eta\cap (-\infty,v_m]$.
  \end{itemize}
 \label{Thm-Mink}
\end{Theorem}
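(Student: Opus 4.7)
The strategy is the moment method, using the one- and two-point Green's functions of Sections \ref{Section 4} and \ref{Section 5} as the essential probabilistic input. For a bounded measurable $S \subset (-\infty,v_m]$, define
$$Y_r(S) := r^{-\alpha}\,\lambda\bigl(\{u \in S : \dist(u,\eta\cap\R) \le r\}\bigr),$$
where $\alpha = 1-d$. By Fubini, $\EE[Y_r(S)] = r^{-\alpha}\int_S \PP[\dist(u,\eta)\le r]\,du$; Theorem \ref{main-thm1} gives pointwise convergence of the rescaled integrand to $G_1(u)$, and Theorem \ref{Thm-est1} (applied with $u$ adjoined as a force point of weight zero) supplies a locally integrable dominating function, so $\EE[Y_r(S)] \to \int_S G_1\,d\lambda$. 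The analogous computation of $\EE[Y_r(S)Y_{r'}(S)]$ uses Theorem \ref{main-thm2} for pointwise convergence and Lemma \ref{two-pt-lem-u1u2} for a dominating function of the form $(|v_m{-}u_1|/(L_0+|v_m{-}u_1|))^{\alpha_*}|v_m{-}u_1|^{-\alpha}|u_1{-}u_2|^{-\alpha}$, which is integrable on $S\times S$ since $\alpha<1$. Both $\EE[Y_r(S)^2]$ and the cross moments $\EE[Y_r(S)Y_{r'}(S)]$ converge to the same limit $\int_{S\times S} G_2\,d\lambda^2$, so $(Y_r(S))_{r>0}$ is Cauchy in $L^2$ with limit $\xi_S$.

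To construct $\mu_\eta$, let $\mathcal{R}$ be the countable algebra of finite unions of half-open intervals with rational endpoints in $(-\infty,v_m]$. Disjoint additivity $Y_r(A\sqcup B)=Y_r(A)+Y_r(B)$ passes to the $L^2$ limit, so off a single null event $\xi$ is finitely additive, monotone on $\mathcal{R}$ and satisfies $\xi_I\to 0$ as $|I|\to 0$; the $L^2$ rate of convergence, extracted from the explicit error terms in Theorems \ref{main-thm1} and \ref{main-thm2}, permits a Borel--Cantelli argument along a geometric subsequence of radii to upgrade $L^2$ convergence to a.s.\ convergence uniformly over $\mathcal{R}$. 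Carathéodory extension then produces a locally finite Borel measure $\mu_\eta$ on $(-\infty,v_m]$ with $\mu_\eta(I)=\xi_I$ for $I\in\mathcal{R}$. Properties (i) and (ii) follow from the first- and second-moment computations by monotone class, starting from indicators of intervals. Atomlessness is obtained from $\EE[\mu_\eta((u-\eps,u+\eps))]=\int_{u-\eps}^{u+\eps}G_1\,d\lambda\to 0$ for each fixed $u$, applied on a countable dense set and combined with right-continuity of $u\mapsto \mu_\eta((-\infty,u])$.

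For (iii), observe that $(S\cap B(\eta,r))\,\triangle\, B(\eta\cap S,r)\subset B(\partial_\R S,r)$, so the deterministic difference $|Y_r(S)-r^{-\alpha}\lambda(B(\eta\cap S,r))|\le r^{-\alpha}\lambda(B(\partial_\R S,r))$ tends to $0$ by the hypothesis $\Cont_d(\partial_\R S)=0$; combining with a.s.\ convergence of $Y_r(S)$ along a subsequence gives $\Cont(\eta\cap S)=\mu_\eta(S)$ a.s. For (iv), the inclusion $\supp\mu_\eta\subset \eta\cap(-\infty,v_m]$ is immediate since $Y_r$ is supported in $B(\eta,r)$; for the reverse inclusion, for each rational interval $J$ I would use DMP at the first hitting time of $J$, together with the positivity of $G_1$ and the distortion control in Lemma \ref{Gt1t2}, to show $\PP[\eta\cap J\ne\emptyset,\,\mu_\eta(J)=0]=0$, then take a countable intersection. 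The main obstacle is precisely this passage from $L^2$ convergence on a countable family to a single random Borel measure that satisfies (iii) simultaneously for all compact $S$: controlling the exchange of limits between $r\to 0^+$ and the various set operations (restriction, intersection with $S$, passage to $B(\eta\cap S,r)$) requires summable Borel--Cantelli bounds extracted from the quantitative error terms in Theorems \ref{main-thm1} and \ref{main-thm2}. A secondary technical point is local integrability near $v_m$, where the factor $(|v_m-u|/(L_0+|v_m-u|))^{\alpha_*}$ arising from Theorem \ref{Thm-est1} is exactly what tames the singularity of $G_1$ and of the two-point dominating function.
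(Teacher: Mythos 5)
Your first- and second-moment computations, the $L^2$-Cauchy argument, the Borel--Cantelli step, and the symmetric-difference argument for (iii) all match the paper's route. But there are several genuine gaps. First, convergence of $Y_r(S)$ along a geometric subsequence does not by itself give the Minkowski content of $\eta\cap S$, which is a limit over all $r\to 0^+$: you need the interpolation step the paper makes explicit, namely $Y_{r_1}\ge (r_2/r_1)^\alpha Y_{r_2}$ for $r_1\ge r_2$, applied along the family of subsequences $Ae^{-n/2^k}$ to sandwich $Y_r$ between $e^{\pm\alpha/2^k}\xi_S$. Second, your atomlessness argument is insufficient: $\EE[\mu_\eta(I)]\to 0$ as $|I|\to 0$ for fixed intervals is compatible with random atoms at random locations (a Poisson point process satisfies it), so a countable dense set plus right-continuity does not rule out atoms. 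The paper instead bounds $\EE[(\theta(a)-\theta(b))^2]\lesssim |v_m-b|^{-\alpha}|b-a|^{2-\alpha}$ via $G_2$ and applies the Kolmogorov continuity theorem to get a continuous decreasing version of $\theta(u)=\xi_{[u,v_m]}$, which yields both the measure and its atomlessness in one stroke; you have the needed second-moment bound but do not use it for this purpose. Third, the ``main obstacle'' you name --- passing from the countable family of rational intervals to the statement that $\mu_\eta$ is the Minkowski content measure in the sense of Definition \ref{Def-Mink}, i.e.\ simultaneously for \emph{all} compact $K\subset\eta\cap(-\infty,v_m]$ with $\mu_\eta(\pa K)=0$ --- is not resolved by more Borel--Cantelli. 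It is resolved by a purely deterministic measure-theoretic argument (Lemma \ref{Lemma-upper-cont} and Corollary \ref{cor-Mink}), which uses subadditivity of the upper Minkowski content (\ref{Cont-union-upper-n}) and the superadditivity correction (\ref{Cont-union-bigger}) to upgrade $\Cont(F_\iota)=\mu(F_\iota)$ on the countable family $\{\eta\cap[a,b]:a<b\in\Q\}$ to all compacts. You should supply this.

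The most serious gap is in (iv). Positivity of $G_1$ only gives $\EE[\mu_\eta(J)]>0$, hence $\PP[\mu_\eta(J)>0]>0$; it does not give $\PP[\mu_\eta(J)=0,\ \eta\cap J\ne\emptyset]=0$, and neither DMP nor the distortion control of Lemma \ref{Gt1t2} closes this. The paper's argument is a self-similarity/zero-one device: for the curve started from $0$ with force point $0^-$, scaling covariance of $\mu_\eta$ (Theorem \ref{phi(K)-Cont}) shows $p(L)=\PP[\mu_\eta[L,0]=0]$ is independent of $L$, with common value $p_0=\PP[\mu_\eta=0]=\PP[0\notin\supp\mu_\eta]$; then DMP at $\tau^*_{-1}$ together with Lemma \ref{lemma-mu-tau} (which transports the Minkowski content measure conformally under $g_\tau$) yields $p_0\le p_0^2$, and $p_0=1$ is excluded by (\ref{EC1-phi}), so $p_0=0$. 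The general case is then reduced to this one by DMP at $\tau^*_b$. Without some such renewal or zero-one argument your sketch of (iv) does not go through.
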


\begin{Remark}
  We emphasize here that the a.s.\ existence of the  Minkowski content measure $\mu_\eta$ means that, on an event with probability $1$, $\mu_\eta$ satisfies properties (i) and (ii) in Definition \ref{Def-Mink}  simultaneously for all compact subsets of $\eta\cap[-\infty,v_m]$.
\end{Remark}

\begin{proof}
For $r>0$, define random functions $I_r,J_r$ on $(-\infty,v_m)$ by $I_r(u)= \ind_{B_{\R}(\eta\cap \R,r)}(u)$ and $J_r(u)=r^{-\alpha} I_r(u)$.  By Theorems \ref{main-thm1} and \ref{main-thm2}, for $u\in (-\infty,v_m)$, $\EE[J_r(u)]\to G_1(u)$ as $r\to 0^+$, and for $u_1\ne u_2\in (-\infty,v_m)$, $\EE[J_{r_1}(u_1)J_{r_2}(u_2)]\to G_2(u_1,u_2)$ as $r_1,r_2\to 0^+$.

By (\ref{est-u*},\ref{G-upper}), for any $u\in (-\infty,v_m)$ and $r>0$,
\BGE  \EE[J_r(u)],G_1(u) \lesssim |v_m-u|^{-\alpha}.\label{JG1'}\EDE
By (\ref{transform-2},\ref{G(u1,u2)-upper}), for $u_1\ne u_2\in (-\infty,v_m)$ and $r_1,r_2>0$,
\BGE \EE[J_{r_1}(u_1)J_{r_2}(u_2)],G_2(u_1,u_2)\lesssim |v_m-u_1|^{-\alpha} |u_1-u_2|^{-\alpha}.\label{JG2'}\EDE
Combining (\ref{JG2'}) with (\ref{G(u1,u2)-est}) and setting $\zeta:=\zeta_1\wedge \zeta_2\wedge \frac{1-\alpha}2\in (0,1-\alpha)$, we get
\begin{align}
  &|\EE[J_{r_1}(u_1)J_{r_2}(u_2)]-G_2(u_1,u_2)| \nonumber \\
  \le & |v_m-u_1|^{-\alpha}|u_1-u_2|^{-\alpha}\Big(1\wedge \Big(\frac{r_1\vee r_2}{|v_m-u_1|\wedge |u_1-u_2|}\Big)^\zeta \Big). \label{J-G2'}
\end{align}

Fix a bounded measurable set $S\subset (-\infty,v_m]$. Let $A=\sup\{|v_m-u|:u\in S\}$. Define
\BGE Y_r =\int_S J_r(u) \lambda(du) =r^{-\alpha} \lambda(B_{\R}(\eta\cap\R,r)\cap S).\label{YrS'}\EDE
From (\ref{JG2'}) and that $\alpha\in(0,1)$ we get
$$\int_S\!\int_S G_2(u_1,u_2)\lambda(du_1)\lambda(du_2)\lesssim \int_0^A L_1^{-\alpha}\int_0^{A-L_1} L_2^{-\alpha} \lambda(dL_2)\lambda(dL_1)<\infty.$$
The finiteness of this integral together with (\ref{J-G2'}) implies that, for any $r_1,r_2>0$,
\begin{align*}
 & \EE[(Y_{r_1}-Y_{r_2})^2]=\EE\Big[\Big(\int_S J_{r_1}(u)\lambda(du)-\int_S J_{r_2}(u)\lambda(du)\Big)^2\Big]\\
  \le &2 \sum_{j=1}^2 \sum_{k=1}^2 \int\!\!\int_{\{(u_1,u_2)\in S^2:u_1>u_2\}} |\EE[J_{r_j}(u_1) J_{r_k}(u_2)]-G(u_1,u_2)| \lambda(du_1) \lambda(du_2)\\
  \lesssim
 & \int_{0}^{A}\!\!\int_{0}^{A }  L_1^{-\alpha} L_2^{-\alpha} \Big(1\wedge \Big(\frac {r_1\vee r_2}{L_1\wedge L_2}\Big)^\zeta\Big)\lambda(dL_1) \lambda(dL_2),
\end{align*}
where in the last line we used a change of variables: $L_1=v_m-u_1$ and $L_2=u_1-u_2$. Since $\alpha\in(0,1)$ and $\zeta\in (0,1-\alpha)$,
we can easily bound the last integral and get
\BGE \|Y_{r_1}-Y_{r_2}\|_{L^2}^2 \lesssim A^{2-2\alpha-\zeta} (r_1\vee r_2)^{\zeta}.
\label{L2diff'}\EDE
This implies that $(Y_r)$ is Cauchy in $L^2$ as $r\to 0^+$. Let $\xi_S\in L^2$ be the limit.

By (\ref{L2diff'}) and Chebyshev's inequality,
$$\sum_{n=1}^\infty \PP[|Y_{Ae^{-n}}-Y_{Ae^{1-n}}|\ge  A^{1-\alpha} e^{-n\zeta/3}]\lesssim \sum_{n=1}^\infty e^{-n\zeta/3}<\infty.$$
By Borel-Cantelli lemma, with probability one, $|Y_{Ae^{-n}}-Y_{Ae^{1-n}}|\le A^{1-\alpha} e^{-n\zeta/3}$ for sufficiently large $n$, which implies that a.s.\ $\lim_{n\to\infty} Y_{e^{-n}}$ converges. The limit is $\xi_S$ since $Y_{Ae^{-n}}\to \xi_S$ in $L^2$. A similar argument shows that, for any $k\in\N$, a.s.\ $Y_{Ae^{-n/2^k}}\to \zeta$ as $n\to\infty$.

Let $E_0$ be the event that $\lim_{n\to\infty} Y_{e^{-n/2^k}}= \zeta$ for every $k\in\N$. Then $\PP[E_0]=1$. By the definition of $I_r$ and $J_r$, for $r_1\ge r_2>0$, we have $I_{r_1}(u)\ge I_{r_2}(u)$ and $J_{r_1}(u)\ge (r_2/r_1)^\alpha J_{r_2}(u)$, which implies that $Y_{r_1}\ge (r_2/r_1)^\alpha Y_{r_2}$. Let $k\in\N$. For $r\in(0,1]$, there is a unique $n_k(r)\in\N$ such that $e^{-n_k(r)/2^k}< r\le e^{(1-n_k(r))/2^k}$. Then we get
$ e^{-\alpha /2^k} Y_{e^{-n_k(r)/2^k}}\le Y_r\le  e^{\alpha/2^k}  Y_{e^{(1-n_k(r))/2^k}}$, which implies that
$e^{-\alpha/2^k} \zeta\le \liminf_{r\to 0^+} Y_r\le \limsup_{r\to 0^+} Y_r\le e^{\alpha/2^k}\zeta$ on the event $E_0$.
Since this holds for every $k\in\N$, and $\PP[E_0]=1$, we get a.s.\ $Y_r\to\xi_S$ as $r\to 0^+$.

So far we have proved that, for any bounded measurable set $S\subset (-\infty,v_m]$, there is a nonnegtive random variable $\xi_S\in L^2$ such that, as $r\to 0^+$, $\int_S J_r(u) \lambda(du)\to \xi_S$  almost surely and in $L^2$. It is clear that a.s.\ $\xi_{S_1\cup S_2}=\xi_{S_1}+\xi_{S_2}$ if $S_1\cap S_2=\emptyset.$ By Fubini Theorem,
$$\int_S \EE[J_r(u)]\lambda(du)\to \EE[\xi_S],\quad \int_S\!\int_S \EE[J_r(u_1)J_r(u_2)] \lambda(du_1) \lambda(du_2)\to \EE[\xi_S^2].$$
By polarization, the second limit further implies that for any bounded measurable sets $S_1,S_2\subset (-\infty,v_m]$,
$$\int_{S_1}\!\int_{S_2} \EE[J_r(u_1)J_r(u_2)] \lambda(du_1) \lambda(du_2)\to \EE[\xi_{S_1}\xi_{S_2}].$$
Since $\EE[J_r(u)]\to G_1(u)$ and $\EE[J_r(u_1)J_r(u_2)]\to G_2(u_1,u_2)$ as $r\to 0^+$, by (\ref{JG1'},\ref{JG2'}) and dominated convergence theorem, for any bounded measurable sets $S,S_1,S_2\subset (-\infty,v_m]$,
\BGE \EE[\xi_S]=\int_S G_1(u) \lambda(du),\quad \EE[\xi_{S_1 }\xi_{S_2}]=\int_{S_1}\!\int_{S_2} G_2(u_1,u_2) \lambda(du_1) \lambda(du_2).\label{moment-zeta}\EDE

We now define $\theta(u)=\xi_{[u,v_m]}$, $u\in (-\infty,v_m]$. Then for $a<b\in (-\infty,v_m)$,  by (\ref{moment-zeta},\ref{JG2'})
$$\EE[(\theta(a)-\theta(b))^2]=\EE[\xi_{[a,b)}^2]=\int_{a}^{b}\int_{a}^{b} G_2(u_1,u_2) du_1 du_2$$
$$\lesssim \int_{v_m-b}^{v_m-a} \int_0^{b-a} L_1^{-\alpha} L_2^{-\alpha} \lambda(dL_2)\lambda(dL_1)\lesssim |v_m-b|^{-\alpha} |b-a|^{ 2-\alpha}.$$
By Kolmogorov continuity theorem, for any $b\in (-\infty,v_m)\cap \Q$, $\theta$ has a locally H\"older continuous version on $(-\infty,b]$. Thus, $\theta$ has a continuous version on $(-\infty,v_m)$. Such continuous version is nonnegative and decreasing since for any $a<b\in(-\infty,v_m]$, a.s.\ $\theta(a)-\theta(b)=\xi_{[a,b)}\ge 0$. By (\ref{moment-zeta},\ref{JG1'}), $\EE[\theta(b)]\to 0$ as $b\to v_m^-$. So $\theta$ has a continuous decreasing version on $(-\infty,v_m]$ with $\theta(v_m)=0$. Such version of $\theta$ determines an atomless locally finite measure $\mu_\eta$ on $(-\infty,v_m]$.
Then for any set $S\subset (-\infty,v_m]$, which is a finite disjoint union of intervals of the form $(a_j,b_j]$, we have a.s.\ $\mu_\eta(S)=\xi_S$. By monotone class theorem, for any bounded measurable set $S\subset (-\infty,v_m]$, a.s.\ $\mu_\eta(S)=\xi_S$. By (\ref{moment-zeta}), we get
  \BGE\EE[\mu_\eta(S)]=\int_S G_1(u) \lambda(du),\label{EC1}\EDE
  and if   $T=S_1\times S_2$ for some bounded measurable sets $S_1,S_2\subset (-\infty,v_m]$, then
  \BGE \EE[\mu_\eta^2(T)]=\int \!\!\int_{T} G_2(u_1,u_2) \lambda^2(du_1\otimes du_2).\label{EC2}\EDE
By monotone convergence theorem and monotone class theorem, (\ref{EC1}) holds for any measurable set $S\subset (-\infty,v_m]$, and
(\ref{EC2}) holds for any measurable set $T\subset (-\infty,v_m]^2$. By monotone convergence theorem, these results further imply (\ref{EC1-phi},\ref{EC2-psi}).

Let $S$ be a compact subset of $(-\infty,v_m]$ with $\Cont(\pa S)=0$. For $r>0$,
since the symmetric difference between $B_{\R}(\eta\cap \R,r)\cap S$ and $B_{\R}(\eta\cap S,r)$ is contained in $B_{\R}(\pa S,r)$, by (\ref{YrS'}) we get $| \int_S J_r(u) \lambda(du)-\Cont(\eta\cap S;r)|\le \Cont(\pa S;r)$. Sending $r$ to $ 0^+$ and using that a.s.\ $\int_S J_r(u) \lambda(du)\to \xi_S=\mu_\eta(S)$, we get (\ref{Cont=muL}).

 Applying (\ref{Cont=muL}) to $S$ of the form $[a,b]$, we find that  there is an event $E$ with $\PP[E]=1$ such that on the event $E$, for any $a<b\in (-\infty,v_m]\cap \Q$, $\Cont(\eta\cap [a,b])=\mu_\eta[a,b]=\theta(a)-\theta(b)$.  {Since $\PP[E]=1$, by applying Corollary \ref{cor-Mink} with $\{F_\iota\}=\{\eta\cap [a,b]:a<b\in (-\infty,v_m]\cap \Q\}$, we conclude that $\mu_\eta$ is a.s.\ the Minkowski content measure on $\eta\cap (-\infty,v_m]$.}

To prove Part (iv), we need the lemma below.

\begin{Lemma}
  Let $\tau$ be a finite $\F$-stopping time. Suppose $\eta^\tau$ is a continuous curve in $\lin\HH$, which satisfies that a.s.\ $f_\tau(\eta^\tau)=\eta(\tau+\cdot)$.   Then
  $\mu^\tau:=(g_\tau)_*((g_\tau')^d\cdot \mu_\eta|_{(-\infty,v_m\wedge a_{K_\tau})})$
  is a.s.\ the Minkowski content measure on $\eta^\tau\cap (-\infty,V_m(\tau)]$.  {Recall that $a_{K_\tau}=\min(\lin{K_\tau}\cap\R)$.}
  \label{lemma-mu-tau}
\end{Lemma}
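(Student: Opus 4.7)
The plan is to combine the domain Markov property for $\eta^\tau$ (which gives existence of $\mu_{\eta^\tau}$ via Theorem \ref{Thm-Mink}) with the restriction property (Theorem \ref{restriction}), conformal covariance (Theorem \ref{phi(K)-Cont}), and uniqueness (Lemma \ref{unique}) of Minkowski content measures, so as to identify $\mu^\tau$ with $\mu_{\eta^\tau}$. First, by DMP, conditional on $\F_\tau$ the curve $\eta^\tau$ has the law of an SLE$_\kappa(\ulin\rho)$ curve started from $W(\tau)$ with force points $\ulin V(\tau)$, all lying weakly to the left of the initial point, and all hypotheses of Theorem \ref{Thm-Mink} (including the constraints on $\rho_\Sigma$ and the partial sums $\sigma_j$) are preserved. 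Applying Theorem \ref{Thm-Mink} to $\eta^\tau$ yields a.s.\ an atomless Minkowski content measure $\mu_{\eta^\tau}$ on $\eta^\tau\cap(-\infty,V_m(\tau)]$.

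Next I would spell out the real-line action of $g_\tau$. The interval $(-\infty,v_m\wedge a_{K_\tau})$ lies outside $K_\tau^{\doub}$, so $g_\tau$ extends conformally across a neighborhood of it in $\R$ and is a strictly increasing bijection onto $(-\infty,V_m(\tau))$ (checking separately the cases $v_m<a_{K_\tau}$, where $V_m(\tau)=g_\tau(v_m)$, and $v_m\ge a_{K_\tau}$, where $V_m(\tau)=c_{K_\tau}$). Since $\R\cap K_\tau\subset[a_{K_\tau},b_{K_\tau}]$ is disjoint from the interval, any $u$ in it is visited by $\eta$ only at times $\ge\tau$, so from $\eta(\tau+\cdot)=f_\tau(\eta^\tau)$ one obtains
\[ g_\tau\big(\eta\cap(-\infty,v_m\wedge a_{K_\tau})\big)=\eta^\tau\cap(-\infty,V_m(\tau)). \]

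Now I would apply Theorem \ref{restriction} to $\mu_\eta$ with $G=(-\infty,v_m\wedge a_{K_\tau})$ and $F=\R$ (the boundary condition is trivial) to conclude that $\mu_\eta|_G$ is a.s.\ the Minkowski content measure on $\eta\cap G$; Theorem \ref{phi(K)-Cont} applied to the conformal extension of $g_\tau$ on a real neighborhood of $G$ then gives that $\mu^\tau=(g_\tau)_*((g_\tau')^d\cdot\mu_\eta|_G)$ is a.s.\ the Minkowski content measure on $\eta^\tau\cap(-\infty,V_m(\tau))$. An analogous application of Theorem \ref{restriction} to $\mu_{\eta^\tau}$ with $G'=(-\infty,V_m(\tau))$ identifies $\mu_{\eta^\tau}|_{G'}$ with the same Minkowski content measure. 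By uniqueness (Lemma \ref{unique}), $\mu^\tau=\mu_{\eta^\tau}|_{(-\infty,V_m(\tau))}$; since $\mu^\tau$ puts no mass at $V_m(\tau)$ (its support is contained in $(-\infty,V_m(\tau))$) and $\mu_{\eta^\tau}$ is atomless, the two measures agree on $(-\infty,V_m(\tau)]$, proving the lemma.

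The main obstacle is assembling a single full-probability event on which the three ingredients---existence of $\mu_\eta$ as an a.s.\ statement for $\eta$, existence of $\mu_{\eta^\tau}$ as an a.s.\ statement only under the conditional law given $\F_\tau$, and the pointwise set identification above---hold simultaneously. A standard Fubini-type argument integrating conditional probability against $\F_\tau$, combined with the $\F_\tau$-measurability of $g_\tau$ and its real extension, handles this; but care is needed because restriction and covariance both require $\mu_\eta$ to satisfy the characterizing property of Definition \ref{Def-Mink} on every relevant compact set at once, which is exactly the content of the a.s.\ statement in Theorem \ref{Thm-Mink}.
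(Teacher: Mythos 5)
Your proposal is correct and follows essentially the same route as the paper's proof: DMP gives the a.s.\ existence of $\mu_{\eta^\tau}$, Theorem \ref{restriction} restricts both measures to the open intervals, Theorem \ref{phi(K)-Cont} transports $\mu_\eta|_{(-\infty,v_m\wedge a_{K_\tau})}$ via $g_\tau$, and atomlessness upgrades the identification from $(-\infty,V_m(\tau))$ to $(-\infty,V_m(\tau)]$. Your extra care about the real-line bijectivity of $g_\tau$ and the assembly of a single full-probability event only makes explicit what the paper leaves implicit.
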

\begin{proof}
By DMP, conditionally on $\F_\tau$, $\eta^\tau$ is an SLE$_\kappa(\ulin\rho)$ curve started from $W(\tau)$ with force points $\ulin V(\tau)$. Thus, the Minkowski content measure $\mu_{\eta^\tau}$ on $\eta^\tau\cap (-\infty,V_m(\tau)]$ a.s.\ exists.
By Theorem \ref{restriction}, $\mu_\eta|_{(-\infty,v_m\wedge a_{K_\tau})}$ and $\mu_{\eta^\tau}|_{ (-\infty,V_m(\tau))}$ are respectively the Minkowski content measure on $\eta\cap (-\infty,v_m\wedge a_{K_\tau})$ and $\eta^\tau\cap (-\infty,V_m(\tau))$. Since $g_\tau$ is injective and $C^1$ on $(-\infty,v_m\wedge a_{K_\tau})$, and sends $\eta\cap (-\infty,v_m\wedge a_{K_\tau})$ to $\eta^\tau\cap (-\infty,V_m(\tau))$, by Theorem \ref{phi(K)-Cont},
  $\mu_{\eta^\tau}|_{ (-\infty,V_m(\tau))}=(g_\tau)_*((g_\tau')^d\cdot \mu_\eta|_{(-\infty,v_m\wedge a_{K_\tau})})=\mu^\tau$. Since $\mu_{\eta^\tau}$ is supported by $(-\infty,V_m(\tau)]$, and is atomless, we get $\mu^\tau=\mu_{\eta^\tau}$. So we get the conclusion.
\end{proof}

Now we prove Part (iv). Let $\supp(\nu)$ denote the (minimal) support of a measure $\nu$.
First consider the case that $w=0$ and $v_m=0^-$. Then $\eta$ is an SLE$_\kappa(\rho_\Sigma)$ curve started from $0$ with the  force point $0^-$. It satisfies the scaling property: for any $a>0$, $a\eta$ has the same law as $\eta$ up to a linear time-change. For $L\in(-\infty,0)$, let $p(L)=\PP[\mu_\eta[L,0]=0]$. Then $p(L)$ is increasing with $\lim_{L\to -\infty} p(L)= \PP[\mu_\eta=0]$ and $\lim_{L\to 0^-} p(L)=\PP[0\not\in \supp(\mu_\eta)]$. By Theorem \ref{phi(K)-Cont}, $\mu_\eta$ has the same law as $a^d \mu_\eta(\cdot/a)$, which implies that $p(L)=p(aL)$ for any $a>0$. So $p$ is constant on $(-\infty,0)$. Denote the constant value by $p_0\in[0,1]$. Then $\PP[\mu_\eta=0]=\PP[0\not\in \supp(\mu_\eta)]=p_0$.

Let $\tau=\tau^*_{-1}$. Then $\tau$ is a finite $\F$-stopping time, and $\eta(\tau)\in(-\infty,-1)$. By DMP, $g_\tau\circ \eta(\tau+\cdot)-W(\tau)$ has the same law as $\eta$, and is independent of $\F_\tau$. By  Lemma \ref{lemma-mu-tau}, we have $\PP[\eta|_{(-\infty,\eta(\tau)]}=0|\F_\tau]=p_0$. Since $\mu_\eta[\eta(\tau),0]=\Cont(\eta[0,\tau]\cap (-\infty,0])$ is $\F_\tau$-measurable, we have
$$p_0=\PP[\mu_\eta=0]=\PP[\{\mu_\eta|_{[\eta(\tau),0]}=0\}\cap \{\mu|_{(-\infty,\eta(\tau)]}=0\}]=p_0 \PP[\mu_\eta|_{[\eta(\tau),0]}=0]\le p_0^2.$$  If $p_0=1$, then by $\PP[\mu_\eta=0]=p_0$ and (\ref{EC1-phi}), we get $0=\EE[\mu_\eta(-\infty,0]]=\int_{-\infty}^0 G_1(u) \lambda(du)>0$, which is a contradiction. So  $p_0=0$, and we get $\PP[0\in\supp(\mu_\eta)]=1$.

Now we consider the general case. To prove that a.s.\ $\supp(\mu_\eta)=\eta\cap (-\infty,v_m]$, it suffices to prove that, for any $a<b\in(-\infty,v_m]\cap \Q$, on the event $\eta\cap (a,b)\ne\emptyset$, a.s.\ $\mu_\eta(a,b)>0$. Fix $a<b\in(-\infty,v_m]\cap \Q$. Let $\tau=\tau^*_b$. Then $\tau$ is a finite $\F$-stopping time, and $\eta(\tau)\in (-\infty,b]$. By DMP, $g_\tau(\eta(\tau+\cdot))-W(\tau) $, denoted by $\eta^\tau$, has the law of an SLE$_\kappa(\rho_\Sigma)$ curve started from $0$ with the force point $0^-$. Let $\mu^\tau$ be the Minkowski content measure on $\eta^\tau\cap(-\infty,0]$. By the last paragraph, a.s.\ $0\in \supp(\mu^\tau)$. By Lemma \ref{lemma-mu-tau}, a.s.\ $\eta(\tau)\in \supp(\mu_\eta|_{(-\infty,\eta(\tau)]})$. On the event $\eta\cap (a,b)\ne\emptyset$, we have $\eta(\tau)\in (a,b]$, which together with a.s.\  $\eta(\tau)\in \supp(\mu_\eta|_{(-\infty,\eta(\tau)]})$ implies that a.s.\ $\mu_\eta(a,b)>0$ on this event. The proof is now complete.
\end{proof}

\begin{Remark}
Let $S=[v_m-A,v_m]$ for some $A>0$. By (\ref{EC2-psi}) and (\ref{JG2'}), for any  $\eps\in(0,d)$,
$$\EE\Big[\int\!\!\int_{S^2} \frac{\mu_\eta^2 (du_1\otimes du_2)}{|u_1-u_2|^{d-\eps}}\Big]
=\int\!\!\int_{S^2} |u_1-u_2|^{-c} G_2(u_1,u_2) \lambda^2(du_1\otimes du_2)$$
$$\lesssim \int_0^{A}\!\! \int_0^{A } L_1^{-\alpha}  L_2^{-\alpha-(d-\eps)}\lambda(dL_1) \lambda( dL_2)\le\frac{A^{1-\alpha}}{1-\alpha}\cdot \frac{A^{\eps}}{\eps} <\infty,$$
which implies that a.s.\ $\int\!\!\int_{S^2} \frac{\mu_\eta^2 (du_1\otimes du_2)}{|u_1-u_2|^{d-\eps}}<\infty$. This shows that $\mu_\eta$ has finite $(d-\eps)$-energy. So, on the event $\{\eta\cap S\ne \emptyset\}$, $\mu_\eta|_S$ could serve as a Frostman measure on $\eta\cap S$.
\label{Frostman}
\end{Remark}

\begin{Theorem}
For  $u\in (-\infty,v_m)$, let
 $$M_u(t)=\ind_{\{t< \tau^*_u\}}  g_t'(u)^\alpha C_{\R}  G (W(t),\ulin V(t);g_t(u)),\quad t\ge 0.$$
Let $S\subset (-\infty,v_m)$ be compact, and define the supermartingale
 $M_{S}(t)=\int_{S} M_u(t) \lambda(du)$.
  Then we have the Doob-Meyer decomposition:
 $$M_{S}(t)=\EE[\mu_\eta(S)|\F_t]-\mu_\eta(\eta[0,t]\cap S),\quad t\ge 0.$$
\end{Theorem}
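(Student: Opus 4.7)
The plan is to establish the stronger pointwise identity
$$\EE[\mu_\eta(S)\mid \F_t]=\mu_\eta(\eta[0,t]\cap S)+M_S(t),\qquad t\ge 0.$$
Rearranging this gives the claimed Doob--Meyer decomposition, and the supermartingale property of $M_S$ then comes for free: $M_S$ is expressed as the difference of a uniformly integrable $\F$-martingale (a Doob martingale) and the adapted nondecreasing process $t\mapsto\mu_\eta(\eta[0,t]\cap S)$. Integrability of $\mu_\eta(S)$ is not an issue, since by (\ref{EC1-phi}) and the bound $G_1(u)\lesssim |v_m-u|^{-\alpha}$ from (\ref{JG1'}), $\EE[\mu_\eta(S)]=\int_S G_1\,d\lambda<\infty$ for every compact $S\subset (-\infty,v_m)$.

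The first step is to split $\mu_\eta(S)$ into ``past'' and ``future'' parts. By Theorem \ref{Thm-Mink}(iv), $\mu_\eta$ is supported on $\eta\cap(-\infty,v_m]$ and is atomless. Moreover, a boundary point $u\in(-\infty,v_m)$ lying on $\eta$ is visited by $\eta$ at exactly the swallowing time $\tau^*_u$: once $u\in K_t$, the future curve evolves in $\HH\setminus K_t$ and cannot revisit $u$. Consequently $\eta\cap K_t\cap(-\infty,v_m)=\eta[0,t]\cap(-\infty,v_m)$, and
$$\mu_\eta(S)=\mu_\eta(\eta[0,t]\cap S)+\mu_\eta\big(S\cap(-\infty,v_m\wedge a_{K_t})\big),$$
with the first summand $\F_t$-measurable.

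The second step computes the conditional expectation of the second summand by transporting through $g_t$. By Lemma \ref{lemma-mu-tau} applied at the (deterministic) stopping time $t$, the pushforward $(g_t)_*((g_t')^d\cdot \mu_\eta|_{(-\infty,v_m\wedge a_{K_t})})$ agrees a.s.\ with the Minkowski content measure $\mu_{\eta^t}$ on $\eta^t\cap(-\infty,V_m(t)]$. By DMP, conditionally on $\F_t$ the curve $\eta^t$ is an SLE$_\kappa(\ulin\rho)$ from $W(t)$ with force points $\ulin V(t)$, so Theorem \ref{Thm-Mink}(i) applied to $\eta^t$ gives
$$\EE[\mu_{\eta^t}(B)\mid \F_t]=\int_B C_\R\, G(W(t),\ulin V(t);y)\,\lambda(dy)$$
for bounded measurable $B\subset(-\infty,V_m(t)]$. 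Inverting the pushforward identity for $A\subset(-\infty,v_m\wedge a_{K_t})$ and then changing variables $y=g_t(u)$ (so $\lambda(dy)=g_t'(u)\,\lambda(du)$), the Jacobian factors combine via the crucial algebraic identity $1-d=\alpha$ to yield
$$\EE[\mu_\eta(A)\mid \F_t]=\int_A g_t'(u)^\alpha\, C_\R\, G(W(t),\ulin V(t);g_t(u))\,\lambda(du)=\int_A M_u(t)\,\lambda(du).$$

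Finally, choosing $A=S\cap(-\infty,v_m\wedge a_{K_t})$ and noting that $M_u(t)=0$ on $\{t\ge \tau^*_u\}$ by its very definition, the right-hand side equals $\int_S M_u(t)\,\lambda(du)=M_S(t)$. Combined with the decomposition of Step 1, this gives the desired identity. The main obstacles are bookkeeping rather than conceptual: verifying the boundary-point claim $\eta\cap K_t\cap(-\infty,v_m)=\eta[0,t]\cap(-\infty,v_m)$, and justifying the conditional Fubini exchange in Step 2---both are routine, the first from the topology of chordal Loewner flows, the second from the integrability bounds noted above.
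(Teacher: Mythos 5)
Your proposal is correct and follows essentially the same route as the paper's own proof: the same splitting of $\mu_\eta(S)$ into the $\F_t$-measurable past part $\mu_\eta(\eta[0,t]\cap S)=\Cont(\eta[0,t]\cap S)$ and the future part supported on $(-\infty,a_{K_t}\wedge v_m)$, followed by Lemma \ref{lemma-mu-tau}, the conditional form of (\ref{EC1-phi}) for $\eta^t$, and the change of variables $x=g_t(u)$ using $1-d=\alpha$. No gaps.
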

\begin{proof}
Since $ \mu_\eta(S) -\mu_\eta(\eta[0,t]\cap S)=\mu_\eta(\eta(t,\infty)\cap S) $ and  $\mu_\eta(\eta[0,t]\cap S)=\Cont(\eta[0,t]\cap S)\in\F_t$, it suffices to show $\EE[\mu_\eta(\eta(t,\infty)\cap S)|\F_t]=M_S(t)$. Since  $\eta(t,\infty)\subset (-\infty,a_{K_t})$ and $\mu_\eta$ is supported by $(-\infty,v_m)$, $\mu_\eta(\eta(t,\infty)\cap S)=\mu_\eta((-\infty,a_{K_t}\wedge v_m)\cap S)$.

  Let $\eta^{t}$ be the SLE$_\kappa(\ulin\rho)$ curve conditionally on $\F_{t}$  started from $W(t)$ with force points $\ulin V(t)$ such that $\eta(t+\cdot)=f_{t} \circ \eta^{t}$. Let $\mu^{t}$ be the the Minkowski content measure on $\eta^{t}\cap (-\infty, V_m(t)]$. By (\ref{EC1-phi}), if $\phi$ is an $\F_{t}$-measurable random nonnegative measurable function on $(-\infty,V_m(t))$\footnote{More precisely, $\phi$ is an $\F\times {\cal B}(\R)$-measurable function defined on $\{(\omega,x)\in \Omega\times \R: x<V_m(\omega,t)\}$. Here we use $V_m(\omega,t)$ to emphasize the dependence of $V_m(t)$ on $\omega\in\Omega$.}, then
  \BGE \EE\Big[\int \phi(x) \mu^{t}(du)\Big|\F_{t}\Big] =\int \phi(x) C_{\R}  G(W(t),\ulin V(t);x) \lambda(dx).\label{eta-S*}\EDE

By Lemma \ref{lemma-mu-tau}, $\mu^{t}=(g_{t})_*((g_{t}')^d\cdot \mu_\eta|_{(-\infty, a_{K_{t}}\wedge v_m)})$. Thus,
\begin{align*}
  &\EE[\mu_\eta((-\infty,a_{K_t}\wedge v_m)\cap S)|\F_{t_0}]=\EE\Big[\int_{(-\infty,a_{K_t}\wedge v_m)\cap S} g_{t}'(g_{t}^{-1}(x))^{-d} \mu^{t}(dx)\Big|\F_{t}\Big]\\
  =& \int_{(-\infty,V_m(t))\cap g_t(S)}  g_{t}'(g_{t}^{-1}(x))^{-d}C_{\R}  G(W(t),\ulin V(t);x) \lambda(dx)\qquad (\mbox{by }(\ref{eta-S*}))\\
  =& \int_{(-\infty,a_{K_t}\wedge v_m)\cap S}  g_{t}'(u)^{\alpha} C_{\R} G(W(t),\ulin V(t);g_{t}(u))  \lambda(du)\qquad (x=g_{t}(u))\\
  = &\int_{(-\infty,a_{K_t}\wedge v_m)\cap S} M_u(t ) \lambda(du)=  \int_{S} M_u(t ) \lambda(du)= M_{S}(t),
\end{align*}
where the second last ``$=$'' holds because $S\subset (-\infty,v_m]$ and $M_u(t)=0$ for $u\ge a_{K_t}$.
 So we get $\EE[\mu_\eta(\eta(t,\infty)\cap S)|\F_t]=M_S(t)$, as desired.
\end{proof}

\begin{Remark}
Note that the above proof does not use the Doob-Meyer decomposition theorem.
When all $\rho_j$ equal $0$, the theorem shows that, if $\eta$ is a chordal SLE$_\kappa$ curve ($\kappa\in(4,8)$) with no force point, the measure $\mu_\eta$ agrees with the covariant measure derived in \cite{cov} up to a multiplicative constant. So we proved the conjecture in \cite[Section 7.3]{cov}.
\end{Remark}


\begin{thebibliography}{00}
\bibitem{Ahl} Lars V.\ Ahlfors. {\it Conformal invariants: topics in geometric function theory}. McGraw-Hill Book Co., New York, 1973.
\bibitem{cov} Tom Alberts and Scott Sheffield. The covariant measure of SLE on the boundary, {\it Probab.\ Theory Relat.\ Fields}, {\bf 149}:331-371, 2011.
\bibitem{dim-real} Tom Alberts and Scott Sheffield. Hausdorff dimension of the SLE curve intersected with the real line,
{\it Electron J.\ Probab.}, {\bf 40}:1166-1188, 2008.
\bibitem{Bf} Vincent Beffara.  The dimension of SLE curves, {\it Ann.\ Probab.}, {\bf 36}:1421-1452, 2008.
\bibitem{MC} Herbert Federer.   Geometric Measure Theory, Springer-Verlag, 1969.
\bibitem{Law-BM} Gregory Lawler. Minkowski content and exceptional sets for Brownian Paths. Talk at the workshop {\it Stochastic Analysis and its Applications}, 2017.
\bibitem{Mink-real} Gregory Lawler. Minkowski content of the intersection of a Schramm-Loewner evolution (SLE) curve with the real line, {\it J. Math. Soc. Japan.}, {\bf 67}:1631-1669, 2015.
\bibitem{Law-Green}  Gregory Lawler. Schramm-Loewner evolution, in statistical mechanics. S.\ Sheffield and T.\
Spencer, ed., IAS/Park City Mathematical Series, AMS, 231-295, 2009.
\bibitem{Law-SLE} Gregory Lawler. {\em Conformally invariant processes in the plane}, Amer. Math. Soc., 2005.
\bibitem{LR} Gregory F.\ Lawler and Mohammad A.\ Rezaei. Minkowski content and natural parametrization for the Schramm-Loewner evolution. {\it Ann.\ Probab.}, {\bf 43}(3):1082-1120, 2015.
\bibitem{LSW-8/3} Gregory   Lawler, Oded Schramm and Wendelin Werner. Conformal restriction: the chordal case, {\it J.\ Amer.\ Math.\ Soc.}, {\bf 16}(4): 917-955, 2003.
\bibitem{LS} Gregory  Lawler and Scott Sheffield. A natural parametrization for the Schramm-Loewner evolution.  {\it Annals of Probab.}, {\bf 39}(5):1896-1937, 2011.
\bibitem{LW} Greg Lawler and B.\ Werness. Multi-point Green's function for SLE and an estimate of Beffara, {\it Annals of Prob.} {\bf 41}:1513-1555, 2013.
\bibitem{LZ}  Greg Lawler and Wang Zhou. SLE curves and natural parametrization. {\it Ann.\ Probab.}, {\bf 41}(3A):1556-1584, 2013.
\bibitem{MS1} Jason Miller and Scott Sheffield. Imaginary geometry I: intersecting SLEs. {\it Probab.\ Theory Relat.\ Fields}, {\bf 164}(3):553-705, 2016.
\bibitem{MW} Jason Miller and Hao Wu. Intersections of SLE Paths: the double and cut point dimension of SLE. {\it Probab.\ Theory Rel.}, {\bf 167}(1-2):45-105, 2017.
\bibitem{existence}   Mohammad A.\ Rezaei and Dapeng Zhan. Green's function for chordal SLE curves.   {\it Probab.\ Theory Rel.}, {\bf 171}:1093-1155, 2018.
\bibitem{NIST:DLMF} NIST Digital Library of Mathematical Functions. \url{http://dlmf.nist.gov/18}, Release 1.0.6 of 2013-05-06.
\bibitem{RY} Daniel Revuz and Marc Yor. {\it Continuous martingales and Brownian motion}. Springer, Berlin, 1991.
\bibitem{RS} Steffen Rohde and Oded Schramm. Basic properties of SLE. {\it Ann.\  Math.}, {\bf 161}:879-920, 2005.
\bibitem{SW} Oded Schramm and David B.\ Wilson. SLE coordinate changes. {\it New York J.\ Math.}, {\bf 11}:659--669, 2005.
{\bibitem{tip} Dapeng Zhan. Ergodicity of the tip of an SLE curve. {\it Prob.\ Theory Relat.\ Fields}, {\bf 164}(1):333-360, 2016.}
 {\bibitem{Two-Green-boundary} Dapeng Zhan. Two-curve Green's function for $2$-SLE: the boundary case. {\it Electron J.\ Probab.}, {\bf 26}, article no.\ 32:1-58, 2021.}
 {\bibitem{LERW} Dapeng Zhan. The scaling limits of planar LERW in finitely connected domains. {\it Ann.\ Probab.} {\bf 36}:467-529, 2008.}
\end{thebibliography}
\end{document}